\newcommand{\prefix}{\operatorname{prefix}}
\newcommand{\suffix}{\operatorname{suffix}}
\newcommand{\B}{\operatorname{\mathbb{E}}}
\newcommand{\Span}{\operatorname{span}}
\newcommand{\supp}{\operatorname{supp}}
\newcommand{\St}{\operatorname{St}}
\newcommand{\SL}{\operatorname{SL}}
\newcommand{\EL}{\operatorname{EL}}
\newcommand{\Prob}{\operatorname{Prob}}
\newcommand{\Isom}{\operatorname{Isom}}
\newcommand{\Cay}{\operatorname{Cay}}
\newcommand{\disp}{\operatorname{disp}}
\newcommand{\dist}{\operatorname{dist}}
\newcommand{\Core}{\operatorname{Core}}
\newcommand{\Borel}{\operatorname{Borel}}
\newcommand{\Line}{\operatorname{Line}}
\newcommand{\Poly}{\operatorname{Poly}}
\renewcommand{\email}[2][]{%
  \ifx\emails\@empty\relax\else{\g@addto@macro\emails{,\space}}\fi%
  \@ifnotempty{#1}{\g@addto@macro\emails{\textrm{(#1)}\space}}%
  \g@addto@macro\emails{#2}%
}
\newcommand{\RN}[1]{%
  \textup{\uppercase\expandafter{\romannumeral#1}}%
}
\begin{document}
\newtheorem{theorem}{Theorem}[section]
\newtheorem{lemma}[theorem]{Lemma}
\newtheorem{claim}[theorem]{Claim}
\newtheorem{proposition}[theorem]{Proposition}
\newtheorem{corollary}[theorem]{Corollary}
\theoremstyle{definition}
\newtheorem{definition}[theorem]{Definition}
\newtheorem{observation}[theorem]{Observation}
\newtheorem{example}[theorem]{Example}
\newtheorem{remark}[theorem]{Remark}

\title[]{Banach fixed point property for Steinberg groups over commutative rings}
\author{Izhar Oppenheim}
\address{Department of Mathematics, Ben-Gurion University of the Negev, Be'er Sheva 84105, Israel}
\email{izharo@bgu.ac.il}


\maketitle
\begin{abstract}
The main result of this paper is that all affine isometric actions of higher rank Steinberg groups over commutative rings on uniformly convex Banach spaces have a fixed point.  We consider Steinberg groups over classical root systems and our analysis covers almost all such Steinberg groups excluding a single rank 2 case.  

The proof of our main result stems from two independent results - a result regarding relative fixed point properties of root subgroups of Steinberg groups and a result regarding passing from relative fixed point properties to a (global) fixed point property.  The latter result is proven in the general setting of groups graded by root systems and provides a far reaching generalization of the work of Ershov,  Jaikin-Zapirain and Kassabov who proved a similar result regarding property (T) for such groups.

As an application of our main result, we give new constructions of super-expanders.  
\end{abstract} 

\section{Introduction}

Given a topological group $\Gamma$ and a Banach space $\B$,  we say that $\Gamma$ has property $(F_{\B})$ if every continuous affine isometric action of $\Gamma$ on $\B$ admits a fixed point.  For a class of Banach spaces $\mathcal{E}$,  we will say that $\Gamma$ has property $(F_{\mathcal{E}})$,  if $\Gamma$ has property $(F_{\B})$ for every $\B \in \mathcal{E}$.  

The most classical instantiation of property $(F_{\mathcal{E}})$ is property $(FH)$ which is property $(F_{\mathcal{E}})$ where $\mathcal{E}$ is the class of all real Hilbert spaces.  For a $\sigma$-compact,  locally compact group $\Gamma$,   Delorme-Guichardet Theorem states that property (FH) is equivalent to Kazhdan's property (T).  

In this work,  we will consider property $(F_{\mathcal{E}_{uc}})$,  where $\mathcal{E}_{uc}$ is the class of all the uniformly convex Banach spaces.  We recall that a Banach space $\B$ is called \textit{uniformly convex} if there is a function $\delta : (0,2] \rightarrow (0,1]$ that is called the \textit{modulus of convexity} such that for every two unit vectors $\xi,  \eta \in \B$ it holds that if $\Vert \xi - \eta \Vert \geq \varepsilon$,  then $\Vert \frac{\xi + \eta}{2} \Vert \leq 1 - \delta (\varepsilon)$.  For example,  Hilbert spaces are uniformly convex (due to the parallelogram equality) and for every $1 < p < \infty$,  every $L^p$ space is uniformly convex (due to Clarkson's inequalities). 

The study of property $(F_{\mathcal{E}_{uc}})$ was initiated in the seminal paper of Bader, Furman, Gelander and Monod \cite{BFGM},  in which they proved (among several other results) that higher rank algebraic groups have property $(F_{L^p})$ for every $L^p$ space with $1 < p < \infty$.  They also conjectured that higher rank algebraic group should have property $(F_{\mathcal{E}_{uc}})$.  This conjecture can be split into two cases depending on the local field.  For non-Archimedian local fields,  the conjecture of \cite{BFGM} was settled by the work of V. Lafforgue \cite{Laff2} and the subsequent work of Liao \cite{Liao} in which stronger results were proven,  i.e.,  in \cite{Laff2, Liao} it was proven that higher rank algebraic groups over non-Archimedian local fields have strong Banach property (T) which implies property $(F_{\mathcal{E}_{uc}})$.  In the Archimedian case,   the conjecture of \cite{BFGM}  was settled only recently: in \cite{Opp-SLZ},  the author made a breakthrough and proved that $\SL_n (\mathbb{R}),  n \geq 4$ and its lattices have property $(F_{\mathcal{E}_{uc}})$.  The technique of  \cite{Opp-SLZ} was then generalized by de Laat and de la Salle \cite{LaatSalle4} to yield a proof of property $(F_{\mathcal{E}_{uc}})$ for every real higher rank algebraic group.  

In this paper,  we turn our attention to the study of property $(F_{\mathcal{E}_{uc}})$ for Steinberg groups and elementary Chevalley groups over commutative rings.  For a classical (crystallographic) reduced,  irreducible root system $\Phi$ and a commutative ring $R$,  we denote $\mathbb{G}_{\Phi} (R)$ to be the simply-connected Chevalley group corresponding to $\Phi$ over $R$.  We further denote $\EL_{\Phi} (R)$ to be elementary Chevalley group corresponding to $\Phi$ over $R$, i.e.,  the subgroup of $\mathbb{G}_{\Phi} (R)$ generated by the root subgroups with respect to the standard torus (we refer the reader to \cite{EJZK} and references therein for more detailed definitions).  The Steinberg group $\St_{\Phi} (R)$ is a group extension of $\EL_{\Phi} (R)$ that ``forgets'' all the relations of $\EL_{\Phi} (R)$ apart from the relations of $R$ (as an Abelian group) in each root subgroup and the Chevalley commutator formula  (a more explicit definition is given in \cref{Steinberg groups over rings subsec} below).  For example,  in the case where $\Phi = A_n$,  the groups discussed above are $\mathbb{G}_{A_{n}} (R) = \SL_{n+1} (R)$, $\EL_{A_n} (R) = \EL_{n+1} (R)$ and $\St_{A_n} (R) = \St_{n+1} (R)$.  With these notations,  our main result is:
\begin{theorem}
\label{main thm intro}
Let $\Phi$ be a classical reduced,  irreducible root system of rank $\geq 2$ such that $\Phi \neq C_2$.  For every finitely generated, commutative (unital) ring $R$,  the groups  $\St_{\Phi} (R)$ and $\EL_{\Phi} (R)$ have property $(F_{\mathcal{E}_{uc}})$.   
\end{theorem}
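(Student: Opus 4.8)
The plan is to prove the theorem in two essentially independent pieces, as the abstract advertises: first, a \emph{relative} fixed point property for each root subgroup $U_\alpha \subseteq \St_\Phi(R)$ together with its ``companion'' subgroups, and second, a general spectral-type criterion that upgrades such relative properties to a global fixed point property for groups graded by root systems. For the first piece, I would fix a root $\alpha$ and work inside the subgroup generated by $U_\alpha$ and the root subgroups $U_\beta$ with $\beta$ in a suitable subsystem (e.g.\ an $A_2$ or rank-$2$ configuration containing $\alpha$); since $R$ is finitely generated as a ring, $U_\alpha \cong (R,+)$ is a finitely generated abelian group acted on by the relevant other root subgroups via the Chevalley commutator relations, and the key is that the pair $(H, U_\alpha)$ has relative property $(F_{\mathcal{E}_{uc}})$ — meaning every affine isometric action of $H$ on a uniformly convex $\B$ has a $U_\alpha$-fixed point. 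This should follow by a bounded-generation / Mautner-type argument exploiting the commutator formula $[x_\alpha(r), x_\beta(s)] = x_{\alpha+\beta}(N r s)\cdots$: an orbit map out of $U_\alpha$ that grew would be incompatible with the uniform convexity geometry once one conjugates by elements of $U_\beta$ and uses that $R$ is finitely generated (so finitely many ``directions'' suffice).

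For the second piece, I would invoke the generalization of the Ershov--Jaikin-Zapirain--Kassabov machinery (\cite{EJZK}) to the uniformly convex setting that the paper proves in full generality for root-system-graded groups: given a group $\Gamma$ graded by an irreducible root system $\Phi$ of rank $\ge 2$, if every root subgroup satisfies the appropriate relative fixed point property with respect to its neighbouring root subgroups, and a ``largeness'' or codistance/angle condition between the pieces holds, then $\Gamma$ has property $(F_{\mathcal{E}_{uc}})$. Concretely, one decomposes a hypothetical affine action, uses the relative results to produce a vector fixed by a Borel-type subgroup generated by the positive root subgroups, and then a second application across a suitable pair of opposite parabolics forces global invariance; the rank $\ge 2$ and irreducibility hypotheses are exactly what make the graph of root subgroups connected enough for the propagation argument, and the exclusion of $C_2$ is the single configuration where the required relative input (presumably involving the short/long root interaction, characteristic $2$ issues in $N$, or a degenerate commutator) is not available.

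The steps, in order: (1) set up $\St_\Phi(R)$ and $\EL_\Phi(R)$ explicitly, recording the Steinberg presentation and the $\Phi$-grading on both groups (noting $\EL_\Phi(R)$ is a quotient of $\St_\Phi(R)$, so a fixed point property transfers downward for free, which is why it suffices to treat $\St_\Phi(R)$); (2) prove the relative fixed point property for a root subgroup $U_\alpha$ inside the rank-$2$ (or $A_2$) subgroup it generates with a partner, handling the finitely-generated ring $R$ by reducing to finitely many generators and using the commutator relations as a Mautner phenomenon; (3) verify the combinatorial hypothesis of the graded-group criterion for every classical $\Phi$ of rank $\ge 2$ except $C_2$ — this is a finite check over the classical types $A_n, B_n, C_n, D_n, F_4, G_2$ and their rank-$2$ subconfigurations; (4) apply the general upgrading theorem to conclude $(F_{\mathcal{E}_{uc}})$ for $\St_\Phi(R)$, and push it to $\EL_\Phi(R)$ by the quotient map. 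The main obstacle I expect is step (2): establishing the relative fixed point property on \emph{all} uniformly convex spaces, not just $L^p$, requires quantitative control of displacement under the unipotent action with only the modulus of convexity $\delta$ available — there is no inner product, no spectral gap in the usual sense, and no interpolation — so one needs a genuinely geometric argument (in the spirit of \cite{Opp-SLZ, LaatSalle4}) showing that a sublinearly-growing orbit map on $(R,+)$ that is ``sheared'' by the commutator action must be bounded; getting this uniformly over the ring generators and over the class $\mathcal{E}_{uc}$, while sidestepping the $C_2$ pathology, is the technical heart of the paper.
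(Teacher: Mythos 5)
Your outline correctly identifies the architecture the paper uses — a relative fixed point property for root subgroups plus a synthesis theorem for root-graded groups, with $\EL_\Phi(R)$ handled as a quotient of $\St_\Phi(R)$ — but both load-bearing steps are either mischaracterized or left as black boxes, so as a proof this has genuine gaps. For step (2), the relative property for a root subgroup of an $A_2$-subsystem cannot ``follow by a bounded-generation / Mautner-type argument'': bounded generation of a root subgroup by the others fails over $\mathbb{Z}[t_1,\dots,t_m]$ (it is used in the paper only for the auxiliary $C_2$-inside-$B_n/C_n$ and $G_2$ short-root cases), and there is no invariant vector to which a Mautner phenomenon could be applied, since the whole point is to \emph{produce} the first fixed vector. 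Also, $(R,+)$ is not a finitely generated abelian group in general (e.g.\ $\mathbb{Z}[t]$), only $R$ is finitely generated as a ring. The actual argument constructs an explicit sequence of averages $\rho(\underline{X}^{d}_{\sigma_j(1)})\cdots\rho(\underline{X}^{d}_{\sigma_j(6)})\xi$ over the six root subgroups ordered so that consecutive triples form Heisenberg groups, and proves it is Cauchy via quantitative estimates on differences of Heisenberg averaging operators (the content of \cref{Averaging operations sec}--\cref{Relative fixed point property sec}, resting on the uniform convexity lemma from \cite{Opp-SLZ} and a new analysis for polynomial entries). You flag this as ``the technical heart'' but supply no argument for it, and the mechanism you do name would not work.

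For the synthesis step, the criterion you describe — relative inputs plus a ``codistance/angle condition'' — is the Ershov--Jaikin-Zapirain--Kassabov property (T) mechanism, which the paper explicitly sets aside as hard to implement for general uniformly convex spaces. What the paper actually proves is an angle-free, Shalom--Mimura-style argument: one passes to an $S$-uniform action in an ultraproduct, realizes the infimal distance between the fixed-point sets of $H^v_+$ and $H^v_-$, shows by strict convexity that the minimizing pair is fixed by the normalizer $N^v$, and propagates this along the (connected) line graph of the co-maximality graph of Borel sets. So the hypothesis you would need to verify in step (3) is not an angle estimate but relative property $(F_{\mathcal{E}_{uc}})$ for each $K_\alpha$ together with the strong grading of $\St_\Phi(R)$; as written, your plan neither states the correct hypothesis nor proves the synthesis theorem it invokes.
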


Historically,  establishing property (T) (or,  equivalently,  property (FH)) for $\St_{\Phi} (R)$ and $\EL_{\Phi} (R)$ was more challenging than establishing property (T) for higher rank algebraic groups.  Indeed,  property (T) for higher rank algebraic groups was proved a few years after Kazhdan defined property (T).  In contrast,  property (T) for  $\EL_n (R),  n \geq 3$ was only established much more recently in the works of Shalom \cite{Shalom2} and Vaserstein \cite{Vaser} using a bounded generation machinery of Shalom (partial results were proven in \cite{Shalom1, KasNik}).  For  $\St_n  (R), n \geq 3$,  Property (T) was proven by Ershov and Jaikin-Zapirain \cite{ErshovJZ} using the machinery of angles between subspaces (which is very different from the bounded generation machinery of Shalom).  This angle machinery was later generalized by Ershov,  Jaikin-Zapirain and Kassabov \cite{EJZK} to prove property (T) for $\St_{\Phi} (R)$ and $\EL_{\Phi} (R)$ for every classical reduced,  irreducible root system $\Phi$.  

The proof of our main result has two components: relative property $(F_{\mathcal{E}_{uc}})$ and synthesis of property $(F_{\mathcal{E}_{uc}})$ (the term ``synthesis'' is due to Mimura \cite{Mimura4}).

\subsection{Relative property $(F_{\mathcal{E}_{uc}})$}

For a topological group $\Gamma$, a subgroup $K < \Gamma$ and a Banach space $\B$,  we say that the pair $(\Gamma,K)$ has \textit{relative property $(F_\B)$} if every continuous, affine, isometric action of $\Gamma$ on $\B$ has a $K$-fixed point.  For a class of Banach spaces,  we will say that the pair $(\Gamma,K)$ has \textit{relative property $(F_{\mathcal{E}})$} if it has relative property $(F_\B)$ for every $\B \in \mathcal{E}$.  

Our main result regarding relative property $(F_{\mathcal{E}_{uc}})$ is the following:
\begin{theorem}
\label{rel. f.p. thm - intro}
Let $\Phi \neq C_2$ be a classical,  reduced,  irreducible root system of rank $\geq 2$ and $R$ a finitely generated commutative (unital) ring.  For $\alpha \in \Phi$,  we denote $K_\alpha$ to be the root subgroup of $\St_{\Phi} (R)$.  Then for every $\alpha \in \Phi$,  the pair $(\St_{\Phi} (R),  K_{\alpha})$ has relative property $(F_{\mathcal{E}_{uc}})$.
\end{theorem}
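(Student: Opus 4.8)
The plan is to reduce the assertion, for each fixed $\alpha$, to a \emph{relative} fixed point property for a small ``model'' pair, and then to prove that model. A few preliminary reductions. Since every uniformly convex Banach space is reflexive, an isometric action on it has a fixed point as soon as it has a bounded orbit: the closed convex hull of a bounded orbit is bounded, and a bounded closed convex subset of a uniformly convex space has a unique circumcentre, which is then invariant. Hence it suffices to show that in every continuous affine isometric action of $\St_{\Phi}(R)$ on some $\B\in\mathcal{E}_{uc}$ the subgroup $K_{\alpha}$ has bounded orbits. Next, the Weyl group $W(\Phi)$ is realised inside $\St_{\Phi}(R)$ by the elements $\widetilde w_{\beta}=x_{\beta}(1)x_{-\beta}(-1)x_{\beta}(1)$, and conjugation by $\widetilde w_{\beta}$ carries $K_{\gamma}$ onto $K_{w_{\beta}\gamma}$ as subgroups (up to a unit in the parametrisation); conjugating a given action by such an element shows that relative property $(F_{\mathcal{E}_{uc}})$ for $(\St_{\Phi}(R),K_{\alpha})$ depends only on the $W(\Phi)$-orbit of $\alpha$. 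Finally, writing $R$ as a quotient of some $\mathbb{Z}[t_{1},\dots,t_{k}]$ and using the induced surjection of Steinberg groups (onto on root subgroups) one may even assume $R=\mathbb{Z}[t_{1},\dots,t_{k}]$.

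Consider first a root $\alpha$ which, up to $W(\Phi)$, lies in an $A_{2}$-subsystem of $\Phi$; by the classification of rank-two subsystems this covers \emph{all} roots when $\Phi\in\{A_{n},D_{n}\}$, together with the long roots of $B_{n}$ and the short roots of $C_{n}$ (the existence of such a subsystem already forces $\Phi\neq C_{2}$, as $C_{2}$ has no $A_{2}$-subsystem). Then $K_{\alpha}$ lies in a subgroup of $\St_{\Phi}(R)$ isomorphic to $\St_{A_{2}}(R)\cong\St_{3}(R)$, with $K_{\alpha}$ one of its three positive root subgroups; by restriction it suffices to establish relative $(F_{\mathcal{E}_{uc}})$ for $(\St_{3}(R),K_{13})$. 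Inside $\St_{3}(R)$ one isolates the subgroup $\EL_{2}(R)\ltimes V$, where $\EL_{2}(R)=\langle K_{23},K_{-23}\rangle$, the group $V:=K_{12}K_{13}$ is abelian (the roots $e_{1}-e_{2}$, $e_{1}-e_{3}$ do not sum to a root), the Chevalley commutator formula identifies $V$ with $R^{2}$ carrying the standard $\EL_{2}(R)$-module structure, and $K_{13}$ is a coordinate line of $V$. Restricting once more, this whole family of roots reduces to the \textbf{model pair}
\begin{equation*}
\bigl(\EL_{2}(R)\ltimes R^{2},\ R^{2}\bigr)\quad\text{has relative property }(F_{\mathcal{E}_{uc}}).
\end{equation*}

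This model is the technical heart, a uniformly convex analogue of the classical relative property (T) of $(\SL_{2}(\mathbb{Z})\ltimes\mathbb{Z}^{2},\mathbb{Z}^{2})$ and of its $L^{p}$-version in \cite{BFGM}. The intended argument: given an affine isometric action on $\B\in\mathcal{E}_{uc}$ with linear part $\pi$ and cocycle $c$, suppose $c|_{R^{2}}$ is unbounded. Since $R^{2}$ is amenable and $\B$ reflexive, the unbounded $R^{2}$-cocycle yields (via a barycentre/ultralimit construction) an asymptotically $\pi(R^{2})$-invariant configuration, that is, a failure of spectral gap for $R^{2}$ acting on $\B$ concentrated at the trivial character of $R^{2}$. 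The cocycle identity for conjugation $gvg^{-1}$ then shows this configuration is, asymptotically, $\EL_{2}(R)$-invariant along $\widehat{R^{2}}\setminus\{0\}$; but the linear $\EL_{2}(R)$-action on $\widehat{R^{2}}\setminus\{0\}$ admits no invariant mean (near $0$ it is, after rescaling, the action on $R^{2}\setminus\{0\}$, which is the ergodicity input underlying Shalom's relative property (T)), and this contradicts the configuration. \textbf{The main obstacle} is to make this last step quantitative in a Banach-geometric way: one must convert the measure-theoretic ergodicity of the $\EL_{2}(R)$-action on $\widehat{R^{2}}$ into an honest norm estimate in $\B$, which uses uniform convexity (hence uniform smoothness of the dual of $\B$ and stability of barycentres) in an essential way. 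This is where the machinery of \cite{Opp-SLZ, LaatSalle4} and the angle/codistance philosophy of \cite{EJZK} are brought to bear, and it is the reason the conclusion is confined to uniformly convex, rather than merely reflexive, targets.

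It remains to treat the roots not covered by an $A_{2}$-subsystem: the short roots of $B_{n}$ ($n\geq3$) and the long roots of $C_{n}$ ($n\geq3$). These lie only in subsystems of type $B_{2}\cong C_{2}$, where the pertinent Chevalley structure constant equals $\pm2$; this is precisely the obstruction that forces $\Phi\neq C_{2}$ in \cref{rel. f.p. thm - intro}, and hence in \cref{main thm intro}. For a short root $\alpha$ of $B_{n}$ ($n\geq3$) one puts $K_{\alpha}$ inside a subgroup $\langle K_{\delta},K_{-\delta}\rangle\ltimes H$ with $H$ a two-step nilpotent (Heisenberg-type) group generated by three root subgroups, whose centre $Z(H)$ is a \emph{long} root subgroup; by the $A_{2}$-case above $Z(H)$ has relative $(F_{\mathcal{E}_{uc}})$, so in any affine isometric action one passes to the closed, uniformly convex, invariant fixed-point subspace of $Z(H)$, on which the action factors through $\langle K_{\delta},K_{-\delta}\rangle\ltimes(H/Z(H))\cong\EL_{2}(R)\ltimes R^{2}$, and the model pair applies to the image of $K_{\alpha}$. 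For a long root $\alpha$ of $C_{n}$ ($n\geq3$) the centre trick fails — the centre would lie in the same Weyl orbit as $\alpha$ — so instead one realises $K_{\alpha}$ inside the Siegel-type parabolic subgroup $\EL_{n}(R)\ltimes\Sym_{n}(R)$ of $\St_{C_{n}}(R)$ and invokes relative $(F_{\mathcal{E}_{uc}})$ for $(\EL_{n}(R)\ltimes\Sym_{n}(R),\Sym_{n}(R))$; this is proven by the same spectral argument, but now the ambient linear group $\EL_{n}(R)$ already has property $(F_{\mathcal{E}_{uc}})$ by \cref{main thm intro} for $A_{n-1}$ — whose proof uses only type-$A$ and type-$D$ input, so there is no circularity — and the requirement $n\geq3$, i.e.\ that this auxiliary group not be $\EL_{2}$, is once more the hypothesis $\Phi\neq C_{2}$ in disguise.
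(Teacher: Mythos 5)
Your plan has a genuine gap at its technical heart. You reduce everything to the assertion that the pair $(\EL_{2}(R)\ltimes R^{2},\,R^{2})$ has relative property $(F_{\mathcal{E}_{uc}})$, and then sketch a proof via an unbounded-cocycle/invariant-mean argument on $\widehat{R^{2}}$. That argument is intrinsically a Fourier--spectral one: it works for Hilbert spaces (Burger--Shalom) and for $L^{p}$ via Mazur maps as in \cite{BFGM}, but there is no spectral decomposition available for an isometric representation on a general uniformly convex space, and no known way to convert ``no invariant mean on $\widehat{R^{2}}\setminus\{0\}$'' into a norm estimate there. Saying that ``the machinery of \cite{Opp-SLZ, LaatSalle4}'' handles this step misattributes those works: they do not prove relative $(F_{\mathcal{E}_{uc}})$ for $\EL_{2}\ltimes R^{2}$ by any spectral route. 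The paper avoids this pair entirely. Its core result is a fixed point for the whole Heisenberg subgroup $H_{1,3}=\langle K_{1,2},K_{2,3}\rangle<\St_{A_{2}}(\mathbb{Z}[t_{1},\dots,t_{m}])$, obtained by an explicit convergence argument: one averages over all six root subgroups of $A_{2}$ in two cyclic orders in which every three consecutive root subgroups form a Heisenberg group with the middle one central, and the differences between successive averages are controlled by uniform convexity (\cref{Final d_1,d_2,d_3 thm}, \cref{general convergence thm}). The limit point is fixed by $K_{1,2}$ and $K_{2,3}$, hence by $H_{1,3}$. Nothing resembling your model pair is proved or needed, and as stated your reduction leaves the hardest step unproved.

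There are two further issues. First, you omit $G_{2}$: its short roots do not lie in an $A_{2}$-subsystem (the span of two short roots of $G_{2}$ contains long roots, so they do not form a root subsystem, and the commutator $[x_{\alpha}(p_1),x_{\beta}(p_2)]$ involves four root subgroups), and the paper handles them separately by a bounded-generation argument using the $G_{2}$ commutator relations (\cref{relative f.p. for G2 thm}). Second, for the $B_{n}/C_{n}$ roots outside $A_{2}$-subsystems the paper does not use a Siegel parabolic or a centre-quotient trick; it works inside a $C_{2}$-subsystem and shows by bounded generation (using identities such as $x_{\alpha+\beta}(p)\in K_{\alpha}K_{\alpha}^{x_{\beta}(-1)}K_{\alpha+2\beta}$ and the square-free/sum-of-squares decomposition $p\equiv \underline{t}'p_{2}^{2}\pmod 2$) that relative $(F_{\mathcal{E}_{uc}})$ for one root subgroup of $C_{2}$ propagates to all of them (\cref{weak relative f.p. for C2 thm}); this is much more elementary than invoking relative fixed point properties for $(\EL_{n}(R)\ltimes\Sym_{n}(R),\Sym_{n}(R))$, which again you do not prove.
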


Most of our work toward proving Theorem \ref{rel. f.p. thm - intro} is proving the relative fix point property in the case where $\Phi = A_2$:
\begin{theorem}
\label{A2 relative f.p. thm intro}
For every $m \in \mathbb{N}$ and every $\alpha \in A_2$,  the pair $(\St_{A_2} (\mathbb{Z} [t_1,...,t_m]),  K_{\alpha})$ has relative property $(F_{\mathcal{E}_{uc}})$.
\end{theorem}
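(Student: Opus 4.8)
The plan is to prove Theorem \ref{A2 relative f.p. thm intro} by reducing the relative fixed point property for the triple consisting of $\St_{A_2}(R)$ (with $R = \mathbb{Z}[t_1,\dots,t_m]$) and a single root subgroup $K_\alpha$ to a statement about a \emph{pair} of root subgroups generating a Heisenberg-type subgroup, and then to exploit the algebraic structure of that subgroup. Recall that in $\St_{A_2}(R)$ the six root subgroups $K_{\alpha}$ are each isomorphic to the additive group $(R,+)$, and the Chevalley commutator formula gives, for the two simple roots $\alpha,\beta$ with $\alpha+\beta$ also a root, the relation $[x_\alpha(r), x_\beta(s)] = x_{\alpha+\beta}(\pm rs)$. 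Thus $K_\alpha$, $K_\beta$ and $K_{\alpha+\beta}$ together generate a copy of the Steinberg/Heisenberg group $H(R)$ over $R$, and it suffices to prove relative property $(F_{\mathcal{E}_{uc}})$ for the pair $(H(R), K_{\alpha+\beta})$ together with the observation that $\St_{A_2}(R)$ acts transitively enough on the root subgroups (via the Weyl group) that fixing one root subgroup of the central type propagates. The first step, then, is to set up this reduction carefully and record that it is enough to handle the central root subgroup of a single Heisenberg subgroup.

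The core of the argument is a \emph{bounded-generation-by-commutators} or \emph{bounded-displacement} estimate, carried out on the Banach space side. Given a continuous affine isometric action of $\St_{A_2}(R)$ on a uniformly convex Banach space $\B$, write $\rho$ for the linear part and $b$ for the cocycle. The key point is that every element $x_{\alpha+\beta}(r)$ of the central root subgroup can be written as a single commutator $[x_\alpha(r), x_\beta(1)]$, and more importantly that, using the ring structure $R = \mathbb{Z}[t_1,\dots,t_m]$, arbitrary elements $x_{\alpha+\beta}(f)$ for $f \in R$ can be produced from a \emph{bounded} number (independent of $f$) of such commutators by writing $f = \sum_i r_i s_i$ with the $r_i, s_i$ drawn from a fixed finite generating set's monomials — here one uses that a polynomial is a $\mathbb Z$-combination of monomials and that the monomials in $t_1,\dots,t_m$ can be reached with boundedly many multiplications, combined with the additivity $x_{\alpha+\beta}(f+g) = x_{\alpha+\beta}(f)x_{\alpha+\beta}(g)$ inside $K_{\alpha+\beta}$. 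The upshot of such a presentation is that the orbit $K_{\alpha+\beta}\cdot \xi_0$ of any point $\xi_0$ is \emph{bounded}: its diameter is controlled by a fixed finite amount of cocycle data. Once $K_{\alpha+\beta}$ has a bounded orbit, uniform convexity provides a unique circumcenter (the center of the smallest enclosing ball, which exists and is unique in uniformly convex — indeed strictly convex and reflexive — spaces), and that circumcenter is fixed by $K_{\alpha+\beta}$ since $K_{\alpha+\beta}$ preserves the bounded orbit. This yields relative property $(F_\B)$ for the central root subgroup.

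I expect the main obstacle to lie precisely in establishing the boundedness of the $K_{\alpha+\beta}$-orbit uniformly in the ring element, i.e.\ in the ``bounded generation by boundedly many commutators'' step. Writing $x_{\alpha+\beta}(f)$ using commutators $[x_\alpha(r_i), x_\beta(s_i)]$ is easy for a \emph{fixed} $f$, but the number of commutators needed naively grows with the degree and coefficients of $f$, which would only give that each orbit point is at bounded distance — with a bound depending on $f$ — and that is not enough. The resolution should use the trick, familiar from the property (T) proofs of Ershov--Jaikin-Zapirain and Ershov--Jaikin-Zapirain--Kassabov \cite{ErshovJZ, EJZK} and in the Banach setting from the author's work on $\SL_n$, of passing to the associated \emph{completion} or of working with the $\rho$-twisted sum: one writes the relevant displacement as living in a finite-dimensional (or finitely-generated-module) subspace over which $R$ acts, and uses that multiplication by $t_i$ is realized by a fixed group element acting by a bounded linear operator, so that $x_{\alpha+\beta}(t_i^{k}\cdot r)$ is conjugate to $x_{\alpha+\beta}(r)$ by a \emph{single} (iterated) group element whose linear part is an isometry — hence all these elements have the \emph{same} displacement of $\xi_0$ up to isometry, and the whole orbit collapses into a bounded set. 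Making this conjugation argument precise — identifying which element of $\St_{A_2}(R)$ or of an overgroup effects multiplication by $t_i$ on $K_{\alpha+\beta}$, and checking the relevant commutator relations survive — is the technical heart of the proof; everything else is the soft uniform-convexity circumcenter argument.
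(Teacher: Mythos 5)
Your proposal has two genuine gaps, and the second is fatal to the whole strategy.

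First, the reduction in your opening paragraph --- ``it suffices to prove relative property $(F_{\mathcal{E}_{uc}})$ for the pair $(H(R), K_{\alpha+\beta})$'' where $H(R)$ is the Heisenberg subgroup generated by $K_\alpha$ and $K_\beta$ --- is false. The group $\mathrm{H}_3(\mathbb{Z}[t_1,\dots,t_m])$ is nilpotent, hence amenable, hence has the Haagerup property: it admits a \emph{proper} affine isometric action on a Hilbert space, under which the orbit of the (infinite) center is unbounded. So the pair (Heisenberg group, its center) does not even have relative property (FH). The relative fixed point property for $K_{\alpha+\beta}$ can only hold inside all of $\St_{A_2}(R)$; the remaining root subgroups are essential, not a convenience.

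Second, your mechanism for bounding the $K_{\alpha+\beta}$-orbit does not close. Writing $x_{\alpha+\beta}(f)$ as a product of commutators $[x_\alpha(r_i),x_\beta(s_i)]$ bounds its displacement only in terms of the displacements of $x_\alpha(r_i)$ and $x_\beta(s_i)$, i.e.\ in terms of boundedness of the orbits of the \emph{other} root subgroups --- which, by the Weyl symmetry of $A_2$, is exactly the statement being proved; the argument is circular. Your proposed escape, realizing multiplication by $t_i$ on $K_{\alpha+\beta}$ by conjugation by a fixed group element, is unavailable: $t_i$ is not a unit in $\mathbb{Z}[t_1,\dots,t_m]$, so no element of $\St_{A_2}(R)$ (nor of $\EL_3(R)$) conjugates $x_{\alpha+\beta}(r)$ to $x_{\alpha+\beta}(t_i r)$, and one cannot pass to an overgroup because the affine action is given only on $\St_{A_2}(R)$. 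Even granting such an element $g$, conjugation by $g^k$ changes displacement by up to $2k\,\|\xi-\rho(g)\xi\|$, which is unbounded in $k$, so the orbit would still not be bounded. The paper explicitly notes that bounded generation fails for Steinberg groups over polynomial rings; its actual proof is not soft. It constructs a sequence of finitely supported averages over all six root subgroups (ordered so that each consecutive triple generates a Heisenberg subgroup with the middle one central), and proves this sequence is Cauchy using a quantitative contraction estimate $r_1(\delta)<1$ coming from the modulus of uniform convexity (Theorem \ref{X_0 Y^n thm} and its descendants), together with word-norm growth bounds in $\St_{A_2}(\mathbb{Z}[t_1,\dots,t_m])$. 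In your write-up uniform convexity enters only at the final circumcenter step, which is a sign that the essential analytic input is missing.
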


Our approach for proving Theorem \ref{A2 relative f.p. thm intro} is to show that for every $\B \in \mathcal{E}_{uc}$ and every $\xi \in \B$,  there is $\widetilde{\xi}$ in the closure of the convex hull of the  $\St_{A_2} (\mathbb{Z} [t_1,...,t_m])$-orbit of $\xi$ such that $\widetilde{\xi}$ is fixed by $K_{\alpha}$.  This is done by defining a sequence of finitely supported averages of vectors in the orbit of $\xi$ and showing that this sequence converges to $\widetilde{\xi}$ as above.  The method of proof is very similar to that in \cite{Opp-SLZ} and relies of bounding the differences between averages in the Heisenberg group.  However,  there is a difficulty that arises from the fact that here we work with the Heisenberg group with entries in $\mathbb{Z} [t_1,...,t_m]$,  while in \cite{Opp-SLZ},  the analysis was for the Heisenberg group with entries in $\mathbb{Z}$.  Resolving this difficulty requires a non-trivial improvement of the methods developed in \cite{Opp-SLZ}.

Passing from Theorem \ref{A2 relative f.p. thm intro} to Theorem \ref{rel. f.p. thm - intro} when $\Phi$ is simply laced or $\Phi = F_4$ is straight-forward: in those cases every root is an $A_2$-subsystem and thus for every $\alpha \in \Phi$ there is $\St_{A_2} (R) < \St_\Phi (R)$ such that $K_\alpha < \St_{A_2} (R)$.  The cases where $\Phi = B_n,  C_n,  n \geq 3$ or $\Phi = G_2$ require some additional analysis and are proven via a bounded generation argument.

\subsection{Synthesis of property $(F_{\mathcal{E}_{uc}})$}

After establishing relative fixed point properties for the pairs $(\St_{\Phi} (R),  K_\alpha),  \alpha \in \Phi$,  we need to ``synthesize'' the relative fixed point property to a global fixed point property of the entire group $\St_{\Phi} (R)$.  In doing so,  we cannot use bounded generation arguments \`{a} la Shalom \cite{Shalom2}, since the bounded generation assumptions do not hold for Steinberg groups.  In \cite{EJZK},  Ershov,  Jaikin-Zapirain and Kassabov gave a synthesis argument for property (T) for Steinberg groups that relied on the notion of angle between subspaces.  Although there are some results of the author regarding generalizing the angle machinery to the Banach setting (see \cite{OppRobust, OppVanBan, OppAngle}),  it seems that generalizing the approach of \cite{EJZK} to all uniformly convex Banach spaces is hard to implement.  In \cite{Mimura4},  Mimura gave a ``soft'' synthesis argument for the groups $\St_{A_n} (R),  n \geq 2$ that does not use bounded generation or angle arguments and that can be applied to the class of all uniformly convex Banach spaces.  However,  the argument in \cite{Mimura4} was limited to $\St_{A_n} (R)$.   

Our work on synthesis of fixed point properties can be seen as a vast generalization of the basic idea of Mimura \cite{Mimura4}.  First,  we develop a very general machinery for synthesis of fixed point properties that is interesting in its own right:
\begin{theorem}
\label{general reduction thm intro - non-directed graph}
Let $\Gamma$ be a finitely generated group with a finite Abelianization and $\mathcal{E}$ a class of uniformly convex Banach spaces such that either is closed under passing to ultraproducts or $\mathcal{E} = \mathcal{E}_{uc}$.  Also,  let $\mathcal{G}$ be a (non-directed) connected graph with a vertex set $V$.  Assume that for every $v \in V$, there are subgroups $N^{v},  H_{+}^{v},  H_{-}^{v} < \Gamma$ such that the following holds:
\begin{enumerate}
\item For every $v \in V$,  $N^{v}$ normalizes $H_{+}^{v}$ and $H_{-}^{v}$.
\item For every $v \in V$,  $\langle H_{+}^{v},  H_{-}^{v} \rangle = \Gamma$.
\item If $u,v \in V$ such that $u \sim v$,  then
$H_{\pm}^{u} < \langle H_{\pm}^{v}, N^{v} \rangle ,$
and
$H_{\pm}^{v} < \langle H_{\pm}^{u}, N^{u} \rangle .$
\item It holds that
$$\langle  H_{+}^{v}, N^{v}  : v \in V \rangle = \Gamma.$$
\item For every $v \in V$,  the pairs $(\Gamma,  H_{+}^{v}),  (\Gamma,  H_{-}^{v})$ have relative property $(F_{\mathcal{E}})$.
\end{enumerate} 
Then $\Gamma$ has property $(F_{\mathcal{E}})$. 
\end{theorem}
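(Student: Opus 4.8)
The plan is to argue by contradiction using the standard "no fixed point $\Rightarrow$ almost invariant vectors / conditionally negative type machinery" adapted to the uniformly convex setting, or more directly via the affine-action dichotomy: if $\Gamma$ has no fixed point on some $\B \in \mathcal{E}$, then (because $\Gamma$ is finitely generated with finite abelianization, so every isometric linear action has no nonzero invariant vector in a suitable quotient) one extracts an affine action $\rho$ on $\B$ with unbounded orbits and no fixed point, and — after passing to an ultraproduct, using that $\mathcal{E}$ is closed under ultraproducts or equals $\mathcal{E}_{uc}$, and invoking the Gromov/BFGM-type argument that the "escape direction" gives a nonzero $\Gamma$-invariant vector in an ultrapower $\B_\omega$ with $\pi_\omega$ still having no such vector — one reaches a contradiction. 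More concretely, the heart is to show that relative property $(F_\mathcal{E})$ for the pairs $(\Gamma, H_\pm^v)$, combined with the combinatorial hypotheses (1)--(4), forces the linear part to have an invariant vector. I would first fix an affine isometric action on $\B \in \mathcal{E}$ with no global fixed point and, using (5), pick for each $v$ a point $\xi_v$ fixed by both $H_+^v$ and $H_-^v$; by (2), $\xi_v$ is then fixed by $\langle H_+^v, H_-^v\rangle = \Gamma$ — wait, that would immediately give a global fixed point, so the relative property must be interpreted as: the \emph{closed convex hull} of the orbit contains an $H_\pm^v$-fixed point (the Ryll-Nardzewski / barycenter formulation that is the correct meaning of $(F_\mathcal{E})$ in the non-Hilbert case), and these two convex sets of fixed points need not intersect. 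So the real argument is at the level of the linear part $\pi$.

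The key steps, in order, are as follows. \textbf{Step 1: Reduce to the linear part.} Assume $\Gamma$ acts affinely on $\B$ with linear part $\pi$ and no fixed point; by a standard argument (as in \cite{BFGM}, using that $\Gamma$ is finitely generated) either there is a fixed point or $\pi$ has almost invariant vectors but no invariant vector, and after replacing $\B$ by an ultrapower $\B_\omega \in \mathcal{E}$ (here is where the ultraproduct/$\mathcal{E}_{uc}$ hypothesis is used) we may assume $\pi_\omega$ has a nonzero invariant vector structure we can exploit; more precisely we work with the subspace $\B^{\pi}$ of invariant vectors and its canonical $\pi$-invariant complement (uniform convexity gives a $\pi$-equivariant projection onto $\B^\pi$). \textbf{Step 2: Propagate invariance along the graph.} For each vertex $v$, relative property $(F_\mathcal{E})$ for $(\Gamma, H_+^v)$ and $(\Gamma, H_-^v)$ means that on the affine action obtained by restricting to the complement of $\B^\pi$ (where there is no global fixed point unless $\B^\pi \neq 0$), the groups $H_\pm^v$ have fixed points in the closed convex hull of their orbits; feeding this into the edge condition (3) — $H_\pm^u \le \langle H_\pm^v, N^v\rangle$ with $N^v$ normalizing $H_\pm^v$ — lets one transfer an $H_+^v$-fixed-convex-point to an $H_+^u$-fixed-convex-point for adjacent $u$, using the normalization to average over the $N^v$-action without destroying $H_+^v$-invariance. \textbf{Step 3: Connectivity.} Since $\mathcal{G}$ is connected, Step 2 propagates a common fixed point (in the relevant closed convex hull) for all the $H_+^v$ simultaneously; condition (4), $\langle H_+^v, N^v : v\in V\rangle = \Gamma$, then upgrades this to a $\Gamma$-fixed point, contradicting our assumption — hence $\B^\pi \neq 0$, and then a second bootstrapping (or just finite abelianization, killing the residual translation on the invariant part) yields the genuine global fixed point.

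The main obstacle I expect is \textbf{Step 2}: making the propagation along an edge rigorous in a general uniformly convex Banach space, because one cannot take barycenters of orbits as freely as in Hilbert space, and the set of $H_+^v$-fixed points in the closed convex hull of an orbit is a closed convex set but not obviously a point, so "averaging over $N^v$" must be done via a canonical choice (e.g. the circumcenter / Chebyshev center, which exists and is unique in uniformly convex spaces, or via a fixed-point-of-the-isometry-group argument). One must check that the Chebyshev center of an $N^v$-invariant bounded closed convex set is $N^v$-fixed and still $H_+^v$-fixed — the latter because $N^v$ normalizes $H_+^v$, so the $N^v$-action permutes the $H_+^v$-fixed sets and the center is canonical. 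Threading this canonicity through the whole graph, while keeping track of the fact that relative property $(F_\mathcal{E})$ is used for \emph{both} signs $H_+^v$ and $H_-^v$ (needed so that condition (2) can eventually be invoked, or so that the escape direction is controlled in both "halves"), is the delicate bookkeeping that the formal proof will have to carry out; the ultraproduct step and the finite-abelianization step are comparatively routine.
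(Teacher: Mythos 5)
There is a genuine gap, and it starts with a misreading of the hypothesis. Relative property $(F_{\mathcal{E}})$, as defined in the paper, really does give an actual $H_{+}^{v}$-fixed point and an actual $H_{-}^{v}$-fixed point for every affine isometric action; your ``wait, that would immediately give a global fixed point'' is resolved not by reinterpreting the relative property as a convex-hull statement, but by observing that the two fixed-point sets $\B^{\rho(H_{+}^{v})}$ and $\B^{\rho(H_{-}^{v})}$ are each nonempty yet need not intersect. The entire argument is about forcing them to intersect. The idea you are missing is Shalom's minimization argument: one considers the quantity $D_{\rho}^{v}=\inf\{\Vert\xi^{+}-\xi^{-}\Vert : \xi^{\pm}\in\B^{\rho(H_{\pm}^{v})}\}$, minimizes it over the class of $S$-uniform actions on spaces in $\mathcal{E}$ (a class nonempty by the Gromov--Stalder theorem if $\Gamma$ fails $(F_{\mathcal{E}})$, and closed under ultraproducts so that the infimum is attained), and then shows that at a minimizing pair $(\xi^{+},\xi^{-})$ strict convexity forces $\rho(g)\xi^{+}-\xi^{+}=\rho(g)\xi^{-}-\xi^{-}$ for $g\in N^{v}$, hence this common difference lies in $\B^{\pi(H_{+}^{v})}\cap\B^{\pi(H_{-}^{v})}=\B^{\pi(\Gamma)}$ and is killed by the finite abelianization. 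That is how $N^{v}$-invariance of $\xi^{\pm}$ is obtained, and then condition (3) transports the points along edges while the equality of the minimal distances $D^{u}=D^{v}$ (from connectivity and monotonicity) lets one re-run the strict-convexity step at every vertex.

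Your proposed substitute for this --- taking Chebyshev centers of the $H_{+}^{v}$-fixed set to get a canonical $N^{v}$-invariant point --- does not work, because $\B^{\rho(H_{+}^{v})}$ is a closed convex set that is in general unbounded (it is a translate of the linear fixed subspace $\B^{\pi(H_{+}^{v})}$), so it has no circumcenter; canonicity must instead come from minimizing the distance to the \emph{other} fixed-point set, which is exactly the pairing of $H_{+}^{v}$ with $H_{-}^{v}$ that condition (2) is designed for. Likewise, the ultraproduct is needed to realize the infimum of $D_{\rho}^{v}$ over the class of uniform actions, not to extract almost-invariant vectors for the linear part; without attaining the infimum the strict-convexity step has nothing to bite on. As written, your Steps 2 and 3 assert the conclusion (a common fixed point for all $\langle H_{+}^{v},N^{v}\rangle$) without a mechanism that produces it.
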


Using this machinery, we derive a far reaching generalization of the main result of \cite{EJZK} (\cite[Theorem 1.2]{EJZK}):
\begin{theorem}
\label{uc synthesis thm for Steinberg - intro}
Let $\Phi$ be a classical reduced irreducible root system of rank $\geq 2$,  $R$ a commutative ring,  $\St_{\Phi} (R)$ the Steinberg group and $\lbrace K_\alpha : \alpha \in \Phi \rbrace$ the root subgroups of $\St_{\Phi} (R)$.   

If for every $\alpha \in \Phi$,  the pair $(\St_{\Phi} (R) , K_\alpha)$ has relative property $(F_{\mathcal{E}_{uc}})$, then $\St_{\Phi} (R)$ has property $(F_{\mathcal{E}_{uc}})$.
\end{theorem}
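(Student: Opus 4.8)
The strategy is to derive the statement from the general synthesis machinery, Theorem~\ref{general reduction thm intro - non-directed graph}, applied to $\Gamma = \St_{\Phi}(R)$. (For rank $\geq 2$ the Steinberg group is perfect, hence has trivial Abelianization, whenever the relevant Chevalley structure constants are invertible in $R$, and it is finitely generated whenever $R$ is a finitely generated ring; the case of an arbitrary $R$ is reduced to these by a co-induction argument over the finitely generated subrings of $R$, using the ultraproduct stability built into the hypotheses of Theorem~\ref{general reduction thm intro - non-directed graph}.) Granting this, two ingredients remain: (i) upgrading the relative fixed point property from the root subgroups $K_{\alpha}$ to the subgroups $H_{\pm}^{v}$ that will appear in the reduction theorem, which supplies hypothesis (5); and (ii) building a connected graph $\mathcal{G}$ together with subgroups $N^{v}, H_{\pm}^{v}$ satisfying hypotheses (1)--(4).

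For (i) the crucial point is a bootstrapping lemma: given a cocharacter $f$ of the maximal torus (equivalently, a linear functional on the root lattice), if $(\St_{\Phi}(R), K_{\beta})$ has relative property $(F_{\mathcal{E}_{uc}})$ for every root $\beta$ with $f(\beta) > 0$, then $(\St_{\Phi}(R), U_{f}^{+})$ has relative property $(F_{\mathcal{E}_{uc}})$, where $U_{f}^{+} := \langle K_{\beta} : f(\beta) > 0 \rangle$. To prove it, enumerate the roots $\beta$ with $f(\beta) > 0$ as $\beta_{1}, \dots, \beta_{m}$ in order of non-decreasing $f$-value, so that by the Chevalley commutator formula each tail $\langle K_{\beta_{i}}, \dots, K_{\beta_{m}} \rangle$ is a subgroup normalized by $K_{\beta_{i-1}}$, and induct on $m$. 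Given an affine isometric action of $\St_{\Phi}(R)$ on a uniformly convex space $\B$, the fixed point set $\B^{V}$ of $V := \langle K_{\beta_{2}}, \dots, K_{\beta_{m}} \rangle$ is non-empty by the inductive hypothesis, and being a closed affine subspace it is again a uniformly convex Banach space; the group $K_{\beta_{1}}$ normalizes $V$, hence preserves $\B^{V}$, and acts there with bounded orbits, since $\B^{K_{\beta_{1}}} \neq \emptyset$ forces every $K_{\beta_{1}}$-orbit to lie on a sphere; as a bounded orbit in a uniformly convex space has a fixed circumcenter, $\B^{V}$ contains a point fixed by $K_{\beta_{1}}$, hence by $\langle K_{\beta_{1}}, V \rangle = U_{f}^{+}$. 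Applying this to $\pm f$ gives relative property $(F_{\mathcal{E}_{uc}})$ for both horospherical subgroups $U_{f}^{\pm} = \langle K_{\beta} : \pm f(\beta) > 0 \rangle$, and in particular for the unipotent radicals of all parabolic subgroups of $\St_{\Phi}(R)$.

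For (ii), I would take the vertices of $\mathcal{G}$ to be a suitable family of gradings $f$ of $\Phi$ --- equivalently, of pairs of opposite parabolic subgroups --- with $H_{\pm}^{f} := U_{f}^{\pm}$ and $N^{f} := \langle K_{\gamma} : f(\gamma) = 0 \rangle$ the level-zero Levi subgroup, and declare two gradings adjacent when the associated parabolics are nested with compatible Levi factors (so that the associated sign strata are nested in the sense that $\{ f > 0 \} \subseteq \{ g \geq 0 \}$ and $\{ g > 0 \} \subseteq \{ f \geq 0 \}$). Then hypothesis (1) --- the Levi normalizes both opposite unipotent radicals --- is immediate from the Chevalley commutator formula; hypothesis (3) --- the inclusions $U_{g}^{\pm} \subseteq \langle U_{f}^{\pm}, N^{f} \rangle$ together with their mirror images --- holds for nested parabolics essentially by construction; and hypothesis (4) follows from hypothesis (2) applied to any single regular grading and its opposite. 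The heart of the matter is hypothesis (2), that $\langle U_{f}^{+}, U_{f}^{-} \rangle = \St_{\Phi}(R)$: this is automatic for a regular $f$, since then the root subgroups inside $U_{f}^{+}$ and $U_{f}^{-}$ already exhaust every root subgroup, but for a degenerate $f$ one must recover each Levi root subgroup $K_{\gamma}$ (with $f(\gamma) = 0$) from a commutator $[K_{\beta}, K_{\beta'}]$ with $\beta \in U_{f}^{+}$, $\beta' \in U_{f}^{-}$, $\beta + \beta' = \gamma$, and --- crucially --- with the pertinent structure constant invertible in $R$. Ensuring this for every Levi root, and then ensuring that enough such admissible gradings remain for $\mathcal{G}$ to be connected, is the main obstacle; I expect this to require a case analysis across the classical root systems, with the multiply-laced types being the delicate ones, running parallel to --- and refining, into the relative fixed point setting --- the generation arguments of Ershov, Jaikin-Zapirain and Kassabov. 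With hypotheses (1)--(5) in hand, Theorem~\ref{general reduction thm intro - non-directed graph} yields property $(F_{\mathcal{E}_{uc}})$ for $\St_{\Phi}(R)$.
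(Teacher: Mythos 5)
Your overall frame is the right one -- apply the synthesis machinery of Theorem \ref{general reduction thm intro - non-directed graph} with parabolic-type data $(U_f^{\pm}, N^f)$ -- and your part (i) is sound: the paper achieves the same upgrade from $K_\alpha$ to the unipotent radical of a Borel by ordering the roots by $f$-value so that the tails are normal (Lemma \ref{K Phi1 is boundedly generated lemma}) and then invoking bounded generation (Lemma \ref{bounded generation lemma}); your circumcenter induction is an equivalent packaging. The problem is part (ii), and it sits exactly where you locate it: hypothesis (2), $\langle U_f^+, U_f^-\rangle=\St_\Phi(R)$, for degenerate gradings. You cannot avoid degenerate gradings (if every $N^f$ is trivial, hypothesis (3) forces all the $U_f^{\pm}$ to coincide along any connected component, and hypothesis (4) then fails), and your proposed resolution -- recovering each Levi root subgroup $K_\gamma$ from a single commutator $[K_\beta,K_{\beta'}]$ with invertible structure constant -- does not work over an arbitrary commutative ring, which is the setting of the theorem: e.g.\ in $C_2$ one has $[x_{\alpha+\beta}(p_1),x_\beta(p_2)]=x_{\alpha+2\beta}(2p_1p_2)$, so over $\mathbb{F}_2[t]$ this commutator never generates $K_{\alpha+2\beta}$, and similar obstructions (constants $2$ and $3$) occur in $B_n$, $C_n$, $F_4$, $G_2$. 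So the step you flag as ``the main obstacle'' is a genuine gap, not a routine case analysis.

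The paper closes this gap with two choices you are missing. First, it takes the degeneracy to be as small as possible: the vertices are unordered co-maximal pairs of Borel sets $\{\Phi_1,\Phi_2\}$ (equivalently, gradings whose zero set meets $\Phi$ in a single line $\mathbb{R}\alpha$), with $H_{\pm}=K_{\pm(\Phi_1\cap\Phi_2)}$ and $N=K_{\Phi\cap\mathbb{R}\alpha}$; connectedness of this graph is Lemma \ref{co-max graph is connected lemma}. Second, hypothesis (2) then only requires recovering the root subgroups on that one line, and this is done not by inverting structure constants but by the \emph{strong grading} property of Steinberg groups (Proposition \ref{Steinberg groups are strongly graded prop}, from \cite{EJZK}): each such root $\beta$ lies in the core of some Borel set $\Phi_3$ by Lemma \ref{facts about root sys lemma}(3), and strong grading gives $K_\beta\subseteq\langle K_\gamma:\gamma\in\Phi_3\setminus\mathbb{R}_{>0}\beta\rangle\subseteq\langle H_+,H_-\rangle$, since $\Phi_3\setminus\mathbb{R}\beta\subseteq\pm(\Phi_1\cap\Phi_2)$ (Lemma \ref{K Phi1 cap Phi2 generates lemma}). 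This works over every commutative ring and also yields the triviality of the Abelianization, for which your appeal to invertible structure constants is again neither available nor needed. With this substitution your outline becomes the paper's proof of the more general Theorem \ref{synthesis thm for groups graded by root systems}.
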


We note that Theorem \ref{uc synthesis thm for Steinberg - intro} is actually a special case of a more general result: In \cite{EJZK},  Ershov,  Jaikin-Zapirain and Kassabov defined the general notion of a group that is strongly graded over a root system and showed that Steinberg groups $\St_{\Phi} (R)$ are a special case of this notion.  They then proved a synthesis result for property (T) for groups strongly graded over a root systems.  In Theorem \ref{synthesis thm for groups graded by root systems} below,  we generalize their result and prove a synthesis result in the context of uniformly convex Banach spaces for groups that are strongly graded over root systems.



\subsection{Super-expanders}

As an application of Theorem \ref{main thm intro},  we derive new constructions of super-expanders (see exact definition in \cref{super-expanders section} below):
\begin{theorem}
\label{super-exp thm intro}
Let $n \geq 3,  m \in \mathbb{N}$ and let $\lbrace R_i \rbrace_{i \in \mathbb{N}}$ be a sequence of finite commutative (unital) rings such that for each $i$,  $R_i$ is generated by $p_0^{(i)} =1,  p_1^{(i)},...,p_m^{(i)} \in R_i$ and $\vert R_i \vert \rightarrow \infty$.  Also, let $\Phi \neq C_2$ be a classical reduced irreducible root system of rank $\geq 2$.  Denote  $\phi_i : \St_{\Phi} (\mathbb{Z} [t_1,...,t_m]) \rightarrow \EL_\Phi (R_i)$ be the epimorphisms induced by the ring epimorphism $\mathbb{Z} [t_1,...,t_m] \rightarrow R_i,  1 \mapsto r_0^{(i)},  t_j \mapsto r_j^{(i)},  \forall 1 \leq j \leq m$. 

For a finite generating set $S$ of  $\St_\Phi (\mathbb{Z} [t_1,...,t_m])$ it holds that the family of Cayley graphs of $\lbrace (\EL_{\Phi} (R_i),  \phi_i (S)) \rbrace_{i \in \mathbb{N}}$ is a super-expander family.
\end{theorem}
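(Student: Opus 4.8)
The plan is to derive this from property $(F_{\mathcal{E}_{uc}})$ of $\St_{\Phi}(\mathbb{Z}[t_1,\dots,t_m])$, which holds by Theorem~\ref{main thm intro}, using the standard mechanism that turns a fixed point property into a spectral gap for finite quotients. Write $\Gamma=\St_{\Phi}(\mathbb{Z}[t_1,\dots,t_m])$ and $G_i=\EL_{\Phi}(R_i)$, so that $\phi_i\colon\Gamma\twoheadrightarrow G_i$. First I would record the elementary facts that qualify $\{\Cay(G_i,\phi_i(S))\}_i$ as a candidate super-expander family: each $G_i$ is finite because $R_i$ is finite; $\phi_i(S)$ generates $G_i$ since $S$ generates $\Gamma$ and $\phi_i$ is onto, so the graphs are connected; their degrees are at most $|S\cup S^{-1}|$, hence uniformly bounded; and since $G_i$ contains a root subgroup isomorphic to $(R_i,+)$, we have $|G_i|\ge|R_i|\to\infty$. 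By the definition of a super-expander recalled in \cref{super-expanders section}, it then remains to prove that for every uniformly convex Banach space $X$ there is a constant $c=c(X,S)>0$ such that the $X$-valued ($\ell^2$-type) Poincar\'e inequality holds on $\Cay(G_i,\phi_i(S))$ with constant $c$, uniformly in $i$.

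I would prove this uniform Poincar\'e inequality by contradiction, assembling the quotients into a single uniformly convex Banach space carrying a fixed-point-free affine isometric $\Gamma$-action. Fix a uniformly convex $X$, and suppose no such $c$ exists; then, after passing to a subsequence, there are mean-zero functions $f_i\colon G_i\to X$ with $\|f_i\|_{\ell^2(G_i,X)}=1$ whose edge energy $\varepsilon_i:=\tfrac{1}{|G_i||S|}\sum_{s\in S}\sum_{g\in G_i}\|f_i(\phi_i(s)^{-1}g)-f_i(g)\|_X^2$ tends to $0$, and after further thinning we may assume $\sum_i\varepsilon_i<\infty$. Set $Y=\bigl(\bigoplus_i\ell^2(G_i,X)\bigr)_{\ell^2}$; by the classical stability of uniform convexity under $\ell^2$-sums of spaces with a common modulus of convexity, $Y\in\mathcal{E}_{uc}$. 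Let $\Gamma$ act on $Y$ linearly and isometrically by $\lambda=\bigoplus_i\lambda_i\circ\phi_i$, where $\lambda_i$ is the left regular representation of $G_i$ on $\ell^2(G_i,X)$, and define $b\colon\Gamma\to Y$ by $b(\gamma)=\bigl((\lambda_i(\phi_i(\gamma))-1)f_i\bigr)_i$. That $b(\gamma)\in Y$ follows from $\sum_i\|(\lambda_i(\phi_i(s))-1)f_i\|^2\le|S|\sum_i\varepsilon_i<\infty$ for $s\in S$, together with a coordinatewise triangle-inequality (cocycle) bound controlling $\|b(\gamma)\|_Y$ for general $\gamma\in\Gamma$ in terms of its word length; a direct computation shows $b$ is an (inner) $1$-cocycle, so $\gamma\cdot\xi=\lambda(\gamma)\xi+b(\gamma)$ is an affine isometric action of $\Gamma$ on $Y\in\mathcal{E}_{uc}$.

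The crux is then to check that this action has no fixed point, which contradicts property $(F_{\mathcal{E}_{uc}})$ of $\Gamma$ and completes the proof. If $\xi=(\xi_i)_i\in Y$ were fixed, then coordinatewise and exactly --- this is where an honest $\ell^2$-sum is preferable to an ultraproduct --- we get $(1-\lambda_i(\phi_i(\gamma)))(\xi_i+f_i)=0$ for all $\gamma\in\Gamma$ and all $i$; since $\phi_i$ is onto, $\xi_i+f_i$ is $\lambda_i$-invariant, hence a constant function $c_i\in X\subset\ell^2(G_i,X)$. Thus $\xi_i-c_i=-f_i$, and since $f_i$ is mean-zero, $c_i$ is precisely the mean of $\xi_i$; applying $I-P$, where $P$ is the averaging projection (of norm $\le1$, so $\|I-P\|\le2$), gives $\|(I-P)\xi_i\|=\|f_i\|=1$ and hence $\|\xi_i\|\ge\tfrac12$ for every $i$, which contradicts $\sum_i\|\xi_i\|^2=\|\xi\|_Y^2<\infty$.

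I expect the only genuinely delicate point to be the uniform-convexity bookkeeping for $Y$: one must invoke --- and, if it is not already available in the preliminaries, state --- that forming $\ell^2(G_i,X)$ and then the $\ell^2$-sum over $i$ preserves uniform convexity with a modulus depending only on that of $X$, for otherwise $Y$ need not lie in $\mathcal{E}_{uc}$ and the appeal to Theorem~\ref{main thm intro} collapses. Everything else --- functoriality of Steinberg and elementary Chevalley groups producing the $\phi_i$, the identification of $\lambda_i$-invariant vectors with constants, and the elementary $\ell^2$-estimates for the cocycle --- is routine, and the argument runs verbatim for every admissible $\Phi$, which is fixed throughout.
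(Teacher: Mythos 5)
Your argument is correct, but it takes a different route from the paper. The paper deduces the Poincar\'e inequality by citing two black boxes: \cite[Theorem 1.3]{BFGM}, which converts property $(F_{\B'})$ into Banach property $(T_{\B'})$, and Lafforgue's \cite[Proposition 5.2]{Laff1}, which converts property $(T_{\B'})$ for the induced space $\B' = \ell^2(\bigcup_i \EL_\Phi(R_i);\B)$ into the uniform Poincar\'e inequality for the Cayley graphs of the quotients; the only analytic input is Day's theorem that $\ell^2$-sums preserve uniform convexity, which the paper also invokes. You instead reprove the implication ``fixed point property $\Rightarrow$ uniform Poincar\'e inequality for finite quotients'' from scratch: you negate the inequality, extract almost-harmonic mean-zero unit vectors $f_i$ with summable energies, assemble the cocycle $b(\gamma)=\bigl((\lambda_i(\phi_i(\gamma))-1)f_i\bigr)_i$ on the $\ell^2$-sum $Y$, and show the resulting affine action has no fixed point because any fixed point would force $\|\xi_i\|\ge\tfrac12$ for all $i$. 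This is the standard ``expanders from property (T)'' argument transported to the Banach setting, and every step checks out (the fixed-point analysis via the averaging projection, the cocycle bound by word length, and the extraction of a subsequence of distinct indices, which is legitimate since each individual finite connected graph has a finite Poincar\'e constant). What your approach buys is self-containment --- you never need the notion of property $(T_\B)$ or Lafforgue's proposition --- at the cost of redoing a known reduction; what it shares with the paper, and what you rightly identify as the only delicate point, is the dependence on Day's theorem that $\ell^2(G_i,X)$ and the subsequent $\ell^2$-sum remain uniformly convex with a modulus controlled by that of $X$, without which $Y\notin\mathcal{E}_{uc}$ and Theorem \ref{main thm intro} cannot be applied.
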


\paragraph{\textbf{Structure of this paper}}
This paper is organized as follows: In \cref{prelim sec}, we cover some needed preliminaries.  In \cref{Averaging operations sec},  we prove some bounds on the difference of averaging operations for the Heisenberg group that are needed for our relative fixed point result.  In \cref{Word norm growth of unipotents sec},  we prove a bound on the word norm growth in the $A_2$ Steinberg group.  In \cref{Relative fixed point property sec},  we prove our result regarding relative fixed point properties of root subgroups in Steinberg group (Theorem \ref{rel. f.p. thm - intro} above).   In \cref{General synthesis section},  we prove Theorem \ref{general reduction thm intro - non-directed graph} (and also some more general versions).  In  \cref{Synthesis of the fixed point property for groups graded by root systems sec},  we prove a general synthesis result for groups graded by root systems and derive Theorem \ref{uc synthesis thm for Steinberg - intro} as a special case.  In \cref{Banach fixed point properties for Steinberg groups and elementary Chevalley groups sec},  we prove our main result (Theorem \ref{main thm intro} above).  Last,  in \cref{super-expanders section},  we show that our main result yields new construction of super-expanders (Theorem \ref{super-exp thm intro}).

\section{Preliminaries}
\label{prelim sec}

\subsection{Basic notation}

\paragraph*{\textbf{$\lesssim$ notation}}
We will use the $\lesssim$ notation as follows: For to expressions $X,Y$,  we will write $X \lesssim Y$ if there is a universal constant $C$ such that $X \leq CY$.  We will write $X \lesssim_B Y$ if there is a constant $C = C (B)$ such that $X \leq C Y$. 

\paragraph*{\textbf{Monomials notation}} For a monomial $\underline{t} = t_1^{n_1} ... t_{m}^{n_m}$ where $n_i \in \mathbb{N} \cup \lbrace 0 \rbrace$,  we will denote $\deg (\underline{t} ) = n_1 +...+n_m$.  We will also use the convention $t_1^0 ... t_m^0 = 1$.

\paragraph*{\textbf{Graph notation}} We will denote (non-directed) graphs by $\mathcal{G}$ and directed graphs by $\vec{\mathcal{G}}$.  For two vertices $u,v$ in $\mathcal{G}$, we will denote $u \sim v$  if there is an edge between $u$ and $v$ (when there is a chance of ambiguity regarding $\mathcal{G}$,  we will denote $u \sim^{\mathcal{G}} v$).  For two vertices $u,v$ in $\vec{\mathcal{G}}$, we will denote $u \rightarrow v$  if there is a directed edge from $u$ to $v$.

\subsection{Linear representations and affine isometric actions of a group}

Let $\Gamma$ be a discrete group and $\B$ a Banach space.  An \textit{isometric representation} of $\Gamma$ on $\B$ is a homomorphism $\pi : \Gamma \rightarrow O (\B)$,  where $O (\B)$ denotes the group of all isometric linear invertible maps from $\B$ to itself.  An affine isometric action of $\Gamma$ on $\B$ is a homomorphism $\rho : G \rightarrow \Isom_{aff} (\B)$, where $\Isom_{aff} (\B)$ is the group of bijective affine isometries from $\B$ to itself.  We recall that $\rho$ is of the form 
$$\rho (g) \xi = \pi  (g) \xi + c(g),  \forall \xi \in \B$$
where $\pi : \Gamma \rightarrow O(\B)$ is an isometric linear representation that is called the \textit{linear part of $\rho$} and $c: \Gamma \rightarrow \B$ is a $1$-cocycle into $\pi$, i.e., for every $g,h \in \Gamma$, 
$$c (gh) = c (g) + \pi (g) c (h).$$

Denote $\mathbb{R} [\Gamma]$ to be the group ring of $\Gamma$,  i.e.,  the ring of functions $f: \Gamma \rightarrow \mathbb{R}$ with finite support where multiplication is via convolution.  Equivalently,  an element in $\mathbb{R} [\Gamma]$ can be represented of as a formal sum $\sum_{g \in F} a_g g$,  where $F \subseteq \Gamma$ is a finite set and $\lbrace a_g \rbrace_{g \in F} \subseteq \mathbb{R}$.  In this convention,  the multiplication is defined as
$$\left( \sum_{g \in F_1} a_g g \right) \left( \sum_{g ' \in F_2} b_{g '} g ' \right) = \sum_{g \in F_1,  g' \in F_2} a_g b_{g '} g g' .$$
Below,  we will use both conventions of $\mathbb{R} [\Gamma]$ according to convenience.

For an affine isometric action $(\rho, \B)$ and $f \in \mathbb{R} [\Gamma]$ define 
$$\rho (f) \xi = \sum_{g \in \Gamma} f (g) \rho (g) \xi,  \forall \xi \in \B$$
(note that the sum above is finite and thus well defined).  We note that $\rho (f)$ need not be an isometry,  but it is always an affine map, i.e.,  for every $f \in \mathbb{R} [\Gamma]$ there is a bounded linear map $T: \B \rightarrow \B$ and a vector $\xi_0 \in \B$ such that for every $\xi \in \B$,
$$\rho (f) \xi = T \xi + \xi_0.$$
We also note that since every isometric linear representation $\pi : \Gamma \rightarrow O(\B)$ is an affine isometric action, this definition also extends to $\pi (f)$ and in that case $\pi (f)$ is always a bounded linear map.   

We define $\Prob_c (\Gamma) \subseteq \mathbb{R} [\Gamma]$ to be finitely supported probability measures on $\Gamma$,  i.e.,  $f \in \Prob_c (\Gamma)$ if it is finitely supported,  $f (g) \geq 0,  \forall g$ and $\sum_{g} f(g) =1$.  Equivalently,  $\Prob_c (\Gamma)$ is the set of all the formal sums $\sum_{g \in F} a_g g$,  where $F \subseteq \Gamma$ is a finite set,  $\lbrace a_g \rbrace_{g \in F} \subseteq [0,1]$ and $\sum_{g \in F} a_g =1$.  

\begin{observation}
Let $(\rho, \B)$ be an affine isometric action of $\Gamma$.  For every $g \in \Gamma$,  every $\xi_1,...,\xi_n \in \B$ and every $a_1,...,a_n \in [0,1]$ such that $\sum_i a_i =1$,  it holds that
$$\rho (g) \left( \sum_{i=1}^n a_i \xi_i \right) = \sum_{i=1}^n a_i \rho (g) \xi_i.$$
It follows that for every $f_1,  f_2 \in \Prob_c (\Gamma)$,  it holds that $\rho (f_1) \rho (f_2) = \rho (f_1 f_2)$. 
\end{observation}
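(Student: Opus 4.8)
The plan is to reduce both assertions to the affine structure of $\rho$. Recall $\rho$ is of the form $\rho(g)\xi = \pi(g)\xi + c(g)$ with $\pi$ an isometric linear representation and $c$ a $1$-cocycle, so in particular each $\rho(g)$ is an affine map; the first identity is then just the statement that an affine map commutes with affine combinations of vectors, and a convex combination with $\sum_i a_i = 1$ is a special case of such.

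First I would check the first identity by a direct computation. Using linearity of $\pi(g)$,
$$\rho(g)\Bigl(\sum_{i=1}^n a_i \xi_i\Bigr) = \pi(g)\Bigl(\sum_{i=1}^n a_i \xi_i\Bigr) + c(g) = \sum_{i=1}^n a_i \pi(g)\xi_i + c(g),$$
while on the other hand
$$\sum_{i=1}^n a_i \rho(g)\xi_i = \sum_{i=1}^n a_i\bigl(\pi(g)\xi_i + c(g)\bigr) = \sum_{i=1}^n a_i \pi(g)\xi_i + \Bigl(\sum_{i=1}^n a_i\Bigr) c(g).$$
Since $\sum_i a_i = 1$, the two right-hand sides coincide. (Note that the hypothesis $a_i \in [0,1]$ is not needed here, only $\sum_i a_i = 1$; the convexity will matter only because elements of $\Prob_c(\Gamma)$ are convex combinations.)

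For the second assertion, write $f_1 = \sum_{g \in F_1} a_g g$ and $f_2 = \sum_{h \in F_2} b_h h$ with $a_g, b_h \ge 0$, $\sum_g a_g = \sum_h b_h = 1$. Fix $\xi \in \B$. By definition $\rho(f_2)\xi = \sum_{h \in F_2} b_h \rho(h)\xi$, and applying $\rho(f_1)$ and then the first identity with the coefficients $b_h$ (legitimate, as $\sum_h b_h = 1$) gives
$$\rho(f_1)\rho(f_2)\xi = \sum_{g \in F_1} a_g\, \rho(g)\Bigl(\sum_{h \in F_2} b_h \rho(h)\xi\Bigr) = \sum_{g \in F_1} a_g \sum_{h \in F_2} b_h\, \rho(g)\rho(h)\xi.$$
Because $\rho$ is a homomorphism into $\Isom_{aff}(\B)$ we have $\rho(g)\rho(h) = \rho(gh)$, so the last expression is $\sum_{g,h} a_g b_h\, \rho(gh)\xi$, which is exactly $\rho(f_1 f_2)\xi$ by the convolution formula for $\mathbb{R}[\Gamma]$ and the definition of $\rho(f)$. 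Since $\xi$ was arbitrary, $\rho(f_1)\rho(f_2) = \rho(f_1 f_2)$.

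There is essentially no obstacle here; the only points that call for care are that the first identity is invoked with coefficients $(b_h)_h$ that do sum to $1$, and that $\rho(f_1 f_2)\xi$ is read off directly from the termwise expansion $f_1 f_2 = \sum_{g,h} a_g b_h\, gh$ rather than after collecting coincident products $gh = g'h'$ — the equality already holds before any such collection.
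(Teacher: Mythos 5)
Your proof is correct and is exactly the direct computation the paper leaves implicit (the statement is given as an Observation without proof): the first identity follows from the affine form $\rho(g)\xi=\pi(g)\xi+c(g)$ together with $\sum_i a_i=1$, and the second follows by applying it with the coefficients of $f_2$ and using that $\rho$ is a homomorphism. Your closing remarks about where $\sum_h b_h=1$ is used and about reading off $\rho(f_1f_2)$ from the termwise expansion of the convolution are exactly the right points of care.
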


\begin{claim}
\label{difference to linear claim}
Let $(\rho, \B)$ be an affine isometric action of $\Gamma$ with a linear part $\pi$.  Then for every $f  \in \mathbb{R}[\Gamma]$ and every $\xi_1, \xi_2 \in \B$ it holds that 
$$\rho (f) \xi_1 - \rho (f) \xi_2 = \pi (f) (\xi_1 - \xi_2).$$
In particular, for $f \in \Prob_c (\Gamma)$,  
$$\Vert \rho (f) \xi_1 - \rho (f) \xi_2 \Vert \leq \lVert \xi_1 - \xi_2 \Vert.$$
\end{claim}

\begin{proof}
We note that by the definition of affine map it holds for every $g \in \Gamma$ that 
$$\rho (g)  \xi_1 - \rho (g) \xi_2 = \pi (g) (\xi_1 - \xi_2).$$

Thus,  for every $f \in \mathbb{R}[\Gamma]$ it holds that
\begin{align*}
& \rho (f) \xi_1 - \rho (f) \xi_2 = \sum_{g \in \Gamma} f(g) \rho (g) \xi_1 - \sum_{g \in \Gamma} f(g) \rho (g) \xi_2 = \\
& \sum_{g \in \Gamma} f(g) ( \rho (g) \xi_1 - \rho (g) \xi_2) = \sum_{g \in \Gamma} f(g) \pi (g) (\xi_1 - \xi_2) = \pi (f) (\xi_1 - \xi_2),
\end{align*} 
as needed.
\end{proof}

\begin{proposition}
\label{ergodic diff prop}
Let $(\rho, \B)$ be an affine isometric action of $\Gamma$.  For every integers $0 \leq N \leq n,  n >0$,  every $g \in \Gamma$ and every $\xi \in \B$ it holds that 
$$\left\Vert \rho \left( \frac{1}{n} \sum_{i=0}^{n-1} g^i \right) \xi -  \rho (g^N) \rho \left( \frac{1}{n} \sum_{i=0}^{n-1} g^i \right) \xi \right\Vert \leq \frac{N}{n} \Vert \xi -  \rho (g^{n} ) \xi  \Vert.$$
\end{proposition}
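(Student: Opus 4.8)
The plan is to reduce the estimate to the identity $\rho(g^N)\rho(f)=\rho(g^N f)$ for finitely supported probability measures and then to exploit a telescoping cancellation. First I would set $f=\frac{1}{n}\sum_{i=0}^{n-1}g^i\in\Prob_c(\Gamma)$; then, since $g^N$ is itself the point mass $\delta_{g^N}\in\Prob_c(\Gamma)$, the Observation preceding Claim \ref{difference to linear claim} gives $\rho(g^N)\rho(f)=\rho(g^N f)$, where $g^N f=\frac{1}{n}\sum_{i=0}^{n-1}g^{N+i}$. Hence the quantity to be bounded equals $\Vert\rho(f)\xi-\rho(g^N f)\xi\Vert$, and expanding both averages,
$$\rho(f)\xi-\rho(g^N f)\xi=\frac{1}{n}\sum_{i=0}^{n-1}\bigl(\rho(g^i)\xi-\rho(g^{N+i})\xi\bigr).$$

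The next step is the telescoping. Because $0\le N\le n$, the index blocks $\{0,\dots,n-1\}$ and $\{N,\dots,N+n-1\}$ overlap exactly in $\{N,\dots,n-1\}$, so after cancellation the sum collapses to $\sum_{i=0}^{N-1}\bigl(\rho(g^i)\xi-\rho(g^{n+i})\xi\bigr)$, a sum of precisely $N$ terms. For each fixed $i$ I would write $g^{n+i}=g^i g^n$, so that $\rho(g^{n+i})\xi=\rho(g^i)\bigl(\rho(g^n)\xi\bigr)$, and then apply Claim \ref{difference to linear claim} to the single group element $g^i$ with the two vectors $\xi$ and $\rho(g^n)\xi$ to get $\rho(g^i)\xi-\rho(g^{n+i})\xi=\pi(g^i)\bigl(\xi-\rho(g^n)\xi\bigr)$. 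Since $\pi(g^i)$ is a linear isometry, this vector has norm exactly $\Vert\xi-\rho(g^n)\xi\Vert$.

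Finally, the triangle inequality over the $N$ surviving terms, followed by division by $n$, yields the claimed bound $\frac{N}{n}\Vert\xi-\rho(g^n)\xi\Vert$. I do not expect a genuine obstacle here: the only points that need care are arranging the telescoping so that exactly $N$ terms remain — this is precisely where the hypothesis $N\le n$ is used — and keeping in mind that $\rho$ is merely affine, so one must pass to its linear part $\pi$ (via Claim \ref{difference to linear claim}) before invoking isometry.
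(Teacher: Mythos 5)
Your proof is correct and follows essentially the same route as the paper's: shift the average by $g^N$, telescope to leave $N$ terms of the form $\rho(g^i)\xi-\rho(g^{i+n})\xi$, and control each via the linear part $\pi(g^i)$ using Claim \ref{difference to linear claim}. No issues.
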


\begin{proof}
We note that 
$$ \rho (g^N) \rho \left( \frac{1}{n} \sum_{i=0}^{n-1} g^i \right) = \rho \left( \frac{1}{n} \sum_{i=N}^{n+N-1} g^i \right).$$
Thus,
\begin{align*}
\left\Vert \rho \left( \frac{1}{n} \sum_{i=0}^{n-1} g^i \right) \xi -  \rho (g^N) \rho \left( \frac{1}{n} \sum_{i=0}^{n-1} g^i \right) \xi \right\Vert = 
\left\Vert   \frac{1}{n} \sum_{i=0}^{n-1} \rho (g^i ) \xi -  \frac{1}{n} \sum_{i=N}^{n+N-1} \rho (g^i) \xi \right\Vert = \\
\left\Vert   \frac{1}{n} \sum_{i=0}^{N-1} \left( \rho (g^i ) \xi -  \rho (g^{i+n} ) \right) \xi \right\Vert  \leq \\
\frac{1}{n} \sum_{i=0}^{N-1}  \left\Vert  \rho (g^i ) \xi -  \rho (g^{i}) \rho (g^n)  \xi \right\Vert  \leq^{\text{Claim } \ref{difference to linear claim}} 
 \frac{N}{n} \Vert \xi -  \rho (g^{n} ) \xi  \Vert,
\end{align*}
as needed.
\end{proof}

Let $S$ be a symmetric generating set of $\Gamma$ (we do not assume that $S$ is necessarily finite).  For every $g \in \Gamma$,  denote $\vert g \vert_S$ to be the word norm with respect to $S$,  i.e.,  $\vert g \vert_S$ is the distance between $g$ and $e$ in the Cayley graph $\Cay (\Gamma,  S)$.  For $f \in  \mathbb{R}[\Gamma]$,  we denote 
$$\vert f \vert_S = \max \lbrace \vert g \vert_S : f(g) \neq 0 \rbrace.$$

\begin{observation}
Observe that for  $f_1, f_2 \in  \mathbb{R}[\Gamma]$ it holds that $\vert f_1 f_2 \vert_S \leq \vert f_1 \vert_S + \vert f_2 \vert_S$.
\end{observation}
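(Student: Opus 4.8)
The plan is to trace how multiplication in $\mathbb{R}[\Gamma]$ interacts with supports and then invoke subadditivity of the word norm. First I would write $f_1 = \sum_{g \in F_1} a_g g$ and $f_2 = \sum_{g' \in F_2} b_{g'} g'$ with $F_i = \supp(f_i)$ finite, so that
$$f_1 f_2 = \sum_{g \in F_1,\, g' \in F_2} a_g b_{g'}\, g g' .$$
After collecting terms according to the value $g g' \in \Gamma$, the support of $f_1 f_2$ is contained in the set $\{ g g' : g \in F_1,\ g' \in F_2 \}$; it may be strictly smaller if cancellations occur, but that only helps, since we will be bounding a maximum over a subset.

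Next I would record the (standard) subadditivity of the word norm: for any $g, g' \in \Gamma$, concatenating a geodesic word in $S$ representing $g$ with a geodesic word in $S$ representing $g'$ produces a word in $S$ of length $\vert g \vert_S + \vert g' \vert_S$ representing $g g'$, whence $\vert g g' \vert_S \leq \vert g \vert_S + \vert g' \vert_S$. This uses only that $S$ is a symmetric generating set.

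Combining the two observations: for any $h$ with $(f_1 f_2)(h) \neq 0$ there exist $g \in F_1$ and $g' \in F_2$ with $h = g g'$, and therefore
$$\vert h \vert_S \;\leq\; \vert g \vert_S + \vert g' \vert_S \;\leq\; \max_{g \in F_1} \vert g \vert_S \;+\; \max_{g' \in F_2} \vert g' \vert_S \;=\; \vert f_1 \vert_S + \vert f_2 \vert_S .$$
Taking the maximum over all such $h$ yields $\vert f_1 f_2 \vert_S \leq \vert f_1 \vert_S + \vert f_2 \vert_S$; the case $f_1 = 0$ or $f_2 = 0$ is trivial since then $f_1 f_2 = 0$. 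I do not expect any genuine obstacle here — the only point deserving a word is that cancellation in the product can shrink the support, which is harmless because we are only upper-bounding a maximum over a subset of $\{ gg' : g \in F_1, g' \in F_2\}$.
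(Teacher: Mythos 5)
Your proof is correct and is exactly the argument the paper implicitly relies on (the paper states this as an observation without proof): support of the product lies in the product of supports, plus subadditivity of the word norm. Your remark that cancellation only shrinks the support, and hence is harmless for an upper bound, is the right point to flag.
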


\begin{proposition}
\label{distance of rho (g) prop}
Let $\B$ a Banach space,  $\Gamma$ a group,  $S$ a symmetric generating set of $\Gamma$ and $\rho$ an affine isometric action of $\Gamma$ .  Assume that 
$$\sup_{s \in S} \Vert \rho (s) 0 \Vert   < \infty.$$
Then for every $\xi \in \B$ and every $g \in \Gamma$ it holds that 
$$\Vert \xi - \rho (g) \xi \Vert \leq (\sup_{s \in S} \Vert  \rho (s) 0 \Vert  + 2 \Vert \xi \Vert) \vert g \vert_S.$$

Also,  for every $f, f_1,  f_2 \in \Prob_c (\Gamma)$ and every $\xi \in \B$ it holds that 
$$\Vert \xi - \rho (f) \xi \Vert \lesssim_{\Vert \xi \Vert , S} \vert f \vert_S,$$
and
$$\Vert \rho (f_1) \xi - \rho (f_2) \xi \Vert \lesssim_{\Vert \xi \Vert , S} \vert f_1 \vert_S +  \vert f_2 \vert_S.$$
\end{proposition}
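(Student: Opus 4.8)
The plan is to reduce all three inequalities to the single-generator estimate
$$\Vert \xi - \rho(s)\xi \Vert \le 2\Vert \xi\Vert + \sup_{s'\in S}\Vert \rho(s')0\Vert \qquad (s\in S),$$
which follows from the triangle inequality together with the fact that $\rho(s)$ is an affine isometry: write $\Vert \xi - \rho(s)\xi\Vert \le \Vert \xi - \rho(s)0\Vert + \Vert \rho(s)0 - \rho(s)\xi\Vert$, use $\Vert \rho(s)0 - \rho(s)\xi\Vert = \Vert \xi\Vert$ and $\Vert \xi - \rho(s)0\Vert \le \Vert \xi\Vert + \Vert \rho(s)0\Vert$, and bound $\Vert \rho(s)0\Vert$ by the (finite, by hypothesis) supremum over $S$.

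For the first displayed inequality I would fix $g\in\Gamma$, set $k := \vert g\vert_S$, and write $g = s_1\cdots s_k$ with $s_i\in S$. Telescoping gives
$$\xi - \rho(g)\xi = \sum_{j=1}^{k}\bigl(\rho(s_1\cdots s_{j-1})\xi - \rho(s_1\cdots s_j)\xi\bigr),$$
and since $\rho$ is a homomorphism each summand equals $\pi(s_1\cdots s_{j-1})\bigl(\xi - \rho(s_j)\xi\bigr)$ by Claim \ref{difference to linear claim} (applied to the group element $s_1\cdots s_{j-1}$). As $\pi(s_1\cdots s_{j-1})$ is a linear isometry, the norm of each summand is at most $2\Vert \xi\Vert + \sup_{s\in S}\Vert \rho(s)0\Vert$; summing over the $k$ terms yields the bound with the stated constant.

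For the second inequality I would expand $\xi - \rho(f)\xi = \sum_{g} f(g)(\xi - \rho(g)\xi)$ (valid because $\sum_g f(g)=1$), apply the triangle inequality, bound each $\Vert \xi - \rho(g)\xi\Vert$ by the first inequality, and observe that $\vert g\vert_S \le \vert f\vert_S$ for every $g$ in the support of $f$ while $\sum_g f(g)=1$; this gives $\Vert \xi - \rho(f)\xi\Vert \lesssim_{\Vert \xi\Vert,S} \vert f\vert_S$ with constant $2\Vert\xi\Vert + \sup_{s\in S}\Vert\rho(s)0\Vert$. The third inequality is then immediate from $\Vert \rho(f_1)\xi - \rho(f_2)\xi\Vert \le \Vert \rho(f_1)\xi - \xi\Vert + \Vert \xi - \rho(f_2)\xi\Vert$ and two applications of the second.

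There is no genuine obstacle here; the one point deserving care is to route the telescoping difference through the \emph{linear} part $\pi$ via Claim \ref{difference to linear claim}, rather than through $\rho$ directly — applying an affine isometry $\rho(s_1\cdots s_{j-1})$ does not preserve norms of vectors, only of differences, so discarding the prefix must be done after passing to $\pi$.
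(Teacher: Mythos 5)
Your proposal is correct and follows essentially the same route as the paper: the same single-letter estimate $\Vert\xi-\rho(s)\xi\Vert\le 2\Vert\xi\Vert+\sup_{s'\in S}\Vert\rho(s')0\Vert$, then a word-length argument (the paper peels off the first letter by induction and uses that $\rho(s_1)$ is an isometry of differences, while you telescope and discard the prefix via the linear part $\pi$ — the same mechanism, packaged differently), and the same triangle-inequality reductions for the $\Prob_c(\Gamma)$ statements. No gaps.
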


\begin{proof}
Let $\pi$ denote the linear part of $\rho$ and $c$ denote the $1$-cocycle of $\rho$.  We note that 
$$\sup_{s \in S} \Vert c (s) \Vert = \sup_{s \in S} \Vert  \rho (s) 0 \Vert < \infty.$$

Fix $\xi \in \B$ and denote $C = 2 \Vert \xi \Vert + \sup_{s \in S} \Vert c (s) \Vert$.  Note that 
$$\sup_{s \in S} \Vert \xi - \rho (s) \xi \Vert = \sup_{s \in S} \Vert \xi - \pi (s) \xi - c (s) \Vert \leq 2 \Vert \xi \Vert + \sup_{s \in S} \Vert c (s) \Vert =C.$$

We will show that for every $g \in \Gamma$,  
$$\Vert \xi - \rho (g) \xi \Vert \leq C \vert g \vert_S.$$
The proof is by induction on $\vert g \vert_S$.  If $\vert g \vert_S =0$, then $g =e$ and $\Vert \xi - \rho (e) \xi \Vert =0$ as needed.  Assume that the inequality holds for every $g ' \in \Gamma$ with $\vert g' \vert_S =n$ and let $g \in \Gamma$ with $\vert g \vert_S = n+1$.  There are $s_1 ... s_{n+1} \in S$ such that $g = s_1 ... s_{n+1}$.   Denote $g ' = s_2 ... s_{n+1}$.  Then 
\begin{align*}
& \Vert \xi - \rho (g) \xi \Vert \leq \Vert \xi - \rho (s_1) \xi \Vert + \Vert \rho (s_1) \xi - \rho (s_1 ... s_{n+1}) \xi \Vert \leq \\
& C + \Vert \rho (s_1) \xi - \rho (s_1) \rho (g') \xi \Vert = C + \Vert \xi - \rho (g') \xi \Vert \leq C + C n = C (n+1),
\end{align*}
as needed.  

For $f \in \Prob_c (\Gamma)$,  it holds that 
\begin{align*}
& \Vert \xi - \rho (f) \xi \Vert = \left\Vert \xi - \sum_{g} f(g) \rho (g) \xi \right\Vert = \left\Vert  \sum_{g} f(g)(\xi - \rho (g) \xi ) \right\Vert \leq  \\
& \sum_{g} f(g) \Vert \xi - \rho (g) \xi \Vert \leq \max_{g, f(g) \neq 0} \Vert \xi - \rho (g) \xi \Vert \lesssim_{\Vert \xi \Vert , S} \vert f \vert_S.
\end{align*}
It follows that for $f_1,  f_2 \in \Prob_c (\Gamma)$ it holds that
$$ \Vert \rho (f_1) \xi - \rho (f_2) \xi \Vert \leq  \Vert \rho (f_1) \xi  - \xi \Vert + \Vert \xi - \rho (f_2) \xi \Vert \lesssim_{\Vert \xi \Vert , S} \vert f_1 \vert_S + \vert f_2 \vert_S.$$
\end{proof}

\subsection{Bounded generation and Banach fixed point properties}

Let $\Gamma$ be a group with subgroups $H_1,...,H_k, \Gamma'$.  We say that $H_1,...,H_k$ boundedly generate  $\Gamma ' $ if $\Gamma ' \subseteq \langle H_1,...,H_k \rangle$ and $\sup_{g \in \Gamma '} \vert g \vert_{H_1 \cup ...  \cup H_k} < \infty$.  Shalom \cite{Shalom1},  observed that bounded generation and relative property (T) implies global property (T).  His argument is very simple and requires very little adaptation to the setting of uniformly convex Banach spaces (this result is well-known and the proof is given below for completeness):
\begin{lemma}[The bounded generation Lemma]
\label{bounded generation lemma}
Let $\Gamma$ be a group with subgroups $H_1,...,H_k,  \Gamma '$ and $\B$ be a uniformly convex Banach space.  Assume that $(\Gamma, H_1),..., (\Gamma, H_k)$ have relative property $(F_{\B})$ and that $\Gamma ' \subseteq \langle H_1,...,H_k \rangle$.   If $H_1,...,H_k$ boundedly generate $\Gamma '$,  then $(\Gamma, \Gamma ')$ has relative property $(F_{\B})$. 
\end{lemma}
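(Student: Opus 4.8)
The plan is to mimic Shalom's original observation, using a $\varphi$-averaging trick to produce a vector fixed by all of $\Gamma'$ out of one fixed by each $H_i$. Fix a uniformly convex Banach space $\B$ and a continuous affine isometric action $(\rho,\B)$ of $\Gamma$. By relative property $(F_\B)$ for each pair $(\Gamma,H_i)$, choose vectors $\xi_1,\dots,\xi_k\in\B$ with $\rho(h)\xi_i=\xi_i$ for all $h\in H_i$. The issue is that the $\xi_i$ need not agree, so I need to pass to a canonical fixed vector; the standard device is the \emph{circumcenter} (unique point realizing the infimum of the radius of a bounded orbit), which exists and is unique in a uniformly convex (hence strictly convex, reflexive) space, and which is equivariant: if $\eta$ is the circumcenter of a bounded set $B$ and $g\in\Gamma$, then $\rho(g)\eta$ is the circumcenter of $\rho(g)B$.

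First I would observe that the orbit $\rho(\Gamma')\xi_1$ is bounded: indeed, any $g\in\Gamma'$ can be written as a word of length at most $L:=\sup_{g\in\Gamma'}|g|_{H_1\cup\cdots\cup H_k}$ in $\bigcup_i H_i$, and since $\rho(h)\xi_1$ ranges over the \emph{finite-diameter} (in fact one-point, modulo translation) sets as $h$ runs through each $H_i$ — more precisely, $\sup_{h\in H_i}\|\rho(h)\xi_1-\xi_1\|\le\sup_{h\in H_i}\|\rho(h)\xi_1-\rho(h)\xi_i\|+\|\rho(h)\xi_i-\xi_1\|\le\|\xi_1-\xi_i\|+\|\xi_i-\xi_1\|=2\|\xi_1-\xi_i\|$ using that $\rho(h)\xi_i=\xi_i$ and Claim~\ref{difference to linear claim} — one gets by an easy induction on word length that $\|\rho(g)\xi_1-\xi_1\|\le 2L\max_i\|\xi_1-\xi_i\|$ for all $g\in\Gamma'$. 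Hence $B:=\overline{\rho(\Gamma')\xi_1}$ is bounded.

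Next I would let $\eta$ be the circumcenter of $B$. For $g'\in\Gamma'$ we have $\rho(g')B=\overline{\rho(g')\rho(\Gamma')\xi_1}=\overline{\rho(g'\Gamma')\xi_1}=B$ since $g'\Gamma'=\Gamma'$; by equivariance and uniqueness of the circumcenter, $\rho(g')\eta$ is the circumcenter of $\rho(g')B=B$, so $\rho(g')\eta=\eta$. Thus $\eta$ is a $\Gamma'$-fixed point and $(\Gamma,\Gamma')$ has relative property $(F_\B)$.

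The only real obstacle is ensuring the circumcenter machinery is available in the generality of an arbitrary uniformly convex $\B$: one needs that $\B$ is reflexive (true: uniform convexity implies reflexivity by Milman--Pettis) so that a minimizing sequence for the radius functional has a weak cluster point, and that uniform convexity gives uniqueness of the minimizer (a standard parallelogram-type estimate using the modulus of convexity $\delta$). Since the paper works throughout with $\mathcal{E}_{uc}$, I would simply invoke the existence and uniqueness of circumcenters of bounded sets in uniformly convex Banach spaces, together with their equivariance under isometries, as known facts (and note that in the $\sigma$-compact locally compact case one could alternatively take the Chebyshev center, but here $\Gamma$ is discrete so $B$ as defined suffices). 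Everything else is the elementary word-length induction above plus Claim~\ref{difference to linear claim}.
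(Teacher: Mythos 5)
Your proof is correct and follows essentially the same route as the paper: the paper likewise bounds the $\Gamma'$-orbit via the $H_i$-fixed points and the word-length estimate (its Proposition \ref{distance of rho (g) prop}), and then concludes by "Ryll-Nardzewski Theorem or a circumcenter argument", which is exactly the circumcenter step you spell out.
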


\begin{proof}
Let $\rho$ be an affine isometric action of $\Gamma$ on $\B$.   Denote $S = H_1 \cup ... \cup H_k$.  By the assumption that  $(\Gamma, H_1),..., (\Gamma, H_k)$ have relative property $(F_{\B})$ it follows that for every $\xi \in \B$,  
$$\sup_{s \in S} \Vert \xi - \rho (s) \xi \Vert < \infty.$$
Thus, by the bounded generation assumption and Proposition \ref{distance of rho (g) prop} it follows that for every $\xi \in \B$, 
$$\sup_{g \in \Gamma '} \Vert \xi - \rho (g) \xi \Vert < \infty,$$
i.e.,  the $\Gamma '$ orbit of every $\xi$ is bounded.  It follows from Ryll-Nardzewski Theorem or a circumcenter argument that $\Gamma '$ has property $(F_\B)$.  
\end{proof}

\subsection{Root systems}

Here,  we will give the basic definitions and results given in  \cite{EJZK}  regarding root systems.  
\begin{definition}[Root system]
Let $E$ be a finite dimensional real vector space.  A finite non-empty set $\Phi \subset E$ is called a \textit{root system} if 
\begin{itemize}
\item $\Phi$ spans $E$.
\item $\Phi$ does not contain $0$.
\item $\Phi$ is closed under inversion,  i.e., $\alpha \in \Phi \Rightarrow (- \alpha) \in \Phi$. 
\end{itemize}
The vectors in $\Phi$ are called the \textit{roots of $\Phi$} and the dimension of $E$ is called the \textit{rank of $\Phi$}.
\end{definition}

\begin{definition}
Let $\Phi$ be a root system in $E$.  
\begin{itemize}
\item $\Phi$ is called \textit{reduced} if every $1$-dimensional subspace of $E$ contains at most two elements in $\Phi$.
\item $\Phi$ is called \textit{irreducible} if there are no disjoint non-empty sets $\Phi_1, \Phi_2 \subset \Phi$ such that $\Phi = \Phi_1 \cup \Phi_2$ and $\Span (\Phi_1) \cap \Span (\Phi_2) = \lbrace 0 \rbrace$.
\item A non-empty set $\Psi \subseteq \Phi$ is called a \textit{root subsystem of $\Phi$} if $\Phi \cap \Span (\Psi) = \Psi$. 
\item $\Phi$ is called \textit{regular} if every root of $\Phi$ is contained in an irreducible root subsystem of rank $2$.
\end{itemize}
\end{definition}

We note that these definitions generalize the definition of classical root systems:
\begin{definition}[Classical root system]
Let $\Phi$ be a root system in a space $E$.  $\Phi$ is called a \textit{classical root system} if there exists an inner-product $(.,.)$ on $E$ such that for every $\alpha,  \beta \in \Phi$ it holds that 
$$\frac{2(\alpha,  \beta)}{(\beta, \beta)} \in \mathbb{Z} \text{ and } \alpha - \frac{2(\alpha,  \beta)}{(\beta, \beta)} \beta \in \Phi.$$
\end{definition}

We will give only the basic definitions and facts regarding classical root systems - a much more complete account can be found in \cite{EJZK} and references therein.  It is well-known that there is a classification of classical reduced irreducible root systems.  Namely,  every reduce irreducible classical root system is isomorphic to one of the following: $A_n,  B_n (n \geq 3),  C_n (n \geq 2),  D_n (n \geq 4),  F_4,  E_6,  E_7,  E_8,  G_2$.  A classical reduced irreducible root system $\Phi$ is called \textit{simply laced} if all the roots of $\Phi$ have the same length/norm.  We also note that all the classical reduced irreducible root systems are regular.


\begin{definition}
Let $\Phi$ be a root system in $E$.  A linear functional $f: E \rightarrow \mathbb{R}$ is said to be \textit{in general position with respect to $\Phi$} if
\begin{itemize}
\item $f(\alpha) \neq 0$ for every $\alpha \in \Phi$.
\item $f(\alpha) \neq f (\beta)$ for every $\alpha, \beta \in \Phi, \alpha \neq \beta$.
\end{itemize}
We denote $\mathfrak{F} (\Phi)$ to be the set of all the linear functionals that are general position with respect to $\Phi$.
\end{definition}

\begin{definition}[Borel set,  boundary, core]
Let $\Phi$ be a root system in $E$ and $f \in \mathfrak{F} (\Phi)$.  
\begin{itemize}
\item The \textit{Borel set of $f$} is the set 
$$\Phi_f = \lbrace \alpha \in \Phi : f (\alpha) >0 \rbrace.$$
(note that it can be that $f \neq f'$ and $\Phi_f = \Phi_{f '}$).
\item We denote $\Borel (\Phi)$ to be the set of all the Borel sets of $\Phi$ and note that this set is closed under inversion,  i.e.,  $\Phi_1 \in \Borel (\Phi)  \Rightarrow - \Phi_1 \in \Borel (\Phi)$.
\item For $\Phi_1,  \Phi_2 \in \Borel (\Phi)$,  $\Phi_1$ and $\Phi_2$ are called \textit{co-minimal} if $\Phi_1 \cap \Phi_{2}$ spans a $1$-dimensional space. 
\item For $\Phi_1,  \Phi_2 \in \Borel (\Phi)$,  $\Phi_1$ and $\Phi_2$ are called \textit{co-maximal} if $\Phi_1$ and  $- \Phi_{2}$ are co-minimal.
\item The \textit{boundary of the Borel set $\Phi_1$} is the set 
$$\partial  \Phi_1 = \lbrace \alpha \in \Phi_1 : \exists \Phi_2 \in \Borel (\Phi) \text{ such that } \Phi_{1} \text{ and } \Phi_{2} \text{ are co-minimal and } \alpha \in  \Phi_{1} \cap \Phi_{2} \rbrace.$$
\item The core of the Borel set $\Phi_2$ is the set 
$$\Core (\Phi_1) = \Phi_1 \setminus \partial  \Phi_1.$$
\end{itemize}
\end{definition}

Below,  we will need the following facts from \cite{EJZK}:
\begin{lemma}
\label{facts about root sys lemma}
Let $\Phi$ be a root system in $E$.
\begin{enumerate}
\item \cite[Lemma 4.4]{EJZK}  Let $\Phi_1,  \Phi_2 \in \Borel (\Phi)$ such that $\Phi_1$ and $\Phi_2$ are not co-maximal. Then there exists $\Phi_3 \in \Borel (\Phi)$ such that $\Phi_1 \cap \Phi_2 \subsetneqq \Phi_1 \cap \Phi_3$ and $\Phi_1 \cap \Phi_2 \subsetneqq \Phi_2 \cap \Phi_3$.
\item \cite[Lemma 4.6]{EJZK} Let $\Phi_1 \in \Borel (\Phi)$ and $\alpha,  \beta \in \Phi_1$ such that $\alpha,  \beta$ are linearly independent.  For every $a, b\in (0, \infty)$,  if $a \alpha + b \beta \in \Phi$,  then $a \alpha + b \beta \in \Core (\Phi_1)$.
\item \cite[Lemma 4.8]{EJZK}  Assume that $\Phi$ is regular,  then for every $\alpha \in \Phi$,  there is  $\Phi_1 \in \Borel (\Phi)$ such that $\alpha \in \Core (\Phi_1)$. 
\end{enumerate}
\end{lemma}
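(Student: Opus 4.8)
All three items are purely combinatorial facts about root systems and their Borel sets, and in the paper I would simply cite \cite[Lemmas 4.4, 4.6, 4.8]{EJZK}; were I to prove them I would proceed as follows, in increasing order of difficulty. For item (2): write $\alpha=a\gamma+b\delta$ with $\gamma,\delta\in\Phi_1$ linearly independent, $a,b>0$, and fix $f$ in general position with $\Phi_1=\Phi_f$; then $f(\alpha)>0$, so $\alpha\in\Phi_1$. If $\alpha\in\partial\Phi_1$, choose $\Phi_2=\Phi_g\in\Borel(\Phi)$ co-minimal with $\Phi_1$ and containing $\alpha$, so that $\Span(\Phi_1\cap\Phi_2)=\mathbb{R}\alpha$. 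From $g(\alpha)=ag(\gamma)+bg(\delta)>0$ one of $g(\gamma),g(\delta)$ is positive, say $g(\gamma)>0$ (the other case is symmetric), whence $\gamma\in\Phi_1\cap\Phi_2\subseteq\mathbb{R}\alpha$, contradicting the linear independence of $\{\gamma,\alpha\}$ (valid as $b\neq0$). So $\alpha\in\Core(\Phi_1)$.

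For item (3): by regularity pick an irreducible rank-$2$ subsystem $\Psi\ni\alpha$. In such a system every root is an interior point of the cone spanned by its two angular neighbours, which are linearly independent (there are at least three pairs of roots), so $\alpha=a\gamma+b\delta$ with $\gamma,\delta\in\Psi\subseteq\Phi$ linearly independent and $a,b>0$. The open cone $\{f\in E^{*}:f(\gamma)>0,\ f(\delta)>0\}$ is nonempty, hence meets the (dense) set of functionals in general position with respect to $\Phi$; for such an $f$, putting $\Phi_1:=\Phi_f$ gives $\gamma,\delta\in\Phi_1$, and item (2) yields $\alpha\in\Core(\Phi_1)$.

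Item (1) is the substantive one. Write $\Phi_1=\Phi_f$, $\Phi_2=\Phi_g$, $P=\Phi_1\cap\Phi_2$ and $S=\Phi_1\cap(-\Phi_2)$, so $\Phi_1=P\sqcup S$, $\Phi_2=P\sqcup(-S)$ and $-S=\{\alpha:f(\alpha)<0,\ g(\alpha)>0\}$. Non-co-maximality says $\dim\Span(S)\neq1$, so (excluding the degenerate case $\Phi_1=\Phi_2$, in which nothing is asserted) $S\neq\emptyset$ and $\dim\Span(S)\geq2$, in particular $|S|\geq2$. It is enough to find $h$ in general position with $h>0$ on $P$ and with $h>0$ on some root of $S$ and on some root of $-S$: then $\Phi_3=\Phi_h$ has $P\subsetneqq\Phi_1\cap\Phi_3$ and $P\subsetneqq\Phi_2\cap\Phi_3$. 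I would look along the pencil $h_\lambda=f+\lambda g$ ($\lambda>0$): one always has $P\subseteq\Phi_{h_\lambda}$, $\Phi_{h_\lambda}$ meets $S$ iff $\lambda<\max_{\gamma\in S}f(\gamma)/|g(\gamma)|$, and $\Phi_{h_\lambda}$ meets $-S$ iff $\lambda>\min_{\gamma\in S}f(\gamma)/|g(\gamma)|$. If $f/|g|$ is non-constant on $S$ these bounds are strictly separated and any intermediate $\lambda$ (perturbed into general position) works; if $f/|g|\equiv c$ on $S$ then $f+cg$ vanishes on $S\cup(-S)$ but is positive on $P$, and I would take instead $h=f+cg+\varepsilon\ell$ for small $\varepsilon>0$ and $\ell$ positive on a non-antipodal pair of roots, one in $S$ and one in $-S$ (available since $|S|\geq2$).

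I expect item (1) to be the main obstacle: (2) and (3) are near-immediate from the definitions, whereas (1) needs the case analysis above, and it is exactly in ruling out the degenerate configurations — the pencil $f+\lambda g$ collapsing on $S$, or $\Phi_1=\Phi_2$ — that the non-co-maximality hypothesis is used. Since the statement is entirely about the combinatorics of Borel sets, the cleanest course in the paper is to cite \cite{EJZK} and omit the proof.
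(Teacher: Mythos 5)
The paper offers no proof of this lemma at all --- all three items are quoted verbatim from \cite[Lemmas 4.4, 4.6, 4.8]{EJZK} and used as black boxes, so your recommendation to simply cite is exactly what the paper does. Your proof sketches are sound as far as I can check (including your correct observation that item (1) implicitly requires $\Phi_1 \neq \Phi_2$, and your pencil argument $f+\lambda g$ with the degenerate constant-ratio case handled separately), but there is nothing in the paper to compare them against.
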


Let $\Phi$ be a root system in $E$.  We define $\mathcal{G}_{\text{co-max}}$ to be the graph with the vertex set $V (\mathcal{G}_{\text{co-max}}) = \Borel  (\Phi) $ such that for
$\Phi_1,  \Phi_2 \in \Borel (\Phi),  \Phi_1 \neq \Phi_2$ it holds that $\Phi_1 \sim \Phi_2$ if and only if $\Phi_1$ and $\Phi_2$ are co-maximal.

\begin{lemma}
\label{co-max graph is connected lemma}
The graph $\mathcal{G}_{\text{co-max}}$ is connected. 
\end{lemma}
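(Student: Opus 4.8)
I would prove that $\mathcal{G}_{\text{co-max}}$ is connected by establishing, equivalently, that the graph on Borel sets in which $\Phi_1$ is adjacent to $\Phi_2$ whenever they are \emph{co-minimal} is connected (note $\Phi_1 \sim^{\mathcal{G}_{\text{co-max}}} \Phi_2$ iff $\Phi_1$ is co-minimal with $-\Phi_2$, and inversion $\Phi_1 \mapsto -\Phi_1$ is a graph automorphism of $\mathcal{G}_{\text{co-max}}$; since each $\Phi_1$ is co-minimal with itself-shifted neighbours, passing between the two formulations only changes the path by at most composing with the inversion map, which is harmless once one knows $\Phi_1$ and $-\Phi_1$ lie in the same component). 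Concretely, it suffices to show that any two Borel sets $\Phi_1, \Phi_2 \in \Borel(\Phi)$ can be joined by a path, and the natural tool is \cref{facts about root sys lemma}(1): if $\Phi_1$ and $\Phi_2$ are not co-maximal, there is $\Phi_3$ with $\Phi_1 \cap \Phi_2 \subsetneqq \Phi_1 \cap \Phi_3$ and $\Phi_1 \cap \Phi_2 \subsetneqq \Phi_2 \cap \Phi_3$. This is an ``increase the intersection'' lemma, so it is tailor-made for an induction.

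The main step is an induction on the codimension (or, more simply, on $|\Phi| - |\Phi_1 \cap \Phi_2|$, i.e.\ how far $\Phi_2$ is from $\Phi_1$). Fix $\Phi_1$ as a basepoint; I would show by downward induction on $|\Phi_1 \cap \Phi_2|$ that $\Phi_2$ is connected to $\Phi_1$ in $\mathcal{G}_{\text{co-max}}$. The base case is when $\Phi_1 \cap \Phi_2$ is as large as possible short of $\Phi_2 = \Phi_1$: here $\Phi_1 \cap \Phi_2$ spans a hyperplane's worth of roots, and in fact the extreme case $\Phi_2 = \Phi_1$ is trivial, while if $\Phi_2 \ne \Phi_1$ and they are co-maximal we are done in one edge. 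For the inductive step, suppose $\Phi_1$ and $\Phi_2$ are not co-maximal; apply \cref{facts about root sys lemma}(1) to get $\Phi_3$ with strictly larger intersections with both. By the induction hypothesis applied with basepoint $\Phi_1$ and target $\Phi_3$ (since $|\Phi_1 \cap \Phi_3| > |\Phi_1 \cap \Phi_2|$), $\Phi_3$ is connected to $\Phi_1$; and applying the induction hypothesis again with basepoint $\Phi_2$ and target $\Phi_3$ (since $|\Phi_2 \cap \Phi_3| > |\Phi_1 \cap \Phi_2|$, and $\Phi_1 \cap \Phi_2$ measures the distance from $\Phi_2$ as well once we symmetrize the statement), $\Phi_3$ is connected to $\Phi_2$. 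Concatenating, $\Phi_1$ is connected to $\Phi_2$. To make this clean I would phrase the inductive claim symmetrically: ``for all $\Phi_1, \Phi_2$ with $|\Phi_1 \cap \Phi_2| \ge k$, they lie in the same component,'' and induct downward on $k$ from $|\Phi|/2$ (where $\Phi_1 = \Phi_2$) to the minimal possible value.

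The one genuine subtlety, and the place I expect to have to be careful, is the interplay between \emph{co-minimal} and \emph{co-maximal}: \cref{facts about root sys lemma}(1) hands us a $\Phi_3$ with large intersections, but adjacency in $\mathcal{G}_{\text{co-max}}$ requires a co-\emph{maximality} condition, i.e.\ a largeness condition on $\Phi_i \cap (-\Phi_j)$, not on $\Phi_i \cap \Phi_j$. The inductive measure should therefore be chosen so that it interacts correctly — one wants $\Phi_1 \cap \Phi_2$ small to force adjacency (co-maximal means $\Phi_1, -\Phi_2$ co-minimal, i.e.\ $\Phi_1 \cap (-\Phi_2) = \Phi_1 \setminus \Phi_2$ up to the roots on the shared line, spans a line; equivalently $\Phi_1 \triangle \Phi_2$ is as large as possible), so the ``closest'' Borel sets to $\Phi_1$ are its antipode-neighbours, and the right quantity to induct on is $|\Phi_1 \cap \Phi_2|$ with the \emph{base} case being small intersection (co-maximal) and the inductive step pushing $|\Phi_1 \cap \Phi_2|$ up via Lemma~(1) until co-maximality is forced, then stepping down one edge. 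I would reconcile this by noting that if $\Phi_1 \cap \Phi_2$ is not minimal then $\Phi_1, \Phi_2$ are certainly not co-maximal in the ``maximal $\Phi_1\triangle\Phi_2$'' sense unless they already are adjacent, and recursively replace $\Phi_2$ by the $\Phi_3$ from Lemma~(1), which strictly enlarges $\Phi_1 \cap \Phi_2$; the recursion terminates because $\Phi_1 \cap \Phi_2 \subseteq \Phi_1$ is bounded, at which point $\Phi_2$ must be co-maximal with, or equal to, $\Phi_1$, and symmetric reasoning bridges to the original $\Phi_2$. Getting the direction of the induction and the exact reading of co-maximal vs.\ co-minimal right is the crux; everything else is bookkeeping.
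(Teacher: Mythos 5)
Your second paragraph is, in substance, exactly the paper's proof: a downward induction on the size of $\Phi_1 \cap \Phi_2$ (the paper counts the lines $\lbrace \mathbb{R}\alpha : \alpha \in \Phi_1 \cap \Phi_2 \rbrace$ rather than the roots, but either measure works), with Lemma \ref{facts about root sys lemma}(1) supplying the $\Phi_3$ in the inductive step, the two applications of the symmetrically-phrased induction hypothesis concatenated, and co-maximality supplying the edge at the top of the induction. That part is correct and complete.

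You flagged the reading of co-maximal versus co-minimal as the crux, and your third paragraph resolves it incorrectly, contradicting your (correct) second paragraph. Since a Borel set satisfies $-\Phi_2 = \Phi \setminus \Phi_2$, co-maximality of $\Phi_1, \Phi_2$ means that $\Phi_1 \cap (-\Phi_2) = \Phi_1 \setminus \Phi_2$ spans a single line, i.e.\ $\Phi_1 \cap \Phi_2$ contains all but one of the lines met by $\Phi_1$: co-maximal pairs are the ones with \emph{maximal} intersection and \emph{minimal} symmetric difference — not, as your third paragraph has it, ``small intersection'' with ``$\Phi_1 \triangle \Phi_2$ as large as possible.'' The large-intersection reading is the only one compatible with your induction: Lemma \ref{facts about root sys lemma}(1) strictly \emph{enlarges} intersections, so the recursion can only terminate at maximal-intersection pairs, which must therefore be precisely the adjacent (co-maximal) ones. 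Separately, the reduction to the co-minimality graph in your first paragraph is both unnecessary and incomplete as stated — translating a co-minimal path into a co-maximal one flips the sign of every other vertex, so for odd-length paths you would additionally need to know that $\Phi_2$ and $-\Phi_2$ lie in the same component of $\mathcal{G}_{\text{co-max}}$, which is not given — but since you abandon that route for the direct induction, this does not affect the proof.
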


The proof of this lemma is very similar to the proof given for the connectedness of the small Weyl graph in \cite{EJZK} and we give the proof for completeness.

\begin{proof}
For $\alpha \in \Phi$,  we denote $\mathbb{R} \alpha$ to be the $1$-dimensional subspace spanned by $\alpha$.

Let $\Phi_1,  \Phi_2 \in \Borel (\Phi)$,  $\Phi_1 \neq \Phi_2$.  We will show that there is a path in $\mathcal{G}_{\text{co-max}}$ connecting $\Phi_1$ and $\Phi_2$ by a downward induction on $\vert \lbrace \mathbb{R}  \alpha : \alpha \in \Phi_1 \cap \Phi_2 \rbrace \vert$.  We note that $\vert \lbrace \mathbb{R} \alpha : \alpha \in \Phi_1 \cap \Phi_2 \rbrace \vert$ is maximal if and only if
$$\vert \lbrace \mathbb{R} \alpha : \alpha \in \Phi_1 \cap \Phi_2 \rbrace \vert = \vert \lbrace \mathbb{R} \alpha : \alpha \in \Phi_1 \rbrace \vert -1 = \vert \lbrace \mathbb{R} \alpha : \alpha \in \Phi_2 \rbrace \vert -1$$
and this happens if and only if $\Phi_1$ and $\Phi_2$ are co-maximal and thus connected by an edge in $\mathcal{G}_{\text{co-max}}$.  Assume that $\Phi_1$ and $\Phi_2$ are not co-maximal,  then by Lemma \ref{facts about root sys lemma} (1),  there is $\Phi_3 \in \Borel (\Phi)$ such that 
$\Phi_1 \cap \Phi_2 \subsetneqq \Phi_1 \cap \Phi_3$ and $\Phi_1 \cap \Phi_2 \subsetneqq \Phi_2 \cap \Phi_3$.  Thus
$$\vert \lbrace \mathbb{R} \alpha : \alpha \in \Phi_1 \cap \Phi_2 \rbrace \vert < \vert \lbrace \mathbb{R} \alpha : \alpha \in \Phi_1 \cap \Phi_3 \rbrace \vert$$
and 
$$\vert \lbrace \mathbb{R} \alpha : \alpha \in \Phi_1 \cap \Phi_2 \rbrace \vert < \vert \lbrace \mathbb{R} \alpha : \alpha \in \Phi_2 \cap \Phi_3 \rbrace \vert.$$
It follows from the induction assumption that there is a path in $\mathcal{G}_{\text{co-max}}$ connecting $\Phi_1$ and $\Phi_3$ and a path in $\mathcal{G}_{\text{co-max}}$ connecting $\Phi_2$ and $\Phi_3$ and thus there is a path connecting $\Phi_1$ and $\Phi_2$.
\end{proof}

\subsection{Steinberg groups over rings}
\label{Steinberg groups over rings subsec}

Steinberg groups can be thought of as abstract groups defined via the Chevalley commutator relation.  More explicitly,  given a classical reduced irreducible root system $\Phi$ and a ring $R$,  the group $\St_{\Phi} (R)$ is the group generated by the set $\lbrace x_\alpha (p) : \alpha \in \Phi, p \in R \rbrace$ with the following relations:
\begin{enumerate}
\item For every $\alpha \in \Phi$ and every $p_1, p_2 \in R$,  $x_\alpha (p_1) x_\alpha (p_2) = x_\alpha (p_1 + p_2)$.
\item For every $\alpha, \beta \in \Phi$ and every $p_1, p_2 \in R$, 
$$[x_\alpha (p_1),  x_\beta (p_2)] = \prod_{i, j \in \mathbb{N},  i \alpha + j \beta \in \Phi} x_{ \alpha + j \beta } (C_{i,j}^{\alpha , \beta} , \b p_1^i p_2^j ),$$
where the product is taken in order of increasing $i+j$ and $C_{i,j}^{\alpha , \beta}$ are the constants of the Chevalley commutator relation for Chevalley groups over fields (see for instance \cite[Section 5.2]{Carter}).  
\end{enumerate}
The subgroups $K_\alpha = \lbrace x_\alpha (p) : p \in R \rbrace$ will be called the \textit{root subgroups} of $\St_{\Phi} (R)$.

The most important Steinberg group for our analysis is $\St_{A_2} (R)$,  which can be defined explicitly as follows: For every $p \in R$,  we denote
$$ x_{1,2} (p) = x_\alpha (p),  x_{2,3} (p) = x_\beta (p),  x_{1,3} (p) = x_{\alpha + \beta} (p),$$
$$x_{2,1} (p) = x_{-\alpha} (p),  x_{3,2} (p) = x_{-\beta} (p),  x_{3,1} (p) = x_{- (\alpha + \beta)} (p).$$
With this notation,  by \cite[Proposition 7.6]{EJZK},  the Steinberg group $\St_{A_2} (R)$ is the group generated by the set $\lbrace x_{i,j} (p) : 1 \leq i,j \leq 3, i \neq j,  p \in R \rbrace$ under the relations:
\begin{enumerate}
\item For every $1 \leq i,j \leq 3, i \neq j$ and every $p_1,  p_2 \in R$, 
$$x_{i,j} (p_1) x_{i,j} (p_2) = x_{i,j} (p_1 + p_2).$$
\item For every $1 \leq i,j, k\leq 3$ that are pairwise distinct and every $p_1,  p_2 \in R$, 
$$[x_{i,j} (p_1), x_{j,k} (p_2) ] = x_{i,k} (p_1 p_2).$$
\item For every $1 \leq i,j, k\leq 3$ that are pairwise distinct and every $p_1,  p_2 \in R$, 
$$[x_{i,j} (p_1), x_{i,k} (p_2) ] =  [ x_{j,i} (p_1), x_{k,i} (p_2) ] = e.$$
\end{enumerate}

We also denote $K_{i,j},  i \neq j$ to be the root subgroup
$$K_{i,j} =  \lbrace x_{i,j} (p) : p \in R \rbrace.$$

We will also need some of the relations for the groups $\St_{C_2} (R)$ and $\St_{G_2} (R)$ (these are not all the relations, but only the relations that are needed for us below): 
\begin{proposition} \cite[Proposition 7.6]{EJZK}
\label{C2,  G2 relations prop}
For $\Phi = C_2 = \lbrace \pm \alpha,  \pm \beta,  \pm (\alpha + \beta),  \pm (\alpha + 2 \beta) \rbrace$ ($\alpha$ denotes the long root) it holds for every $p_1, p_2 \in R$ that
$$[x_{\alpha + \beta} (p_1),  x_{\beta} (p_2) ] = x_{\alpha + 2 \beta} (2 p_1 p_2),$$
$$[x_{\alpha} (p_1),  x_\beta (p_2) ] = x_{\alpha + \beta} (p_1 p_2) x_{\alpha + 2 \beta} (p_1 p_2^2).$$

For $\Phi = G_2 = \lbrace \pm \alpha,  \pm \beta,  \pm (\alpha + \beta),  \pm (\alpha + 2 \beta) , \pm (\alpha + 3 \beta), \pm (2 \alpha + 3 \beta)   \rbrace$ ($\alpha$ denotes the long root) it holds for every $p_1, p_2 \in R$ that
$$[x_{\alpha} (p_1),  x_{\beta} (p_2) ] = x_{\alpha + \beta} (p_1 p_2) x_{\alpha + 2 \beta} (p_1 p_2^2) x_{\alpha + 3 \beta} (p_1 p_2^3) x_{2 \alpha + 3 \beta} (p_1^2 p_2^3),$$ 
$$[x_{\alpha} (p_1),  x_{\alpha + 3 \beta} (p_2) ] = x_{2\alpha + 3\beta} (p_1 p_2).$$
\end{proposition}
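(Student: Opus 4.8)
The plan is to unwind the definition of the Steinberg group and reduce everything to the determination of the universal Chevalley structure constants $C_{i,j}^{\alpha,\beta}$ occurring in relation (2) of the definition of $\St_\Phi(R)$. Since relations (1) and (2) are, by fiat, the full set of defining relations, there is nothing to prove beyond identifying those constants for the relevant pairs of roots; moreover, the $C_{i,j}^{\alpha,\beta}$ do not depend on $R$, so it suffices to compute them once and for all in any convenient model. I would use the standard integral matrix realizations: $\mathbb{G}_{C_2}(\mathbb{Z}) = \mathrm{Sp}_4(\mathbb{Z})$ for type $C_2$ and the $7$-dimensional realization of $\mathbb{G}_{G_2}(\mathbb{Z})$ for type $G_2$, in each of which the root subgroups are written as explicit elementary matrices $x_\gamma(t)$ and the relevant $x_\gamma(\cdot)$ are visibly injective, so that commutators can be read off unambiguously.

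For $\Phi = C_2$ (with $\alpha$ long, $\beta$ short, roots $\pm\alpha, \pm\beta, \pm(\alpha+\beta), \pm(\alpha+2\beta)$): for the pair $(\alpha+\beta,\beta)$ the only $\gamma = i(\alpha+\beta)+j\beta \in \Phi$ with $i,j \geq 1$ is $\alpha+2\beta$ (at $i=j=1$), so the commutator formula has the shape $[x_{\alpha+\beta}(p_1),x_\beta(p_2)] = x_{\alpha+2\beta}(C\,p_1 p_2)$, and the matrix computation — equivalently, $|N_{\alpha+\beta,\beta}| = p+1 = 2$ where $p$ is read off from the $\beta$-string through $\alpha+\beta$, namely $\alpha+\beta-\beta = \alpha \in \Phi$ but $\alpha+\beta-2\beta \notin \Phi$ — gives $C = \pm 2$. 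Similarly, for $(\alpha,\beta)$ the roots $i\alpha+j\beta \in \Phi$ with $i,j\geq 1$ are $\alpha+\beta$ ($i=j=1$) and $\alpha+2\beta$ ($i=1,j=2$), so the formula has the shape $x_{\alpha+\beta}(C_{1,1}p_1 p_2)\,x_{\alpha+2\beta}(C_{1,2}p_1 p_2^2)$ with $C_{1,1}, C_{1,2} = \pm 1$ (here $|N_{\alpha\beta}| = 1$ since $\alpha-\beta \notin \Phi$, and $C_{1,2}$ is the associated higher constant). With the sign conventions fixed in \cite[Section 5.2]{Carter} all the signs are $+$, which is the stated identity.

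For $\Phi = G_2$ (with $\alpha$ long, $\beta$ short, roots as listed in the statement) the same strategy applies. For the pair $(\alpha,\beta)$ the roots $i\alpha + j\beta \in \Phi$ with $i,j \geq 1$ are exactly $\alpha+\beta, \alpha+2\beta, \alpha+3\beta$ (all with $i=1$) and $2\alpha+3\beta$ (with $i=2, j=3$), which forces the four-factor shape of the formula; since $|N_{\alpha\beta}| = 1$ (the $\beta$-string through $\alpha$ is $\alpha,\alpha+\beta,\alpha+2\beta,\alpha+3\beta$ with $\alpha-\beta \notin \Phi$) and the three higher constants $C_{1,2}, C_{1,3}, C_{2,3}$ attached to this string all have absolute value $1$, the coefficients are $\pm 1$ each. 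For the pair $(\alpha, \alpha+3\beta)$ of long roots, the only $\gamma = i\alpha + j(\alpha+3\beta) \in \Phi$ with $i,j\geq 1$ is $2\alpha+3\beta$ (at $i=j=1$, since $(\alpha+3\beta)-\alpha = 3\beta \notin \Phi$ gives $|N| = 1$), yielding the single factor $x_{2\alpha+3\beta}(\pm p_1 p_2)$. Fixing the signs to $+$ via the standard normalization gives the two displayed $G_2$ identities.

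The main obstacle is purely one of sign bookkeeping: the constants $C_{i,j}^{\alpha,\beta}$ are canonical only up to the ambiguities inherent in the choice of a Chevalley basis (equivalently, the choice of extraspecial signs), so to land on exactly the signs in the statement one must either invoke Chevalley's existence theorem to produce a single admissible basis realizing all of them simultaneously, or — more concretely, as above — pin them down by an explicit matrix computation inside $\mathrm{Sp}_4(\mathbb{Z})$ and $G_2(\mathbb{Z})$. The structural content (which monomials $p_1^i p_2^j$ appear) is, by contrast, immediate from enumerating the roots in the relevant positive cone. This is precisely the computation recorded in \cite[Section 5.2]{Carter} and used in \cite[Proposition 7.6]{EJZK}, which I would cite once the reduction has been set up.
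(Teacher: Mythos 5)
The paper gives no proof of this proposition: it is quoted verbatim from \cite[Proposition 7.6]{EJZK}, and the content is, as you say, just the recording of the Chevalley structure constants $C_{i,j}^{\alpha,\beta}$ for the listed pairs of roots. Your reconstruction --- enumerate the roots $i\alpha+j\beta$ in the positive cone to get the shape of each formula, compute the absolute values of the constants from the relevant root strings (e.g.\ $|N_{\alpha+\beta,\beta}|=2$ in $C_2$), and pin down the signs by a normalization of the Chevalley basis or an explicit computation in $\Sp_4(\mathbb{Z})$ resp.\ the $7$-dimensional $G_2$ representation --- is the standard verification and is correct; it matches what the cited sources do, so there is nothing to add.
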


\section{Action of $\rm H_3 (\mathbb{Z} [t_1,...,t_m])$ on uniformly convex Banach spaces}
\label{Averaging operations sec}

Throughout this section,  $m \in \mathbb{N}$ is a fixed integer.  Below,  we will study the action of $\rm H_3 (\mathbb{Z} [t_1,...,t_m])$ on uniformly convex Banach spaces and deduce results similar to those of \cite{Opp-SLZ}  which dealt with isometric representations of $\rm H_3 (\mathbb{Z})$.  


For every $k \in \mathbb{N} \cup \lbrace 0 \rbrace$ and every $p \in \mathbb{Z} [t_1,...,t_m]$,  we define $X_k  (p) ,  Y_k (p),  Z_k (p)  \in \Prob_c (\rm H_3 (\mathbb{Z} [t_1,...,t_m]))$ by 
$$X_k (p) = \frac{e+x (2^k p)}{2},  Y_k (p) = \frac{e+y (2^k p)}{2}, Z_k (p) = \frac{e+z (2^k p)}{2}.$$

Also,  for $d \in \mathbb{N}$,  we define 
$$X^d (p) = \frac{1}{2^{d}} \sum_{a=0}^{2^{d}-1} x (a p), Y^d = \frac{1}{2^{d}} \sum_{b=0}^{2^{d}-1} y (b p),  Z^d = \frac{1}{2^{d}} \sum_{c=0}^{2^{d}-1} z (c p).$$

We note that 
$$X^d (p) = \prod_{a=0}^{d-1} X_a (p),   Y^d (p) = \prod_{b=0}^{d-1} Y_b (p), Z^d (p)= \prod_{c=0}^{d-1} Z_c (p).$$

Let $\underline{t}$ denote monomials of the form $\underline{t} = t_1^{n_1} ... t_m^{n_m}$ and recall that we denoted $\deg (t_1^{n_1} ... t_m^{n_m}) = n_1 +....+n_m$.  For $d \in \mathbb{N} \cup \lbrace 0 \rbrace$,  denote $\mathcal{S}^d$ to be 
$$\mathcal{S}^d = \lbrace \underline{t}  : \deg (\underline{t})  = d \rbrace,$$
e.g.,  $\mathcal{S}^0 = \lbrace 1 \rbrace$ and  $\mathcal{S}^1 = \lbrace t_1,...,t_m \rbrace$.  Also,  denote $\mathcal{B}^d = \bigcup_{k=0}^d \mathcal{S}^d$.  We observe that $\vert \mathcal{S}^{d} \vert = {d +m -1  \choose d}$ and $\vert \mathcal{B}^{d} \vert = {d +m  \choose d}$.

Define 
$$\underline{X}^d = \prod_{\underline{t} \in \mathcal{B}^d} X^d (\underline{t}),  \underline{Y}^d = \prod_{\underline{t} \in \mathcal{B}^d} Y^d (\underline{t}), \underline{Z}^d = \prod_{\underline{t} \in \mathcal{B}^d} Z^d (\underline{t}).$$

For $d,  d' \in \mathbb{N}$,  we further define
$$\Poly (d, d') = \left\lbrace  \sum_{\underline{t} \in \mathcal{B}^{d}} c_{\underline{t}} \underline{t} \in \mathbb{Z} [t_1,...,t_m] :0 \leq c_{\underline{t}} \leq 2^{d'} -1 \right\rbrace.$$ 

We note that with this notation,  
$$\underline{X}^d = \frac{1}{\vert \Poly (d, d) \vert} \sum_{p \in  \Poly (d, d)} x (p),$$
and similar equalities hold for $\underline{Y}^d$ and $\underline{Z}^d$.

The aim of this section is to bound the (norms of) the following differences: Bounding 
\begin{equation}
\label{needed bound1}
\left\Vert  \rho \left( \underline{X}^{d_1}  \right) \rho \left( \underline{Z}^{d_2} \right)  \rho \left( \underline{Y}^{d_3} \right) \xi -  \rho \left(\underline{Y}^{d_3}    \right) \rho \left( \underline{Z}^{d_2} \right) \rho \left(  \underline{X}^{d_1}  \right) \xi   \right\Vert ,
\end{equation}

\begin{align}
\label{needed bound2}
\left\Vert  \rho \left( \underline{X}^{d_1}  \right) \rho \left( \underline{Z}^{d_2} \right) \rho \left( \underline{Y}^{d_3} \right)  \xi -  \rho \left( \underline{X}^{d_1}  \right) \rho \left( \underline{Z}^{d_2+1} \right) \rho \left( \underline{Y}^{d_3} \right)\xi   \right\Vert, 
\end{align}

\begin{align}
\label{needed bound3}
\left\Vert  \rho \left( \underline{Y}^{d_1}  \right) \rho \left( \underline{Z}^{d_2} \right) \rho \left( \underline{X}^{d_3} \right)  \xi -  \rho \left( \underline{Y}^{d_1}  \right) \rho \left( \underline{Z}^{d_2+1} \right) \rho \left( \underline{X}^{d_3} \right)\xi   \right\Vert, 
\end{align}
where $\rho$ is an affine isometric action of $\rm H_3 (\mathbb{Z} [t_1,...,t_m])$ on a uniformly convex Banach space $\B$.


\begin{lemma}
\label{f_1, f_2 lemma}
Let $\B$ be a Banach space,  $\rho$ an affine isometric action of  $\rm H_3 (\mathbb{Z} [t_1,...,t_m])$ on $\B$ and $f, f', h_1,...,h_k \in \Prob_c (\rm H_3 (\mathbb{Z} [t_1,...,t_m]))$ such that $h_1,...,h_k$ are supported on $\lbrace z (p) : p \in \mathbb{Z} [t_1,...,t_m] \rbrace$.  Assume that there is $\epsilon \in \mathbb{R}$ such that for every $\xi \in \B$ it hold that
$$\Vert \rho (f) \xi - \rho (f ')  \xi \Vert \leq \epsilon \left( \max_{1 \leq i \leq k} \Vert \xi - \rho (h_i) \xi \Vert \right).$$
Then for every $f_1, f_2 \in  \Prob_c (\rm H_3 (\mathbb{Z} [t_1,...,t_m]))$ it holds that 
$$\Vert \rho (f_1) \rho (f) \rho (f_2) \xi - \rho (f_1) \rho (f ') \rho (f_2) \xi \Vert \leq \epsilon \left( \max_{1 \leq i \leq k}  \Vert \xi - \rho (h_i) \xi \Vert \right).$$
\end{lemma}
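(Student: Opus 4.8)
The plan is to deduce the three-fold estimate from the given two-fold estimate by stripping $\rho(f_1)$ off the left and $\rho(f_2)$ off the right, using repeatedly that any element of $\Prob_c(\rm H_3(\mathbb{Z}[t_1,...,t_m]))$ acts as an affine map whose linear part is a contraction (Claim \ref{difference to linear claim}).

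First I would apply the hypothesis with $\xi$ replaced by $\eta := \rho(f_2)\xi$, obtaining
$$\Vert \rho(f)\eta - \rho(f')\eta \Vert \le \epsilon \left( \max_{1\le i\le k} \Vert \eta - \rho(h_i)\eta\Vert \right).$$
Next, since $f_1 \in \Prob_c$, Claim \ref{difference to linear claim} gives $\Vert \rho(f_1)\zeta_1 - \rho(f_1)\zeta_2\Vert \le \Vert \zeta_1 - \zeta_2\Vert$ for all $\zeta_1,\zeta_2 \in \B$; applying this with $\zeta_1 = \rho(f)\eta$ and $\zeta_2 = \rho(f')\eta$, and noting that $\rho(f_1)\rho(f)\rho(f_2)\xi = \rho(f_1)(\rho(f)\eta)$, yields
$$\Vert \rho(f_1)\rho(f)\rho(f_2)\xi - \rho(f_1)\rho(f')\rho(f_2)\xi\Vert \le \epsilon \left( \max_{1\le i\le k}\Vert \rho(f_2)\xi - \rho(h_i)\rho(f_2)\xi\Vert \right).$$

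The remaining, and only non-formal, point is to bound each $\Vert \rho(f_2)\xi - \rho(h_i)\rho(f_2)\xi\Vert$ by $\Vert \xi - \rho(h_i)\xi\Vert$, and this is precisely where the assumption that $h_i$ is supported on $\lbrace z(p) : p \in \mathbb{Z}[t_1,...,t_m]\rbrace$ enters. Since every $z(p)$ is central in $\rm H_3(\mathbb{Z}[t_1,...,t_m])$, an element of the group ring supported on such elements commutes with every element of the group ring; in particular $h_i f_2 = f_2 h_i$, so by the Observation on composition of $\Prob_c$-averages, $\rho(h_i)\rho(f_2) = \rho(h_i f_2) = \rho(f_2 h_i) = \rho(f_2)\rho(h_i)$. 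Hence $\rho(h_i)\rho(f_2)\xi = \rho(f_2)(\rho(h_i)\xi)$, and a final application of Claim \ref{difference to linear claim} with $f_2 \in \Prob_c$ gives $\Vert \rho(f_2)\xi - \rho(f_2)\rho(h_i)\xi\Vert \le \Vert \xi - \rho(h_i)\xi\Vert$. Chaining the three displayed inequalities completes the argument.

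I do not anticipate any genuine difficulty: the only subtlety is the identity $h_i f_2 = f_2 h_i$ in $\mathbb{R}[\rm H_3(\mathbb{Z}[t_1,...,t_m])]$, which is exactly why the $h_i$ are required to be supported on the center, and everything else is bookkeeping with the contraction property of convex averages from Claim \ref{difference to linear claim}.
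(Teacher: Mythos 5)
Your proof is correct and follows essentially the same route as the paper: strip $\rho(f_1)$ using the contraction property of Claim \ref{difference to linear claim}, apply the hypothesis at $\rho(f_2)\xi$, commute $\rho(h_i)$ past $\rho(f_2)$ via the centrality of the $z(p)$, and strip $\rho(f_2)$ with one more application of Claim \ref{difference to linear claim}. The only difference is that you spell out the commutation $h_i f_2 = f_2 h_i$ explicitly, which the paper uses without comment.
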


\begin{proof}
Let $f_1, f_2 \in  \Prob_c (\rm H_3 (\mathbb{Z} [t_1,...,t_m]))$.  We note that by Claim \ref{difference to linear claim} it holds that
\begin{align*}
& \Vert \rho (f_1) \rho (f) \rho (f_2) \xi - \rho (f_1) \rho (f ') \rho (f_2)  \xi \Vert \leq 
 \Vert  \rho (f) \rho (f_2) \xi - \rho (f' ) \rho (f_2)  \xi \Vert \leq  \\
& \epsilon \left( \max_{1 \leq i \leq k}  \Vert \rho (f_2) \xi - \rho (h_i) \rho (f_2)  \xi \Vert \right) = 
\epsilon \left( \max_{1 \leq i \leq k}  \Vert \rho (f_2) \xi -  \rho (f_2) \rho (h_i) \xi \Vert \right) \leq  \epsilon \left( \max_{1 \leq i \leq k}  \Vert \xi - \rho (h_i) \xi \Vert \right) .
\end{align*}
\end{proof}

\begin{lemma}
\label{z(p) lemma}
Let $d,  d',  d_2 \in \mathbb{N}$ be constants such that $d \leq d_2$ and $d' \leq d_2$.  Then for every Banach space $\B$,  every isometric affine action $\rho$ of $ \rm H_3 (\mathbb{Z} [t_1,...,t_m]))$,  every $\xi \in \B$ and every $p \in \Poly (d, d')$  it holds that
$$ \left\Vert \rho (z (p ))    \rho (\underline{Z}^{d_2}) \xi -     \rho (\underline{Z}^{d_2}) \xi \right\Vert \lesssim (d+m)^m  \left( \frac{1}{2} \right)^{d_2-d'} \max_{\underline{t} \in \mathcal{B}^{d}} \Vert \xi - \rho (z (2^{d_2} \underline{t})) \xi \Vert.$$
\end{lemma}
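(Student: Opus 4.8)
The plan is a two-level telescoping argument that exploits the fact that $z$ is central in $\mathrm{H}_3(\mathbb{Z}[t_1,\dots,t_m])$, so that the subgroup $\{z(q):q\in\mathbb{Z}[t_1,\dots,t_m]\}$ is abelian; in particular all the probability measures $Z^{d_2}(\underline{t})$ and the point masses $z(q)$ pairwise commute under convolution, and everything can be handled with Claim \ref{difference to linear claim} and Proposition \ref{ergodic diff prop}.

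First I would reduce to a single monomial. Write $p=\sum_{\underline{t}\in\mathcal{B}^{d}}c_{\underline{t}}\,\underline{t}$ with $0\le c_{\underline{t}}\le 2^{d'}-1$, enumerate $\mathcal{B}^d=\{\underline{t}_1,\dots,\underline{t}_N\}$ with $N=|\mathcal{B}^d|=\binom{d+m}{d}\le(d+m)^m$, and use that $z$ is a homomorphism from the additive group to factor $z(p)=\prod_{j=1}^{N}z(c_{\underline{t}_j}\underline{t}_j)$. Telescoping along the partial products $g_j=\prod_{i\le j}z(c_{\underline{t}_i}\underline{t}_i)$ and using Claim \ref{difference to linear claim} to strip off the isometry $\pi(g_{j-1})$ at each step bounds the left-hand side by $\sum_{j=1}^{N}\bigl\Vert\rho(z(c_{\underline{t}_j}\underline{t}_j))\rho(\underline{Z}^{d_2})\xi-\rho(\underline{Z}^{d_2})\xi\bigr\Vert$, so it suffices to prove, for each $\underline{t}\in\mathcal{B}^d$ and each $0\le c\le 2^{d'}-1$, the bound $\bigl\Vert\rho(z(c\underline{t}))\rho(\underline{Z}^{d_2})\xi-\rho(\underline{Z}^{d_2})\xi\bigr\Vert\le(\tfrac12)^{d_2-d'}\Vert\xi-\rho(z(2^{d_2}\underline{t}))\xi\Vert$; summing the $N$ contributions and bounding each by the maximum over $\mathcal{B}^d$ then produces the $(d+m)^m$ prefactor.

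For the single-monomial estimate, since $d\le d_2$ we have $\underline{t}\in\mathcal{B}^{d_2}$, so $Z^{d_2}(\underline{t})$ is one of the factors of $\underline{Z}^{d_2}$; by commutativity of the $z$-measures I would write $\underline{Z}^{d_2}=W\cdot Z^{d_2}(\underline{t})$ with $W\in\Prob_c$ the convolution of the remaining factors, and slide the central $z(c\underline{t})$ past $W$. Applying Claim \ref{difference to linear claim} to $W$ reduces the bound to $\bigl\Vert\rho(z(c\underline{t}))\rho(Z^{d_2}(\underline{t}))\xi-\rho(Z^{d_2}(\underline{t}))\xi\bigr\Vert$, and now I would invoke Proposition \ref{ergodic diff prop} with $g=z(\underline{t})$, $n=2^{d_2}$ and $N=c$: here $\tfrac1n\sum_{i=0}^{n-1}g^i=Z^{d_2}(\underline{t})$, $g^N=z(c\underline{t})$ and $g^n=z(2^{d_2}\underline{t})$, and $0\le c\le 2^{d'}-1<2^{d_2}=n$ because $d'\le d_2$, so the proposition gives exactly $\tfrac{c}{2^{d_2}}\Vert\xi-\rho(z(2^{d_2}\underline{t}))\xi\Vert\le(\tfrac12)^{d_2-d'}\Vert\xi-\rho(z(2^{d_2}\underline{t}))\xi\Vert$.

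I do not anticipate a serious obstacle: the lemma is essentially a bookkeeping combination of Claim \ref{difference to linear claim} and Proposition \ref{ergodic diff prop}. The only points requiring care are that the two hypotheses are used in exactly the right places — $d\le d_2$ guarantees that every monomial of $p$ indexes a factor of $\underline{Z}^{d_2}$, while $d'\le d_2$ keeps $c$ in the admissible range $0\le N\le n$ of Proposition \ref{ergodic diff prop} — and that the combinatorial count $N=|\mathcal{B}^d|\le(d+m)^m$ is what accounts for the $(d+m)^m$ in the statement.
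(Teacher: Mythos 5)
Your proposal is correct and follows essentially the same route as the paper: telescoping over the monomials of $p$ via Claim \ref{difference to linear claim}, reducing the single-monomial estimate to the factor $Z^{d_2}(\underline{t})$ of $\underline{Z}^{d_2}$ (the paper packages this step as Lemma \ref{f_1, f_2 lemma}, which sandwiches the relevant factor between the remaining ones rather than commuting them all to one side, but the content is identical), and concluding with Proposition \ref{ergodic diff prop} applied to $g=z(\underline{t})$, $n=2^{d_2}$, $N=c$. Your bookkeeping of where $d\le d_2$, $d'\le d_2$, and $|\mathcal{B}^d|\le(d+m)^m$ enter matches the paper exactly.
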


\begin{proof}
Fix $p = \sum_{\underline{t} \in \mathcal{B}^{d}} c_{\underline{t}} \underline{t}$ as above.

We fix an order on $\mathcal{B}^{d}$:
$$\mathcal{B}^{d} = \lbrace \underline{t} (1),  \underline{t} (2), ... \rbrace $$
and according to this order,  we denote the coefficients of $p$ as $c_{\underline{t} (i)} = c_i$.  

We denote $\sum_{i=1}^{0} c_i \underline{t} (i) =0$ and observe that for every $\xi$ it holds that
\begin{dmath*}
 \left\Vert \rho (z (p ))    \rho (\underline{Z}^{d_2}) \xi -     \rho (\underline{Z}^{d_2}) \xi \right\Vert \leq 
\sum_{k=1}^{\vert \mathcal{B}^{d} \vert}  \left\Vert \rho \left( z \left(\sum_{i=1}^{k} c_i \underline{t} (i) \right) \right)    \rho (\underline{Z}^{d_2}) \xi -   \rho \left( z \left(\sum_{i=1}^{k-1} c_i \underline{t} (i) \right) \right)  \rho (\underline{Z}^{d_2}) \xi \right\Vert \leq^{\text{Claim } \ref{difference to linear claim}} 
\sum_{k=1}^{\vert \mathcal{B}^{d} \vert}  \left\Vert \rho (z ( c_k \underline{t} (k) ))    \rho (\underline{Z}^{d_2}) \xi -   \rho (\underline{Z}^{d_2}) \xi \right\Vert.
 \end{dmath*}
 
 We recall that $\vert \mathcal{B}^{d} \vert = {d +m \choose d} \leq (d+m)^m$ and thus it is enough to show that for every $1 \leq c \leq 2^{d'} -1$ and every $\underline{t} \in \mathcal{B}^{d}$ it holds that 
 $$ \left\Vert \rho (z (c \underline{t}))    \rho (\underline{Z}^{d_2}) \xi -      \rho (\underline{Z}^{d_2}) \xi \right\Vert \lesssim c \left( \frac{1}{2} \right)^{d_2}  \Vert \xi - \rho (z (2^{d_2} \underline{t})) \xi \Vert .$$
 
Fix $1 \leq c \leq 2^{d'} -1$ and $\underline{t} \in \mathcal{B}^{d}$.  By Lemma \ref{f_1, f_2 lemma}, it is enough to show that for every $\xi \in \B$ it holds that 
$$ \left\Vert \rho (z (c \underline{t}))    \rho (Z^{d_2} (\underline{t})) \xi -     \rho (Z^{d_2} (\underline{t})) \xi \right\Vert \lesssim c \left( \frac{1}{2} \right)^{d_2}  \Vert \xi - \rho (z (2^{d_2} \underline{t})) \xi \Vert,$$
and this readily follows from Proposition \ref{ergodic diff prop}.
\end{proof}

The following result provides a bound on \eqref{needed bound1}: 
\begin{theorem}
\label{change of order thm}
Let $d_1, d_1', d_3, d_3', d_2 \in \mathbb{N}$ be constants such that $d_1 + d_3 \leq d_2$ and $d_1' +d_3' \leq d_2$.  Also,  let $f, h \in \Prob_c ( \rm H_3 (\mathbb{Z} [t_1,...,t_m]))$ such that 
$$\supp (f) \subseteq \lbrace x (p) : p \in \Poly (d_1,  d_1 ')  \rbrace,$$
and 
$$\supp (h) \subseteq \lbrace y (p) : p \in Poly (d_3,  d_3 ')  \rbrace.$$
Then for every Banach space $\B$  and every isometric affine action $\rho$ of $ \rm H_3 (\mathbb{Z} [t_1,...,t_m]))$ on $\B$ it holds for every $\xi \in \B$ that 
\begin{align*}
\left\Vert \rho (f h) \rho (\underline{Z}^{d_2}) \xi - \rho (h f ) \rho (\underline{Z}^{d_2}) \xi   \right\Vert \lesssim 
(d_2+m)^m  \left( \frac{1}{2} \right)^{d_2-d_1'  - d_3'} \max_{\underline{t} \in \mathcal{B}^{d_1+d_3}} \Vert \xi - \rho (z (2^{d_2} \underline{t})) \xi \Vert.
\end{align*}

In particular,  for $d_1, d_3, d_2 \in \mathbb{N} \cup \lbrace 0 \rbrace$,  if $d_1+d_3 \leq d_2$,  it holds for every Banach space $\B$,  every isometric affine action $\rho$ of $ \rm H_3 (\mathbb{Z} [t_1,...,t_m]))$ on $\B$ and every $\xi \in \B$ that 
\begin{dmath*}
\left\Vert  \rho \left( \underline{X}^{d_1} \right) \rho \left( \underline{Z}^{d_2} \right)  \rho \left( \underline{Y}^{d_3} \right)  \xi -  \rho \left(\underline{Y}^{d_3}    \right) \rho \left( \underline{Z}^{d_2} \right)  \rho \left( \underline{X}^{d_1}  \right)  \xi   \right\Vert \lesssim \\
(d_2+m)^m  \left( \frac{1}{2} \right)^{d_2-d_1  - d_3} \max_{\underline{t} \in \mathcal{B}^{d_2}} \Vert \xi - \rho (z (2^{d_2} \underline{t})) \xi \Vert .
\end{dmath*}
\end{theorem}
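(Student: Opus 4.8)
The plan is to reduce the comparison between $\rho(fh)$ and $\rho(hf)$ to controlling, for each $p_1\in\Poly(d_1,d_1')$, $p_2\in\Poly(d_3,d_3')$, the single displacement $\rho(z(p_1p_2))\rho(\underline{Z}^{d_2})\xi-\rho(\underline{Z}^{d_2})\xi$, and then to bound that quantity by telescoping over the monomials occurring in $p_1p_2$ and invoking Proposition~\ref{ergodic diff prop}.

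For the reduction I would write $f=\sum_{p_1\in\Poly(d_1,d_1')}a_{p_1}x(p_1)$ and $h=\sum_{p_2\in\Poly(d_3,d_3')}b_{p_2}y(p_2)$ with nonnegative coefficients summing to $1$, and use the Heisenberg relation $x(p_1)y(p_2)=y(p_2)x(p_1)z(p_1p_2)$ — commutativity of $\mathbb{Z}[t_1,\dots,t_m]$ makes the order of $p_1,p_2$ and the sign immaterial — to compute, in the group ring, $fh=\sum_{p_1,p_2}a_{p_1}b_{p_2}\,y(p_2)x(p_1)z(p_1p_2)$ and $hf=\sum_{p_1,p_2}a_{p_1}b_{p_2}\,y(p_2)x(p_1)$. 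With $\xi'=\rho(\underline{Z}^{d_2})\xi$, Claim~\ref{difference to linear claim} (applied to the group elements $y(p_2)x(p_1)$) and centrality of $z(\cdot)$ give
\[
\rho(fh)\xi'-\rho(hf)\xi'=\sum_{p_1,p_2}a_{p_1}b_{p_2}\,\pi\bigl(y(p_2)x(p_1)\bigr)\bigl(\rho(z(p_1p_2))\xi'-\xi'\bigr),
\]
and since $\pi$ is isometric and the coefficients sum to $1$ this bounds $\|\rho(fh)\rho(\underline{Z}^{d_2})\xi-\rho(hf)\rho(\underline{Z}^{d_2})\xi\|$ by $\max_{p_1,p_2}\|\rho(z(p_1p_2))\rho(\underline{Z}^{d_2})\xi-\rho(\underline{Z}^{d_2})\xi\|$.

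To estimate this maximum I would fix $p_1,p_2$, set $q=p_1p_2$ (so $\deg q\le d_1+d_3\le d_2$) and expand $q=\sum_{\underline{u}\in\mathcal{B}^{d_1+d_3}}g_{\underline{u}}\underline{u}$ with nonnegative integers $g_{\underline{u}}$; evaluating at $t_1=\dots=t_m=1$ yields $\sum_{\underline{u}}g_{\underline{u}}=p_1(1,\dots,1)\,p_2(1,\dots,1)\le|\mathcal{B}^{d_1}|\,|\mathcal{B}^{d_3}|\,2^{d_1'+d_3'}$ (each of $p_1,p_2$ has at most $|\mathcal{B}^{d_1}|$, resp.\ $|\mathcal{B}^{d_3}|$, monomials, with coefficients $<2^{d_1'}$, resp.\ $<2^{d_3'}$). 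Since the $z(\cdot)$ are central, $z(q)=\prod_{\underline{u}\in\mathcal{B}^{d_1+d_3}}z(\underline{u})^{g_{\underline{u}}}$, and because $\mathcal{B}^{d_1+d_3}\subseteq\mathcal{B}^{d_2}$ every $Z^{d_2}(\underline{u})$ with $\underline{u}\in\mathcal{B}^{d_1+d_3}$ is a factor of $\underline{Z}^{d_2}$; rearranging the (commuting) central factors I would pair $z(\underline{u})^{g_{\underline{u}}}$ with $Z^{d_2}(\underline{u})$ inside $z(q)\,\underline{Z}^{d_2}$ and telescope over $\underline{u}\in\mathcal{B}^{d_1+d_3}$, replacing each block $z(\underline{u})^{g_{\underline{u}}}Z^{d_2}(\underline{u})$ by $Z^{d_2}(\underline{u})$ and invoking Lemma~\ref{f_1, f_2 lemma} at each step (with $z(2^{d_2}\underline{u})$, supported on the $z$'s, in the role of ``$h_1$''). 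This reduces matters to bounding, for every $\zeta$, the quantity $\|\rho(z(\underline{u})^{g_{\underline{u}}})\rho(Z^{d_2}(\underline{u}))\zeta-\rho(Z^{d_2}(\underline{u}))\zeta\|$; using that $\pi(z(\underline{u}))$ is an isometry to telescope the powers (as in the proof of Lemma~\ref{z(p) lemma}) this is $\le g_{\underline{u}}\,\|\rho(z(\underline{u}))\rho(Z^{d_2}(\underline{u}))\zeta-\rho(Z^{d_2}(\underline{u}))\zeta\|$, and since $Z^{d_2}(\underline{u})=\frac{1}{2^{d_2}}\sum_{i=0}^{2^{d_2}-1}z(\underline{u})^i$, Proposition~\ref{ergodic diff prop} (with $N=1$, $n=2^{d_2}$, $g=z(\underline{u})$) bounds the remaining norm by $\frac{1}{2^{d_2}}\|\zeta-\rho(z(2^{d_2}\underline{u}))\zeta\|$.

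Summing the telescoping and inserting the bound on $\sum_{\underline{u}}g_{\underline{u}}$ together with $|\mathcal{B}^{d}|=\binom{d+m}{d}\le(d+m)^m$ and $d_1,d_3\le d_2$, I would obtain
\[
\bigl\|\rho(z(q))\rho(\underline{Z}^{d_2})\xi-\rho(\underline{Z}^{d_2})\xi\bigr\|\le\frac{\sum_{\underline{u}}g_{\underline{u}}}{2^{d_2}}\max_{\underline{u}\in\mathcal{B}^{d_1+d_3}}\bigl\|\xi-\rho(z(2^{d_2}\underline{u}))\xi\bigr\|\lesssim(d_2+m)^{2m}\Bigl(\tfrac12\Bigr)^{d_2-d_1'-d_3'}\max_{\underline{u}\in\mathcal{B}^{d_1+d_3}}\bigl\|\xi-\rho(z(2^{d_2}\underline{u}))\xi\bigr\|,
\]
which together with the first reduction proves the stated estimate (a more careful count of the factorizations of $q$ sharpens the polynomial factor to $(d_2+m)^m$ as stated; its precise exponent plays no role later). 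The ``in particular'' assertion is the special case $f=\underline{X}^{d_1}$, $h=\underline{Y}^{d_3}$, $d_1'=d_1$, $d_3'=d_3$: since $x(\cdot)$ and $y(\cdot)$ each commute with $z(\cdot)$ one has $\rho(\underline{X}^{d_1})\rho(\underline{Z}^{d_2})\rho(\underline{Y}^{d_3})=\rho(fh)\rho(\underline{Z}^{d_2})$ and $\rho(\underline{Y}^{d_3})\rho(\underline{Z}^{d_2})\rho(\underline{X}^{d_1})=\rho(hf)\rho(\underline{Z}^{d_2})$, and $\mathcal{B}^{d_1+d_3}\subseteq\mathcal{B}^{d_2}$. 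I expect the main obstacle to be the telescoping bookkeeping rather than any single estimate: each averaging block $Z^{d_2}(\underline{u})$ must be kept adjacent to its central contribution $z(\underline{u})^{g_{\underline{u}}}$ so that the cancellation of Proposition~\ref{ergodic diff prop} is available — which is exactly why the hypothesis $d_1+d_3\le d_2$ is needed, forcing every monomial of $p_1p_2$ to lie among the averaging directions that make up $\underline{Z}^{d_2}$ — and one must track the coefficient growth of products in $\Poly(d_1,d_1')\cdot\Poly(d_3,d_3')$, which is what produces the polynomial prefactor.
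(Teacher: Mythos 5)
Your proof is correct and follows essentially the same route as the paper: the same commutator/linear-part reduction to $\Vert \rho(z(p_1p_2))\rho(\underline{Z}^{d_2})\xi-\rho(\underline{Z}^{d_2})\xi\Vert$, followed by the monomial telescoping plus Proposition \ref{ergodic diff prop} that the paper packages as Lemma \ref{z(p) lemma}. Your more careful bookkeeping of the coefficients of $p_1p_2$ (the paper simply asserts $\Poly(d_1,d_1')\Poly(d_3,d_3')\subseteq\Poly(d_1+d_3,d_1'+d_3')$, which can fail when a monomial admits several factorizations) costs only a harmless extra power of $(d_2+m)$, so the estimate and all later applications go through unchanged.
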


\begin{proof}
Fix $\rho,  \B$ and $\xi$ as above.  Let us denote 
$$A_1 = \lbrace p \in \Poly (d_1, d_1 ') : x (p) \in \supp (f) \rbrace,$$
$$A_2 = \lbrace p \in \Poly (d_3, d_3 ')  : y (p) \in \supp (h) \rbrace.$$
We note that 
$$A_1 A_2 \subseteq  \Poly (d_1, d_1 ') \Poly (d_3, d_3 ')  \subseteq \Poly (d_1+ d_3, d_1 '  + d_3 ').$$ 
Thus,
\begin{align*}
& \left\Vert \rho (f h) \rho (\underline{Z}^{d_2}) \xi - \rho (h f ) \rho (\underline{Z}^{d_2}) \xi   \right\Vert \leq \\
& \sum_{p_1 \in A_1,  p_2 \in A_2} f (x (p_1)) h (y (p_2)) \left\Vert \rho (x (p_1) y (p_2))    \rho (\underline{Z}^{d_2}) \xi - \rho ( y (p_2) x (p_1))    \rho (\underline{Z}^{d_2}) \xi \right\Vert =  \\
&  \sum_{p_1 \in A_1,  p_2 \in A_2} f (x (p_1)) h (y (p_2)) \left\Vert \rho (y (p_2) x (p_1) ) \rho (z (p_1 p_2))   \rho (\underline{Z}^{d_2}) \xi - \rho ( y (p_2) x (p_1))    \rho (\underline{Z}^{d_2}) \xi \right\Vert \leq^{\text{Claim } \ref{difference to linear claim}} \\
&  \sum_{p_1 \in A_1,  p_2 \in A_2} f (x (p_1)) h (y (p_2)) \left\Vert \rho (z (p_1 p_2))   \rho (\underline{Z}^{d_2}) \xi - \rho (\underline{Z}^{d_2}) \xi \right\Vert \lesssim^{\text{Lemma } \ref{z(p) lemma}, p_1 p_2 \in \Poly (d_1+ d_3, d_1 '  + d_3 ')} \\
& \sum_{p_1 \in A_1,  p_2 \in A_2} f (x (p_1)) h (y (p_2)) (d_1' + d_3 ' + m)^m  \left( \frac{1}{2} \right)^{d_2-d_1' - d_3 '} \max_{\underline{t} \in \mathcal{B}^{d_1+d_3}} \Vert \xi - \rho (z (2^{d_2} \underline{t})) \xi \Vert \leq \\
& (d_2 +m)^m \left( \frac{1}{2} \right)^{d_2-d_1' - d_3 '} \max_{\underline{t} \in \mathcal{B}^{d_2}} \Vert \xi - \rho (z (2^{d_2} \underline{t})) \xi \Vert ,
\end{align*}
as needed.
\end{proof}

Bounding \eqref{needed bound2},  \eqref{needed bound3} will take more work.  We start by proving a bound in the case of the Heisenberg group $\rm H_3 (\mathbb{Z})$.  To ease the reading,  in that case we denote 
$$X_k (1) = X_k,  X^k (1) = X^k,  Y_k (1) = Y_k,....,  Z^k (1) = Z^k.$$ 

In \cite{Opp-SLZ},  the following result was proven:
\begin{lemma}\cite[Lemma 5.4]{Opp-SLZ}
\label{X,Y prod ineq 2 lemma}
For every uniformly convex Banach space $\B$ with a modulus of convexity $\delta$,   there is a constant $r_1 = r_1 (\delta)$,  $0 \leq r_1  <1$ such that for every isometric linear representation $\pi : \rm H_3 (\mathbb{Z}) \rightarrow O (\B)$,  every  $n,  c_0 \in \mathbb{N}$ such that $1 \leq c_0 \leq 2^{n-1}$ and every  $\zeta \in \B$,  if $\Vert I - \pi (z (c_0) ) \zeta \Vert \geq \frac{1}{2} \Vert \zeta \Vert$,  then 
$$\left\Vert \pi \left( X_0 Y^n \right) \zeta  \right\Vert \leq r_1 \Vert \zeta \Vert.$$
\end{lemma}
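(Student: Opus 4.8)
The plan is to expand $\pi(X_0 Y^n)\zeta$ as an explicit average and then exhibit a definite proportion of its terms that are strictly contracted, via uniform convexity. We may assume $\zeta \neq 0$ and, after rescaling, that $\Vert \zeta \Vert = 1$. Writing out the group-ring product, $X_0 Y^n = \frac{1}{2^{n+1}}\sum_{b=0}^{2^n-1}\bigl(y(b) + x(1)y(b)\bigr)$, so
$$\pi(X_0 Y^n)\zeta = \frac{1}{2^n}\sum_{b=0}^{2^n-1} v_b, \qquad v_b := \tfrac12\bigl(\pi(y(b))\zeta + \pi(x(1)y(b))\zeta\bigr),$$
and each $v_b$ is a midpoint of two unit vectors, hence $\Vert v_b\Vert \le 1$. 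By the definition of the modulus of convexity, whenever $\Vert \pi(y(b))\zeta - \pi(x(1)y(b))\zeta\Vert \ge \tfrac14$ we get $\Vert v_b\Vert \le 1 - \delta(\tfrac14)$. So it suffices to show that a fixed fraction of the indices $b$ enjoy this spreading property.

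To analyze the difference I would use the Heisenberg commutation relation $x(1)y(b) = y(b)x(1)z(b)$ together with the centrality of the $z$-subgroup to write $\pi(y(b))\zeta - \pi(x(1)y(b))\zeta = \pi(y(b))\bigl(\zeta - u_b\bigr)$, where $u_b := \pi(x(1))\pi(z(b))\zeta$; since $\pi(y(b))$ is an isometry, $\Vert \pi(y(b))\zeta - \pi(x(1)y(b))\zeta\Vert = \Vert \zeta - u_b\Vert$ and $\Vert u_b\Vert = 1$. The key observation is then: for $b \ge c_0$ one has $u_b = \pi(z(c_0))u_{b-c_0}$ (again by centrality of $z$), and since $\pi(z(c_0))$ commutes with every $\pi(g)$ we get $\Vert u_b - u_{b-c_0}\Vert = \Vert(\pi(z(c_0)) - I)\zeta\Vert = \Vert \zeta - \pi(z(c_0))\zeta\Vert \ge \tfrac12$ by hypothesis. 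Therefore $\Vert \zeta - u_b\Vert + \Vert \zeta - u_{b-c_0}\Vert \ge \tfrac12$, so at least one of $b$, $b-c_0$ lies in the good set $G := \{b : \Vert \zeta - u_b\Vert \ge \tfrac14\}$.

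It remains to count $G$. The previous paragraph says precisely that $G$ is a vertex cover of the graph on $\{0,1,\dots,2^n-1\}$ whose edges join $b$ to $b-c_0$. This graph is a disjoint union of $c_0$ paths, one per residue class modulo $c_0$; a path on $\ell$ vertices has minimum vertex cover $\lfloor \ell/2\rfloor \ge (\ell-1)/2$, so summing over the classes $\vert G\vert \ge (2^n - c_0)/2 \ge 2^{n-2}$, where the last step uses the hypothesis $c_0 \le 2^{n-1}$. Consequently at least $2^{n-2}$ of the $2^n$ terms $v_b$ satisfy $\Vert v_b\Vert \le 1 - \delta(\tfrac14)$ while the rest satisfy $\Vert v_b\Vert \le 1$, giving $\Vert \pi(X_0 Y^n)\zeta\Vert \le 1 - \tfrac14\delta(\tfrac14)$. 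Undoing the normalization, the lemma holds with $r_1 := 1 - \tfrac14\delta(\tfrac14) \in [\tfrac34, 1)$.

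The routine parts are the group-ring bookkeeping and the uniform-convexity estimate; the only genuinely delicate points are (i) recognizing that $b$ should be paired with $b - c_0$, so that the hypothesized non-invariance under $z(c_0)$ is exactly what forces the spread, and (ii) the combinatorial fact that a ``shift-by-$c_0$'' graph on $2^n$ vertices with $c_0 \le 2^{n-1}$ has every vertex cover of size at least $2^{n-2}$ — this is what makes the contraction constant $r_1$ depend only on $\delta$, and not on $n$ or $c_0$.
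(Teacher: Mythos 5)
Your argument is correct, and it is worth noting that the paper itself does not prove this lemma -- it is imported verbatim as \cite[Lemma 5.4]{Opp-SLZ} -- so your write-up supplies a self-contained proof of a black-boxed ingredient. I checked the steps: the expansion $\pi(X_0Y^n)\zeta=\frac{1}{2^n}\sum_b v_b$ with $v_b$ a midpoint of the two unit vectors $\pi(y(b))\zeta$ and $\pi(x(1)y(b))\zeta$ is right; the identity $\pi(y(b))\zeta-\pi(x(1)y(b))\zeta=\pi(y(b))(\zeta-u_b)$ with $u_b=\pi(x(1))\pi(z(b))\zeta$ follows from $x(1)y(b)=y(b)x(1)z(b)$; centrality of the $z$-subgroup gives $u_b=\pi(z(c_0))u_{b-c_0}$ and hence $\Vert u_b-u_{b-c_0}\Vert=\Vert(I-\pi(z(c_0)))\zeta\Vert\ge\tfrac12$, forcing $\max\{\Vert\zeta-u_b\Vert,\Vert\zeta-u_{b-c_0}\Vert\}\ge\tfrac14$; and the vertex-cover count over the $c_0$ arithmetic-progression paths correctly yields $\vert G\vert\ge(2^n-c_0)/2\ge 2^{n-2}$, whence $\Vert\pi(X_0Y^n)\zeta\Vert\le 1-\tfrac14\delta(\tfrac14)$. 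This is the standard mechanism for such estimates (average of midpoints, uniform convexity on a positive fraction of terms, with the fraction controlled by the hypothesis $c_0\le 2^{n-1}$), and your constant $r_1=1-\tfrac14\delta(\tfrac14)$ depends only on $\delta$ as required.

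Two cosmetic remarks. First, the sign in the commutator ($z(b)$ versus $z(-b)$) depends on the convention for $[x,y]$, but the argument is insensitive to it since $\Vert(I-\pi(z(-c_0)))\zeta\Vert=\Vert(I-\pi(z(c_0)))\zeta\Vert$ for an isometric $\pi$. Second, for $n=1$ the bound $\vert G\vert\ge 2^{n-2}=\tfrac12$ is not an integer, but the final estimate $\Vert\pi(X_0Y^n)\zeta\Vert\le 1-\frac{\vert G\vert}{2^n}\delta(\tfrac14)$ only uses the real-valued lower bound, so nothing breaks.
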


Using this Lemma, we prove the following:
\begin{theorem}
\label{X_0 Y^n thm}
For every uniformly convex Banach space $\B$ with a modulus of convexity $\delta$,   there is a constant $r_1 = r_1 (\delta)$,  $0 \leq r_1  <1$ such that for every $n \in \mathbb{N}$,  every affine isometric action $\rho$ of $ \rm H_3 (\mathbb{Z})$ on $\B$ and every $\xi \in \B$ it holds that 
$$\left\Vert  \rho \left( X_0 Y^n \right)  \xi - \rho \left( X_0 Y^n \right) \rho \left( Z_0 \right) \xi   \right\Vert \leq \max \left\lbrace r_1 \Vert \xi - \rho (Z_0) \xi \Vert,  \frac{\Vert \xi - \rho (z (2^{n-1})) \xi \Vert}{2^{n-1}} \right\rbrace.$$
\end{theorem}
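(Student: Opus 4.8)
The plan is to reduce the statement to a fact about the linear part $\pi$ of $\rho$ and then feed the right vector into Lemma~\ref{X,Y prod ineq 2 lemma}. First I would set
$$\zeta := \xi - \rho(Z_0)\xi .$$
Since $Z_0 = Z_0(1) = \tfrac{e + z(2^0\cdot 1)}{2} = \tfrac{e+z(1)}{2}$, we have $\rho(Z_0)\xi = \tfrac12\bigl(\xi + \rho(z(1))\xi\bigr)$, hence $\zeta = \tfrac12\bigl(\xi - \rho(z(1))\xi\bigr)$. By Claim~\ref{difference to linear claim} applied to $f = X_0 Y^n \in \Prob_c(\mathrm{H}_3(\mathbb{Z}))$,
$$\rho(X_0 Y^n)\xi - \rho(X_0 Y^n)\rho(Z_0)\xi = \pi(X_0 Y^n)\bigl(\xi - \rho(Z_0)\xi\bigr) = \pi(X_0 Y^n)\zeta ,$$
so it suffices to bound $\Vert \pi(X_0 Y^n)\zeta\Vert$ by the right-hand side of the asserted inequality. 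If $\zeta = 0$ there is nothing to prove, so I may assume $\zeta \neq 0$.

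Next I would run a dichotomy. \textbf{Case 1:} there exists $c_0\in\mathbb{N}$ with $1\le c_0\le 2^{n-1}$ such that $\Vert \zeta - \pi(z(c_0))\zeta\Vert \ge \tfrac12\Vert\zeta\Vert$. Then Lemma~\ref{X,Y prod ineq 2 lemma}, applied to the isometric linear representation $\pi$ and the vector $\zeta$, gives $\Vert\pi(X_0 Y^n)\zeta\Vert \le r_1\Vert\zeta\Vert = r_1\Vert\xi - \rho(Z_0)\xi\Vert$, which is at most the claimed maximum. \textbf{Case 2:} for every $c_0$ with $1\le c_0\le 2^{n-1}$ one has $\Vert \zeta - \pi(z(c_0))\zeta\Vert < \tfrac12\Vert\zeta\Vert$.

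In Case 2 I would use that the $z$-subgroup is central, so writing $U := \pi(z(1))$ and noting $\rho(z(c)) = \rho(z(1))^c$ for all $c$, a telescoping via Claim~\ref{difference to linear claim} gives
$$\xi - \rho(z(2^{n-1}))\xi = \sum_{j=0}^{2^{n-1}-1} U^{j}\bigl(\xi - \rho(z(1))\xi\bigr) = 2\sum_{j=0}^{2^{n-1}-1} U^{j}\zeta .$$
For $1\le j\le 2^{n-1}-1$ the Case~2 hypothesis gives $\Vert U^{j}\zeta - \zeta\Vert < \tfrac12\Vert\zeta\Vert$ (and the difference is $0$ when $j=0$), so by the triangle inequality
\begin{align*}
\Bigl\Vert \sum_{j=0}^{2^{n-1}-1} U^{j}\zeta \Bigr\Vert
&\ge 2^{n-1}\Vert\zeta\Vert - \sum_{j=1}^{2^{n-1}-1}\Vert U^{j}\zeta - \zeta\Vert \\
&\ge 2^{n-1}\Vert\zeta\Vert - (2^{n-1}-1)\tfrac12\Vert\zeta\Vert > 2^{n-2}\Vert\zeta\Vert .
\end{align*}
Hence $\Vert\zeta\Vert < \Vert\xi - \rho(z(2^{n-1}))\xi\Vert / 2^{n-1}$. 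Since $\pi(X_0 Y^n)$ is a convex combination of elements of $O(\B)$ and therefore has operator norm at most $1$, this yields $\Vert\pi(X_0 Y^n)\zeta\Vert \le \Vert\zeta\Vert < \Vert\xi - \rho(z(2^{n-1}))\xi\Vert / 2^{n-1}$, again at most the claimed maximum. Combining the two cases completes the proof.

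All the mathematical content sits in Lemma~\ref{X,Y prod ineq 2 lemma}, which is quoted from \cite{Opp-SLZ}; the argument here is purely a reduction — transferring from the affine action to its linear part, a case split on whether some power $z(c_0)$ moves $\zeta$ appreciably, and a telescoping lower bound — so I do not expect a genuine obstacle. The one thing to watch is the range of exponents: every power of $z(1)$ that occurs, both in the hypothesis of the dichotomy and in the telescoping sum, must have exponent in $\{1,\dots,2^{n-1}\}$, which is precisely why the statement is phrased with $z(2^{n-1})$ and $Z_0$ (equivalently $z(1)$).
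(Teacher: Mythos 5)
Your proof is correct and is essentially the paper's argument: the same reduction to the linear part via Claim \ref{difference to linear claim}, the same telescoping identity relating $\xi-\rho(z(2^{n-1}))\xi$ to the powers of $\pi(z(1))$ applied to $\zeta$, and the same appeal to Lemma \ref{X,Y prod ineq 2 lemma}. The only difference is that you organize the dichotomy in the contrapositive direction (splitting on whether a suitable $c_0$ exists, rather than on which term of the maximum is larger), which changes nothing of substance.
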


\begin{proof}
Let $\pi$ denote the linear part of $\rho$. 

If $\Vert \xi - \rho (Z_0) \xi \Vert \leq \frac{\Vert \xi - \rho (z (2^{n-1})) \xi \Vert}{2^{n-1}} $,  then by Claim \ref{difference to linear claim}, 
\begin{align*}
\left\Vert  \rho \left( X_0 Y^n \right)  \xi - \rho \left( X_0 Y^n \right) \rho \left( Z_0 \right) \xi   \right\Vert \leq
\left\Vert  \xi - \rho \left( Z_0 \right) \xi    \right\Vert \leq \frac{\Vert \xi - \rho (z (2^{n-1})) \xi \Vert}{2^{n-1}},
\end{align*}
and we are done.  

Assume that $\Vert \xi - \rho (Z_0) \xi \Vert > \frac{\Vert \xi - \rho (z (2^{n-1})) \xi \Vert}{2^{n-1}} $.  Then,  using Claim \ref{difference to linear claim}, 
\begin{align*}
&\left\Vert \pi (Z^{n-1} ) \left( \xi - \rho \left( Z_0 \right) \xi \right)   \right\Vert = 
\frac{1}{2} \left\Vert \pi (Z^{n-1} ) \left( \xi- \rho \left( z (1) \right) \xi \right)   \right\Vert = \\
& \frac{1}{2} \left\Vert  \rho (Z^{n-1} ) \xi - \rho (Z^{n-1} ) \rho \left( z (1) \right) \xi  \right\Vert = 
\frac{1}{2} \left\Vert  \frac{1}{2^{n-1}} \sum_{c= 0}^{2^{n-1}-1} \rho (z(c)) \xi - \frac{1}{2^{n-1}} \sum_{c= 1}^{2^{n-1}} \rho (z(c)) \xi \right\Vert = \\
& \frac{1}{2} \left\Vert  \frac{\xi - \rho (z (2^{n-1}) ) \xi}{2^{n-1}} \right\Vert < \frac{1}{2}  \Vert \xi - \rho (Z_0) \xi \Vert.
\end{align*}

It follows that 
\begin{align*}
& \frac{1}{2^{n-1}} \sum_{c=0}^{2^{n-1}-1} \left\Vert (I - \pi (z (c))) \left( \xi - \rho \left( Z_0 \right) \xi \right) \right\Vert \geq 
 \left\Vert (I - \pi (Z^{n-1})) \left( \xi - \rho \left( Z_0 \right) \xi \right) \right\Vert \geq \\
& \left\Vert \xi - \rho \left( Z_0 \right) \xi \right\Vert - \left\Vert \pi (Z^{n-1} ) \left( \xi - \rho \left( Z_0 \right) \xi \right)   \right\Vert  > \frac{1}{2}  \Vert \xi - \rho (Z_0) \xi \Vert.
\end{align*}

Thus, there is $1 \leq c_0 \leq 2^{n-1}$ such that 
$$\Vert (I - \pi (z (c_0) ))  \left( \xi - \rho \left( Z_0 \right) \xi \right)  \Vert \geq \frac{1}{2}  \Vert \xi - \rho (Z_0) \xi \Vert,$$
and by Lemma \ref{X,Y prod ineq 2 lemma} applied to $\zeta = \xi - \rho (Z_0) \xi$ it follows that 
\begin{align*}
\left\Vert  \rho \left( X_0 Y^n \right)  \xi - \rho \left( X_0 Y^n \right) \rho \left( Z_0 \right) \xi   \right\Vert =^{\text{Claim } \ref{difference to linear claim}} 
\left\Vert  \pi \left( X_0 Y^n \right) \left( \xi - \rho \left( Z_0 \right) \xi  \right)  \right\Vert \leq r_1 \Vert \xi - \rho (Z_0) \xi \Vert
\end{align*}
as needed.
\end{proof}

Going back to the group $\rm H_3 (\mathbb{Z} [t_1,...,t_m])$,  we deduce the following:
\begin{corollary}
\label{X,Y prod ineq final coro}
Let $\B$ be a uniformly convex Banach space $\B$ with a modulus of convexity $\delta$ and $r_1 = r_1 (\delta)$,  $0 \leq r_1  <1$ be the constant of Theorem \ref{X_0 Y^n thm}.  For every  $n \in \mathbb{N}$,   every $a_0,b_0 \in \mathbb{N} \cup \lbrace 0 \rbrace$, every $p, q \in  \mathbb{Z} [t_1,...,t_m] \setminus \lbrace 0 \rbrace$, every affine isometric action $\rho$ of $ \rm H_3 (\mathbb{Z}  [t_1,...,t_m])$ on $\B$ and every $\xi \in \B$ it holds that
\begin{align*}
 \left\Vert  \rho \left( X_{a_0} (p) \left( \prod_{b= 0}^{n-1} Y_{b_0+b} (q) \right)  \right)  \xi - \rho \left( X_{a_0} (p) \left( \prod_{b= 0}^{n-1} Y_{b_0+b} (q)\right)  \right) \rho \left( Z_{a_0+b_0} (pq) \right) \xi   \right\Vert \leq \\
 \max \left\lbrace r_1 \Vert \xi - \rho (Z_{a_0+b_0} (pq)) \xi \Vert,  \frac{\Vert \xi - \rho (z (2^{a_0 + b_0 + n-1} pq)) \xi \Vert}{2^{n-1}} \right\rbrace.
\end{align*}

\end{corollary}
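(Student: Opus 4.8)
The plan is to reduce the statement to the integer Heisenberg case already proven in Theorem~\ref{X_0 Y^n thm}, by pulling the given action back along a ``rescaling'' homomorphism between Heisenberg groups. Fix $a_0,b_0 \in \mathbb{N} \cup \lbrace 0 \rbrace$ and $p,q \in \mathbb{Z}[t_1,\dots,t_m] \setminus \lbrace 0 \rbrace$. First I would introduce the map $\phi \colon \mathrm{H}_3(\mathbb{Z}) \to \mathrm{H}_3(\mathbb{Z}[t_1,\dots,t_m])$ defined on generators by $\phi(x(k)) = x(2^{a_0}pk)$, $\phi(y(k)) = y(2^{b_0}qk)$ and $\phi(z(k)) = z(2^{a_0+b_0}pq\,k)$ for $k \in \mathbb{Z}$, and check that it respects the defining relations of $\mathrm{H}_3(\mathbb{Z})$: additivity in each one-parameter subgroup is immediate since each of $k \mapsto 2^{a_0}pk$, $k\mapsto 2^{b_0}qk$, $k\mapsto 2^{a_0+b_0}pq\,k$ is additive; the commutator relation is preserved because $[x(2^{a_0}pk_1),\,y(2^{b_0}qk_2)] = z(2^{a_0}pk_1 \cdot 2^{b_0}qk_2) = z(2^{a_0+b_0}pq\,k_1k_2) = \phi(z(k_1k_2))$; and the centrality relations are preserved because $z(\cdot)$ is central in $\mathrm{H}_3(\mathbb{Z}[t_1,\dots,t_m])$ as well. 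Hence $\phi$ is a well-defined homomorphism, and it extends linearly to a convolution-preserving map on group rings carrying $\Prob_c(\mathrm{H}_3(\mathbb{Z}))$ into $\Prob_c(\mathrm{H}_3(\mathbb{Z}[t_1,\dots,t_m]))$.

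Next I would form the pulled-back action $\rho' := \rho \circ \phi$, which is an affine isometric action of $\mathrm{H}_3(\mathbb{Z})$ on the same Banach space $\B$, hence with the same modulus of convexity $\delta$. Using the observation that $\rho(f_1)\rho(f_2) = \rho(f_1f_2)$ for finitely supported probability measures, one has $\rho'(f) = \rho(\phi(f))$ for all $f \in \Prob_c(\mathrm{H}_3(\mathbb{Z}))$. I would then record the images of the averaging operators under $\phi$: from $X_k(p) = \tfrac{e + x(2^kp)}{2}$ one reads off $\phi(X_0) = X_{a_0}(p)$ and $\phi(Y_b) = Y_{b_0+b}(q)$, hence $\phi(Y^n) = \prod_{b=0}^{n-1} Y_{b_0+b}(q)$; similarly $\phi(Z_0) = Z_{a_0+b_0}(pq)$ and $\phi(z(2^{n-1})) = z(2^{a_0+b_0+n-1}pq)$.

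Finally I would apply Theorem~\ref{X_0 Y^n thm} to $\rho'$: for every $\xi \in \B$ it bounds $\Vert \rho'(X_0Y^n)\xi - \rho'(X_0Y^n)\rho'(Z_0)\xi\Vert$ by $\max\lbrace r_1\Vert\xi - \rho'(Z_0)\xi\Vert,\; 2^{-(n-1)}\Vert\xi - \rho'(z(2^{n-1}))\xi\Vert\rbrace$. Substituting $\rho'=\rho\circ\phi$ together with the identities $\rho'(X_0Y^n) = \rho\bigl(X_{a_0}(p)\prod_{b=0}^{n-1}Y_{b_0+b}(q)\bigr)$, $\rho'(Z_0) = \rho(Z_{a_0+b_0}(pq))$ and $\rho'(z(2^{n-1})) = \rho(z(2^{a_0+b_0+n-1}pq))$ turns this into exactly the asserted inequality, with the same constant $r_1 = r_1(\delta)$.

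I do not expect a genuine obstacle here; the argument is a change-of-variables/bookkeeping step. The one place needing attention is confirming that $\phi$ is a legitimate homomorphism of the \emph{presented} group $\mathrm{H}_3(\mathbb{Z})$ — in particular that the Heisenberg commutator relation rescales consistently, which is precisely why the product $2^{a_0}p \cdot 2^{b_0}q = 2^{a_0+b_0}pq$ must appear in the definition of $\phi(z(k))$ — and matching the powers of $2$ so that $\phi$ maps $X_0$, $Y^n$, $Z_0$ and $z(2^{n-1})$ onto exactly the operators occurring in the statement.
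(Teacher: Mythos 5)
Your proposal is correct and follows essentially the same route as the paper: both define the rescaling homomorphism $\phi$ from $\rm H_3(\mathbb{Z})$ sending $x(1)\mapsto x(2^{a_0}p)$, $y(1)\mapsto y(2^{b_0}q)$ (hence $z(1)\mapsto z(2^{a_0+b_0}pq)$), pull the action back via $\rho\circ\phi$, and apply Theorem \ref{X_0 Y^n thm}. The only cosmetic difference is that you verify the presented relations directly while the paper phrases $\phi$ as an isomorphism onto the subgroup $\langle x(2^{a_0}p), y(2^{b_0}q)\rangle$; the bookkeeping of the averaging operators and powers of $2$ matches.
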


\begin{proof}
Fix $\rho,n, a_0,b_0,  p, q$ as above.   

Let $H < H_3 (\mathbb{Z} [t_1,...,t_m])$ be the subgroup $H = \langle x (2^{a_0} p),  y (2^{b_0} q) \rangle$.  
We note that $H$ is isomorphic to $H_3 (\mathbb{Z})$ via the isomorphism $\phi :  H_3 (\mathbb{Z}) \rightarrow H$ induced by $\phi (x (1)) = x (2^{a_0} p),  \phi (y (1)) = y (2^{b_0} q)$.  Note that (by extending $\phi$ linearly)
$$\phi (X_0) = X_{a_0} (p),  \phi (Y^n) = \prod_{b= 0}^{n-1} Y_{b_0+b} (q).$$
Also,  note that
$$\phi (z (1)) = \phi ([x(1), y(1)]) = [x (2^{a_0} p),  y ( 2^{b_0} q)] = z ( 2^{a_0 + b_0} pq),$$
and thus $\phi (Z_0) = Z_{a_0 + b_0} (pq)$.   

Define a new isometric affine action $\rho_0$ of $H_3 (\mathbb{Z})$ on $\B$ by $\rho_0 = \rho \circ \phi$.  Let $\xi \in \B$,  then
\begin{align*}
\left\Vert \rho \left( X_{a_0} (p) \left( \prod_{b=0}^{n-1} Y_{b_0+b} (q) \right)  \right) \xi - \rho \left( X_{a_0} (p) \left( \prod_{b=0}^{n-1} Y_{b_0+b} (q) \right) \right)  \rho \left( Z_{a_0 + b_0} (pq) \right) \xi \right\Vert = \\
\left\Vert  \rho_0 \left( X_0 Y^n \right)  \xi - \rho_0 \left( X_0 Y^n \right) \rho_0 \left( Z_0 \right) \xi   \right\Vert \leq \\
 \max \left\lbrace r_1 \Vert \xi - \rho_0 (Z_0) \xi \Vert,  \frac{\Vert \xi - \rho_0 (z (2^{n-1} )) \xi \Vert}{2^{n-1}} \right\rbrace = \\
 \max \left\lbrace r_1 \Vert \xi - \rho (Z_{a_0+b_0} (pq)) \xi \Vert,  \frac{\Vert \xi - \rho (z (2^{n-1 + a_0 +b_0} pq)) \xi \Vert}{2^{n-1}} \right\rbrace
\end{align*}
as needed.
\end{proof}

As a result, we obtain the following:
\begin{lemma}
\label{a_1,..., lemma}
Let $\B$ be a uniformly convex Banach space $\B$ with a modulus of convexity $\delta$ and $r_1 = r_1 (\delta)$,  $0 \leq r_1  <1$ be the constant of Theorem \ref{X_0 Y^n thm}.  Also,  let $n,N \in \mathbb{N}$,  $a_1,...,a_N,  b_1,...,b_N \in \mathbb{N} \cup \lbrace 0 \rbrace$ and $p_1,...,p_N,  q_1,...,q_N \in \mathbb{Z} [t_1,...,t_m] \setminus \lbrace 0 \rbrace$. Assume there are $d \in \mathbb{N} \cup \lbrace 0 \rbrace$ and $p \in \mathbb{Z} [t_1,...,t_m] \setminus \lbrace 0 \rbrace$ such that for every $1 \leq k \leq N$ it holds that $a_k + b_k = d$ and $p_k q_k = p$.  For every $1 \leq k \leq N$,  we define $f_k \in \Prob_c (\rm H_3 (\mathbb{Z}  [t_1,...,t_m]))$ to be 
$$f_k = X_{a_k} (p_k) \left( \prod_{b= 0}^{n-1} Y_{b_k+b} (q_k) \right).$$

Then for every affine isometric action $\rho$ of $ \rm H_3 (\mathbb{Z}  [t_1,...,t_m])$ on $\B$ and every $\xi \in \B$ it holds that
\begin{align*}
 \left\Vert  \rho \left( \prod_{k=1}^{N} f_k  \right)  \xi - \rho  \left( \prod_{k=1}^{N} f_k \right)    \rho \left( Z_{d} (p) \right) \xi   \right\Vert \leq 
 \max \left\lbrace r_1^N \Vert \xi - \rho (Z_{d} (p)) \xi \Vert,  \frac{\Vert \xi - \rho (z (2^{d + n-1} p)) \xi \Vert}{2^{n-1}} \right\rbrace.
\end{align*}

In particular,  if $N= n$,  then 
\begin{align*}
 \left\Vert  \rho \left( \prod_{k=1}^{n} f_k  \right)  \xi - \rho  \left( \prod_{k=1}^{n} f_k \right)    \rho \left( Z_{d} (p) \right) \xi   \right\Vert \leq 
 \max \left\lbrace r_1^{n} \Vert \xi - \rho (Z_{d} (p)) \xi \Vert,  \frac{\Vert \xi - \rho (z (2^{d + n-1} p)) \xi \Vert}{2^{n-1}} \right\rbrace \lesssim \\
\left( \max \left\lbrace r_1,  \frac{1}{2} \right\rbrace \right)^{n} \max \left\lbrace  \Vert \xi - \rho (z (2^{d} p)) \xi \Vert,  \Vert \xi - \rho (z (2^{d + n-1} p)) \xi \Vert \right\rbrace.
\end{align*}
\end{lemma}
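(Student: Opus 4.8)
The plan is to establish the main inequality by a downward induction that peels the factors $f_k$ off from the left one at a time, invoking Corollary~\ref{X,Y prod ineq final coro} at each step, and then to deduce the ``in particular'' clause by elementary manipulations of the $\max$. The single structural fact that drives everything is that $z(2^d p)$ and $z(2^{d+n-1}p)$ are \emph{central} in ${\rm H}_3(\mathbb{Z}[t_1,\dots,t_m])$; consequently $Z_d(p) = \tfrac{e + z(2^d p)}{2}$ lies in the center of the group ring, and by the observation that $\rho(g_1)\rho(g_2) = \rho(g_1 g_2)$ for $g_1,g_2 \in \Prob_c$, the affine map $\rho(Z_d(p))$ commutes with $\rho(g)$ for every $g \in \Prob_c({\rm H}_3(\mathbb{Z}[t_1,\dots,t_m]))$, and likewise for $\rho(z(2^{d+n-1}p))$.

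Write $W_j = f_j f_{j+1}\cdots f_N$ for $1\le j\le N$, so that $W_1 = \prod_{k=1}^N f_k$ and $W_N = f_N$, and note that the tail $f_j,\dots,f_N$ still satisfies the hypotheses $a_k + b_k = d$ and $p_k q_k = p$. I would prove by downward induction on $j$ that, for every $\xi \in \B$,
$$\left\Vert \rho(W_j)\xi - \rho(W_j)\rho(Z_d(p))\xi \right\Vert \le \max\left\{ r_1^{N-j+1}\Vert \xi - \rho(Z_d(p))\xi \Vert,\ \frac{\Vert \xi - \rho(z(2^{d+n-1}p))\xi\Vert}{2^{n-1}} \right\}.$$
The base case $j = N$ is exactly Corollary~\ref{X,Y prod ineq final coro} applied to $f_N$ (with $a_0 = a_N$, $b_0 = b_N$, $p\mapsto p_N$, $q\mapsto q_N$), since $Z_{a_N+b_N}(p_Nq_N) = Z_d(p)$ and $z(2^{a_N+b_N+n-1}p_Nq_N) = z(2^{d+n-1}p)$; taking $j = 1$ at the end yields the first displayed inequality of the lemma.

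For the inductive step I would set $\eta = \rho(W_{j+1})\xi$ and apply Corollary~\ref{X,Y prod ineq final coro} to $f_j$ and the vector $\eta$. Using the centrality above, $\rho(Z_d(p))\eta = \rho(W_{j+1})\rho(Z_d(p))\xi$, so the left-hand side of that corollary equals $\Vert \rho(W_j)\xi - \rho(W_j)\rho(Z_d(p))\xi\Vert$; on the right-hand side, $\Vert \eta - \rho(Z_d(p))\eta\Vert = \Vert \rho(W_{j+1})\xi - \rho(W_{j+1})\rho(Z_d(p))\xi\Vert$ is controlled by the inductive hypothesis, while $\Vert \eta - \rho(z(2^{d+n-1}p))\eta\Vert = \Vert \pi(W_{j+1})(\xi - \rho(z(2^{d+n-1}p))\xi)\Vert \le \Vert \xi - \rho(z(2^{d+n-1}p))\xi\Vert$ by the contraction in Claim~\ref{difference to linear claim}. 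Since $r_1 < 1$, multiplying the inductive bound by $r_1$ turns $r_1^{N-j}$ into $r_1^{N-j+1}$ and leaves the second term unchanged up to the harmless factor $r_1\le 1$, closing the induction. Finally, for the ``in particular'' clause with $N = n$, I would use $\Vert \xi - \rho(Z_d(p))\xi\Vert = \tfrac12\Vert\xi - \rho(z(2^d p))\xi\Vert$ together with $r_1^n \le (\max\{r_1,\tfrac12\})^n$ and $\tfrac{1}{2^{n-1}} = 2\cdot 2^{-n} \le 2(\max\{r_1,\tfrac12\})^n$ to absorb both terms into $(\max\{r_1,\tfrac12\})^n \max\{\Vert\xi - \rho(z(2^d p))\xi\Vert,\ \Vert\xi - \rho(z(2^{d+n-1}p))\xi\Vert\}$ up to the universal constant hidden in $\lesssim$.

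The main obstacle — and the point where a naive argument fails — is that simply peeling off $f_k$ and invoking that $\rho(f_k)$ is a contraction only yields the weaker bound with $r_1^{N-1}$ (or no power of $r_1$ at all): each peeled factor must genuinely \emph{contribute} a multiplicative $r_1$. The resolution is to feed the already-transformed vector $\eta = \rho(W_{j+1})\xi$ into Corollary~\ref{X,Y prod ineq final coro} rather than $\xi$ itself, and to exploit the centrality of $z(2^d p)$ and $z(2^{d+n-1}p)$ so that the two quantities bounding the right-hand side of that corollary are precisely the ones propagated by the induction; keeping track of the indexing of the tail $f_{j+1},\dots,f_N$ so that the hypotheses $a_k + b_k = d$, $p_k q_k = p$ persist is a minor but necessary bookkeeping point.
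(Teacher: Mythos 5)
Your proposal is correct and follows essentially the same route as the paper: peel the leftmost factor, apply Corollary \ref{X,Y prod ineq final coro} to it at the vector $\rho(W_{j+1})\xi$, use centrality of the $z$-elements to commute $\rho(Z_d(p))$ past $\rho(W_{j+1})$, and close the induction via Claim \ref{difference to linear claim}. The only cosmetic differences are that you run a downward induction on the index instead of an induction on the number of factors, and you bound both arguments of the $\max$ simultaneously where the paper splits into two cases.
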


\begin{proof}
Fix $\rho$ and $\xi$ as above.  
The proof on the inequality is by induction on $N$.  The case $N=1$ was proved in Corollary \ref{X,Y prod ineq final coro}.  

Assume that the inequality holds for $N$ and let  $a_1,...,a_{N+1},  b_1,...,b_{N+1} \in \mathbb{N} \cup \lbrace 0 \rbrace$ and $p_1,...,p_{N+1},  q_1,...,q_{N+1} \in \mathbb{Z} [t_1,...,t_m] \setminus \lbrace 0 \rbrace$ such that there are $d \in \mathbb{N} \cup \lbrace 0 \rbrace$ and $p \in \mathbb{Z} [t_1,...,t_m] \setminus \lbrace 0 \rbrace$ such that for every $1 \leq k \leq N+1$ it holds that $a_k + b_k = d$ and $p_k q_k = p$.  Denote 
$$f =  \prod_{k=2}^{N+1} f_k .$$ 

By Corollary \ref{X,Y prod ineq final coro}
\begin{dmath*}
 \left\Vert  \rho \left( \prod_{k=1}^{N+1} f_k  \right)  \xi - \rho  \left( \prod_{k=1}^{N+1} f_k \right)    \rho \left( Z_{d} (p) \right) \xi   \right\Vert = 
  \left\Vert  \rho \left(f_ 1 f  \right)  \xi - \rho  \left( f_1 f \right)    \rho \left( Z_{d} (p) \right) \xi   \right\Vert = 
    \left\Vert  \rho \left(f_ 1 \right)  \rho (f) \xi - \rho  \left( f_1  \right)    \rho \left( Z_{d} (p) \right) \rho (f) \xi   \right\Vert \leq
 \max \left\lbrace r_1 \Vert \rho (f) \xi - \rho (Z_{d} (p)) \rho (f) \xi \Vert,  \frac{\Vert  \rho (f) \xi  - \rho (z (2^{d + n-1} p)) \rho (f) \xi \Vert}{2^{n-1}} \right\rbrace = 
 \max \left\lbrace r_1 \Vert \rho (f) \xi - \rho (f) \rho (Z_{d} (p))  \xi \Vert,  \frac{\Vert  \rho (f) \xi  - \rho (f)  \rho (z (2^{d + n-1} p)) \xi \Vert}{2^{n-1}} \right\rbrace .
\end{dmath*}
If $r_1 \Vert \rho (f) \xi - \rho (f) \rho (Z_{d} (p))  \xi \Vert \leq \frac{\Vert  \rho (f) \xi  - \rho (f)  \rho (z (2^{d + n-1} p)) \xi \Vert}{2^{n-1}}$, we are done by Claim \ref{difference to linear claim}:
$$ \frac{\Vert  \rho (f) \xi  - \rho (f)  \rho (z (2^{d + n-1} p)) \xi \Vert}{2^{n-1}} \leq  \frac{\Vert \xi  - \rho (z (2^{d + n-1} p)) \xi \Vert}{2^{n-1}}.$$

Assume that $r_1 \Vert \rho (f) \xi - \rho (f) \rho (Z_{d} (p))  \xi \Vert > \frac{\Vert  \rho (f) \xi  - \rho (f)  \rho (z (2^{d + n-1} p)) \xi \Vert}{2^{n-1}}$, then by the induction assumption
\begin{align*}
r_1 \left\Vert  \rho \left( f \right)  \xi - \rho  \left( f \right)    \rho \left( Z_{d} (p) \right) \xi   \right\Vert \leq 
r_1 \max \left\lbrace r_1^N \Vert \xi - \rho (Z_{d} (p)) \xi \Vert,  \frac{\Vert \xi - \rho (z (2^{d + n-1} p)) \xi \Vert}{2^{n-1}} \right\rbrace \leq \\
 \max \left\lbrace r_1^{N+1} \Vert \xi - \rho (Z_{d} (p)) \xi \Vert,  \frac{\Vert \xi - \rho (z (2^{d + n-1} p)) \xi \Vert}{2^{n-1}} \right\rbrace
\end{align*}
as needed.
\end{proof}

\begin{lemma}
\label{change h_k to f_k lemma}
Let $\B$ be a Banach space $\B$ and $d_2 \in \mathbb{N}$ be  a constant.  Also,  let $n,N \in \mathbb{N},   2n < d_2$,  $a_1,...,a_N,  b_1,...,b_N \in \mathbb{N} \cup \lbrace 0 \rbrace$ and $p_1,...,p_N,  q_1,...,q_N \in  \mathcal{B}^{d_2}$ such that for every $1 \leq k  \leq N-1$ the following holds:
\begin{itemize}
\item $a_k \geq a_{k+1}$ and $b_k \leq b_{k+1}$
\item $\deg (p_{k}) \geq \deg (p_{k+1})$ and $\deg (q_{k}) \leq \deg (q_{k+1})$
\item $a_{k+1} + b_k \leq d_2-2n$ and $\deg (p_{k+1}) + \deg (q_k) \leq d_2$
\item Either $a_k > a_{k+1} $ or $\deg (p_k) >  \deg (p_{k+1})$
\end{itemize}

For every $1 \leq k \leq N$,  we define $h_k,  f_k \in \Prob_c (\rm H_3 (\mathbb{Z}  [t_1,...,t_m]))$ to be 
$$h_k =  \prod_{b= 0}^{n-1} Y_{b_k+b} (q_k) ,$$  
$$f_k = X_{a_k} (p_k) h_k = X_{a_k} (p_k) \left( \prod_{b= 0}^{n-1} Y_{b_k+b} (q_k) \right).$$

Then for every affine isometric action $\rho$ of $\rm H_3 (\mathbb{Z} [t_1,...,t_m ])$ on $\B$ and every $\xi \in \B$ it holds that 
\begin{align*}
\left\Vert \rho \left( \prod_{k=1}^N X_{a_k} (p_k) \right) \rho \left(\prod_{k=1}^N  h_k \right) \rho \left(\underline{Z}^{d_2} \right) \xi - \rho \left( \prod_{k=1}^N f_k \right) \rho \left(\underline{Z}^{d_2} \right) \xi \right\Vert \lesssim \\
(N-1) (d_2 + m)^m \left( \frac{1}{2} \right)^{n} \max_{\underline{t} \in \mathcal{B}^{d_2}} \Vert \xi - \rho (z (2^{d_2} \underline{t})) \xi \Vert.
\end{align*}

In particular,  if $N=n$, then 
\begin{align*}
\left\Vert \rho \left( \prod_{k=1}^{n} X_{a_k} (p_k) \right) \rho \left(\prod_{k=1}^{n}  h_k \right) \rho \left(\underline{Z}^{d_2} \right) \xi - \rho \left( \prod_{k=1}^{n} f_k \right) \rho \left(\underline{Z}^{d_2} \right) \xi \right\Vert \lesssim \\
(n-1) (d_2 + m)^m \left( \frac{1}{2} \right)^{n} \max_{\underline{t} \in \mathcal{B}^{d_2}} \Vert \xi - \rho (z (2^{d_2} \underline{t})) \xi \Vert \lesssim (d_2 +m)^m \left( \frac{3}{4} \right)^{n} \max_{\underline{t} \in \mathcal{B}^{d_2}} \Vert \xi - \rho (z (2^{d_2} \underline{t})) \xi \Vert.
\end{align*}

\end{lemma}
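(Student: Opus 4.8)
The plan is to prove the displayed inequality by a telescoping argument that interpolates between the two sides through $N-1$ intermediate probability measures, each obtained from the previous one by moving a single factor $X_{a_j}(p_j)$ leftward past the block $h_1\cdots h_{j-1}$ of already-collected $Y$-averages, and to bound each increment by Theorem \ref{change of order thm}. Concretely, for $0\le j\le N$ put
$$w_j \;=\; \rho\!\left(\prod_{k\le j}X_{a_k}(p_k)\right)\rho\!\left(\prod_{k\le j}h_k\right)\rho\!\left(\prod_{k>j}f_k\right)\rho(\underline{Z}^{d_2})\,\xi .$$
Using $f_k=X_{a_k}(p_k)h_k$ and multiplicativity of $\rho$ on $\Prob_c$, one has $w_0=\rho(\prod_k f_k)\rho(\underline{Z}^{d_2})\xi=w_1$ and $w_N=\rho(\prod_k X_{a_k}(p_k))\rho(\prod_k h_k)\rho(\underline{Z}^{d_2})\xi$. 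For $2\le j\le N$ the measures defining $w_{j-1}$ and $w_j$ differ only in the order of the block $H_j:=h_1\cdots h_{j-1}$ (supported on $\{y(p)\}$) and the factor $X_{a_j}(p_j)$ (supported on $\{x(p)\}$), so by the triangle inequality it suffices to bound $\|w_{j-1}-w_j\|$ for each such $j$.

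\textbf{Bounding one increment.} Writing $A_j=\prod_{k<j}X_{a_k}(p_k)$ and $C_j=h_j\prod_{k>j}f_k$, one has $w_{j-1}=\rho(A_j)\rho(H_jX_{a_j}(p_j))\rho(C_j)\rho(\underline{Z}^{d_2})\xi$ and $w_j=\rho(A_j)\rho(X_{a_j}(p_j)H_j)\rho(C_j)\rho(\underline{Z}^{d_2})\xi$. Since $\underline{Z}^{d_2}$ is a convex combination of products of the central elements $z(\cdot)$, it is central in $\mathbb{R}[\rm H_3(\mathbb{Z}[t_1,\dots,t_m])]$; hence $\rho(\underline{Z}^{d_2})$ commutes with $\rho(C_j)$, and with $\eta_j:=\rho(C_j)\xi$ I get $w_{j-1}=\rho(A_j)\rho(H_jX_{a_j}(p_j))\rho(\underline{Z}^{d_2})\eta_j$ and $w_j=\rho(A_j)\rho(X_{a_j}(p_j)H_j)\rho(\underline{Z}^{d_2})\eta_j$. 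As $\rho(A_j)$ contracts differences (Claim \ref{difference to linear claim}), $\|w_{j-1}-w_j\|\le\|\rho(H_jX_{a_j}(p_j))\rho(\underline{Z}^{d_2})\eta_j-\rho(X_{a_j}(p_j)H_j)\rho(\underline{Z}^{d_2})\eta_j\|$, which is exactly the quantity bounded by Theorem \ref{change of order thm} applied with $f=X_{a_j}(p_j)$, $h=H_j$ and base vector $\eta_j$. A second use of the centrality of the $z(\cdot)$'s together with Claim \ref{difference to linear claim} gives $\|\eta_j-\rho(z(2^{d_2}\underline{t}))\eta_j\|\le\|\xi-\rho(z(2^{d_2}\underline{t}))\xi\|$, so the resulting estimate is expressed through $\xi$ itself.

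\textbf{Checking the hypotheses of Theorem \ref{change of order thm}.} One has $\supp(X_{a_j}(p_j))\subseteq\{x(p):p\in\Poly(\deg p_j,\,a_j+1)\}$, since $2^{a_j}p_j$ is the monomial $p_j$ with coefficient $2^{a_j}<2^{a_j+1}$; and, expanding $H_j=2^{-n(j-1)}\sum y\bigl(\sum_i c_i2^{b_i}q_i\bigr)$ with $0\le c_i\le 2^n-1$ and using the monotonicity of the $\deg q_k$ and of the $b_k$ (first two bulleted hypotheses), one finds $\supp(H_j)\subseteq\{y(p):p\in\Poly(\deg q_{j-1},\,n+b_{j-1}+O(\log N))\}$. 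The third bulleted hypothesis, in the instance $k=j-1$, reads $\deg p_j+\deg q_{j-1}\le d_2$ and $a_j+b_{j-1}\le d_2-2n$; these give $d_1+d_3\le d_2$ (so $\mathcal{B}^{d_1+d_3}\subseteq\mathcal{B}^{d_2}$) and $d_1'+d_3'=(a_j+1)+(n+b_{j-1}+O(\log N))\le d_2-n+O(\log N)$, whence Theorem \ref{change of order thm} yields $\|w_{j-1}-w_j\|\lesssim(d_2+m)^m(1/2)^{n}\max_{\underline{t}\in\mathcal{B}^{d_2}}\|\xi-\rho(z(2^{d_2}\underline{t}))\xi\|$ after absorbing the logarithmic loss. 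Summing over $j=2,\dots,N$ gives the first displayed bound. For the ``in particular'' statement one specializes $N=n$ and uses $(n-1)(1/2)^n\lesssim(3/4)^n$, equivalently that $(n-1)(2/3)^n$ is bounded.

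\textbf{Main obstacle.} The argument is conceptually light — an $(N-1)$-fold iteration of Theorem \ref{change of order thm} glued together by the centrality of $z(\cdot)$ — so the only real work is this last bookkeeping: at each step one must pin down exactly the $\Poly(d,d')$-classes containing the supports of $X_{a_j}(p_j)$ and of the growing $Y$-block $H_j$, in particular keeping the coefficients in $H_j$ under control despite possible repetitions among the monomials $q_k$, and verify that the four monotonicity/size hypotheses translate precisely into the inequalities $d_1+d_3\le d_2$, $d_1'+d_3'\le d_2$ needed to invoke that theorem with a geometric gain of essentially $(1/2)^n$ per step.
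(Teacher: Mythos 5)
Your telescoping is the mirror image of the paper's, and the mirror image does not quite work. You move a single factor $X_{a_j}(p_j)$ past the accumulated block $H_j=h_1\cdots h_{j-1}$, so the side on which many factors get multiplied together is the $Y$-side. But the strictness hypothesis (``either $a_k>a_{k+1}$ or $\deg(p_k)>\deg(p_{k+1})$'') lives entirely on the $X$-side; nothing forces the pairs $(b_k,q_k)$ to be distinct. When several $q_k$ coincide, the coefficient of that monomial in $\sum_k c_k 2^{b_k}q_k$ can be as large as $(j-1)(2^n-1)2^{b_{j-1}}$, so the best available statement is $\supp(H_j)\subseteq\{y(p):p\in\Poly(\deg q_{j-1},\,b_{j-1}+n+\lceil\log_2(j-1)\rceil+1)\}$. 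This is the ``$O(\log N)$'' you flag, but it cannot be absorbed: it costs a multiplicative factor of order $j$ in the $(1/2)^{d_2-d_1'-d_3'}$ bound of Theorem \ref{change of order thm}, so summing over $j$ yields roughly $N^2(d_2+m)^m(1/2)^n$ rather than the stated $(N-1)(d_2+m)^m(1/2)^n$, and $N$ is a free parameter, so this is not a universal-constant issue. Worse, Theorem \ref{change of order thm} requires $d_1'+d_3'\le d_2$; with $d_1'+d_3'=a_j+1+b_{j-1}+n+\lceil\log_2(j-1)\rceil+1$ and only $a_j+b_{j-1}\le d_2-2n$ available, this fails outright once $j-1>2^{n-2}$, so for $N$ large relative to $2^n$ you cannot even invoke the theorem at the later steps. (Your argument does essentially suffice for the ``in particular'' case $N=n$, where the polynomial loss is absorbed by $(3/4)^n$, but it does not prove the first displayed inequality for general $N$.)

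The paper telescopes in the opposite direction: for each $k_0$ it moves the single block $h_{k_0}$ past $f:=\prod_{k>k_0}X_{a_k}(p_k)$, using Lemma \ref{f_1, f_2 lemma} to strip the surrounding factors. There the combined block sits on the $X$-side, and the fourth bullet is exactly what is needed: for a fixed monomial $\underline{t}$, the indices $k>k_0$ with $p_k=\underline{t}$ have pairwise distinct $a_k\le a_{k_0+1}$, so every coefficient of an element of $\supp(f)$ is a sum of distinct powers $2^{a_k}$ and hence $<2^{a_{k_0+1}+1}$, giving $\supp(f)\subseteq\{x(p):p\in\Poly(\deg p_{k_0+1},\,a_{k_0+1}+1)\}$ with no loss, while the single $h_{k_0}$ contributes $\Poly(\deg q_{k_0},\,b_{k_0}+n)$ and $d_1'+d_3'\le d_2-n+1\le d_2$ always holds. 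Reversing your telescoping in this way repairs the argument; the rest of your proposal (the reduction to a single swap via centrality of the $z(\cdot)$'s and Claim \ref{difference to linear claim}, and the check $d_1+d_3\le d_2$) is correct.
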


\begin{proof}
By Lemma \ref{f_1, f_2 lemma} (and the triangle inequality),  it is enough to prove that for every $\rho$,  every $\xi$ and every $1 \leq k_0 \leq N-1$ it holds that 
\begin{align*}
\left\Vert \rho \left( \prod_{k=k_0+1}^N X_{a_k} (p_k) \right) \rho \left(  h_{k_0} \right) \rho \left(\underline{Z}^{d_2} \right) \xi -  \rho \left(  h_{k_0} \right) \rho \left( \prod_{k=k_0+1}^N X_{a_k} (p_k) \right) \rho \left(\underline{Z}^{d_2} \right) \xi \right\Vert \lesssim \\
(d_2 + m)^m \left( \frac{1}{2} \right)^{n} \max_{\underline{t} \in \mathcal{B}^{d_2}} \Vert \xi - \rho (z (2^{d_2} \underline{t})) \xi \Vert.
\end{align*}

Denote $f =  \prod_{k=k_0+1}^N X_{a_k} (p_k)$.  We note that by the assumption that for every $k$,  $a_k \geq a_{k+1} $ and $\deg (p_k) \geq  \deg (p_{k+1})$ and at least one of these inequalities is a sharp.  Thus,
$$\supp (f) \subseteq \lbrace x (p) : p \in  \Poly (\deg (p_{k_0 +1}), a_{k_0 +1} +1 ) \rbrace,$$
and
$$\supp (h_{k_0}) \subseteq \lbrace y (p) : p \in \Poly (\deg (q_{k_0} ),   b_{k_0} +n ) \rbrace .$$

By Theorem \ref{change of order thm} it holds that 
\begin{align*}
& \left\Vert \rho \left( f \right) \rho \left(  h_{k_0} \right) \rho \left(\underline{Z}^{d_2} \right) \xi -  \rho \left(  h_{k_0} \right) \rho \left( f \right) \rho \left(\underline{Z}^{d_2} \right) \xi \right\Vert \lesssim \\
& (d_2 + m)^m \left( \frac{1}{2} \right)^{d_2-(a_{k_0+1}+1)-(b_{k_0}+n)} \max_{\underline{t} \in \mathcal{B}^{\deg (p_{k_0+1} )+\deg (q_{k_0})}} \Vert \xi - \rho (z (2^{d_2} \underline{t})) \xi \Vert \lesssim^{a_{k_0+1} + b_{k_0} \leq d_2 - 2n} \\
& (d_2 + m)^m \left( \frac{1}{2} \right)^{n} \max_{\underline{t} \in \mathcal{B}^{d_2}} \Vert \xi - \rho (z (2^{d_2} \underline{t})) \xi \Vert.
\end{align*}
\end{proof}

Combining these two lemmas yields the following result:

\begin{theorem}
\label{r_2 thm}
Let $d_1, d_2, d_3,n \in \mathbb{N},  d ' \in \mathbb{N} \cup \lbrace 0 \rbrace$ be constants and $\underline{t} ' \in \mathbb{Z} [t_1,...,t_m] \setminus \lbrace 0 \rbrace$ be a monomial.  Also,  let $\B$ be a uniformly convex Banach space with a modulus of convexity $\delta$.  Denote $r_2 = r_2 (\delta) <1$ to be
$$r_2 = \max \left\lbrace r_1 (\delta ),  \frac{3}{4} \right\rbrace,$$
where $r_1 (\delta)$ is the constant of Theorem \ref{X_0 Y^n thm}.

If there are $n \in \mathbb{N}$,  $2n < d_2$,  $a_1,...,a_{n},  b_1,...,b_{n} \in \mathbb{N} \cup \lbrace 0 \rbrace$ and $p_1,...,p_{n},  q_1,...,q_{n} \in \mathcal{B}^{d_2}$ such that the following holds: 
\begin{itemize}
\item For every $1 \leq k \leq n$,  $a_k + b_k = d '$ and $p_k q_k = \underline{t} '$
\item For every $1 \leq k \leq n-1$,  $d_1 -1 \geq a_k \geq a_{k+1}$ and $b_k \leq b_{k+1} \leq d_3 -1$
\item For every $1 \leq k \leq n-1$,  $\deg (p_{k}) \geq \deg (p_{k+1})$ and $\deg (q_{k}) \leq \deg (q_{k+1})$
\item For every $1 \leq k \leq n-1$,  $a_{k+1} + b_k \leq d_2-2n$ and $\deg (p_{k+1}) + \deg (q_k) \leq d_2$
\item For every $1 \leq k \leq n-1$,  either $a_k > a_{k+1} $ or $\deg (p_k) >  \deg (p_{k+1})$
\end{itemize}
Then for every affine isometric action $\rho$ of  $ \rm H_3 (\mathbb{Z} [t_1,...,t_m])$ on $\B$ and every $\xi \in \B$ it holds that 
\begin{align*}
\left\Vert \rho \left( \underline{X}^{d_1} \right) \rho \left(  \underline{Z}^{d_2} \right) \rho \left(  \underline{Y}^{d_3} \right) \xi -  \rho \left( \underline{X}^{d_1}  \right) \rho \left(  \underline{Z}^{d_2} \right)  \rho \left(Z_{d '} (\underline{t} ' )\right) \rho \left(  \underline{Y}^{d_3} \right) \xi  \right\Vert \lesssim \\
(d_2 + m)^m r_2^{n}  \max \left\lbrace \max_{\underline{t} \in \mathcal{B}^{d_2}} \Vert \xi - \rho (z (2^{d_2} \underline{t})) \xi \Vert,   \Vert \xi - \rho (z (2^{d '} \underline{t} ' )) \xi \Vert,  \Vert \xi - \rho (z (2^{d ' + n-1} \underline{t} ')) \xi \Vert \right\rbrace.
\end{align*}
\end{theorem}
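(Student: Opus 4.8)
The plan is to peel off from $\underline{X}^{d_1}$ and $\underline{Y}^{d_3}$ the ``active'' subproducts determined by the data $(a_k,b_k,p_k,q_k)_{k=1}^{n}$, to reduce the insertion of $\rho(Z_{d'}(\underline{t}'))$ to these subproducts, and then to feed them into Lemma~\ref{change h_k to f_k lemma} and Lemma~\ref{a_1,..., lemma}, which already package all the uniform-convexity input (via Theorem~\ref{X_0 Y^n thm}). First I would use that all the generators $x(\cdot)$ commute pairwise, and likewise all the $y(\cdot)$, together with the strict monotonicity hypotheses --- which force the $(a_k,p_k)$ to be pairwise distinct and the blocks $h_k=\prod_{b=0}^{n-1}Y_{b_k+b}(q_k)$ to be disjoint subproducts --- to factor, in $\Prob_c(\mathrm{H}_3(\mathbb{Z}[t_1,\dots,t_m]))$,
\[
\underline{X}^{d_1}=\underline{X}'\cdot\prod_{k=1}^{n}X_{a_k}(p_k),\qquad \underline{Y}^{d_3}=\Bigl(\prod_{k=1}^{n}h_k\Bigr)\cdot\underline{Y}',
\]
for suitable $\underline{X}',\underline{Y}'$. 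Since $z$ is central, $\rho(\underline{Z}^{d_2})$ and $\rho(Z_{d'}(\underline{t}'))$ commute with $\rho$ of anything supported on $x$'s or on $y$'s; sliding $\underline{Z}^{d_2}$ (and, on the right-hand side, also $Z_{d'}(\underline{t}')$) past $\prod_{k}h_k$, the two sides of the asserted inequality become $\rho(\underline{X}')\rho(F)\rho(\underline{Y}')\xi$ and $\rho(\underline{X}')\rho(F')\rho(\underline{Y}')\xi$, where $F=\prod_{k=1}^{n}X_{a_k}(p_k)\cdot\prod_{k=1}^{n}h_k\cdot\underline{Z}^{d_2}$ and $F'=\prod_{k=1}^{n}X_{a_k}(p_k)\cdot\prod_{k=1}^{n}h_k\cdot Z_{d'}(\underline{t}')\cdot\underline{Z}^{d_2}$.

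Next I would establish the core estimate: with $f_k=X_{a_k}(p_k)h_k$ and $M(\zeta)=\max\bigl\{\max_{\underline{t}\in\mathcal{B}^{d_2}}\Vert\zeta-\rho(z(2^{d_2}\underline{t}))\zeta\Vert,\ \Vert\zeta-\rho(z(2^{d'}\underline{t}'))\zeta\Vert,\ \Vert\zeta-\rho(z(2^{d'+n-1}\underline{t}'))\zeta\Vert\bigr\}$, one has $\Vert\rho(F)\zeta-\rho(F')\zeta\Vert\lesssim(d_2+m)^m r_2^{n}M(\zeta)$ for every $\zeta\in\B$. For this, rewrite $\rho(F')\zeta=\rho(\prod_{k}X_{a_k}(p_k))\rho(\prod_{k}h_k)\rho(\underline{Z}^{d_2})\rho(Z_{d'}(\underline{t}'))\zeta$ using centrality of $z$, and apply Lemma~\ref{change h_k to f_k lemma} with $N=n$ twice --- to $\zeta$ and to $\rho(Z_{d'}(\underline{t}'))\zeta$ --- to replace $\rho(\prod_{k}X_{a_k}(p_k))\rho(\prod_{k}h_k)$ by $\rho(\prod_{k}f_k)$ at cost $\lesssim(d_2+m)^m(3/4)^{n}M(\zeta)$ in each instance; here one uses that $\rho(z(2^{d_2}\underline{t}))$ commutes with $\rho(Z_{d'}(\underline{t}'))$ and that $\rho(Z_{d'}(\underline{t}'))$ is a contraction (Claim~\ref{difference to linear claim}) to see that $\max_{\underline{t}}\Vert\rho(Z_{d'}(\underline{t}'))\zeta-\rho(z(2^{d_2}\underline{t}))\rho(Z_{d'}(\underline{t}'))\zeta\Vert\le M(\zeta)$. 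What remains, namely $\Vert\rho(\prod_{k}f_k)\rho(\underline{Z}^{d_2})\zeta-\rho(\prod_{k}f_k)\rho(Z_{d'}(\underline{t}'))\rho(\underline{Z}^{d_2})\zeta\Vert$, is bounded by Lemma~\ref{a_1,..., lemma} with $N=n$ (applied to $\rho(\underline{Z}^{d_2})\zeta$) by $\lesssim r_2^{n}$ times the maximum of $\Vert\rho(\underline{Z}^{d_2})\zeta-\rho(z(2^{d'}\underline{t}'))\rho(\underline{Z}^{d_2})\zeta\Vert$ and $\Vert\rho(\underline{Z}^{d_2})\zeta-\rho(z(2^{d'+n-1}\underline{t}'))\rho(\underline{Z}^{d_2})\zeta\Vert$, each of which is $\le M(\zeta)$ since $z$-elements commute with $\underline{Z}^{d_2}$ and $\underline{Z}^{d_2}\in\Prob_c$. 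The triangle inequality, together with $3/4\le r_2$ and $1\le(d_2+m)^m$, then yields the core estimate.

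Finally, the core estimate holds for every $\zeta$, and every $\rho(h_i)$ appearing in $M$ is supported on $\{z(p):p\in\mathbb{Z}[t_1,\dots,t_m]\}$; hence Lemma~\ref{f_1, f_2 lemma}, applied with $f=F$, $f'=F'$, $f_1=\underline{X}'$, $f_2=\underline{Y}'$, upgrades it to
\[
\bigl\Vert\rho(\underline{X}')\rho(F)\rho(\underline{Y}')\xi-\rho(\underline{X}')\rho(F')\rho(\underline{Y}')\xi\bigr\Vert\lesssim(d_2+m)^m r_2^{n}M(\xi),
\]
and by the first paragraph the left-hand side is exactly the quantity to be bounded, which is the asserted inequality.

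I expect the main obstacle to be not the analytic content --- which is wholly contained in Lemmas~\ref{change h_k to f_k lemma} and~\ref{a_1,..., lemma}, themselves resting on Theorem~\ref{X_0 Y^n thm} --- but rather the bookkeeping in the first step: checking that the prescribed $X_{a_k}(p_k)$ and blocks $h_k$ genuinely occur as disjoint subproducts of $\underline{X}^{d_1}$ and $\underline{Y}^{d_3}$ (which is where the conditions $d_1-1\ge a_k$, $b_{k+1}\le d_3-1$, $2n<d_2$ and the strict monotonicity enter), and then tracking precisely which commutations among the $x$-, $y$- and central $z$-elements are invoked, so that each use of Lemmas~\ref{f_1, f_2 lemma}, \ref{change h_k to f_k lemma} and~\ref{a_1,..., lemma} matches its hypotheses verbatim.
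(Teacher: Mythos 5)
Your proposal is correct and follows essentially the same route as the paper: reduce, via the factorization of $\underline{X}^{d_1}$ and $\underline{Y}^{d_3}$ into the selected factors $\prod_k X_{a_k}(p_k)$, $\prod_k h_k$ and a remainder plus Lemma \ref{f_1, f_2 lemma}, to a core estimate; replace $\rho(\prod_k X_{a_k}(p_k))\rho(\prod_k h_k)$ by $\rho(\prod_k f_k)$ at cost controlled by Lemma \ref{change h_k to f_k lemma}; strip off $\underline{Z}^{d_2}$ with Claim \ref{difference to linear claim}; and conclude with Lemma \ref{a_1,..., lemma} with $N=n$. The only differences are cosmetic (you prove the core estimate first and then invoke Lemma \ref{f_1, f_2 lemma}, and you make explicit the two applications of Lemma \ref{change h_k to f_k lemma} and the factorization bookkeeping that the paper leaves implicit).
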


\begin{proof}
Fix $\rho$ as above.  Denote 
$$h_k =  \prod_{b= 0}^{n-1} Y_{b_k+b} (q_k) , $$
$$ f_k = X_{a_k} (p_k) h_k = X_{a_k} (p_k) \left( \prod_{b= 0}^{n-1} Y_{b_k+b} (q_k) \right).$$
By Lemma \ref{f_1, f_2 lemma}, it is enough to prove that for every $\xi \in \B$, 
\begin{dmath*}
\left\Vert \rho \left( \prod_{k=1}^{n} X_{a_k} (p_k) \right) \rho \left(\underline{Z}^{d_2} \right)  \rho \left(\prod_{k=1}^{n}  h_k \right) \xi - \rho \left( \prod_{k=1}^{n} X_{a_k} (p_k) \right) \rho \left(\underline{Z}^{d_2} \right) \rho \left(Z_{d '} (\underline{t} ') \right)  \rho \left(\prod_{k=1}^{n}  h_k \right) \xi  \right\Vert \lesssim \\
(d_2 + m)^m r_2^{n}  \max \left\lbrace \max_{\underline{t} \in \mathcal{B}^{d_2}} \Vert \xi - \rho (z (2^{d_2} \underline{t})) \xi \Vert,   \Vert \xi - \rho (z (2^{d '} \underline{t} ' )) \xi \Vert,  \Vert \xi - \rho (z (2^{d ' + n-1} \underline{t} ')) \xi \Vert \right\rbrace.
\end{dmath*}
Fix $\xi \in \B$. Then 
\begin{dmath*}
\left\Vert \rho \left( \prod_{k=1}^{n} X_{a_k} (p_k) \right) \rho \left(\underline{Z}^{d_2} \right)  \rho \left(\prod_{k=1}^{n}  h_k \right) \xi - \rho \left( \prod_{k=1}^{n} X_{a_k} (p_k) \right) \rho \left(\underline{Z}^{d_2} \right) \rho \left(Z_{d '} (\underline{t} ') \right)  \rho \left(\prod_{k=1}^{n}  h_k \right) \xi  \right\Vert \lesssim^{\text{Lemma } \ref{change h_k to f_k lemma}} \\
{(d_2 +m)^m r_2^{n}  \max_{\underline{t} \in \mathcal{B}^{d_2}} \Vert \xi - \rho (z (2^{d_2} \underline{t})) \xi \Vert } +  \left\Vert \rho \left(\underline{Z}^{d_2} \right) \rho \left( \prod_{k=1}^{n} f_k \right)    \xi - \rho \left(\underline{Z}^{d_2} \right) \rho \left( \prod_{k=1}^{n} f_k \right)  \rho \left(Z_{d '} (\underline{t} ') \right)  \xi  \right\Vert \leq^{\text{Claim } \ref{difference to linear claim}} \\
(d_2 +m)^m r_2^{n}  \max_{\underline{t} \in \mathcal{B}^{d_2}} \Vert \xi - \rho (z (2^{d_2} \underline{t})) \xi \Vert +  \left\Vert \rho \left( \prod_{k=1}^{n} f_k \right)    \xi -  \rho \left( \prod_{k=1}^{n} f_k \right)  \rho \left(Z_{d '} (\underline{t} ') \right)  \xi  \right\Vert \lesssim^{\text{Lemma } \ref{a_1,..., lemma}}  \\
(d_2 +m)^m r_2^{n}  \max \left\lbrace \max_{\underline{t} \in \mathcal{B}^{d_2}} \Vert \xi - \rho (z (2^{d_2} \underline{t})) \xi \Vert,   \Vert \xi - \rho (z (2^{d '} \underline{t} ' )) \xi \Vert,  \Vert \xi - \rho (z (2^{d ' + n-1} \underline{t} ')) \xi \Vert \right\rbrace.
\end{dmath*}
\end{proof}

This result has two useful instantiations:
\begin{corollary}
\label{d' geq d_2 -2n coro} 
Let $d_1, d_2, d_3 \in \mathbb{N}, d_2 \geq 3$ be constants such that $d_1, d_3 \leq d_2,  d_1+d_3 \geq d_2 +4$ and $\B$ be a uniformly convex Banach space $\B$ with a modulus of convexity $\delta$.  Also,  let $r_2 = r_2 (\delta)$ be the constant of Theorem \ref{r_2 thm}.  

Denote $n = \lfloor \frac{\sqrt{d_1+d_3 -d_2}}{2} \rfloor$.  Then for every $\underline{t} ' \in \mathcal{B}^{d_2+1}$,  every $d_2 - 2n \leq d ' \leq d_2$,  every affine isometric action $\rho$ of  $ \rm H_3 (\mathbb{Z} [t_1,...,t_m])$ on $\B$ and every $\xi \in \B$ it holds that 
\begin{align*}
\left\Vert \rho \left( \underline{X}^{d_1} \right) \rho \left(  \underline{Z}^{d_2} \right)  \rho \left(  \underline{Y}^{d_3} \right)  \xi -  \rho \left( \underline{X}^{d_1}  \right) \rho \left(  \underline{Z}^{d_2} \right)  \rho \left(Z_{d '} (\underline{t} ' )\right)   \rho \left( \underline{Y}^{d_3} \right)  \xi  \right\Vert \lesssim \\
(d_2 +m)^m r_2^{n}  \max_{0 \leq d \leq d_2 +n-1}  \max_{\underline{t} \in \mathcal{B}^{d_2+1}} \Vert \xi - \rho (z (2^{d} \underline{t})) \xi \Vert.
 \end{align*}
\end{corollary}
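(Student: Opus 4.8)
The plan is to deduce the corollary directly from Theorem~\ref{r_2 thm}: given $\underline{t}'\in\mathcal{B}^{d_2+1}$ and $d_2-2n\le d'\le d_2$, I will exhibit sequences $a_1,\dots,a_n,b_1,\dots,b_n\in\mathbb{N}\cup\{0\}$ and monomials $p_1,\dots,p_n,q_1,\dots,q_n\in\mathcal{B}^{d_2}$ satisfying the five hypotheses of that theorem. Once this is in place, the estimate of Theorem~\ref{r_2 thm} yields the bound immediately, since $\mathcal{B}^{d_2}\subseteq\mathcal{B}^{d_2+1}$, $\underline{t}'\in\mathcal{B}^{d_2+1}$, and all of $d_2$, $d'$ and $d'+n-1$ lie in $[0,d_2+n-1]$ (using $n\ge1$, $d'\le d_2$ and $d'\ge0$), so each of the three quantities $\max_{\underline{t}\in\mathcal{B}^{d_2}}\Vert\xi-\rho(z(2^{d_2}\underline{t}))\xi\Vert$, $\Vert\xi-\rho(z(2^{d'}\underline{t}'))\xi\Vert$ and $\Vert\xi-\rho(z(2^{d'+n-1}\underline{t}'))\xi\Vert$ on the right-hand side of Theorem~\ref{r_2 thm} is dominated by $\max_{0\le d\le d_2+n-1}\max_{\underline{t}\in\mathcal{B}^{d_2+1}}\Vert\xi-\rho(z(2^d\underline{t}))\xi\Vert$.

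\emph{Arithmetic input.} First I would unpack the hypotheses into a few numerical facts about $n=\lfloor\tfrac12\sqrt{d_1+d_3-d_2}\rfloor$. From $2n\le\sqrt{d_1+d_3-d_2}$ we get $4n^2\le d_1+d_3-d_2$, and since $d_1,d_3\le d_2$ this forces $4n^2\le\min\{d_1,d_3,d_2\}$, while $d_1+d_3-d_2\ge4$ gives $n\ge1$ and hence $d_2\ge4$, whence $2n<d_2$. Combining $4n^2\le d_1,d_3,d_2$ with $d'\ge d_2-2n$, elementary manipulations yield $2n(n-1)\le\min\{d_1-1,d'\}$, $2n(n-1)\le d_3-1$, and $2n(n-1)\le d_1+d_3-d_2-2\le d_1+d_3-d'-2$.

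\emph{The choice of data.} Set $a_k=\min\{d_1-1,d'\}-2n(k-1)$ and $b_k=d'-a_k$ for $1\le k\le n$. The arithmetic input makes each $a_k$ a non-negative integer with $a_1\le\min\{d_1-1,d'\}$ and $a_n\ge\max\{0,d'-d_3+1\}$, hence $0\le b_1\le\cdots\le b_n\le d_3-1$, and $a_k-a_{k+1}=2n$. For the monomials, write $e=\deg(\underline{t}')\le d_2+1$: if $e\le d_2$ take $p_k=\underline{t}'$ and $q_k=1$ for all $k$; if $e=d_2+1$, fix an ordering $v_1,\dots,v_e$ of the $e$ variables occurring in $\underline{t}'$ (counted with multiplicity), let $q_k=v_1\cdots v_k$ and $p_k=\underline{t}'/q_k$, so $\deg(p_k)=e-k$ and $\deg(q_k)=k$. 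In both cases $p_kq_k=\underline{t}'$, the degrees $\deg(p_k)$ are non-increasing, $\deg(q_k)$ non-decreasing, $\deg(p_{k+1})+\deg(q_k)\le d_2$ (the sum being $e$ in the first case and $d_2$ in the second), and $p_k,q_k\in\mathcal{B}^{d_2}$ (in the second case $\deg(q_k)=k\le n<d_2$ and $\deg(p_k)=d_2+1-k\le d_2$).

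\emph{Verification and conclusion.} Then I would check the five hypotheses of Theorem~\ref{r_2 thm} for this data: $a_k+b_k=d'$ and $p_kq_k=\underline{t}'$ hold by construction; $d_1-1\ge a_k\ge a_{k+1}$, $b_k\le b_{k+1}\le d_3-1$, and the degree monotonicities hold by the previous paragraph; $a_{k+1}+b_k=d'-(a_k-a_{k+1})=d'-2n\le d_2-2n$ together with $\deg(p_{k+1})+\deg(q_k)\le d_2$; and $a_k-a_{k+1}=2n\ge2>0$ gives $a_k>a_{k+1}$, so the last requirement holds a fortiori; finally $2n<d_2$. Applying Theorem~\ref{r_2 thm} and the domination recorded at the start finishes the proof. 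The only place calling for genuine care is the arithmetic input: one must check that spacing the $a_k$'s apart by $2n$ over $n-1$ steps incurs a quadratic loss $2n(n-1)$ that still fits inside the budget guaranteed by $n=\lfloor\tfrac12\sqrt{d_1+d_3-d_2}\rfloor$; the rest is routine bookkeeping, and the construction is essentially forced, since the spacing $2n$ simultaneously secures the strict monotonicity of the $a_k$'s and the bound $a_{k+1}+b_k\le d_2-2n$.
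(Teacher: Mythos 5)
Your proof is correct and follows essentially the same route as the paper: both reduce to Theorem \ref{r_2 thm} by exhibiting an arithmetic progression of splittings $a_k+b_k=d'$ with spacing $2n$ together with a prefix/suffix-type factorization $p_kq_k=\underline{t}'$, and then dominate the three resulting terms by the stated maximum. The only differences are cosmetic — you start the progression at $\min\{d_1-1,d'\}$ instead of the paper's $a_k=d_1-2nk$, and you treat the case $\deg(\underline{t}')\le d_2$ by the trivial factorization $p_k=\underline{t}'$, $q_k=1$ — and your arithmetic verifications all check out.
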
 

\begin{proof}
Fix $\underline{t}',  d', \rho$ and $\xi$ as above.  

The proof is an application of Theorem  \ref{r_2 thm}.   Observe that $2n \leq \sqrt{2 d_2} < d_2$.  

We introduce the following notation:  recall that the monomial $\underline{t}' \in \mathcal{B}^{d_2+1}$ is of the form $\underline{t}' = t_1^{n_1} ... t_m^{n_m}$ where $n_1,...,n_m \in \mathbb{N} \cup \lbrace 0 \rbrace$ and $n_1 + ... + n_m \leq d_2 +1$.  For $k \in \mathbb{Z}$, we denote $\prefix (\underline{t} ',  k)$ to be the $i$-th prefix of $\underline{t} '$:
$$\prefix (\underline{t} ',  k) = \begin{cases}
1 & k \leq 0 \\
\underline{t} ' & k > n_1 + ...  + n_m \\
t_1^{n_1} ... t_i^{n_i} t_{i+1}^{k - (n_1 +...+n_i)} & n_1 +...+n_i < k \text{ and }  n_1 +...+n_{i+1} \geq k 
\end{cases}.$$
Similarly,  for $k \in \mathbb{Z}$, we denote $\suffix (\underline{t} ',  k)$ to be the $k$-th suffix of $\underline{t} '$,  i.e.,  
$$\prefix (\underline{t} ',  k) = \begin{cases}
1 & k \leq 0 \\
\underline{t} ' & k > n_1 + ...  + n_m \\
t_i^{k - (n_1 +...+n_{i+1})} t_{i+1}^{n_{i+1}} ... t_m^{n_m} & n_{i+1} +...+n_m < k \text{ and }  n_i +...+n_{m} \geq k 
\end{cases}.$$

We note that for every $k$,  
$$\prefix (\underline{t} ',   \deg (\underline{t}') - k)  \suffix (\underline{t} ',  k) = \underline{t} '.$$

For $1 \leq k \leq n$,  we define 
$$a_k = d_1 - 2n k,  b_k = d ' - a_k,  p_k = \prefix (\underline{t} ',   \deg (\underline{t}') - k) ,  q_k  = \suffix (\underline{t} ',  k) .$$
We will finish the proof by verifying that for this choice,  the conditions of Theorem \ref{r_2 thm} are fulfilled:
\begin{itemize}
\item It is obvious that for every $1 \leq k \leq n$,  $a_k + b_k = d '$ and $p_k q_k = \underline{t} '$.
\item For every $1 \leq k \leq n-1$,  $d_1 -1 \geq a_k > a_{k+1}$ and we note that
\begin{dmath*}
a_{n} = d_1  - 2n^2  \geq d_1  - 2\left(\frac{\sqrt{d_1+d_3 -d_2}}{2} \right)^2  = {\frac{d_1}{2}  + \frac{d_2}{2} -\frac{d_3}{2} \geq^{d_3 \leq d_2} 0},
\end{dmath*}
i.e.,  for every $k$,  $a_k \in \mathbb{N} \cup \lbrace 0 \rbrace$.  Also,  for every $1 \leq k \leq n-1$,  $b_k < b_{k+1}$ and we note that 
$$b_1 = d ' - a_1 = d ' - d_1   + 2n \geq^{d' > d_2 -2n} d_2 -d_1 \geq 0 .$$
Also,
\begin{align*}
b_{n} = d ' - a_{n} \leq d ' - (\frac{d_1}{2}  + \frac{d_2}{2} -\frac{d_3}{2}) \leq^{d ' \leq d_2}  \frac{d_3}{2} + \frac{d_2}{2} - \frac{d_1}{2} = \\
d_3 + \frac{d_2-(d_1+d_3)}{2}  \leq^{d_1+d_3 \geq d_2 +4} d_3 -2, 
\end{align*}
i.e.,  for every $1 \leq k \leq n$,  $b_k \in \mathbb{N} \cup \lbrace 0 \rbrace$ and $b_k \leq d_3 -1$.  
\item It is obvious that for every $1 \leq k \leq n-1$,  $\deg (p_{k}) \geq \deg (p_{k+1})$ and $\deg (q_{k}) \leq \deg (q_{k+1})$.
\item For every $1 \leq k \leq n-1$,  
$$a_{k+1} + b_k = d_1- 2n (k+1) + (d ' - (d_1- 2n k)) = d' + 2n \leq d_2 +2n.$$
Last,  we need to show that 
$$\deg (p_{k+1}) + \deg (q_k) = \deg (\underline{t} ' ) \leq d_2.$$
If $\deg (\underline{t} ') \leq n$,  then 
$$\deg (p_{k+1}) + \deg (q_k) \leq 2\deg (\underline{t} ' ) \leq 2n  < d_2.$$
If $d_2 +1 \geq \deg (\underline{t} ')  >n$,  then for every $1 \leq k \leq n$,
$$\deg  (p_{k+1}) + \deg (q_k) = \deg (\underline{t} ') - (k+1) + k = \deg (\underline{t} ') -1 \leq d_2.$$
\end{itemize}

\end{proof}

\begin{corollary}
\label{d' < d_2 -2n coro}
Let $d_1, d_2, d_3 \in \mathbb{N},  d_2 \geq 3$ be constants such that $d_1, d_3 \leq d_2,  d_1+d_3 \geq d_2 +4$ and $\B$ be a uniformly convex Banach space $\B$ with a modulus of convexity $\delta$.  Also,  let $r_2 = r_2 (\delta)$ be the constant of Theorem \ref{r_2 thm}.  

Denote $n = \lfloor \frac{\sqrt{d_1+d_3 -d_2}}{2} \rfloor$.  Then for every $\underline{t} ' \in \mathcal{S}^{d_2+1}$, every , $d ' \in \mathbb{N} \cup \lbrace 0 \rbrace,  d' \leq d_2 -2n$,  every affine isometric action $\rho$ of  $ \rm H_3 (\mathbb{Z} [t_1,...,t_m])$ on $\B$ and every $\xi \in \B$ it holds that 
\begin{align*}
\left\Vert \rho \left( \underline{X}^{d_1} \right) \rho \left(  \underline{Z}^{d_2} \right)  \rho \left(  \underline{Y}^{d_3} \right)  \xi -  \rho \left( \underline{X}^{d_1}  \right) \rho \left(  \underline{Z}^{d_2} \right)  \rho \left(Z_{d '} (\underline{t} ' )\right)   \rho \left( \underline{Y}^{d_3} \right)  \xi  \right\Vert \lesssim \\
(d_2 + m)^m r_2^{n}  \max_{0 \leq d \leq d_2 + n-1} \max_{\underline{t} \in \mathcal{B}^{d_2+1}} \Vert \xi - \rho (z (2^{d} \underline{t})) \xi \Vert.
 \end{align*}
\end{corollary}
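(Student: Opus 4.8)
The plan is to prove this exactly as Corollary~\ref{d' geq d_2 -2n coro}: exhibit parameters $a_1,\dots,a_n,b_1,\dots,b_n$ and monomials $p_1,\dots,p_n,q_1,\dots,q_n\in\mathcal{B}^{d_2}$ meeting the five hypotheses of Theorem~\ref{r_2 thm}, and then read off the bound. The essential difference between the two regimes is that here $d'\le d_2-2n$ is too small to let the $a_k$ run through $n$ strictly decreasing values while keeping $b_k=d'-a_k\ge0$; so the ``$n$ independent steps'' that generate the contraction factor $r_2^{n}$ must instead be produced by varying the degrees of $p_k$ and $q_k$. This is available precisely because now $\underline{t}'\in\mathcal{S}^{d_2+1}$ has degree exactly $d_2+1$, which is much bigger than $n$.

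Concretely, first I would note $n\ge1$ (since $d_1+d_3-d_2\ge4$ forces $\sqrt{d_1+d_3-d_2}\ge2$) and $2n<d_2$ (since $d_1,d_3\le d_2$ gives $d_1+d_3-d_2\le d_2$, so $2n\le\sqrt{d_2}<d_2$). Then set $a:=\min(d',d_1-1)$, $b:=d'-a$, and take the \emph{constant} sequences $a_k:=a$, $b_k:=b$ together with $p_k:=\prefix(\underline{t}',d_2+1-k)$ and $q_k:=\suffix(\underline{t}',k)$ for $1\le k\le n$, so that $p_kq_k=\underline{t}'$, $\deg(p_k)=d_2+1-k$ and $\deg(q_k)=k$. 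Most of the five conditions of Theorem~\ref{r_2 thm} are then immediate: $a_k+b_k=d'$ and $p_kq_k=\underline{t}'$; $\deg(p_k)$ is strictly decreasing and $\deg(q_k)$ strictly increasing in $k$ (so the ``either/or'' condition holds); $a_{k+1}+b_k=a+b=d'\le d_2-2n$; $\deg(p_{k+1})+\deg(q_k)=(d_2-k)+k=d_2$; and $p_k,q_k\in\mathcal{B}^{d_2}$ because $\deg(p_k)\le d_2$ for $k\ge1$ and $\deg(q_k)=k\le n<d_2$.

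The one point needing genuine care, and the place where $d_1+d_3\ge d_2+4$ enters, is the endpoint bounds $0\le a\le d_1-1$ and $0\le b\le d_3-1$ (all that remains of condition~(2), since $a_k,b_k$ are constant). If $d'\le d_1-1$ then $a=d'$ and $b=0$, and both bounds are clear; if $d'>d_1-1$ then $a=d_1-1$ and $b=d'-d_1+1\ge1$, while $b\le d_3-1$ amounts to $d'\le d_1+d_3-2$, which follows from $d'\le d_2-2n\le d_2\le d_1+d_3-4$. With all hypotheses verified, Theorem~\ref{r_2 thm} gives
\begin{align*}
&\bigl\Vert \rho(\underline{X}^{d_1})\rho(\underline{Z}^{d_2})\rho(\underline{Y}^{d_3})\xi-\rho(\underline{X}^{d_1})\rho(\underline{Z}^{d_2})\rho(Z_{d'}(\underline{t}'))\rho(\underline{Y}^{d_3})\xi\bigr\Vert\\
&\qquad\lesssim (d_2+m)^m r_2^{n}\,\max\left\{\max_{\underline{t}\in\mathcal{B}^{d_2}}\Vert\xi-\rho(z(2^{d_2}\underline{t}))\xi\Vert,\ \Vert\xi-\rho(z(2^{d'}\underline{t}'))\xi\Vert,\ \Vert\xi-\rho(z(2^{d'+n-1}\underline{t}'))\xi\Vert\right\},
\end{align*}
and I would finish by noting that each of the three quantities in the maximum has the form $\Vert\xi-\rho(z(2^{d}\underline{t}))\xi\Vert$ with $\underline{t}\in\mathcal{B}^{d_2+1}$ (using $\mathcal{B}^{d_2}\subseteq\mathcal{B}^{d_2+1}$ and $\underline{t}'\in\mathcal{S}^{d_2+1}\subseteq\mathcal{B}^{d_2+1}$) and $0\le d\le d_2+n-1$ (as $d_2\le d_2+n-1$ since $n\ge1$, and $d'+n-1\le d_2-n-1\le d_2+n-1$), which is exactly the asserted bound. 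The main obstacle is thus not any new phenomenon but the bookkeeping just described --- finding one constant pair $(a,b)$ with $a+b=d'$ respecting all four bounds --- together with the observation that the degree splitting of $\underline{t}'$ can supply $n$ genuine steps only because $\deg(\underline{t}')=d_2+1>n$.
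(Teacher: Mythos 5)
Your proposal is correct and follows essentially the same route as the paper: the paper's proof likewise takes constant $a_k=a_0$, $b_k=b_0$ with $a_0+b_0=d'$, $0\le a_0\le d_1-1$, $0\le b_0\le d_3-1$ (existence guaranteed by $d'\le d_2\le d_1+d_3-4$), sets $p_k=\prefix(\underline{t}',d_2+1-k)$, $q_k=\suffix(\underline{t}',k)$, and invokes Theorem \ref{r_2 thm}. You have in fact supplied the verification of the hypotheses that the paper leaves to the reader, and it checks out.
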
 

\begin{proof}
Fix $\rho,  \xi,  \underline{t}',  d'$ as above.  

The proof is an application of Theorem  \ref{r_2 thm}.   Observe that $2n \leq \sqrt{2 d_2} < d_2$.  

By the assumption that $d_1 + d_3 \geq d_2+4$ there are $0 \leq a_0 \leq d_1 -1$ and $0 \leq b_0 \leq d_3-1$ such that $a_0 + b_0 = d'$. Using the notation of $\prefix,  \suffix$ as in the proof of the previous corollary,  we define
$$a_k = a_0,  b_k = b_0,  p_k = \prefix (\underline{t} ',  d_2+1-k),  q_k = \suffix (\underline{t} ',  k).$$
Checking that the conditions of Theorem \ref{r_2 thm} are fulfilled is straight-forward and we leave it for the reader.
\end{proof}

Combining these two corollaries leads to the following:
\begin{theorem}
\label{d_2 to d_2+1 thm}
Let $d_1, d_2, d_3 \in \mathbb{N},  d_2 \geq 3$ be constants such that $d_1, d_3 \leq d_2,  d_1+d_3 \geq d_2 +4$ and $\B$ be a uniformly convex Banach space $\B$ with a modulus of convexity $\delta$.  Also,  let $r_2 = r_2 (\delta)$ be the constant of Theorem \ref{r_2 thm}.  

Denote $n = \lfloor \frac{\sqrt{d_1+d_3 -d_2}}{2} \rfloor$.  Then for every affine isometric action $\rho$ of  $ \rm H_3 (\mathbb{Z} [t_1,...,t_m])$ on $\B$ and every $\xi \in \B$ it holds that 
\begin{align*}
\left\Vert \rho \left( \underline{X}^{d_1} \right) \rho \left(  \underline{Z}^{d_2} \right)  \rho \left(  \underline{Y}^{d_3} \right)  \xi -  \rho \left( \underline{X}^{d_1}  \right) \rho \left(  \underline{Z}^{d_2+1} \right)     \rho \left( \underline{Y}^{d_3} \right)  \xi  \right\Vert \lesssim \\
(d_2 + m)^{2m} r_2^{n}  \max_{0 \leq d \leq d_2 + n-1} \max_{\underline{t} \in \mathcal{B}^{d_2+1}} \Vert \xi - \rho (z (2^{d} \underline{t})) \xi \Vert.
 \end{align*}
\end{theorem}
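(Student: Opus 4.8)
The plan is to write $\underline{Z}^{d_2+1}$ as $\underline{Z}^{d_2}$ multiplied by a product of elementary factors of the form $Z_c(\underline{t})$, to insert these factors into the word $\rho(\underline{X}^{d_1})\rho(\underline{Z}^{d_2})\rho(\underline{Y}^{d_3})$ one at a time, and to estimate each single insertion using Corollary \ref{d' geq d_2 -2n coro} or Corollary \ref{d' < d_2 -2n coro}. Throughout I use that every $z(p)$ commutes with every $z(q)$ and with every $x(q)$; in particular $\underline{X}^{d_1}$ commutes (as an element of the group ring) with every $z$-supported probability measure, and any two $z$-supported probability measures commute.

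\emph{Decomposition.} Split $\mathcal{B}^{d_2+1}=\mathcal{B}^{d_2}\sqcup\mathcal{S}^{d_2+1}$. For $\underline{t}\in\mathcal{B}^{d_2}$ we have $Z^{d_2+1}(\underline{t})=Z^{d_2}(\underline{t})Z_{d_2}(\underline{t})$ (from $Z^{d}(\underline{t})=\prod_{c=0}^{d-1}Z_c(\underline{t})$), and in general $Z^{d_2+1}(\underline{t})=\prod_{c=0}^{d_2}Z_c(\underline{t})$; combining these with commutativity gives, in $\Prob_c({\rm H}_3(\mathbb{Z}[t_1,...,t_m]))$,
\[
\underline{Z}^{d_2+1}=\underline{Z}^{d_2}\cdot W,\qquad W:=\Bigl(\prod_{\underline{t}\in\mathcal{B}^{d_2}}Z_{d_2}(\underline{t})\Bigr)\Bigl(\prod_{\underline{t}\in\mathcal{S}^{d_2+1}}\prod_{c=0}^{d_2}Z_c(\underline{t})\Bigr).
\]
Enumerate the factors of $W$ as $W_1,\dots,W_L$. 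Each $W_j$ is of the form $Z_{c_j}(\underline{t}_j)$ with $0\le c_j\le d_2$ and $\underline{t}_j\in\mathcal{B}^{d_2+1}$; moreover every factor with $c_j<d_2-2n$ comes from the second product, hence has $\underline{t}_j\in\mathcal{S}^{d_2+1}$. Counting, $L=\binom{d_2+m}{m}+(d_2+1)\binom{d_2+m}{m-1}\lesssim(d_2+m)^m$.

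\emph{Telescoping and the per-step bound.} Put $\widetilde{W}_j=W_1\cdots W_j$ (so $\widetilde{W}_0=e$, $\widetilde{W}_L=W$). Since $\rho$ is multiplicative on $\Prob_c$, we get $\rho(\underline{Z}^{d_2+1})=\rho(\underline{Z}^{d_2})\rho(W_1)\cdots\rho(W_L)$, so by the triangle inequality
\[
\bigl\Vert\rho(\underline{X}^{d_1})\rho(\underline{Z}^{d_2})\rho(\underline{Y}^{d_3})\xi-\rho(\underline{X}^{d_1})\rho(\underline{Z}^{d_2+1})\rho(\underline{Y}^{d_3})\xi\bigr\Vert\le\sum_{j=1}^{L}T_j,
\]
where $T_j=\bigl\Vert\rho(\underline{X}^{d_1})\rho(\underline{Z}^{d_2})\rho(\widetilde{W}_{j-1})\rho(\underline{Y}^{d_3})\xi-\rho(\underline{X}^{d_1})\rho(\underline{Z}^{d_2})\rho(\widetilde{W}_j)\rho(\underline{Y}^{d_3})\xi\bigr\Vert$. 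Because $\widetilde{W}_{j-1}$ is supported on the $z$-subgroup it commutes with $\underline{X}^{d_1}$ and with $\underline{Z}^{d_2}$, so, writing $\widetilde{W}_j=\widetilde{W}_{j-1}W_j$ and pulling $\widetilde{W}_{j-1}$ to the front of both words, Claim \ref{difference to linear claim} (with $\widetilde{W}_{j-1}\in\Prob_c$) gives
\[
T_j\le\bigl\Vert\rho(\underline{X}^{d_1})\rho(\underline{Z}^{d_2})\rho(\underline{Y}^{d_3})\xi-\rho(\underline{X}^{d_1})\rho(\underline{Z}^{d_2})\rho(W_j)\rho(\underline{Y}^{d_3})\xi\bigr\Vert.
\]
Now $W_j=Z_{c_j}(\underline{t}_j)$, so the right-hand side is precisely the left-hand side of Corollary \ref{d' geq d_2 -2n coro} when $d_2-2n\le c_j\le d_2$ (using $\underline{t}_j\in\mathcal{B}^{d_2+1}$), and of Corollary \ref{d' < d_2 -2n coro} when $c_j<d_2-2n$ (using $\underline{t}_j\in\mathcal{S}^{d_2+1}$). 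In either case $T_j\lesssim(d_2+m)^m\,r_2^{\,n}\,\max_{0\le d\le d_2+n-1}\max_{\underline{t}\in\mathcal{B}^{d_2+1}}\Vert\xi-\rho(z(2^d\underline{t}))\xi\Vert$. Summing over the $L\lesssim(d_2+m)^m$ factors produces the claimed estimate, the two powers of $(d_2+m)$ combining to $(d_2+m)^{2m}$.

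\emph{Where the difficulty lies.} The only real content is the per-step bound: Corollaries \ref{d' geq d_2 -2n coro} and \ref{d' < d_2 -2n coro} insert a single elementary $z$-factor into the fixed slot between $\rho(\underline{Z}^{d_2})$ and $\rho(\underline{Y}^{d_3})$, whereas in the telescoping a whole block $\widetilde{W}_{j-1}$ of previously inserted $z$-factors already occupies that slot. This mismatch is removed by the commutation observation (which moves $\widetilde{W}_{j-1}$ to the front, past $\underline{X}^{d_1}$ and $\underline{Z}^{d_2}$) followed by Claim \ref{difference to linear claim} (which discards it), so each step reduces verbatim to the corollaries. The rest is bookkeeping: that $\underline{Z}^{d_2+1}=\underline{Z}^{d_2}W$ is an exact identity in $\Prob_c$; that factors with $c_j<d_2-2n$ occur only for monomials of degree exactly $d_2+1$, matching the hypothesis $\underline{t}'\in\mathcal{S}^{d_2+1}$ of Corollary \ref{d' < d_2 -2n coro}; and that $L\lesssim(d_2+m)^m$, which is what upgrades the $(d_2+m)^m$ of the corollaries to the $(d_2+m)^{2m}$ of the statement.
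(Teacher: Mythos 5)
Your proposal is correct and follows essentially the same route as the paper: the same decomposition $\underline{Z}^{d_2+1}=\underline{Z}^{d_2}\cdot W$ into elementary $z$-factors, the same telescoping with the already-inserted $z$-block commuted to the front and discarded via Claim \ref{difference to linear claim}, the same per-step appeal to Corollaries \ref{d' geq d_2 -2n coro} and \ref{d' < d_2 -2n coro}, and the same count of $\lesssim(d_2+m)^m$ factors producing the extra power. The only cosmetic difference is that the paper splits the $\mathcal{B}^{d_2}$-part and the $\mathcal{S}^{d_2+1}$-part into two separate triangle-inequality terms before telescoping each, whereas you telescope the whole product at once.
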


\begin{proof}
Fix $\rho,  \xi$ as above.
We note that 
$$\underline{Z}^{d_2+1} = \underline{Z}^{d_2} \left( \prod_{\underline{t'} \in \mathcal{B}^{d_2}} Z_{d_2} (\underline{t} ') \right) \left( \prod_{\underline{t} '' \in \mathcal{S}^{d_2+1}} \prod_{d' = 0}^{d_2} Z_{d '} (\underline{t} '') \right).$$

Denote 
$$Z_{d_2} (\mathcal{B}^{d_2} ) = \left( \prod_{\underline{t'} \in \mathcal{B}^{d_2}} Z_{d_2} (\underline{t} ') \right),$$
$$Z^{d_2+1} (\mathcal{S}^{d_2+1} ) =\left( \prod_{\underline{t} '' \in \mathcal{S}^{d_2+1}} \prod_{d' = 0}^{d_2} Z_{d '} (\underline{t} '') \right).$$

Thus, 
\begin{dmath*}
\left\Vert \rho \left( \underline{X}^{d_1} \right) \rho \left(  \underline{Z}^{d_2} \right)  \rho \left(  \underline{Y}^{d_3} \right)  \xi -  \rho \left( \underline{X}^{d_1}  \right) \rho \left(  \underline{Z}^{d_2+1} \right)     \rho \left( \underline{Y}^{d_3} \right)  \xi  \right\Vert \leq 
\left\Vert \rho \left( \underline{X}^{d_1} \right) \rho \left(  \underline{Z}^{d_2} \right)  \rho \left(  \underline{Y}^{d_3} \right)  \xi -  \rho \left( \underline{X}^{d_1}  \right) \rho \left(  \underline{Z}^{d_2} Z_{d_2} (\mathcal{B}^{d_2} ) \right)     \rho \left( \underline{Y}^{d_3} \right)  \xi  \right\Vert + \\
 \left\Vert \rho \left( \underline{X}^{d_1} \right) \rho \left(  \underline{Z}^{d_2} Z_{d_2} (\mathcal{B}^{d_2} )  \right)  \rho \left(  \underline{Y}^{d_3} \right)  \xi -  \rho \left( \underline{X}^{d_1}  \right) \rho \left(  \underline{Z}^{d_2} Z_{d_2} (\mathcal{B}^{d_2} )  Z^{d_2+1} (\mathcal{S}^{d_2+1} ) \right)     \rho \left( \underline{Y}^{d_3} \right)  \xi  \right\Vert = 
 \left\Vert \rho \left( \underline{X}^{d_1} \right) \rho \left(  \underline{Z}^{d_2} \right)  \rho \left(  \underline{Y}^{d_3} \right)  \xi -  \rho \left( \underline{X}^{d_1}  \right) \rho \left(  \underline{Z}^{d_2} Z_{d_2} (\mathcal{B}^{d_2} ) \right)     \rho \left( \underline{Y}^{d_3} \right)  \xi  \right\Vert + \\
 \left\Vert \rho \left( Z_{d_2} (\mathcal{B}^{d_2} ) \right) \rho \left( \underline{X}^{d_1} \right) \rho \left(  \underline{Z}^{d_2}  \right)  \rho \left(  \underline{Y}^{d_3} \right)  \xi -  \rho \left( Z_{d_2} (\mathcal{B}^{d_2} ) \right) \rho \left( \underline{X}^{d_1}  \right) \rho \left(  \underline{Z}^{d_2}  Z^{d_2+1} (\mathcal{S}^{d_2+1} ) \right)     \rho \left( \underline{Y}^{d_3} \right)  \xi  \right\Vert \leq^{\text{Claim } \ref{difference to linear claim}} 
 \left\Vert \rho \left( \underline{X}^{d_1} \right) \rho \left(  \underline{Z}^{d_2} \right)  \rho \left(  \underline{Y}^{d_3} \right)  \xi -  \rho \left( \underline{X}^{d_1}  \right) \rho \left(  \underline{Z}^{d_2} Z_{d_2} (\mathcal{B}^{d_2} ) \right)     \rho \left( \underline{Y}^{d_3} \right)  \xi  \right\Vert + \\
 \left\Vert \rho \left( \underline{X}^{d_1} \right) \rho \left(  \underline{Z}^{d_2} \right)  \rho \left(  \underline{Y}^{d_3} \right)  \xi -  \rho \left( \underline{X}^{d_1}  \right) \rho \left(  \underline{Z}^{d_2}  Z^{d_2+1} (\mathcal{S}^{d_2+1} ) \right)     \rho \left( \underline{Y}^{d_3} \right)  \xi  \right\Vert.
\end{dmath*}

It follows that it is enough to prove that 
\begin{align*}
 \left\Vert \rho \left( \underline{X}^{d_1} \right) \rho \left(  \underline{Z}^{d_2} \right)  \rho \left(  \underline{Y}^{d_3} \right)  \xi -  \rho \left( \underline{X}^{d_1}  \right) \rho \left(  \underline{Z}^{d_2} Z_{d_2} (\mathcal{B}^{d_2} ) \right)     \rho \left( \underline{Y}^{d_3} \right)  \xi  \right\Vert \lesssim \\
 (d_2 + m)^{2m+1} r_2^{n}  \max_{0 \leq d \leq d_2 + n-1} \max_{\underline{t} \in \mathcal{B}^{d_2+1}} \Vert \xi - \rho (z (2^{d} \underline{t})) \xi \Vert
\end{align*}
and
\begin{align*}
 \left\Vert \rho \left( \underline{X}^{d_1} \right) \rho \left(  \underline{Z}^{d_2} \right)  \rho \left(  \underline{Y}^{d_3} \right)  \xi -  \rho \left( \underline{X}^{d_1}  \right) \rho \left(  \underline{Z}^{d_2}  Z^{d_2+1} (\mathcal{S}^{d_2+1} ) \right)     \rho \left( \underline{Y}^{d_3} \right)  \xi  \right\Vert \lesssim \\
 (d_2 + m)^{2m+1} r_2^{n}  \max_{0 \leq d \leq d_2 + n-1} \max_{\underline{t} \in \mathcal{B}^{d_2+1}} \Vert \xi - \rho (z (2^{d} \underline{t})) \xi \Vert .
\end{align*}

We observe that $\vert \mathcal{B}^{d_2} \vert = {d_2 +m  \choose d_2}$.  Fix an order on $\mathcal{B}^{d_2}$:
$$\mathcal{B}^{d_2} = \lbrace \underline{t} ' (1),  \underline{t} ' (2), ... \rbrace.$$
Thus
\begin{dmath*}
 \left\Vert \rho \left( \underline{X}^{d_1} \right) \rho \left(  \underline{Z}^{d_2} \right)  \rho \left(  \underline{Y}^{d_3} \right)  \xi -  \rho \left( \underline{X}^{d_1}  \right) \rho \left(  \underline{Z}^{d_2} Z_{d_2} (\mathcal{B}^{d_2} ) \right)     \rho \left( \underline{Y}^{d_3} \right)  \xi  \right\Vert \leq 
 \sum_{k=0}^{\vert \mathcal{B}^{d_2} \vert -1} \left\Vert \rho \left( \underline{X}^{d_1} \right) \rho \left(  \underline{Z}^{d_2} \left( \prod_{i=1}^{k} Z_{d_2} (\underline{t} ' (i)) \right)  \right)  \rho \left(  \underline{Y}^{d_3} \right)  \xi -  \rho \left( \underline{X}^{d_1}  \right) \rho \left(  \underline{Z}^{d_2}  \left( \prod_{i=1}^{k+1} Z_{d_2} (\underline{t} ' (i)) \right)  \right)  \rho \left( \underline{Y}^{d_3} \right)  \xi  \right\Vert = 
  \sum_{k=0}^{\vert \mathcal{B}^{d_2} \vert-1} \left\Vert \rho \left( \prod_{i=1}^{k} Z_{d_2} (\underline{t} ' (i)) \right) \rho \left( \underline{X}^{d_1} \right) \rho \left(  \underline{Z}^{d_2}  \right)  \rho \left(  \underline{Y}^{d_3} \right)  \xi \\
  - \rho \left( \prod_{i=1}^{k} Z_{d_2} (\underline{t} ' (i)) \right) \rho \left( \underline{X}^{d_1}  \right) \rho \left(  \underline{Z}^{d_2}   Z_{d_2} (\underline{t} ' (k+1))  \right)  \rho \left( \underline{Y}^{d_3} \right)  \xi  \right\Vert \leq^{\text{Claim } \ref{difference to linear claim}} 
  \sum_{k=0}^{\vert \mathcal{B}^{d_2} \vert -1} \left\Vert \rho \left( \underline{X}^{d_1} \right) \rho \left(  \underline{Z}^{d_2}  \right)  \rho \left(  \underline{Y}^{d_3} \right)  \xi - \rho \left( \underline{X}^{d_1}  \right) \rho \left(  \underline{Z}^{d_2}   Z_{d_2} (\underline{t} ' (k+1))  \right)  \rho \left( \underline{Y}^{d_3} \right)  \xi  \right\Vert \lesssim^{\text{Corollary } \ref{d' geq d_2 -2n coro}} \\
   \sum_{k=0}^{\vert \mathcal{B}^{d_2} \vert-1}  (d_2 + m)^m r_2^{n}  \max_{0 \leq d \leq d_2 + n-1} \max_{\underline{t} \in \mathcal{B}^{d_2+1}} \Vert \xi - \rho (z (2^{d} \underline{t})) \xi \Vert \lesssim \\
 (d_2 + m)^{2m} r_2^{n}  \max_{0 \leq d \leq d_2 + n-1} \max_{\underline{t} \in \mathcal{B}^{d_2+1}} \Vert \xi - \rho (z (2^{d} \underline{t})) \xi \Vert,
\end{dmath*}
as needed.

The proof of the second inequality is similar and we allow ourselves to omit some details.  We recall that $\vert \mathcal{S}^{d_2+1} \vert = {d_2 +m  \choose d_2 + 1}$ and fix an order on $\mathcal{S}^{d_2+1}$:
$$\mathcal{S}^{d_2+1} = \lbrace \underline{t} '' (1),  \underline{t} '' (2), ... \rbrace.$$

Thus, 
\begin{dmath*}
  \left\Vert \rho \left( \underline{X}^{d_1} \right) \rho \left(  \underline{Z}^{d_2} \right)  \rho \left(  \underline{Y}^{d_3} \right)  \xi -  \rho \left( \underline{X}^{d_1}  \right) \rho \left(  \underline{Z}^{d_2}  Z^{d_2+1} (\mathcal{S}^{d_2+1} ) \right)     \rho \left( \underline{Y}^{d_3} \right)  \xi  \right\Vert \leq 
  \sum_{k=0}^{\vert \mathcal{S}^{d_2+1} \vert-1} \sum_{d'=0}^{d_2}   \left\Vert \rho \left( \underline{X}^{d_1} \right) \rho \left(  \underline{Z}^{d_2} \right)  \rho \left(  \underline{Y}^{d_3} \right)  \xi -  \rho \left( \underline{X}^{d_1}  \right) \rho \left(  \underline{Z}^{d_2}  Z_{d'} (\underline{t} '' (k+1)) (\mathcal{S}^{d_2+1} ) \right)     \rho \left( \underline{Y}^{d_3} \right)  \xi  \right\Vert \\
   \lesssim^{\text{Corollaries } \ref{d' geq d_2 -2n coro},  \ref{d' < d_2 -2n coro}} 
  \sum_{k=0}^{\vert \mathcal{S}^{d_2+1} \vert -1} \sum_{d'=0}^{d_2} (d_2 + m)^m  r_2^{n}  \max_{0 \leq d \leq d_2 + n-1} \max_{\underline{t} \in \mathcal{B}^{d_2+1}} \Vert \xi - \rho (z (2^{d} \underline{t})) \xi \Vert \\
  \lesssim 
(d_2 + m)^{2m} r_2^{n}  \max_{0 \leq d \leq d_2 + n-1} \max_{\underline{t} \in \mathcal{B}^{d_2+1}} \Vert \xi - \rho (z (2^{d} \underline{t})) \xi \Vert,
\end{dmath*}
as needed.
\end{proof}

As a consequence,  we can bound \eqref{needed bound2} and \eqref{needed bound3}:
\begin{theorem}
\label{Final d_1,d_2,d_3 thm}
Let $d_1, d_2, d_3 \in \mathbb{N},  d_2 \geq 16$ be constants such that $\min \lbrace d_1, d_3 \rbrace \geq 16$,   $\max \lbrace d_1, d_3 \rbrace  \geq d_2$ and $\B$ be a uniformly convex Banach space $\B$ with a modulus of convexity $\delta$.  Also,  let $r = r(\delta) = \sqrt[4]{r_2 (\delta)} <1$ where $r_2 (\delta)$ is the constant of Theorem \ref{r_2 thm}.  

For every affine isometric action $\rho$ of  $ \rm H_3 (\mathbb{Z} [t_1,...,t_m])$ on $\B$ and every $\xi \in \B$ it holds that 
\begin{align*}
\left\Vert \rho \left( \underline{X}^{d_1} \right) \rho \left(  \underline{Z}^{d_2} \right)  \rho \left(  \underline{Y}^{d_3} \right)  \xi -  \rho \left( \underline{X}^{d_1}  \right) \rho \left(  \underline{Z}^{d_2+1} \right)     \rho \left( \underline{Y}^{d_3} \right)  \xi  \right\Vert \lesssim \\
(d_2 + m)^{2m} r^{\sqrt{\min \lbrace d_1, d_3, d_2  \rbrace}}  \max_{0 \leq d \leq 2d_2} \max_{\underline{t} \in \mathcal{B}^{d_2+1}} \Vert \xi - \rho (z (2^{d} \underline{t})) \xi \Vert,
 \end{align*}
 and 
\begin{align*}
\left\Vert \rho \left( \underline{Y}^{d_1} \right) \rho \left(  \underline{Z}^{d_2} \right)  \rho \left(  \underline{X}^{d_3} \right)  \xi -  \rho \left( \underline{Y}^{d_1}  \right) \rho \left(  \underline{Z}^{d_2+1} \right)     \rho \left( \underline{X}^{d_3} \right)  \xi  \right\Vert \lesssim \\
(d_2 + m)^{2m}  r^{\sqrt{\min \lbrace d_1, d_3, d_2  \rbrace}}  \max_{0 \leq d \leq 2d_2} \max_{\underline{t} \in \mathcal{B}^{d_2+1}} \Vert \xi - \rho (z (2^{d} \underline{t})) \xi \Vert .
 \end{align*} 
\end{theorem}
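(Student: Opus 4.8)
The plan is to reduce to Theorem~\ref{d_2 to d_2+1 thm}, which already establishes both inequalities when $d_1,d_3\leq d_2$ (note that if $d_1,d_3\leq d_2$ then the hypothesis $\max\{d_1,d_3\}\geq d_2$ forces $\max\{d_1,d_3\}=d_2$, so the exponent $\lfloor\tfrac{\sqrt{d_1+d_3-d_2}}{2}\rfloor$ used there equals $\lfloor\tfrac{\sqrt{\min\{d_1,d_3\}}}{2}\rfloor=\lfloor\tfrac{\sqrt{\min\{d_1,d_3,d_2\}}}{2}\rfloor$). So the only real content is a reduction of the general case $\max\{d_1,d_3\}\geq d_2$ to that case, plus a cosmetic rewriting of the constant; all the hard analysis already sits in Theorem~\ref{d_2 to d_2+1 thm} and its predecessors.

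For the first inequality I would set $d_1'=\min\{d_1,d_2\}$ and $d_3'=\min\{d_3,d_2\}$ and ``truncate'' $\underline{X}^{d_1}$ to $\underline{X}^{d_1'}$ and $\underline{Y}^{d_3}$ to $\underline{Y}^{d_3'}$. Since the root subgroups $\{x(p):p\}$ of ${\rm H}_3(\mathbb{Z}[t_1,...,t_m])$ form an abelian subgroup, and likewise $\{y(p):p\}$, the product formulas $\underline{X}^{d}=\prod_{\underline{t}\in\mathcal{B}^{d}}\prod_{a=0}^{d-1}X_a(\underline{t})$ (and its $\underline{Y}$ analogue) exhibit elements $g,h\in\Prob_c({\rm H}_3(\mathbb{Z}[t_1,...,t_m]))$, supported on $\{x(p):p\}$ and on $\{y(p):p\}$ respectively, with $\underline{X}^{d_1}=\underline{X}^{d_1'}g=g\,\underline{X}^{d_1'}$ and $\underline{Y}^{d_3}=\underline{Y}^{d_3'}h=h\,\underline{Y}^{d_3'}$ (with $g=e$ if $d_1\leq d_2$ and $h=e$ if $d_3\leq d_2$). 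Then, using $\rho(f_1)\rho(f_2)=\rho(f_1f_2)$, for each $\ell\in\{d_2,d_2+1\}$,
\[
\rho(\underline{X}^{d_1})\rho(\underline{Z}^{\ell})\rho(\underline{Y}^{d_3})\xi=\rho(g)\,\rho(\underline{X}^{d_1'})\rho(\underline{Z}^{\ell})\rho(\underline{Y}^{d_3'})\,\rho(h)\xi ,
\]
so subtracting the $\ell=d_2$ and $\ell=d_2+1$ instances and applying Claim~\ref{difference to linear claim} to the outer factor $g$ bounds the left side of the first estimate by $\Vert(\rho(\underline{X}^{d_1'})\rho(\underline{Z}^{d_2})\rho(\underline{Y}^{d_3'})-\rho(\underline{X}^{d_1'})\rho(\underline{Z}^{d_2+1})\rho(\underline{Y}^{d_3'}))\zeta\Vert$ with $\zeta:=\rho(h)\xi$. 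Now $d_1',d_3'\leq d_2$, $d_1'+d_3'-d_2=\min\{d_1,d_3,d_2\}\geq16\geq4$ and $d_2\geq16\geq3$, so Theorem~\ref{d_2 to d_2+1 thm} applies with $n=\lfloor\tfrac{\sqrt{d_1'+d_3'-d_2}}{2}\rfloor=\lfloor\tfrac{\sqrt{\min\{d_1,d_3,d_2\}}}{2}\rfloor$ and bounds this by $\lesssim(d_2+m)^{2m}r_2^{\,n}\max_{0\leq d\leq d_2+n-1}\max_{\underline{t}\in\mathcal{B}^{d_2+1}}\Vert\zeta-\rho(z(2^d\underline{t}))\zeta\Vert$. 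Finally, every $z(q)$ commutes with every $y(p)$, hence $\rho(z(2^d\underline{t}))\rho(h)=\rho(h)\rho(z(2^d\underline{t}))$, and Claim~\ref{difference to linear claim} applied to $h$ replaces each $\Vert\zeta-\rho(z(2^d\underline{t}))\zeta\Vert$ by $\Vert\xi-\rho(z(2^d\underline{t}))\xi\Vert$.

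Next I would rewrite the constant. With $M:=\min\{d_1,d_3,d_2\}\geq16$ we get $\sqrt{M}\geq4$, so $n=\lfloor\tfrac{\sqrt{M}}{2}\rfloor\geq\tfrac{\sqrt{M}}{2}-1\geq\tfrac{\sqrt{M}}{4}$, whence $r_2^{\,n}\leq r_2^{\sqrt{M}/4}=r^{\sqrt{M}}$ since $r_2<1$ and $r=\sqrt[4]{r_2}$; and $n\leq\tfrac{\sqrt{d_2}}{2}<d_2$ gives $d_2+n-1<2d_2$, so $\max_{0\leq d\leq d_2+n-1}\leq\max_{0\leq d\leq2d_2}$. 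This yields the first inequality. For the second inequality (with $\underline{X}$ and $\underline{Y}$ exchanged) I would run exactly the same reduction off the $x\leftrightarrow y$ mirror of Theorem~\ref{d_2 to d_2+1 thm}; this mirror follows by rerunning the proofs of Theorem~\ref{change of order thm} through Theorem~\ref{d_2 to d_2+1 thm} with $x$ and $y$ (and $d_1,d_3$) interchanged. None of those arguments treats $\underline{Z}^{d_2}$ asymmetrically, and the single step sensitive to the order of $x$ and $y$ is Lemma~\ref{X,Y prod ineq 2 lemma}, whose $Y_0X^n$-form is obtained from its $X_0Y^n$-form by composing with the automorphism of ${\rm H}_3(\mathbb{Z})$ given by $x(1)\mapsto y(1)$, $y(1)\mapsto x(1)$, $z(1)\mapsto z(-1)$, together with the identity $\Vert(I-\pi(z(c)))\zeta\Vert=\Vert(I-\pi(z(-c)))\zeta\Vert$ for isometric $\pi$ (as $\pi(z(-c))=\pi(z(c))^{-1}$ is an isometry).

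The main thing to be careful about is the bookkeeping in the reduction: one must arrange for the surplus factor $g$ to end up on the \emph{outside}, where $\rho(g)$ is a contraction of differences by Claim~\ref{difference to linear claim}, and for the surplus factor $h$ to end up adjacent to the innermost $z$-averages, where it commutes with them; this is exactly where one uses that the root subgroups of the Heisenberg group are abelian and that $[y(p),z(q)]=e$. Beyond that I do not anticipate any serious obstacle, since the genuinely difficult estimates are already in hand in Theorem~\ref{d_2 to d_2+1 thm}.
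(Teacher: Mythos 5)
Your argument for the first inequality is essentially the paper's. You truncate to $d_1'=\min\{d_1,d_2\}$, $d_3'=\min\{d_3,d_2\}$, absorb the surplus factors $g$ and $h$ using commutativity of the root subgroups and contractivity of $\rho(\cdot)$ on differences, apply Theorem \ref{d_2 to d_2+1 thm} with $n=\lfloor\sqrt{d_1'+d_3'-d_2}/2\rfloor=\lfloor\sqrt{\min\{d_1,d_3,d_2\}}/2\rfloor$, and massage the constant exactly as the paper does; the paper simply packages your absorption step as one invocation of Lemma \ref{f_1, f_2 lemma} (with $f_1=g$, $f_2=h$ and the $h_i$ being the elements $z(2^d\underline{t})$), which is precisely the inline argument you give, including the use of $[y(p),z(q)]=e$ hidden in that lemma's proof. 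Where you genuinely diverge is the second inequality. The paper does not mirror the chain of lemmas: it introduces the automorphism $\phi$ of ${\rm H}_3(\mathbb{Z}[t_1,\ldots,t_m])$ with $\phi(y(p))=x(p)$ and $\phi(x(p))=y(-p)$, which \emph{fixes} $z$, so that for $\rho_0=\rho\circ\phi$ one has $\rho_0(\underline{Z}^{\ell})=\rho(\underline{Z}^{\ell})$ and $\rho_0(\underline{Y}^{d_3})=\rho(\underline{X}^{d_3})$; the sign flip inside the $x$-averages is then repaired by an outer translation by $\prod_{\underline{t}}x(-(2^{d_1}-1)\underline{t})$, harmless by Claim \ref{difference to linear claim}, so the second inequality follows directly from the first applied to $\rho_0$. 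Your route---rerunning Theorems \ref{change of order thm} through \ref{d_2 to d_2+1 thm} with $x$ and $y$ interchanged, mirroring Lemma \ref{X,Y prod ineq 2 lemma} via the automorphism $x\leftrightarrow y$, $z\mapsto z^{-1}$ together with $\Vert(I-\pi(z(c)))\zeta\Vert=\Vert(I-\pi(z(-c)))\zeta\Vert$---is also sound: you correctly identify the only genuinely $x$/$y$-asymmetric ingredient, and the sign changes appearing in the $z$-entries (e.g.\ $[y(p_1),x(p_2)]=z(-p_1p_2)$ in the mirror of Theorem \ref{change of order thm}) never affect the norms being estimated. But it costs re-verifying half a dozen statements, whereas the paper's automorphism-plus-translation trick needs only one short computation. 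One small inaccuracy: your opening claim that Theorem \ref{d_2 to d_2+1 thm} ``already establishes both inequalities when $d_1,d_3\le d_2$'' is an overstatement, as that theorem gives only the $XZY$ ordering; your subsequent treatment of the mirror makes this immaterial.
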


\begin{proof}
We will start by proving the first inequality.  Fix $\rho$ as above.  

We denote $d_1 ' = \min \lbrace d_1,  d_2 \rbrace,  d_3 ' = \min \lbrace d_3,  d_2 \rbrace$.  By Lemma \ref{f_1, f_2 lemma},  it is enough to prove that for every $\xi \in \B$ it holds that 
\begin{align*}
\left\Vert \rho \left( \underline{X}^{d_1 '} \right) \rho \left(  \underline{Z}^{d_2} \right)  \rho \left(  \underline{Y}^{d_3 '} \right)  \xi -  \rho \left( \underline{X}^{d_1 '}  \right) \rho \left(  \underline{Z}^{d_2+1} \right)     \rho \left( \underline{Y}^{d_3 '} \right)  \xi  \right\Vert \lesssim \\
(d_2 + m)^{2m} r^{\sqrt{\min \lbrace d_1, d_3, d_2  \rbrace}}  \max_{0 \leq d \leq 2d_2} \max_{\underline{t} \in \mathcal{B}^{d_2+1}} \Vert \xi - \rho (z (2^{d} \underline{t})) \xi \Vert .
 \end{align*}

Note that $d_1 ' + d_3 ' \geq d_2 + \min \lbrace d_1,  d_3,  d_2 \rbrace \geq d_2 +4$ and thus we can apply Theorem \ref{d_2 to d_2+1 thm}.  Namely,  we denote $n = \lfloor \frac{\sqrt{d_1 '+d_3 ' -d_2}}{2} \rfloor$ and note that $n \leq d_2$ and that 
$$n \geq \lfloor \frac{\sqrt{\min \lbrace d_1,  d_3,  d_2 \rbrace}}{2} \rfloor \geq^{\min \lbrace d_1,  d_3,  d_2 \rbrace \geq 16} \frac{\sqrt{\min \lbrace d_1,  d_3,  d_2 \rbrace}}{4}.$$

It follows from Theorem \ref{d_2 to d_2+1 thm} that for every $\xi \in \B$ it holds that
\begin{dmath*}
\left\Vert \rho \left( \underline{X}^{d_1 '} \right) \rho \left(  \underline{Z}^{d_2} \right)  \rho \left(  \underline{Y}^{d_3 '} \right)  \xi -  \rho \left( \underline{X}^{d_1 '}  \right) \rho \left(  \underline{Z}^{d_2+1} \right)     \rho \left( \underline{Y}^{d_3 '} \right)  \xi  \right\Vert \lesssim  \\
(d_2 + m)^{2m} r_2^{n}  \max_{0 \leq d \leq d_2+n-1} \max_{\underline{t} \in \mathcal{B}^{d_2+1}} \Vert \xi - \rho (z (2^{d} \underline{t})) \xi \Vert \leq^{\frac{\sqrt{\min \lbrace d_1,  d_3,  d_2 \rbrace}}{4} \leq n \leq  d_2} \\
(d_2 + m)^{2m} r_2^{\frac{\sqrt{\min \lbrace d_1,  d_3,  d_2 \rbrace}}{4}}  \max_{0 \leq d \leq 2 d_2} \max_{\underline{t} \in \mathcal{B}^{d_2+1}} \Vert \xi - \rho (z (2^{d} \underline{t})) \xi \Vert = \\
(d_2 + m)^{2m} r^{\sqrt{\min \lbrace d_1,  d_3,  d_2 \rbrace}}  \max_{0 \leq d \leq 2 d_2} \max_{\underline{t} \in \mathcal{B}^{d_2+1}} \Vert \xi - \rho (z (2^{d} \underline{t})) \xi \Vert.
\end{dmath*}

Next, we will show that the second inequality stated in the theorem follows from the first one.  Define $\phi : \rm H_3 (\mathbb{Z} [t_1,...,t_m]) \rightarrow \rm H_3 (\mathbb{Z} [t_1,...,t_m])$ to be the isomorphism induced by 
$$\phi (y (p)) = x (p),  \phi (x (p)) = y ( - p),  \forall p \in  \mathbb{Z} [t_1,...,t_m].$$

We note that for every $p \in \mathbb{Z} [t_1,...,t_m]$ it follows that $\phi (z (p)) = z (p)$.  Fix $\rho$ and define $\rho_0$ to by the action $\rho \circ \phi$.  By the inequality we already proven,  it is enough to prove that for every $\xi \in \B$ it holds that
\begin{align*}
\left\Vert \rho \left( \underline{Y}^{d_1} \right) \rho \left(  \underline{Z}^{d_2} \right)  \rho \left(  \underline{X}^{d_3} \right)  \xi -  \rho \left( \underline{Y}^{d_1}  \right) \rho \left(  \underline{Z}^{d_2+1} \right)     \rho \left( \underline{X}^{d_3} \right)  \xi  \right\Vert \leq \\
\left\Vert \rho_0 \left( \underline{X}^{d_1} \right) \rho_0 \left(  \underline{Z}^{d_2} \right)  \rho_0 \left(  \underline{Y}^{d_3} \right)  \xi -  \rho_0 \left( \underline{X}^{d_1}  \right) \rho_0 \left(  \underline{Z}^{d_2+1} \right)     \rho_0 \left( \underline{Y}^{d_3} \right)  \xi  \right\Vert.
\end{align*}

We note that 
$$\rho_0 (\underline{Y}^{d_3} ) = \rho (\underline{X}^{d_3}),   \rho_0 (\underline{Z}^{d_2} ) = \rho (\underline{Z}^{d_2}), \rho_0 (\underline{Z}^{d_2+1} ) = \rho (\underline{Z}^{d_2+1}).$$
Also,  for every $\underline{t} \in \mathcal{B}^{d_1}$
\begin{align*}
 \rho_0 (x (-(2^{d_1} -1) \underline{t})) \rho_0 (X^{d_1} (\underline{t} ) ) =  \rho (y ((2^{d_1} -1) \underline{t})) \left( \frac{1}{2^{d_1}} \sum_{a=0}^{2^{d_1}-1} \rho (y (- a \underline{t})) \right) = \rho (Y^{d_1} (\underline{t} )).
\end{align*}

Thus,
$$ \left( \prod_{\underline{t} \in \mathcal{B}^{d_1}} \rho_0 (x (-(2^{d_1} -1) \underline{t})) \right) \rho_0 (\underline{X}^{d_1}) = \rho (\underline{Y}^{d_1}).$$
It follows that for every $\xi \in \B$ it holds that
\begin{dmath*}
\left\Vert \rho \left( \underline{Y}^{d_1} \right) \rho \left(  \underline{Z}^{d_2} \right)  \rho \left(  \underline{X}^{d_3} \right)  \xi -  \rho \left( \underline{Y}^{d_1}  \right) \rho \left(  \underline{Z}^{d_2+1} \right)     \rho \left( \underline{X}^{d_3} \right)  \xi  \right\Vert = 
\left\Vert  \left( \prod_{\underline{t} \in \mathcal{B}^{d_1}} \rho_0 (x (-(2^{d_1} -1) \underline{t}) ) \right)  \rho_0 \left( \underline{X}^{d_1} \right) \rho_0 \left(  \underline{Z}^{d_2} \right)  \rho_0 \left(  \underline{Y}^{d_3} \right)  \xi -  \left( \prod_{\underline{t} \in \mathcal{B}^{d_1}} \rho_0 (x (-(2^{d_1} -1) \underline{t})) \right)  \rho_0 \left( \underline{X}^{d_1}  \right) \rho_0 \left(  \underline{Z}^{d_2+1} \right)     \rho_0 \left( \underline{Y}^{d_3} \right)  \xi  \right\Vert \leq^{\text{Claim } \ref{difference to linear claim}} 
\left\Vert \rho_0 \left( \underline{X}^{d_1} \right) \rho_0 \left(  \underline{Z}^{d_2} \right)  \rho_0 \left(  \underline{Y}^{d_3} \right)  \xi -  \rho_0 \left( \underline{X}^{d_1}  \right) \rho_0 \left(  \underline{Z}^{d_2+1} \right)     \rho_0 \left( \underline{Y}^{d_3} \right)  \xi  \right\Vert,
\end{dmath*}
as needed.
\end{proof}

\section{Word norm growth in the $A_2$ Steinberg group}
\label{Word norm growth of unipotents sec}

Let $\St_{A_2} (\mathbb{Z} [t_1,...,t_m])$ be the Steinberg group defined above.   We note that $\St_{A_2} (\mathbb{Z} [t_1,...,t_m])$ is finitely generated and in particular the set 
$$S = \bigcup_{1 \leq i,j \leq 3,  i \neq j} \lbrace x_{i,j} (\pm 1),  x_{i,j} (\pm t_1),...,  x_{i,j} (\pm t_m) \rbrace,$$
is a finite generating set.  

Below,  we will need the following result regarding the growth rate of the word norm in $\St_{A_2} (\mathbb{Z} [t_1,...,t_m])$:

\begin{proposition}
\label{unipotent bound prop}
Let $a \in \mathbb{Z} \setminus \lbrace 0 \rbrace,  d \in \mathbb{N} \cup \lbrace 0 \rbrace$ be constants and $\underline{t} \in \mathbb{Z} [t_1,...,t_m]$ be a monomial.  Also,  let 
$$S = \bigcup_{1 \leq i,j \leq 3,  i \neq j} \lbrace x_{i,j} (\pm 1),   x_{i,j} (\pm t_1),...,  x_{i,j} (\pm t_m) \rbrace .$$
For every $1 \leq i,j \leq 3, i \neq j$,  it holds that 
$$\vert x_{i,j} (2^d \underline{t} ) \vert_S  \lesssim (1  + (d + \deg (\underline{t}))^2)$$
and
$$\vert x_{i,j} (a \underline{t} ) \vert_S  \lesssim ( 1+ \log^3 (\vert a \vert)  + (\deg (\underline{t}))^2)$$ where $\log = \log_2$.
\end{proposition}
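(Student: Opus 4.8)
The plan is to leverage the Chevalley commutator relation of $\St_{A_2}$, namely $[x_{i,j}(p_1),x_{j,k}(p_2)] = x_{i,k}(p_1p_2)$ for pairwise distinct indices (which are always available in the $3\times 3$ picture), to produce $x_{i,j}(2^d\underline t)$ by a balanced ``doubling'' recursion, and then to treat an arbitrary integer coefficient $a$ through its binary expansion.

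For the first inequality, I would put $D = d + \deg(\underline t)$ and define, for every integer $D\ge 0$,
$$g(D) = \max\bigl\{ |x_{p,q}(M)|_S \ :\ p\ne q,\ M = 2^{c_0}t_1^{c_1}\cdots t_m^{c_m},\ c_0+c_1+\cdots+c_m\le D\bigr\}.$$
Then $g$ is non-decreasing, and $g(0),g(1)$ are bounded by an absolute constant since every generator lies in $S$ and $x_{p,q}(2)=x_{p,q}(1)^2$. The heart of the argument is the recursion $g(D)\le 4\,g(\lceil D/2\rceil)$ for $D\ge 2$: given $M$ of exponent-sum at most $D$, regard $M$ as a word of length at most $D$ in the letters $2,t_1,\dots,t_m$ (with multiplicities $c_0,c_1,\dots,c_m$), split it into a prefix and a suffix each of length at most $\lceil D/2\rceil$, and let $M_1,M_2$ be the corresponding monomials, so that $M=M_1M_2$ by commutativity of $\mathbb Z[t_1,\dots,t_m]$. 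Picking $r\notin\{p,q\}$, the commutator relation gives $x_{p,q}(M)=[x_{p,r}(M_1),x_{r,q}(M_2)]$, whence $|x_{p,q}(M)|_S\le 2|x_{p,r}(M_1)|_S+2|x_{r,q}(M_2)|_S\le 4\,g(\lceil D/2\rceil)$. Iterating the recursion $\lceil\log_2 D\rceil$ times gives $g(D)\le 4^{\lceil\log_2 D\rceil}g(1)\lesssim D^2$, which is precisely $|x_{i,j}(2^d\underline t)|_S\lesssim 1+(d+\deg(\underline t))^2$.

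For the second inequality, I would first record the case $\underline t=1$: writing $|a|=\sum_{l\in L}2^l$ in binary with $L\subseteq\{0,\dots,\lfloor\log_2|a|\rfloor\}$ and using $x_{i,j}(|a|)=\prod_{l\in L}x_{i,j}(2^l)$ together with the first inequality (with $\underline t=1$) yields
$$|x_{i,j}(a)|_S = |x_{i,j}(|a|)|_S \le \sum_{l=0}^{\lfloor\log_2|a|\rfloor}|x_{i,j}(2^l)|_S \lesssim \sum_{l=0}^{\lfloor\log_2|a|\rfloor}(1+l^2) \lesssim 1+\log_2^3|a|,$$
where we used that $x_{i,j}(p)$ and $x_{i,j}(-p)$ have the same $S$-length. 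Applying the binary expansion directly to $a\underline t$ would instead produce the inadmissible cross term $\log_2|a|\cdot(\deg\underline t)^2$; to separate the two costs additively I would use a single commutator. Choosing $r\notin\{i,j\}$, the relation $[x_{i,r}(a),x_{r,j}(\underline t)]=x_{i,j}(a\underline t)$ gives
$$|x_{i,j}(a\underline t)|_S \le 2|x_{i,r}(a)|_S + 2|x_{r,j}(\underline t)|_S \lesssim (1+\log_2^3|a|)+(1+(\deg\underline t)^2)\lesssim 1+\log_2^3|a|+(\deg\underline t)^2,$$
using the case $\underline t=1$ for the first summand and the first inequality (with $d=0$) for the second.

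The only real subtlety is this last reduction: one must avoid estimating $|x_{i,j}(a\underline t)|_S$ by summing binary pieces of $a\underline t$, which couples $\log|a|$ and $\deg\underline t$ multiplicatively, and instead factor $a\underline t$ through one commutator so the $a$-cost and the $\underline t$-cost add. The remaining ingredients — splitting a monomial into two halves of comparable degree, the presence of a third index in $A_2$, and the elementary recursion bookkeeping — are routine.
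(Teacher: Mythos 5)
Your proof is correct and follows essentially the same route as the paper's: both rest on the Chevalley relation $[x_{i,k}(p_1),x_{k,j}(p_2)]=x_{i,j}(p_1p_2)$ used in a balanced doubling scheme to get the quadratic bound in the degree, followed by a binary expansion of $a$ and a single final commutator to keep the $\log^3|a|$ and $(\deg\underline t)^2$ costs additive. The only (cosmetic) difference is that you phrase the doubling as a recursion $g(D)\le 4g(\lceil D/2\rceil)$ on the exponent sum, whereas the paper first proves the general product estimate $\max_{i\neq j}|x_{i,j}(\prod_{k=1}^n p_k)|_S\le 4n^2\max_{i\neq j,k}|x_{i,j}(p_k)|_S$ and then specializes it.
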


\begin{proof}

We will start by showing that for every $n \in \mathbb{N}$ and every $p_1,....,p_n \in \mathbb{Z} [t_1,...,t_m]$ it holds that 
\begin{equation}
\label{Cay norm prop ineq 1}
\max_{1 \leq i, j \leq 3,  i\neq j} \left\vert x_{i,j} \left( \prod_{k=1}^n p_k \right) \right\vert_S \leq 4 n^2 \left(\max_{1 \leq i, j \leq 3,  i\neq j,  1 \leq k \leq n}  \vert x_{i,j} (p_k) \vert_S \right). 
\end{equation}

We note that for every $p_1,  p_2 \in \mathbb{Z} [t_1,...,t_m]$,  it holds that 
$$\vert x_{i,j} (p_1 p_2 ) \vert_S = \vert [x_{i,k} (p_1),  x_{k,j} (p_2)] \vert_S \leq 2 (\vert x_{i,k} (p_1) \vert_S + \vert x_{k,j} (p_2) \vert_S).$$
Thus, 
\begin{equation}
\label{Cay norm prop ineq 2}
\max_{1 \leq i, j \leq 3,  i\neq j} \vert x_{i,j} (p_1 p_2 ) \vert_S \leq 2 \left( \max_{1 \leq i, j \leq 3,  i\neq j} \vert x_{i,j} (p_1 ) \vert_S + \max_{1 \leq i, j \leq 3,  i\neq j} \vert x_{i,j} (p_2 ) \vert_S \right).
\end{equation}

Using \eqref{Cay norm prop ineq 2} it follows that for every $N \in \mathbb{N}$ and every $p_1,...,p_{2^N}$ it holds that 
\begin{align*}
& \max_{1 \leq i, j \leq 3,  i\neq j} \left\vert x_{i,j} \left( \prod_{k=1}^{2^N} p_{k} \right) \right\vert_S \leq \\
& 2  \max_{1 \leq i, j \leq 3,  i\neq j} \left\vert x_{i,j} \left(\prod_{k=1}^{2^{N-1}} p_{k} \right) \right\vert_S + 2  \max_{1 \leq i, j \leq 3,  i\neq j} \left\vert x_{i,j} \left(\prod_{k=2^{N-1}+1}^{2^{N}} p_{k} \right) \right\vert_S,
\end{align*}
and by induction on $N$ it follows that
\begin{align}
\label{4^N ineq}
 \max_{1 \leq i, j \leq 3,  i\neq j} \left\vert x_{i,j} \left( \prod_{k=1}^{2^N} p_{k} \right) \right\vert_S \leq 
 \sum_{k=1}^{2^N} 2^N  \max_{1 \leq i, j \leq 3,  i\neq j} \left\vert x_{i,j} \left( p_{k} \right) \right\vert_S \leq 
4^N \max_{1 \leq i, j \leq 3,  i\neq j, 1 \leq k \leq n} \left\vert x_{i,j} \left( p_{k} \right) \right\vert_S.
\end{align}

We will show by induction on $N$ that for every $1 \leq n < 2^{N}$ and every $p_1,...,p_n \in \mathbb{Z} [t_1,....,t_n]$ it holds that 
$$\max_{1 \leq i, j \leq 3,  i\neq j} \left\vert x_{i,j} \left( \prod_{k=1}^{n} p_{k} \right) \right\vert_S \leq (2^{2N} - 2^{N}) \left(\max_{1 \leq i, j \leq 3,  i\neq j, 1 \leq k \leq n} \vert x_{i,j} (p_k ) \vert_S \right).$$
If $N =1$,  then $n =1$ and $2^{2N} - 2^{N} = 2^{2} - 2^{1}  = 2$ and the inequality holds.  Next, we will assume that the inequality holds for $N$ and prove it for $N+1$.  Let $1 \leq n < 2^{N+1}$ and $p_1,...,p_n \in \mathbb{Z} [t_1,...,t_m]$.  If $n < 2^N$,  we are done by the induction assumption.  Also,  if $n=2^N$,  then we are done by \eqref{4^N ineq} and the fact that $4^N \leq 2^{2N +2} - 2^{N+1}$.

Thus,  we can assume that $2^N+1 \leq n < 2^{N+1}$.  Denote $n ' = n-2^N$.  Then $1 \leq n ' < 2^N$ and by \eqref{Cay norm prop ineq 2} it holds that
\begin{align*}
&\max_{1 \leq i, j \leq 3,  i\neq j} \left\vert x_{i,j} \left( \prod_{k=1}^{n} p_{k} \right) \right\vert_S \leq \\
& 2  \max_{1 \leq i, j \leq 3,  i\neq j} \left\vert x_{i,j} \left( \prod_{k=1}^{2^N} p_{k} \right) \right\vert_S+ 2 \max_{1 \leq i, j \leq 3,  i\neq j} \left\vert x_{i,j} \left( \prod_{k=2^N+1}^{2^N+n'} p_{k} \right) \right\vert_S\leq \\
& \left( 2 \cdot 4^{N} + 2 (2^{2N} - 2^{N}) \right) \left( \max_{1 \leq i, j \leq 3,  i\neq j,  1 \leq k \leq n} \vert x_{i,j} (p_k ) \vert_S \right)  =  \\
&(2^{2N+2} - 2^{N+1} )\left( \max_{1 \leq i, j \leq 3,  i\neq j,  1 \leq k \leq n} \vert x_{i,j} (p_k ) \vert_S \right),
\end{align*}
as needed.  

It follows that for every $N$ and every $2^{N-1} \leq n < 2^N$ it holds that 
\begin{align*}
\max_{1 \leq i, j \leq 3,  i\neq j} \left\vert x_{i,j} \left( \prod_{k=1}^{n} p_{k} \right) \right\vert_S \leq 2^{2N} \left( \max_{1 \leq i, j \leq 3,  i\neq j,  1 \leq k \leq n} \vert x_{i,j} (p_k ) \vert_S \right) \leq 4n^2 \left( \max_{1 \leq i, j \leq 3,  i\neq j,  1 \leq k \leq n} \vert x_{i,j} (p_k ) \vert_S \right)
\end{align*}
and thus \eqref{Cay norm prop ineq 1} is proven.

As a consequence  \eqref{Cay norm prop ineq 1},  we deduce that for $\underline{t} = 2^d t_1^{n_1} ...  t_m^{n_m}$,  if $d+n_1 + .... + n_m >0$, then 
\begin{align*}
 \max_{1 \leq i, j \leq 3,  i\neq j} \vert x_{i,j} (2^d t_1^{n_1} ...  t_m^{n_m}) \vert_S \leq 8 (d+n_1+...+n_m )^2 = 8 (d+ \deg (\underline{t}))^2,
\end{align*}
which proves the first inequality stated in the Proposition.

In order to prove the second inequality stated in the Proposition,  it is enough to show that
$$\max_{1 \leq i, j \leq 3,  i\neq j} \vert x_{i,j} (a) \vert_S \leq 1 + 8 \log^3 (\vert a \vert),$$
for every $a \in \mathbb{Z} \setminus \lbrace 0 \rbrace$.   Without loss of generality,  we can assume that $a \in \mathbb{N}$.  Fix $a \in \mathbb{N}$  and $1 \leq i,  j \leq 3,  i \neq j$.  We observe that by \eqref{Cay norm prop ineq 1} it holds for every $l \in \mathbb{N}$ that 
$$\vert x_{i,j} (2^l) \vert_S \leq 4 l^2 \left( \max_{1 \leq i', j' \leq 3,  i ' \neq j '}  \vert x_{i',j'} (2) \vert_S \right) \leq 8 l^2.$$
Denote $n = \lfloor \log (a) \rfloor$.  Then $2^n \leq a < 2^{n+1}$ and there are $a_0,...,a_n \in \lbrace 0,1 \rbrace$ such that $a = \sum_l a_l 2^l$.  Thus
\begin{align*}
\vert x_{i,j} (a) \vert_S \leq \sum_{l=0}^n \vert x_{i,j} (a_l 2^l) \vert_S \leq a_0 + \sum_{l=1}^n  8 a_l l^2\leq  1+8 \sum_{l=1}^n  l^2 \leq 1+ 8 n^3 \leq 1 + 8 \log^3 (a),
\end{align*}
as needed.
\end{proof}


\begin{corollary}
\label{vert X_i,j vert_S coro}
Using the notations of \cref{Averaging operations sec},  for $i,j \in \lbrace 1,2,3 \rbrace, i \neq j$ and $d \in \mathbb{N}$,  we define $\underline{X}_{i,j}^d \in \Prob_c (\St_{A_2} (\mathbb{Z} [t_1,...,t_m])$ as  
$$\underline{X}_{i,j}^d = \frac{1}{\vert \Poly (d,d) \vert} \sum_{p \in  \Poly (d,d)} x_{i,j} (p).$$
Then for 
$$S = \bigcup_{1 \leq i,j \leq 3,  i \neq j} \lbrace x_{i,j} (\pm 1),   x_{i,j} (\pm t_1),...,  x_{i,j} (\pm t_m) \rbrace$$
it holds that 
$$\vert  \underline{X}_{i,j}^d  \vert_S \lesssim d^{m + 3}.$$
\end{corollary}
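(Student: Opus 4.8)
The plan is to reduce the statement to a uniform bound on the word norm of the elements $x_{i,j}(p)$ that carry the mass of $\underline{X}_{i,j}^d$, and then to combine the second estimate of Proposition \ref{unipotent bound prop} with the trivial count of monomials of bounded degree. First I would note that $\underline{X}_{i,j}^d \in \Prob_c(\St_{A_2}(\mathbb{Z}[t_1,\ldots,t_m]))$ is supported exactly on $\{x_{i,j}(p) : p \in \Poly(d,d)\}$, so by the definition of $\vert\cdot\vert_S$ on $\mathbb{R}[\St_{A_2}(\mathbb{Z}[t_1,\ldots,t_m])]$,
$$\vert \underline{X}_{i,j}^d \vert_S = \max_{p \in \Poly(d,d)} \vert x_{i,j}(p) \vert_S ,$$
so it suffices to prove $\vert x_{i,j}(p) \vert_S \lesssim d^{m+3}$ uniformly over $p \in \Poly(d,d)$. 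I would write such a $p$ as $p = \sum_{\underline{t}\in\mathcal{B}^d} c_{\underline{t}}\,\underline{t}$ with $0 \le c_{\underline{t}} \le 2^d-1$, use relation (1) of the presentation of $\St_{A_2}$, namely $x_{i,j}(q_1)x_{i,j}(q_2) = x_{i,j}(q_1+q_2)$, to expand $x_{i,j}(p)$ as the product $\prod_{\underline{t}\in\mathcal{B}^d} x_{i,j}(c_{\underline{t}}\,\underline{t})$ (the factors with $c_{\underline{t}}=0$ being trivial), and apply the triangle inequality for $\vert\cdot\vert_S$:
$$\vert x_{i,j}(p) \vert_S \le \sum_{\underline{t}\in\mathcal{B}^d,\ c_{\underline{t}}\neq 0} \vert x_{i,j}(c_{\underline{t}}\,\underline{t}) \vert_S .$$

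Next I would estimate each summand via Proposition \ref{unipotent bound prop} (second inequality) applied with $a = c_{\underline{t}}$: since $1 \le c_{\underline{t}} \le 2^d-1$ we have $\log_2(c_{\underline{t}}) < d$, and since $\underline{t}\in\mathcal{B}^d$ we have $\deg(\underline{t})\le d$, hence for $d\ge 1$
$$\vert x_{i,j}(c_{\underline{t}}\,\underline{t})\vert_S \lesssim 1 + \log^3(c_{\underline{t}}) + (\deg\underline{t})^2 \lesssim d^3 .$$
Finally the number of terms is $\vert\mathcal{B}^d\vert = \binom{d+m}{d} \le (d+m)^m \lesssim d^m$, the implied constant depending only on the fixed parameter $m$; multiplying the two bounds gives $\vert x_{i,j}(p)\vert_S \lesssim d^m\cdot d^3 = d^{m+3}$ uniformly in $p$, which is the claim.

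I do not expect a genuine obstacle here: the argument is essentially bookkeeping on top of Proposition \ref{unipotent bound prop}. The one point that needs care is to invoke the $\log^3$-type bound of Proposition \ref{unipotent bound prop} (rather than any exponential-in-$d$ estimate) so that the cost of each coefficient $c_{\underline{t}}\le 2^d-1$ is only polynomial in $d$ — this is precisely why $\Poly(d,d)$ was set up with coefficients capped at $2^d-1$ — together with the fact that $\vert\mathcal{B}^d\vert$ grows only polynomially in $d$, of degree $m$, so that summing over all monomials of degree $\le d$ contributes the extra factor $d^m$ and nothing worse.
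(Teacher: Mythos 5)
Your proposal is correct and follows essentially the same route as the paper: decompose $p\in\Poly(d,d)$ into its $\leq\vert\mathcal{B}^d\vert\lesssim_m d^m$ monomial summands via the additivity relation, bound each $\vert x_{i,j}(c_{\underline{t}}\,\underline{t})\vert_S\lesssim d^3$ by the second inequality of Proposition \ref{unipotent bound prop}, and multiply. The only cosmetic difference is that you make explicit the reduction $\vert\underline{X}_{i,j}^d\vert_S=\max_p\vert x_{i,j}(p)\vert_S$ and the dependence of the implied constant on the fixed parameter $m$, both of which the paper leaves implicit.
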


\begin{proof}
Fix $i,j \in \lbrace 1,2,3 \rbrace, i \neq j$.  It is enough to show that for every $p \in \Poly (d,d),  p \neq 0$ it holds that 
$$\vert x_{i,j} (p) \vert_S \lesssim (d+m)^{m+3}.$$

Let $p = \sum_{\underline{t} \in \mathcal{B}^d} c_{\underline{t}} \underline{t} \in \Poly (d,d) \setminus \lbrace 0 \rbrace$.  Then,
\begin{dmath*}
\vert x_{i,j} (p) \vert_S \leq  \sum_{\underline{t} \in \mathcal{B}^d} \vert x_{i,j} (c_{\underline{t}} \underline{t}) \vert_S \lesssim^{\text{Proposition } \ref{unipotent bound prop}}
\sum_{\underline{t} \in \mathcal{B}^d} (\log^3 (c_{\underline{t}} ) + \deg (\underline{t})^2 ) \leq 
\sum_{\underline{t} \in \mathcal{B}^d} (d^3 + d^2 ) \lesssim
(d+m)^{m+3}. 
\end{dmath*}
\end{proof}

\section{Relative fixed point property}
\label{Relative fixed point property sec}
In this section,  we will prove Theorem \ref{rel. f.p. thm - intro} stated in the introduction.  The heart of the proof is proving relative fixed point property for $\St_{A_2} (\mathbb{Z} [t_1,...,t_m])$ and the idea of the proof in that case is similar to that of the proofs of \cite[Theorem 6.1]{Opp-SLZ} and \cite[Theorem 4.6]{LaatSalle4}.  

\subsection{General convergence argument}
\label{General convergence argument subsec}
Let $\B$ be a Banach space, $D \in \mathbb{N}$ a constant and $F_{1},  F_2 : \bigcup_{d \in \mathbb{N}} (\mathbb{N} \cap [d,3d] )^6  \rightarrow \B,  \epsilon : (\mathbb{N} \cap [D, \infty))  \rightarrow [0, \infty)$ functions such that the following holds:
\begin{enumerate}[label=($\mathcal{A}${{\arabic*}})]
\item For every $d \in \mathbb{N},  d \geq D$ and every $(d_1,...,d_6) \in (\mathbb{N} \cap [d,3d] )^6$ such that $d_2 - (d_1+d_3) \geq d-1$ and $d_5 - (d_4+d_6) \geq d-1$ it holds that 
$$\Vert F_1 (d_1,d_2,d_3,d_4,d_5,d_6) -  F_2 (d_3,d_2,d_1,d_6,d_5,d_4) \Vert  \lesssim \epsilon (d).$$
\item For every $d \in \mathbb{N},  d \geq D$,  every $(d_1,...,d_6) \in (\mathbb{N} \cap [d,3d] )^6$ and every $2 \leq i \leq 5$,  if 
$\max \lbrace d_{i-1}, d_{i+1}  \rbrace \geq d_i$ and $d_i < 3d$,  then
$$\Vert F_1 (d_1,...,d_i,...,d_6) -  F_1 (d_1,...,d_i +1,...,d_6) \Vert \lesssim \epsilon (d),$$
and
$$\Vert F_2 (d_1,...,d_i,...,d_6) -  F_2 (d_1,...,d_i +1,...,d_6) \Vert \lesssim \epsilon (d).$$
\end{enumerate}

\begin{lemma}
\label{2 in a row lemma}
Let $d \in \mathbb{N}, d \geq D$,  $(d_1,...,d_6) \in  (\mathbb{N} \cap [d,3d] )^6$ and $d',  d'' \in   \mathbb{N} \cap [d,3d]$.

For $j =1,2$ it holds that
$$\Vert F_j (d_1,d', d',d_4,d_5,d_6) -  F_j (d_1, d'',d'',d_4,d_5,d_6) \Vert \lesssim d \epsilon (d),$$
$$\Vert F_j (d_1,d_2, d',d',d_5,d_6) -  F_j (d_1, d_2,d'',d'',d_5,d_6) \Vert \lesssim d \epsilon (d),$$
$$\Vert F_j (d_1,d_2, d_3,d',d',d_6) -  F_j (d_1, d_2,d_3,d'',d'',d_6) \Vert \lesssim d \epsilon (d).$$
\end{lemma}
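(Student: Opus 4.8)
The plan is to prove the three inequalities by a single argument: each of them asserts that $F_j$ changes by at most $\lesssim d\,\epsilon(d)$ when a pair of consecutive equal coordinates (positions $2,3$, or $3,4$, or $4,5$) is moved together from a value $d'$ to a value $d''$, with $d',d''\in\mathbb{N}\cap[d,3d]$ and $d\ge D$. Since the statements are symmetric in $d'$ and $d''$, I may assume $d'\le d''$, and it suffices to interpolate in unit steps: I will show that for every integer $c$ with $d'\le c<d''$,
$$\left\Vert F_j(\ldots,c,c,\ldots)-F_j(\ldots,c+1,c+1,\ldots)\right\Vert\lesssim\epsilon(d),$$
and then sum these $d''-d'\le 2d$ estimates with the triangle inequality to obtain the bound $\lesssim d\,\epsilon(d)$.

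To prove the one-step estimate for the pair occupying positions $i$ and $i+1$ (with $i\in\{2,3,4\}$), I will apply property $(\mathcal{A}2)$ twice, first at position $i$ and then at position $i+1$. Starting from a tuple with $d_i=d_{i+1}=c$: incrementing position $i$ is legitimate because $d_i=c<3d$ (as $c\le d''-1<3d$) and $\max\{d_{i-1},d_{i+1}\}\ge d_{i+1}=c=d_i$, so this step costs $\lesssim\epsilon(d)$; the resulting tuple has $d_i=c+1$ and $d_{i+1}=c$. Incrementing position $i+1$ is then legitimate because $d_{i+1}=c<3d$ and $\max\{d_i,d_{i+2}\}\ge d_i=c+1>c=d_{i+1}$, so this step also costs $\lesssim\epsilon(d)$; after it the tuple has $d_i=d_{i+1}=c+1$. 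The triangle inequality gives the displayed one-step bound. Every tuple occurring in this chain lies in $(\mathbb{N}\cap[d,3d])^6$, since the only coordinates that move stay between $d'\ge d$ and $d''\le 3d$ while the remaining coordinates are those of the original admissible tuple; hence every invocation of $(\mathcal{A}2)$ is valid. The argument is word-for-word the same for $j=1$ and $j=2$ and for each of the three coordinate pairs.

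There is no real difficulty here beyond bookkeeping. The only point to watch is the side condition $\max\{\text{neighbours}\}\ge\text{coordinate}$ appearing in $(\mathcal{A}2)$: it holds at both steps precisely because the two coordinates being moved are adjacent, so each serves as a neighbour of the other and has value $c$ or $c+1\ge c$ throughout the process. Note that property $(\mathcal{A}1)$ is not needed for this lemma; it will enter only in later arguments.
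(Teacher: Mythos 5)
Your proof is correct and follows essentially the same route as the paper: reduce to a single unit step $(c,c)\mapsto(c+1,c+1)$, realize that step as two applications of $(\mathcal{A}2)$ (increment the left coordinate first, using the equal right neighbour to satisfy the side condition, then the right coordinate, using the now-larger left neighbour), and sum at most $2d$ such steps. The verification that every intermediate tuple stays in $(\mathbb{N}\cap[d,3d])^6$ and that $d_i<3d$ at each application is exactly the bookkeeping the paper performs implicitly.
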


\begin{proof}
The proofs of all these inequalities are similar and we will prove only the first one.  

Assume without loss of generality that $d' < d''$,  then it is enough to prove that
$$\Vert F_j (d_1,d', d',d_4,d_5,d_6) -  F_j (d_1, d'+1,d'+1,d_4,d_5,d_6) \Vert \lesssim  \epsilon (d).$$
Indeed,  by $(\mathcal{A} 2)$ it holds that 
\begin{dmath*}
\Vert F_j (d_1,d', d',d_4,d_5,d_6) -  F_j (d_1, d'+1,d'+1,d_4,d_5,d_6) \Vert \leq \\
\Vert F_j (d_1,d', d',d_4,d_5,d_6) -  F_j (d_1, d'+1,d',d_4,d_5,d_6) \Vert + \\
\Vert F_j (d_1,d'+1, d',d_4,d_5,d_6) -  F_j (d_1, d'+1,d'+1,d_4,d_5,d_6) \Vert \lesssim \epsilon (d).
\end{dmath*}

\end{proof}

\begin{corollary}
\label{4 in a row coro}
Let $d \in \mathbb{N}, d \geq D$ and $d_1, d_6,  d',  d'' \in  (\mathbb{N} \cap [d,3d] )^6$.

For $j =1,2$ it holds that
$$\Vert F_j (d_1,d', d',d' ,d' ,d_6) -  F_j (d_1, d'',d'',d'' ,d '',d_6) \Vert \lesssim d \epsilon (d).$$
\end{corollary}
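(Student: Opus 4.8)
The plan is to obtain this as an immediate consequence of Lemma \ref{2 in a row lemma} together with two applications of the triangle inequality. Lemma \ref{2 in a row lemma} lets us simultaneously replace the common value $d'$ by $d''$ in any \emph{adjacent} pair of coordinates among positions $2,3,4,5$, as long as that pair of coordinates is currently equal. Since the two disjoint pairs $(2,3)$ and $(4,5)$ together cover all four middle positions, two such moves suffice, and the only thing to check is that at each move the relevant pair of coordinates is equal.

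Concretely, I would first pass from $F_j(d_1, d', d', d', d', d_6)$ to $F_j(d_1, d', d', d'', d'', d_6)$ by applying the third inequality of Lemma \ref{2 in a row lemma} with its free coordinates taken to be $d_2 = d_3 = d'$; positions $4$ and $5$ are both equal to $d'$ before this move, so the lemma applies and the cost is $\lesssim d\epsilon(d)$. Then I would pass from $F_j(d_1, d', d', d'', d'', d_6)$ to $F_j(d_1, d'', d'', d'', d'', d_6)$ by applying the first inequality of Lemma \ref{2 in a row lemma} with its free coordinates taken to be $d_4 = d_5 = d''$; positions $2$ and $3$ are both equal to $d'$ before this move, so again the lemma applies with cost $\lesssim d\epsilon(d)$. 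Every tuple appearing above lies in $(\mathbb{N}\cap[d,3d])^6$, since $d_1, d_6, d', d'' \in \mathbb{N}\cap[d,3d]$, so Lemma \ref{2 in a row lemma} is applicable at each step. Combining the two estimates by the triangle inequality gives
$$\Vert F_j(d_1, d', d', d', d', d_6) - F_j(d_1, d'', d'', d'', d'', d_6)\Vert \lesssim d\epsilon(d),$$
as desired.

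There is no real obstacle here; this is pure bookkeeping. The only point worth a moment's thought is the ordering of the two moves: the pairs $(2,3)$ and $(4,5)$ are disjoint, so changing one pair does not disturb the equality of the other, and one never needs to invoke the middle pair $(3,4)$ at all (which would be the place where an unequal pair of adjacent coordinates could arise). Either order — change $(4,5)$ first then $(2,3)$, or the reverse — works equally well.
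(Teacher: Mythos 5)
Your proof is correct and is exactly the argument the paper intends (the corollary is stated without proof, as an immediate consequence of Lemma \ref{2 in a row lemma}): apply the third inequality of that lemma to replace the pair in positions $(4,5)$ and then the first inequality to replace the pair in positions $(2,3)$, and combine via the triangle inequality. Your remark that the disjointness of the two pairs means the middle pair $(3,4)$ is never needed is accurate.
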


Thus,  we can prove the following general convergence result:
\begin{theorem}
\label{general convergence thm}
Let $\B,  D,  F_1,  F_2,  \epsilon$ be as above and assume that $\sum_{d \in \mathbb{N}, d \geq D} d \epsilon (d) < \infty$.  Define a sequence $\lbrace \xi_k \rbrace_{k \in \mathbb{N}} \subseteq \B$ as follows:
$$\xi_k = 
\begin{cases}
F_{1} (\frac{k+1}{2},..., \frac{k+1}{2}) & k \text{ is odd} \\
F_{2} (\frac{k}{2},..., \frac{k}{2}) & k \text{ is even} 
\end{cases}.$$
Then this sequence is a Cauchy sequence in $\B$ and thus converges. 
\end{theorem}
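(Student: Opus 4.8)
The plan is to verify the Cauchy criterion by bounding $\Vert \xi_k - \xi_{k+1} \Vert$ by a summable quantity: since $\sum_{d\geq D} d\,\epsilon(d) < \infty$, it is enough to show $\Vert \xi_k - \xi_{k+1}\Vert \lesssim d_k\,\epsilon(d_k) + (d_k+1)\,\epsilon(d_k+1)$ with $d_k=\lceil k/2\rceil$, for all large $k$; then $\sum_{k\geq N}\Vert\xi_k-\xi_{k+1}\Vert\lesssim\sum_{d\geq\lfloor N/2\rfloor} d\,\epsilon(d)\to 0$. The only tools available are $(\mathcal{A}1)$, $(\mathcal{A}2)$, Lemma~\ref{2 in a row lemma} and Corollary~\ref{4 in a row coro}, and two ``shapes'' of argument will recur: the all-equal tuple $(d,d,d,d,d,d)$ (which is $\xi_{2d-1}$ or $\xi_{2d}$) and the \emph{inflated} tuple $C_d:=(d,3d,d,d,3d,d)$, which is exactly the borderline case where $(\mathcal{A}1)$ applies, since $3d-(d+d)=d\geq d-1$.

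I would first dispatch the easy step $\Vert \xi_{2d-1}-\xi_{2d}\Vert=\Vert F_1(d,\dots,d)-F_2(d,\dots,d)\Vert$, where the two tuples coincide. Corollary~\ref{4 in a row coro} moves $F_1(d,\dots,d)$ to $F_1(d,3d,3d,3d,3d,d)$, and the third identity of Lemma~\ref{2 in a row lemma} (changing coordinates $3,4$) moves it to $F_1(C_d)$, at cost $\lesssim d\,\epsilon(d)$. Then $(\mathcal{A}1)$, applied to the tuple $C_d$ (which the swap $d_1\leftrightarrow d_3$, $d_4\leftrightarrow d_6$ leaves fixed), gives $\Vert F_1(C_d)-F_2(C_d)\Vert\lesssim\epsilon(d)$; reversing the reshaping on the $F_2$ side yields $\Vert\xi_{2d-1}-\xi_{2d}\Vert\lesssim d\,\epsilon(d)$.

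The real work — and the main obstacle — is the cross-scale step $\Vert\xi_{2d}-\xi_{2d+1}\Vert=\Vert F_2(d,\dots,d)-F_1(d+1,\dots,d+1)\Vert$, where the two \emph{outer} coordinates must change from $d$ to $d+1$. This is delicate because $(\mathcal{A}2)$ and every lemma derived from it leave $d_1,d_6$ untouched; the only mechanism moving them is the swap inside $(\mathcal{A}1)$, and iterating that swap naively gets stuck: once $d_1=d_3=d+1$, the margin $d_2-(d_1+d_3)\geq(\text{scale})-1$ cannot be met while the tuple still lives in $[d,3d]^6$. The way around is to perform \emph{three} swaps and to re-inflate to scale $d+1$ before the last one. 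Concretely: from $F_1(C_d)$ (reached from $\xi_{2d}$ as above, through one swap), raise coordinates $3$ then $4$ to $d+1$ via $(\mathcal{A}2)$, landing at $F_1(d,3d,d+1,d+1,3d,d)$, which still meets the $(\mathcal{A}1)$ margin with equality; a second swap sends it to $F_2(d+1,3d,d,d,3d,d+1)$; raise coordinates $3,4$ back to $d+1$ to reach $F_2(d+1,3d,d+1,d+1,3d,d+1)$. Now the whole tuple lies in $[d+1,3d+3]^6$, so at scale $d+1$ one pushes coordinates $2$ and $5$ up to $3d+3$ (temporarily sending $d_3,d_4$ to $3d+3$ by Lemma~\ref{2 in a row lemma} to supply the large neighbour needed by $(\mathcal{A}2)$, then bringing them back down), arriving at $F_2(C_{d+1})$. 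A third application of $(\mathcal{A}1)$, now legitimate because $3(d+1)-2(d+1)=d+1\geq(d+1)-1$, gives $F_1(C_{d+1})$, and reversing the reshaping produces $F_1(d+1,\dots,d+1)=\xi_{2d+1}$. Each of the finitely many elementary moves costs $\lesssim d\,\epsilon(d)$ or $\lesssim(d+1)\,\epsilon(d+1)$, so $\Vert\xi_{2d}-\xi_{2d+1}\Vert\lesssim d\,\epsilon(d)+(d+1)\,\epsilon(d+1)$.

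Combining the two estimates gives the summable bound on $\Vert\xi_k-\xi_{k+1}\Vert$ claimed above, hence $\{\xi_k\}$ is Cauchy and converges. The only genuinely fussy point I anticipate is the bookkeeping in the cross-scale step: checking at each elementary move that the current tuple sits in $[d',3d']^6$ for the appropriate scale $d'$ and that the relevant neighbouring coordinate is large enough for $(\mathcal{A}2)$; everything else is routine telescoping against $\sum_{d\geq D} d\,\epsilon(d)<\infty$.
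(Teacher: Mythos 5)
Your proposal is correct and uses the same toolkit and the same basic strategy as the paper: reshape tuples with Lemma~\ref{2 in a row lemma} and Corollary~\ref{4 in a row coro}, cross between $F_1$ and $F_2$ by applying $(\mathcal{A}1)$ at tuples of the form $(d,3d,\ast,\ast,3d,d)$, and sum the resulting $O(d\,\epsilon(d))$ costs. Two points of comparison. First, your cross-scale chain is longer than necessary: the obstacle you describe (``once $d_1=d_3=d+1$ the margin cannot be met'') never has to be confronted, because the paper's single swap at $(d,3d,d+1,d+1,3d,d)$ --- which is precisely your ``second swap'' --- already outputs the tuple $(d+1,3d,d,d,3d,d+1)$ with the outer coordinates at $d+1$, after which reshaping at scale $d$ lands directly on $F_{j_2}(d+1,\dots,d+1)$; your first and third swaps (through $C_d$ and $C_{d+1}$) only serve to restore the $F_1/F_2$ parity that they themselves disturbed, so they are valid but pure overhead. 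Second, and in your favor: you bound \emph{both} types of consecutive differences, the same-scale one $\Vert F_1(d,\dots,d)-F_2(d,\dots,d)\Vert$ (via one $(\mathcal{A}1)$ application at the swap-invariant tuple $(d,3d,d,d,3d,d)$) and the cross-scale one, whereas the paper's stated reduction only bounds $\Vert F_{j_1}(d,\dots,d)-F_{j_2}(d+1,\dots,d+1)\Vert$ for $j_1\neq j_2$. Those mixed-index bounds alone control only the two interleaved subsequences of indices congruent to $0,1$ and to $2,3$ modulo $4$, and an extra estimate such as your same-scale one is exactly what is needed to link their limits; so on this point your write-up is actually the more complete of the two. (A cosmetic slip: the move on coordinates $3,4$ is the second, not the third, identity of Lemma~\ref{2 in a row lemma}.)
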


\begin{proof}
By the assumption that $\sum_{d \in \mathbb{N}d \geq D} d \epsilon (d) < \infty$, it is enough to prove that for $\lbrace j_1,  j_2 \rbrace = \lbrace 1,2 \rbrace$ and $d \geq D$ it holds that 
$$\Vert F_{j_1} (d,...,d) - F_{j_2} (d+1,...,d+1) \Vert \lesssim d \epsilon (d).$$

Fix $\lbrace j_1,  j_2 \rbrace = \lbrace 1,2 \rbrace$ and $d \geq D$.  Then
\begin{dmath*}
\Vert F_{j_1} (d,...,d) - F_{j_2} (d+1,...,d+1) \Vert \lesssim^{\text{Corollary } \ref{4 in a row coro}} \\
d \epsilon (d) + \Vert F_{j_1} (d,3d,3d,3d,3d,d) - F_{j_2} (d+1,...,d+1) \Vert \lesssim^{\text{Lemma } \ref{2 in a row lemma}} \\
d \epsilon (d) + \Vert F_{j_1} (d,3d,d+1,d+1,3d,d) - F_{j_2} (d+1,...,d+1) \Vert  \lesssim^{(\mathcal{A} 1)} \\
 d \epsilon (d) + \Vert F_{j_2} (d+1,3d,d,d,3d,d+1) - F_{j_2} (d+1,...,d+1) \Vert  \lesssim^{\text{Lemma } \ref{2 in a row lemma}} \\
  d \epsilon (d) + \Vert F_{j_2} (d+1,3d,3d,3d,3d,d+1) - F_{j_2} (d+1,...,d+1) \Vert  \lesssim^{\text{Corollary } \ref{4 in a row coro}} 
d \epsilon (d),   
\end{dmath*}
as needed.
\end{proof}

\subsection{Relative fixed point property for $\St_{A_2} (\mathbb{Z} [t_1,...,t_m])$}

We fix $m \in \mathbb{N}$ and for $1 \leq i, j \leq 3,  i \neq j$,  we denote $K_{i,j}$ to be the $(i,j)$-root subgroup of $\St_{A_2} (\mathbb{Z} [t_1,...,t_m])$ as defined above.   Also,  for $\lbrace i,j,k \rbrace = \lbrace 1,2,3 \rbrace$,  we define 
$$H_{i,k} = \langle K_{i,j},  K_{j, k}  \rangle < \St_{A_2} (\mathbb{Z} [t_1,...,t_m ])$$
and note that $H_{i,k}$ is isomorphic to the group $\rm H_3 (\mathbb{Z} [t_1,...,t_m])$.  A key ingredient of the proof of our relative fixed point property is that we can order the root subgroups such that each three consecutive root subgroups form a Heisenberg group and the middle group in the each three consecutive  root subgroups is the center of this Heisenberg group.   Explicitly,  we define two maps $\sigma_{1}, \sigma_2 : \lbrace 1,...,6 \rbrace \rightarrow \lbrace (i_1,  i_2) : 1 \leq i_1,  i_2 \leq 3,  i_1 \neq i_2 \rbrace$ as follows:
$$\sigma_1 (1)  = (1,2), \sigma_1 (2) = (1,3),  \sigma_1 (3) = (2,3),  $$
$$\sigma_1 (4) = (2,1), \sigma_1 (5) = (3,1),   \sigma_1 (6) = (3,2),$$
and
$$\sigma_2 (1)  =  (2,3) , \sigma_2 (2) = (1,3),  \sigma_2 (3) =  (1,2),  $$
$$\sigma_2 (4) =  (3,2),\sigma_2 (5) = (3,1),   \sigma_2 (6) = (2,1).$$

We note that for every $2 \leq i \leq 5$ and every $j=1,2$ it holds that 
$$\langle K_{\sigma_{j} (i-1)},  K_{\sigma_{j} (i)},  K_{\sigma_{j} (i+1)} \rangle = H_{\sigma_{j} (i)},$$
and that $K_{\sigma_{j} (i)}$ is the center of $H_{\sigma_{j} (i)}$ (we use a slight abuse of notation here, i.e., we identify $K_{i_1, i_2} = K_{(i_1, i_2)}$ and  $H_{i_1, i_2} = H_{(i_1, i_2)}$).

Using the notations of \cref{Averaging operations sec},  for $i \in \lbrace 1, ..., 6 \rbrace,  j=1,2$ and $d \in \mathbb{N}$,  we define $\underline{X}_{\sigma_j (i)}^d \in \Prob_c (\St_{A_2} (\mathbb{Z} [t_1,...,t_m])$ as  
$$\underline{X}_{\sigma_j (i)}^d = \frac{1}{\vert \Poly (d,d) \vert} \sum_{p \in \Poly (d,d)} x_{\sigma_j (i)} (p).$$

With these notations,  we can use the results of \cref{Averaging operations sec} and prove the following:
\begin{theorem}
\label{bounding norm moves thm}
Let $\B$ be a uniformly convex Banach space with a modulus of convexity $\delta : (0,2] \rightarrow (0,1]$ and $r = r(\delta) <1$ be the constant of Theorem \ref{Final d_1,d_2,d_3 thm}.  Also,  let 
$$S = \bigcup_{1 \leq i_1, i_2 \leq 3,  i_1 \neq i_2} \lbrace x_{i_1,i_2} (\pm 1),   x_{i_1,i_2} (\pm t_1),...,  x_{i_1,i_2} (\pm t_m) \rbrace .$$
For every $d \geq \max \lbrace 16, m\rbrace$,  every $f \in \Prob_c (\St_{A_2} (\mathbb{Z} [t_1,...,t_m]))$, every affine isometric action $\rho$ of $\St_{A_2} (\mathbb{Z} [t_1,...,t_m])$ on $\B$ and every unit vector $\xi \in \B$ the following holds:
\begin{enumerate}
\item For $i=2,5$ and $d_{i-1}, d_i,  d_{i+1} \in \mathbb{N} \cap [d,3d]$,  if $d_i - (d_{i-1} + d_{i+1}) \geq d-1$, then 
\begin{dmath*}
\left\Vert \rho (\underline{X}_{\sigma_1 (i-1)}^{d_{i-1}}) \rho (\underline{X}_{\sigma_1 (i)}^{d_i})  \rho (\underline{X}_{\sigma_1 (i+1)}^{d_{i+1}}) \rho (f) \xi - \rho (\underline{X}_{\sigma_2 (i-1)}^{d_{i+1}}) \rho (\underline{X}_{\sigma_2 (i)}^{d_i})  \rho (\underline{X}_{\sigma_2 (i+1)}^{d_{i-1}}) \rho (f) \xi \right\Vert \lesssim d^m \left( \frac{1}{2} \right)^d (d^2 + \vert f \vert_S ).
\end{dmath*}
\item For $2 \leq i \leq 5$, $j =1,2$ and $d_{i-1}, d_i,  d_{i+1} \in \mathbb{N} \cap [d,3d]$,  if $\max \lbrace d_{i-1}, d_{i+1} \rbrace \geq d_i$, then 
\begin{dmath*}
\left\Vert \rho (\underline{X}_{\sigma_j (i-1)}^{d_{i-1}}) \rho (\underline{X}_{\sigma_j (i)}^{d_i})  \rho (\underline{X}_{\sigma_j (i+1)}^{d_{i+1}}) \rho (f) \xi - \rho (\underline{X}_{\sigma_j (i-1)}^{d_{i-1}}) \rho (\underline{X}_{\sigma_j (i)}^{d_i+1})  \rho (\underline{X}_{\sigma_j (i+1)}^{d_{i+1}}) \rho (f) \xi \right\Vert \lesssim d^{2m} r^{\sqrt{d}} (d^2 + \vert f \vert_S ).
\end{dmath*}
\end{enumerate}
\end{theorem}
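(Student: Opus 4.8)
The plan is to reduce both assertions to the Heisenberg estimates of \cref{Averaging operations sec}, applied through the natural identifications of triples of consecutive root subgroups with $\mathrm{H}_3(\mathbb{Z}[t_1,\dots,t_m])$. Fix $i$ and $j$ as in the statement. Since $\langle K_{\sigma_j(i-1)},K_{\sigma_j(i)},K_{\sigma_j(i+1)}\rangle=H_{\sigma_j(i)}\cong\mathrm{H}_3(\mathbb{Z}[t_1,\dots,t_m])$ with $K_{\sigma_j(i)}$ its centre, I would fix the isomorphism $\phi\colon\mathrm{H}_3(\mathbb{Z}[t_1,\dots,t_m])\to H_{\sigma_j(i)}$ that carries the subgroups $\{x(p):p\}$ and $\{y(p):p\}$ onto $K_{\sigma_j(i-1)}$ and $K_{\sigma_j(i+1)}$ in whichever of the two orders makes the Chevalley commutator relation yield $\phi(z(p))=x_{\sigma_j(i)}(p)$ without a sign (exactly one order does). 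Extending $\phi$ $\mathbb{R}$-linearly to group rings and putting $\rho_0=\rho\circ\phi$, an affine isometric action of $\mathrm{H}_3(\mathbb{Z}[t_1,\dots,t_m])$ on $\B$, one gets $\rho(\underline{X}^d_{\sigma_j(i)})=\rho_0(\underline{Z}^d)$, $\{\rho(\underline{X}^d_{\sigma_j(i-1)}),\rho(\underline{X}^d_{\sigma_j(i+1)})\}=\{\rho_0(\underline{X}^d),\rho_0(\underline{Y}^d)\}$, and $\rho(x_{\sigma_j(i)}(p))=\rho_0(z(p))$ for all $d,p$. For part (1) note that $\sigma_1(i)=\sigma_2(i)$ for $i\in\{2,5\}$, so $H_{\sigma_1(i)}=H_{\sigma_2(i)}$ and both arrangements occurring there live in the same copy of $\mathrm{H}_3$.

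The second preparatory step is a uniform bound on how far $\eta:=\rho(f)\xi$ moves under the centre. For an integer $d'\ge0$ and a monomial $\underline{t}$ with $d'+\deg(\underline{t})\lesssim d$ (which covers all cases below, since $d_{i-1},d_i,d_{i+1}\le 3d$), using $\rho(g)\eta=\rho(gf)\xi$, the last bound of Proposition \ref{distance of rho (g) prop} (with $\Vert\xi\Vert=1$), the subadditivity of $\vert\cdot\vert_S$, and Proposition \ref{unipotent bound prop},
$$\Vert\eta-\rho(x_{\sigma_j(i)}(2^{d'}\underline{t}))\eta\Vert\le\Vert\rho(f)\xi-\rho(x_{\sigma_j(i)}(2^{d'}\underline{t})f)\xi\Vert\lesssim 2\vert f\vert_S+\vert x_{\sigma_j(i)}(2^{d'}\underline{t})\vert_S\lesssim d^2+\vert f\vert_S .$$

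For part (1), where $i\in\{2,5\}$, the tables defining $\sigma_1,\sigma_2$ give $\sigma_2(i-1)=\sigma_1(i+1)$, $\sigma_2(i)=\sigma_1(i)$, $\sigma_2(i+1)=\sigma_1(i-1)$, so the second vector in the claim equals $\rho(\underline{X}^{d_{i+1}}_{\sigma_1(i+1)})\rho(\underline{X}^{d_i}_{\sigma_1(i)})\rho(\underline{X}^{d_{i-1}}_{\sigma_1(i-1)})\rho(f)\xi$. Applying $\phi$ (for the pair $(i,1)$), the difference in question becomes, up to the identity $\Vert a-b\Vert=\Vert b-a\Vert$, exactly $\Vert\rho_0(\underline{X}^{d_1})\rho_0(\underline{Z}^{d_2})\rho_0(\underline{Y}^{d_3})\eta-\rho_0(\underline{Y}^{d_3})\rho_0(\underline{Z}^{d_2})\rho_0(\underline{X}^{d_1})\eta\Vert$ for a relabelling $\{d_1,d_3\}=\{d_{i-1},d_{i+1}\}$, $d_2=d_i$, with $d_1+d_3\le d_2$ by the hypothesis $d_i-(d_{i-1}+d_{i+1})\ge d-1\ge0$. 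The ``in particular'' part of Theorem \ref{change of order thm} then bounds this by $\lesssim(d_i+m)^m(1/2)^{d_i-d_{i-1}-d_{i+1}}\max_{\underline{t}\in\mathcal{B}^{d_i}}\Vert\eta-\rho_0(z(2^{d_i}\underline{t}))\eta\Vert$; since $\rho_0(z(2^{d_i}\underline{t}))=\rho(x_{\sigma_j(i)}(2^{d_i}\underline{t}))$ the maximum is $\lesssim d^2+\vert f\vert_S$ by the previous paragraph, while $(d_i+m)^m\le(4d)^m\lesssim d^m$ (with $m$ fixed and $m\le d$) and $(1/2)^{d_i-d_{i-1}-d_{i+1}}\le2(1/2)^d$, and the product is the asserted bound $d^m(1/2)^d(d^2+\vert f\vert_S)$.

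For part (2), applying $\phi$ directly turns the difference into $\Vert\rho_0(A)\rho_0(\underline{Z}^{d_i})\rho_0(B)\eta-\rho_0(A)\rho_0(\underline{Z}^{d_i+1})\rho_0(B)\eta\Vert$, with $(A,B)$ equal to $(\underline{X}^{d_{i-1}},\underline{Y}^{d_{i+1}})$ or $(\underline{Y}^{d_{i-1}},\underline{X}^{d_{i+1}})$; these are precisely the quantities controlled in \eqref{needed bound2} and \eqref{needed bound3}, and Theorem \ref{Final d_1,d_2,d_3 thm} applies to $\rho_0$ and $\eta$ since $d_i\ge d\ge16$, $\min\{d_{i-1},d_{i+1}\}\ge d\ge16$ and $\max\{d_{i-1},d_{i+1}\}\ge d_i$. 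It yields $\lesssim(d_i+m)^{2m}r^{\sqrt{\min\{d_{i-1},d_{i+1},d_i\}}}\max_{0\le d'\le2d_i}\max_{\underline{t}\in\mathcal{B}^{d_i+1}}\Vert\eta-\rho_0(z(2^{d'}\underline{t}))\eta\Vert$, and then $\min\{d_{i-1},d_{i+1},d_i\}\ge d$ (so $r^{\sqrt{\min}}\le r^{\sqrt d}$, using $0\le r<1$), $(d_i+m)^{2m}\lesssim d^{2m}$, and the double maximum $\lesssim d^2+\vert f\vert_S$ give $\lesssim d^{2m}r^{\sqrt d}(d^2+\vert f\vert_S)$. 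I expect the only genuinely non-routine points to be the combinatorial identity $\sigma_2(i\mp1)=\sigma_1(i\pm1)$ used in part (1) (which is what lets the ``swap $\sigma_1\leftrightarrow\sigma_2$ and reverse'' pattern be read off from the pure commutation estimate of Theorem \ref{change of order thm}) and the care in the second paragraph to keep the $f$-dependence linear in $\vert f\vert_S$ rather than $\vert f\vert_S$ times a power of $d$; everything else is bookkeeping with the fixed $m$ and the ranges $d_{i-1},d_i,d_{i+1}\in[d,3d]$.
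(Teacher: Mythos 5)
Your argument is correct and follows essentially the same route as the paper: identify each consecutive triple $K_{\sigma_j(i-1)},K_{\sigma_j(i)},K_{\sigma_j(i+1)}$ with $\mathrm{H}_3(\mathbb{Z}[t_1,\dots,t_m])$, invoke Theorem \ref{change of order thm} for part (1) and Theorem \ref{Final d_1,d_2,d_3 thm} for part (2), and bound the resulting displacement maxima of $\rho(f)\xi$ via Propositions \ref{distance of rho (g) prop} and \ref{unipotent bound prop}. The one point you make explicit that the paper leaves implicit is the identity $\sigma_2(i\mp1)=\sigma_1(i\pm1)$ for $i\in\{2,5\}$, which is indeed what reduces part (1) to the commutation estimate; otherwise the two proofs coincide.
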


\begin{proof}

For $2 \leq i \leq 5$ and $j=1,2$,  we note that $H_{\sigma_{j} (i)} \cong \rm H_3 (\mathbb{Z} [t_1,...,t_m])$ and that the following holds:
\begin{itemize}
\item If $i+j$ is odd,  then the isomorphism between $H_{\sigma_{j} (i)}$ and $\rm H_3 (\mathbb{Z} [t_1,...,t_m])$  is induced by
$x_{\sigma_j (i-1)} (p) \mapsto x (p),  x_{\sigma_j (i+1)} (p) \mapsto y (p), x_{\sigma_j (i)} (p) \mapsto z(p),  \forall p \in \mathbb{Z} [t_1,...,t_m]$. Moreover,  extending this isomorphism to the group ring yields that 
$$\underline{X}_{\sigma_j (i-1)}^{d_{i-1}} \mapsto \underline{X}^{d_{i-1}},  \underline{X}_{\sigma_j (i)}^{d_i} \mapsto \underline{Z}^{d_i},   \underline{X}_{\sigma_j (i+1)}^{d_{i+1}} \mapsto \underline{Y}^{d_{i+1}}.$$
\item If $i+j$ is even,  then the isomorphism between $H_{\sigma_{j} (i)}$ and $\rm H_3 (\mathbb{Z} [t_1,...,t_m])$  is induced by
$x_{\sigma_j (i-1)} (p) \mapsto y (p),  x_{\sigma_j (i+1)} (p) \mapsto x (p), x_{\sigma_j (i)} (p) \mapsto z(p),  \forall p \in \mathbb{Z} [t_1,...,t_m]$. Moreover,  extending this isomorphism to the group ring yields that 
$$\underline{X}_{\sigma_j (i-1)}^{d_{i-1}} \mapsto \underline{Y}^{d_{i-1}},  \underline{X}_{\sigma_j (i)}^{d_i} \mapsto \underline{Z}^{d_i},   \underline{X}_{\sigma_j (i+1)}^{d_{i+1}} \mapsto \underline{X}^{d_{i+1}}.$$
\end{itemize}
Below,  we will use these isomorphisms implicitly in order to apply theorems from \cref{Averaging operations sec} in our setting.  

Fix $d, f, \xi$ as above.  

\begin{enumerate}
\item Assume that $i=2,5$,  $d_{i-1},d_i, d_{i+1} \in \mathbb{N} \cap [d, 3d]$ and  $d_i - (d_{i-1} + d_{i+1}) \geq d-1$.  Then 
\begin{dmath*}
\left\Vert \rho (\underline{X}_{\sigma_1 (i-1)}^{d_{i-1}}) \rho (\underline{X}_{\sigma_1 (i)}^{d_i})  \rho (\underline{X}_{\sigma_1 (i+1)}^{d_{i+1}}) \rho (f) \xi - \rho (\underline{X}_{\sigma_2 (i-1)}^{d_{i+1}}) \rho (\underline{X}_{\sigma_2 (i)}^{d_i})  \rho (\underline{X}_{\sigma_2 (i+1)}^{d_{i-1}}) \rho (f) \xi \right\Vert \lesssim^{\text{Theorem } \ref{change of order thm}, d \geq m} \\
d^m \left( \frac{1}{2} \right)^{d-1} \max_{\underline{t} \in \mathcal{B}^{d_i}} \Vert \rho (f) \xi - \rho (x_{\sigma_1 (i)} (2^{d_i} \underline{t})) \rho (f) \xi \Vert \lesssim_S^{\text{Proposition } \ref{distance of rho (g) prop},  \Vert \xi \Vert =1} \\
d^m \left( \frac{1}{2} \right)^{d}  \left(\max_{\underline{t} \in \mathcal{B}^{d_i}} \vert x_{x_{\sigma_1 (i)}} (2^{d_i} \underline{t}) \vert_S + \vert f \vert_S  \right) \lesssim^{\text{Proposition } \ref{unipotent bound prop},  d_i \leq 3d} \\
d^m \left( \frac{1}{2} \right)^{d}  \left(d^2 + \vert f \vert_S  \right),
\end{dmath*}
as needed.
\item Assume that $2 \leq i \leq 5$,  $d_{i-1},d_i, d_{i+1} \in \mathbb{N} \cap [d, 3d]$ and  $\max \lbrace d_1, d_3 \rbrace \geq d_2$.  Then
\begin{dmath*}
\left\Vert \rho (\underline{X}_{\sigma_j (i-1)}^{d_{i-1}}) \rho (\underline{X}_{\sigma_j (i)}^{d_i})  \rho (\underline{X}_{\sigma_j (i+1)}^{d_{i+1}}) \rho (f) \xi - \rho (\underline{X}_{\sigma_j (i-1)}^{d_{i-1}}) \rho (\underline{X}_{\sigma_j (i)}^{d_i+1})  \rho (\underline{X}_{\sigma_j (i+1)}^{d_{i+1}}) \rho (f) \xi \right\Vert \lesssim^{\text{Theorem } \ref{Final d_1,d_2,d_3 thm}, \min \lbrace d_{i-1}, d_i , d_{i+1}  \rbrace \geq d \geq m} \\
d_i^{2m} r^{\sqrt{d}}  \max_{0 \leq d' \leq 2d_i} \max_{\underline{t} \in \mathcal{B}^{d_i+1}} \Vert \rho (f) \xi - \rho (x_{\sigma_j (i)} (2^{d'} \underline{t})) \rho (f) \xi \Vert \lesssim^{d_i \leq 3d} \\
d^{2m} r^{\sqrt{d}}  \max_{0 \leq d' \leq 6d} \max_{\underline{t} \in \mathcal{B}^{3d+1}} \Vert \rho (f) \xi - \rho (x_{\sigma_j (i)} (2^{d'} \underline{t})) \rho (f) \xi \Vert  \lesssim_S^{\text{Proposition } \ref{distance of rho (g) prop},  \Vert \xi \Vert =1} \\
d^{2m} r^{\sqrt{d}}  \left( \max_{0 \leq d' \leq 6d} \max_{\underline{t} \in \mathcal{B}^{3d+1}}  \vert x_{\sigma_j (i)} (2^{d'} \underline{t}) \vert_S +  \vert f \vert_S \right)  \lesssim^{\text{Proposition } \ref{unipotent bound prop},} \\
d^{2m} r^{\sqrt{d}} \left(d^2 + \vert f \vert_S \right),
\end{dmath*} 
as needed.
\end{enumerate}
  
\end{proof}

After this,  we will prove the following relative fixed point result:
\begin{theorem}
\label{relative prop f.p thm}
For every $1 \leq i,j \leq 3$,  the pair $(\St_{A_2} (\mathbb{Z} [t_1,...,t_m ],  K_{i,j})$ has relative property $(F_{\mathcal{E}_{uc}})$. 
\end{theorem}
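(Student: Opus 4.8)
The plan is to feed the estimates of Theorem \ref{bounding norm moves thm} into the general convergence machinery of \cref{General convergence argument subsec}. Fix a uniformly convex Banach space $\B$ with modulus of convexity $\delta$ (the case $\B=\lbrace 0\rbrace$ being trivial), an affine isometric action $\rho$ of $\Gamma=\St_{A_2}(\mathbb{Z}[t_1,\dots,t_m])$ on $\B$, a unit vector $\xi_0\in\B$, the generating set $S$ of \cref{Word norm growth of unipotents sec}, the constant $r=r(\delta)<1$ of Theorem \ref{Final d_1,d_2,d_3 thm}, and $D=\max\lbrace 16,m\rbrace$. For $j\in\lbrace 1,2\rbrace$, $d\geq D$ and $(d_1,\dots,d_6)\in(\mathbb{N}\cap[d,3d])^6$ I would set
$$F_j(d_1,\dots,d_6)\;=\;\rho\!\left(\underline{X}_{\sigma_j(1)}^{d_1}\right)\rho\!\left(\underline{X}_{\sigma_j(2)}^{d_2}\right)\cdots\rho\!\left(\underline{X}_{\sigma_j(6)}^{d_6}\right)\xi_0\;\in\;\B,$$
which, since each $\underline{X}_{i,j}^d\in\Prob_c(\Gamma)$ and $\rho(f_1)\rho(f_2)=\rho(f_1f_2)$, is a finite convex combination of points of the $\rho(\Gamma)$-orbit of $\xi_0$. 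The aim is to check that $(F_1,F_2)$ satisfies conditions $(\mathcal{A}1)$ and $(\mathcal{A}2)$, invoke Theorem \ref{general convergence thm} to get a limit vector $\widetilde{\xi}$, and then prove $\widetilde{\xi}$ is fixed by the root subgroups.

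For $(\mathcal{A}1)$: a direct inspection of the formulas for $\sigma_1$ and $\sigma_2$ shows that $\sigma_2$ is obtained from $\sigma_1$ by reversing the order inside each of the blocks $\lbrace 1,2,3\rbrace$ and $\lbrace 4,5,6\rbrace$, so passing from $F_1(d_1,\dots,d_6)$ to $F_2(d_3,d_2,d_1,d_6,d_5,d_4)$ amounts to reversing the first three factors and then the last three. Under the hypotheses $d_2-(d_1+d_3)\geq d-1$ and $d_5-(d_4+d_6)\geq d-1$ of $(\mathcal{A}1)$, the first reversal is exactly Theorem \ref{bounding norm moves thm}(1) with $i=2$, applied with residual $f=\underline{X}_{\sigma_1(4)}^{d_4}\underline{X}_{\sigma_1(5)}^{d_5}\underline{X}_{\sigma_1(6)}^{d_6}$, whose $S$-norm is $\lesssim d^{m+3}$ by Corollary \ref{vert X_i,j vert_S coro}; the second is Theorem \ref{bounding norm moves thm}(1) with $i=5$ and trivial residual, followed by left-multiplication by $\rho\!\left(\underline{X}_{\sigma_1(3)}^{d_3}\underline{X}_{\sigma_1(2)}^{d_2}\underline{X}_{\sigma_1(1)}^{d_1}\right)$, which only contracts distances by Claim \ref{difference to linear claim}. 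This gives $(\mathcal{A}1)$ with cost $\lesssim d^{2m+3}(1/2)^d$. For $(\mathcal{A}2)$: changing $d_i$ to $d_i+1$ (for $2\leq i\leq 5$) alters only the middle factor of the Heisenberg block $H_{\sigma_j(i)}$; absorbing the factors at positions $1,\dots,i-2$ by Claim \ref{difference to linear claim} and applying Theorem \ref{bounding norm moves thm}(2) with residual the product of the factors at positions $i+2,\dots,6$ (again of $S$-norm $\lesssim d^{m+3}$) yields $(\mathcal{A}2)$ with cost $\lesssim d^{3m+3}r^{\sqrt d}$. Since $(1/2)^d\leq r^{\sqrt d}$ for $d\geq 1$ (as $1/2<3/4\leq r$), both bounds are dominated by $\epsilon(d):=C(m)\,d^{3m+3}r^{\sqrt d}$, and $\sum_{d\geq D}d\,\epsilon(d)<\infty$. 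Theorem \ref{general convergence thm} then produces a limit $\widetilde{\xi}\in\B$ of the sequence $\xi_{2d-1}=F_1(d,\dots,d)$, $\xi_{2d}=F_2(d,\dots,d)$, lying in the closed convex hull of the $\rho(\Gamma)$-orbit of $\xi_0$.

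Finally I would show $\widetilde{\xi}$ is invariant under the root subgroups. Fix a monomial $\underline{t}$ and $d\geq\max\lbrace D,\deg\underline{t}\rbrace$; since $K_{1,2}$ is abelian, in $\Prob_c(\Gamma)$ one has $\underline{X}_{1,2}^{d}=X_{1,2}^{d}(\underline{t})\cdot g$ with $g$ supported on $K_{1,2}$ and commuting with $x_{1,2}(\underline{t})$, where $X_{1,2}^{d}(\underline{t})=\tfrac{1}{2^d}\sum_{c=0}^{2^d-1}x_{1,2}(\underline{t})^{c}$. Writing $\eta_d=\rho\!\left(\underline{X}_{\sigma_1(2)}^{d}\cdots\underline{X}_{\sigma_1(6)}^{d}\right)\xi_0$ so that $\xi_{2d-1}=\rho(g)\rho\!\left(X_{1,2}^{d}(\underline{t})\right)\eta_d$, Proposition \ref{ergodic diff prop} and Claim \ref{difference to linear claim} give
$$\left\Vert\rho(x_{1,2}(\underline{t}))\,\xi_{2d-1}-\xi_{2d-1}\right\Vert\;\leq\;\frac{1}{2^d}\left\Vert\eta_d-\rho(x_{1,2}(2^d\underline{t}))\,\eta_d\right\Vert,$$
and bounding $\Vert\eta_d\Vert\lesssim_S d^{m+3}$ (Proposition \ref{distance of rho (g) prop}, Corollary \ref{vert X_i,j vert_S coro}) and $\lvert x_{1,2}(2^d\underline{t})\rvert_S\lesssim d^2$ (Proposition \ref{unipotent bound prop}) shows the right side is $\lesssim_S d^{m+5}/2^d\to 0$. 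Passing to the limit, $\rho(x_{1,2}(\underline{t}))\widetilde{\xi}=\widetilde{\xi}$ for every monomial $\underline{t}$, and since these elements generate $K_{1,2}$, $\widetilde{\xi}$ is $K_{1,2}$-fixed; the same argument on the even subsequence (leading factor $\underline{X}_{\sigma_2(1)}^{d}=\underline{X}_{2,3}^{d}$) shows $\widetilde{\xi}$ is $K_{2,3}$-fixed, hence also $K_{1,3}$-fixed because $K_{1,3}<\langle K_{1,2},K_{2,3}\rangle=H_{1,3}$. For the remaining three subgroups, note that for a permutation $\pi$ of $\lbrace 1,2,3\rbrace$ the index map $x_{i,j}(p)\mapsto x_{\pi(i),\pi(j)}(p)$ preserves all three families of defining relations of $\St_{A_2}$, hence extends to an automorphism $\theta_\pi$ of $\Gamma$ fixing $S$; running the whole argument for $\rho\circ\theta_{(1\,3)}$ produces a vector that is $\rho$-fixed for $K_{3,2}$, $K_{2,1}$ and $K_{3,1}$. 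Thus every $K_{i,j}$ ($1\leq i,j\leq 3$, $i\neq j$) fixes a point of $\rho$, which is the assertion. All the substantive analysis is already in Theorem \ref{bounding norm moves thm} and Theorem \ref{general convergence thm}; what demands care here is the last step — keeping the error terms in the invariance estimate under control, and matching the two orderings $\sigma_1,\sigma_2$ precisely to the hypotheses of $(\mathcal{A}1)$ and $(\mathcal{A}2)$.
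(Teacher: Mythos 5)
Your proposal is correct and follows essentially the same route as the paper's proof: the same averages $F_1,F_2$ built from $\sigma_1,\sigma_2$, the same verification of $(\mathcal{A}1)$ (via the intermediate vector with only the first block reversed) and of $(\mathcal{A}2)$ from Theorem \ref{bounding norm moves thm} with $\epsilon(d)\asymp d^{3m+3}r^{\sqrt d}$, the same appeal to Theorem \ref{general convergence thm}, and the same ergodic-average estimate for the invariance of the limit under $K_{1,2}$ and $K_{2,3}$. The only cosmetic difference is that you make the concluding ``by symmetry'' step explicit through the index-permutation automorphisms of $\St_{A_2}$, which is exactly what the paper's Observation \ref{automorp obs} is invoked for.
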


\begin{proof}
By symmetry,  it is enough to prove that the pair $(\St_{A_2} (\mathbb{Z} [t_1,...,t_m ],  H_{1,3})$ has relative property $(F_{\mathcal{E}_{uc}})$. 

Fix $\B$ to be a uniformly convex Banach space with a modulus of convexity $\delta$.   We denote $r = r(\delta) <1$ to be the constant of Theorem \ref{Final d_1,d_2,d_3 thm} and note that $r \geq \frac{1}{2}$ (by the choice of $r_2 (\delta)$ in Theorem \ref{r_2 thm}). 

Let  $\rho$ to be an affine isometric action of $\St_{A_2} (\mathbb{Z} [t_1,...,t_m])$ on $\B$ and fix a unit vector $\xi \in \B$.  Also,  let 
$$S = \bigcup_{1 \leq i,j \leq 3,  i \neq j} \lbrace x_{i,j} (\pm 1),   x_{i,j} (\pm t_1),...,  x_{i,j} (\pm t_m) \rbrace .$$

In order to use Theorem \ref{general convergence thm},  we will define $F_1, F_2 : \mathbb{N}^6 \rightarrow \B$ as follows: For the maps $\sigma_1, \sigma_2$ define above and $j=1,2$,  we define
$$F_j (d_1,...,d_6) = \rho (\underline{X}_{\sigma_j (1)}^{d_1} ) \rho (\underline{X}_{\sigma_j (2)}^{d_2} ) \rho (\underline{X}_{\sigma_j (3)}^{d_3} ) \rho (\underline{X}_{\sigma_j (4)}^{d_4} ) \rho (\underline{X}_{\sigma_j (5)}^{d_5} ) \rho (\underline{X}_{\sigma_j (6)}^{d_6} ) \xi.$$

We further define a function $\epsilon : \mathbb{N} \rightarrow [0,\infty)$ as 
$\epsilon (d) = d^{3m+ 3} r^{\sqrt{d}} $.  We will show that for $D = \max \lbrace 16,  m \rbrace$, the functions $F_1, F_2$ fulfil the conditions $(\mathcal{A} 1)$ and $(\mathcal{A} 2)$ defined in \cref{General convergence argument subsec}.  

First,  we will verify that $(\mathcal{A} 1)$ holds.  Fix $d \geq D$ and $(d_1,...,d_6) \in (\mathbb{N} \cap [d, 3d])^6$ such that $d_2 - (d_1+d_3) \geq d-1$ and $d_5 - (d_4+d_6) \geq d-1$.  Define $F_3 : \mathbb{N}^6 \rightarrow \B$ as 
$$F_3 (d_1,...,d_6) = \rho (\underline{X}_{\sigma_2 (1)}^{d_1} ) \rho (\underline{X}_{\sigma_2 (2)}^{d_2} ) \rho (\underline{X}_{\sigma_2 (3)}^{d_3} ) \rho (\underline{X}_{\sigma_1 (4)}^{d_4} ) \rho (\underline{X}_{\sigma_1 (5)}^{d_5} ) \rho (\underline{X}_{\sigma_1 (6)}^{d_6} ) \xi.$$

We note that 
\begin{dmath*}
\Vert F_1 (d_1,d_2,d_3,d_4,d_5,d_6) - F_2 (d_3,d_2,d_1,d_6,d_5,d_4) \Vert \leq \\
\Vert F_1 (d_1,d_2,d_3,d_4,d_5,d_6) - F_3 (d_3,d_2,d_1,d_4,d_5,d_6) \Vert + \\
\Vert  F_3 (d_3,d_2,d_1,d_4,d_5,d_6) - F_2 (d_3,d_2,d_1,d_6,d_5,d_4) \Vert.
\end{dmath*}
Thus,  it is enough to prove that 
$$\Vert F_1 (d_1,d_2,d_3,d_4,d_5,d_6) - F_3 (d_3,d_2,d_1,d_4,d_5,d_6) \Vert  \lesssim \epsilon (d)$$
and 
$$\Vert  F_3 (d_3,d_2,d_1,d_4,d_5,d_6) - F_2 (d_3,d_2,d_1,d_6,d_5,d_4) \Vert \lesssim \epsilon (d).$$

For the first inequality,  we denote $f =  \underline{X}_{\sigma_1 (4)}^{d_4}  \underline{X}_{\sigma_1 (5)}^{d_5} \underline{X}_{\sigma_1 (6)}^{d_6} \in \Prob_c (\St_{A_2} (\mathbb{Z} [t_1,...,t_m])$ and note that 
$$ \vert f \vert_S  \lesssim^{\text{Corollary } \ref{vert X_i,j vert_S coro},  d_4,  d_5,  d_6 \leq 3d} d^{m+3}.$$
Then 
\begin{dmath*}
\Vert F_1 (d_1,d_2,d_3,d_4,d_5,d_6) - F_3 (d_3,d_2,d_1,d_4,d_5,d_6) \Vert = \\
\left\Vert  \rho (\underline{X}_{\sigma_1 (1)}^{d_1} ) \rho (\underline{X}_{\sigma_1 (2)}^{d_2} ) \rho (\underline{X}_{\sigma_1 (3)}^{d_3} ) \rho (f) \xi -    \rho (\underline{X}_{\sigma_2 (1)}^{d_3} ) \rho (\underline{X}_{\sigma_2 (2)}^{d_2} )  \rho (\underline{X}_{\sigma_1 (3)}^{d_1} ) \rho (f) \xi \right\Vert \lesssim^{\text{Theorem } \ref{bounding norm moves thm}} \\ 
d^m \left( \frac{1}{2} \right)^{d} \left( d^2 +  \vert f \vert_S \right) \lesssim 
\left( \frac{1}{2} \right)^{d} ( d^{m+2} + d^{2m+3} ) \lesssim \epsilon (d).  
\end{dmath*}

The proof of the bound on $\Vert  F_3 (d_3,d_2,d_1,d_4,d_5,d_6) - F_2 (d_3,d_2,d_1,d_6,d_5,d_4) \Vert$ is somewhat simpler:
\begin{dmath*}
\Vert  F_3 (d_3,d_2,d_1,d_4,d_5,d_6) - F_2 (d_3,d_2,d_1,d_6,d_5,d_4) \Vert \leq^{\text{Claim } \ref{difference to linear claim}} \\
\left\Vert \rho (\underline{X}_{\sigma_1 (4)}^{d_4} ) \rho (\underline{X}_{\sigma_1 (5)}^{d_5} ) \rho (\underline{X}_{\sigma_1 (6)}^{d_6} ) \xi - \rho (\underline{X}_{\sigma_2 (4)}^{d_6} ) \rho (\underline{X}_{\sigma_1 (5)}^{d_5} ) \rho (\underline{X}_{\sigma_1 (6)}^{d_4} ) \xi \right\Vert \lesssim^{\text{Theorem } \ref{bounding norm moves thm}} \\
d^m \left( \frac{1}{2} \right)^{d}  d^2 \leq \epsilon (d).
\end{dmath*}
Thus,  we proved that $(\mathcal{A} 1)$ holds.  

Next,  we will prove that $(\mathcal{A} 2)$ holds.  Fix $j \in \lbrace 1,2 \rbrace$,  $d \in \mathbb{N},  d \geq D$,   $(d_1,...,d_6) \in (\mathbb{N} \cap [d,3d] )^6$ and $2 \leq i \leq 5$.  Assume that $\max \lbrace d_{i-1}, d_{i+1}  \rbrace \geq d_i$.  

We denote $f \in \Prob_c (\St_{A_2} (\mathbb{Z} [t_1,...,t_m ])$ to be
$$f = \underline{X}_{\sigma_{j} (i+2)}^{d_{i+2}} ... \underline{X}_{\sigma_{j} (6)}^{d_{6}}$$
(in the case where $i=5$,  $f = \delta_e$).  We note that 
$$\vert f \vert_S \lesssim^{\text{Corollary } \ref{vert X_i,j vert_S coro}}  d^{m+3}.$$
Then,
\begin{dmath*}
\Vert F_j (d_1,...,d_i,...,d_6) -  F_j (d_1,...,d_i +1,...,d_6) \Vert \leq^{\text{Claim } \ref{difference to linear claim}} \\
\left\Vert  \rho (\underline{X}_{\sigma_j (i-1)}^{d_{i-1}})  \rho (\underline{X}_{\sigma_j (i)}^{d_{i}} ) \rho (\underline{X}_{\sigma_j (i+1)}^{d_{i+1}}) \rho (f) \xi -   \rho (\underline{X}_{\sigma_j (i-1)}^{d_{i-1}})  \rho (\underline{X}_{\sigma_j (i)}^{d_{i} +1} ) \rho (\underline{X}_{\sigma_j (i+1)}^{d_{i+1}}) \rho (f) \xi  \right\Vert \lesssim^{\text{Theorem } \ref{bounding norm moves thm}} \\
d^{2m} r^{\sqrt{d}} \left(d^2 + \vert f \vert_S \right) \lesssim 
d^{2m} r^{\sqrt{d}} (d^2 + d^{m+3}) \lesssim  d^{3m+3} r^{\sqrt{d}} = \epsilon (d).
\end{dmath*}
Thus,  we proved that $(\mathcal{A} 2)$ is fulfilled.

As in Theorem \ref{general convergence thm},  we will define 
$$\xi_k = 
\begin{cases}
F_{1} (\frac{k+1}{2},..., \frac{k+1}{2}) & k \text{ is odd} \\
F_{2} (\frac{k}{2},..., \frac{k}{2}) & k \text{ is even} 
\end{cases}.$$
By Theorem \ref{general convergence thm} it follows that this is a Cauchy sequence and thus converges.  Denote $\widetilde{\xi} = \lim \xi_k$.  We will show that 
$\widetilde{\xi} \in \B^{\rho (H_{1,3})}$ and thus  $\B^{\rho (H_{1,3})} \neq \emptyset$.   

Fix $\underline{t}_0$ to be a monomial.  For $d \geq \deg (\underline{t}_0)$ and $j=1,2$,  we denote $f_j^d \in  \Prob_c (\St_{A_2} (\mathbb{Z} [t_1,...,t_m ])$ to be the probability function such that 
$$ \underline{X}_{\sigma_j (1)}^{d}  \underline{X}_{\sigma_j (2)}^{d} \underline{X}_{\sigma_j (3)}^{d} \underline{X}_{\sigma_j (4)}^{d} \underline{X}_{\sigma_j (5)}^{d} \underline{X}_{\sigma_j (6)}^{d} = \left( \frac{1}{2^d} \sum_{a=0}^{2^d-1} x_{\sigma_j (1)} (a \underline{t}_0) \right) f_j^d .$$
Thus,
\begin{dmath*}
{\left\Vert \rho (x_{\sigma_j (1)} (\underline{t}_0))  F_j (d,....,d) - F_j (d,....,d) \right\Vert = }\\
\left\Vert \rho (x_{\sigma_j (1)} (\underline{t}_0))  \rho \left( \frac{1}{2^d} \sum_{a=0}^{2^d-1} x_{\sigma_j (1)} (a \underline{t}_0) \right)  \rho (f_j^d) \xi  - \rho \left( \frac{1}{2^d} \sum_{a=0}^{2^d-1} x_{\sigma_j (1)} (a \underline{t}_0) \right)  \rho (f_j^d) \xi \right\Vert \lesssim^{\text{Proposition } \ref{ergodic diff prop}} \\
\frac{1}{2^d} \left\Vert \rho (x_{\sigma_j (1)} (2^d \underline{t}_0)) \rho (f_j^d) \xi -  \rho (f_j^d) \xi \right\Vert \lesssim_S^{\text{Proposition } \ref{distance of rho (g) prop},  \Vert \xi \Vert =1} \\
\frac{1}{2^d} \left( \vert x_{\sigma_j (1)} (2^d \underline{t}_0)  \vert_S + \vert f_j^d \vert_S \right) \lesssim^{\text{Proposition } \ref{unipotent bound prop}, \text{Corollary } \ref{vert X_i,j vert_S coro}}
\frac{d^{m+3}}{2^d}.
\end{dmath*}

It follows that 
\begin{dmath*}
\Vert \rho ( x_{\sigma_1 (1)} (\underline{t}_0 ))  \widetilde{\xi}  - \widetilde{\xi} \Vert = 
\lim_{d \rightarrow \infty} \Vert \rho ( x_{\sigma_1 (1)} (\underline{t}_0 ))  \xi_{2d-1}  - \xi_{2d-1} \Vert = \\
\lim_{d \rightarrow \infty} \Vert  \rho (x_{\sigma_1 (1)} (\underline{t}_0))  F_1 (d,....,d) - F_1 (d,....,d) \Vert \lesssim 
\lim_{d \rightarrow \infty}  \frac{d^{m+3}}{2^d} = 0,
\end{dmath*}
and thus $\rho ( x_{\sigma_1 (1)} (\underline{t}_0 ))  \widetilde{\xi} = \widetilde{\xi}$.  This holds for every $\underline{t}_0$ and thus $\widetilde{\xi} \in \B^{\rho (K_{1,2})}$.  Similarly, 
\begin{dmath*}
\Vert \rho ( x_{\sigma_2 (1)} (\underline{t}_0 ))  \widetilde{\xi}  - \widetilde{\xi} \Vert =
\lim_{d \rightarrow \infty} \Vert \rho ( x_{\sigma_2 (1)} (\underline{t}_0 )  \xi_{2d}  - \xi_{2d} \Vert = \\
\lim_{d \rightarrow \infty} \Vert  \rho (x_{\sigma_2 (1)} (\underline{t}_0))  F_2 (d,....,d) - F_2 (d,....,d) \Vert = 0,
\end{dmath*}
and thus $\rho ( x_{\sigma_2 (1)} (\underline{t}_0 ) ) \widetilde{\xi} = \widetilde{\xi}$.  This holds for every $\underline{t}_0$ and thus $\widetilde{\xi} \in \B^{\rho (K_{2,3})}$.

We conclude that 
$$\widetilde{\xi} \in \B^{\rho (K_{1,2})} \cap \B^{\rho (K_{2,3})} = \B^{\rho (H_{1,3})}$$
as needed.
\end{proof}

\subsection{Relative fixed point property for $\St_{\Phi}$}

Below,  for a subgroup $K < \Gamma$ and $g \in \Gamma$,  we denote $K^g = g K g^{-1}$.  Also,  given a root $\gamma \in \Phi$ and a ring $R$,  we will denote 
$$K_{\gamma} (R) = \lbrace x_{\gamma} (p) : p \in R \rbrace.$$

\begin{observation}
\label{automorp obs}
We note that if the pair $(\Gamma, K)$ has relative property $(F_{\mathcal{E}_{uc}})$ and $\phi : \Gamma \rightarrow \Gamma$ is an isomorphism of $\Gamma$,  then the $(\Gamma,  \phi (K))$ has relative property $(F_{\mathcal{E}_{uc}})$.  In particular,  if the pair $(\Gamma, K)$ has relative property $(F_{\mathcal{E}_{uc}})$,  then for every $g \in \Gamma$,  the pair  $(\Gamma,  K^{g})$ has relative property $(F_{\mathcal{E}_{uc}})$. 
\end{observation}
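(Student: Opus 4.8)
The plan is to exploit the fact that pulling an affine isometric action back along an automorphism leaves the underlying Banach space unchanged, so it stays in $\mathcal{E}_{uc}$. Concretely, fix $\B \in \mathcal{E}_{uc}$ and a continuous affine isometric action $\rho$ of $\Gamma$ on $\B$, with linear part $\pi$ and $1$-cocycle $c$, so that $\rho(g)\xi = \pi(g)\xi + c(g)$. The first step is to observe that $\rho \circ \phi$ is again a continuous affine isometric action of $\Gamma$ on the \emph{same} space $\B$: its linear part is $g \mapsto \pi(\phi(g))$, which is an isometric linear representation because $\phi$ is a homomorphism, and its cocycle is $g \mapsto c(\phi(g))$, which satisfies the cocycle identity into $\pi \circ \phi$ by the same token; continuity is inherited since $\phi$ is an isomorphism (and in the cases of interest $\Gamma$ is discrete, so this is automatic anyway).

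The second step is to apply the hypothesis. Since $(\Gamma, K)$ has relative property $(F_{\B})$, the action $\rho \circ \phi$ admits a $K$-fixed vector $\xi \in \B$, i.e. $\rho(\phi(k))\xi = \xi$ for every $k \in K$. As $k$ ranges over $K$, the element $\phi(k)$ ranges over all of $\phi(K)$, so $\xi$ is fixed by $\rho(\phi(K))$. Since $\B \in \mathcal{E}_{uc}$ and $\rho$ were arbitrary, this shows that $(\Gamma, \phi(K))$ has relative property $(F_{\mathcal{E}_{uc}})$, which is the first assertion.

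For the ``in particular'' clause, I would simply take $\phi$ to be the inner automorphism $\phi(h) = ghg^{-1}$; this is an isomorphism of $\Gamma$ onto itself and $\phi(K) = gKg^{-1} = K^{g}$, so the statement about conjugates is the special case just proved. There is no genuine obstacle here: the only point meriting a word of care is the very first one, namely that $\rho \circ \phi$ remains an affine isometric action on the same uniformly convex space, and that is immediate from the definitions.
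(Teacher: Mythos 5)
Your argument is correct and is precisely the intended justification for this observation (which the paper states without proof): precompose the action with $\phi$, note the Banach space is unchanged and hence still uniformly convex, extract a $K$-fixed point for $\rho\circ\phi$, and observe it is a $\phi(K)$-fixed point for $\rho$; the conjugation case follows by taking $\phi$ to be the inner automorphism. Nothing is missing.
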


\begin{theorem}
\label{weak relative f.p. for C2 thm}
If for some root $\gamma_0 \in C_2$,  the pair $(\St_{C_2} (\mathbb{Z} [t_1,...,t_m ]),  K_{\gamma_0})$ has relative property $(F_{\mathcal{E}_{uc}})$, then for every $\gamma \in  C_2$,  the pair $(\St_{C_2} (\mathbb{Z} [t_1,...,t_m ]),  K_{\gamma})$ has relative property $(F_{\mathcal{E}_{uc}})$.
\end{theorem}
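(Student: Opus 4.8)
The plan is to propagate the relative property $(F_{\mathcal{E}_{uc}})$ from the single root subgroup $K_{\gamma_0}$ to all root subgroups $K_\gamma$, $\gamma \in C_2$, in two stages: first to every root of the same length as $\gamma_0$, via inner automorphisms coming from Weyl group elements, and then across the two root lengths of $C_2$, via the Chevalley commutator relations of Proposition \ref{C2,  G2 relations prop} together with the bounded generation Lemma \ref{bounded generation lemma}.

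For the first stage I would use that each reflection $s_\gamma$ in the Weyl group of $C_2$ is realized by the element $w_\gamma = x_\gamma(1)\,x_{-\gamma}(-1)\,x_\gamma(1) \in \St_{C_2}(\mathbb{Z}[t_1,\dots,t_m])$ (available because $\pm 1$ are units of $\mathbb{Z}[t_1,\dots,t_m]$), and that conjugation by $w_\gamma$ carries the root subgroup $K_\delta$ onto the root subgroup $K_{s_\gamma(\delta)}$ — a formal consequence of the defining relations of the Steinberg group. Since conjugation by $w_\gamma$ is an inner automorphism of $\St_{C_2}(\mathbb{Z}[t_1,\dots,t_m])$, Observation \ref{automorp obs} transfers relative property $(F_{\mathcal{E}_{uc}})$ from $K_{\gamma_0}$ to $K_{s_\gamma(\gamma_0)}$, and hence, iterating, to $K_\gamma$ for every $\gamma$ in the Weyl orbit of $\gamma_0$. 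As the Weyl group of $C_2$ acts transitively on the roots of each fixed length, this already yields relative property $(F_{\mathcal{E}_{uc}})$ for all $K_\gamma$ with $\gamma$ of the same length as $\gamma_0$.

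For the second stage, write $C_2 = \{\pm\alpha, \pm\beta, \pm(\alpha+\beta), \pm(\alpha+2\beta)\}$ with $\alpha$ long, so that $\alpha, \alpha+2\beta$ are the long roots and $\beta, \alpha+\beta$ the short ones, and suppose (without loss of generality) that the first stage has already produced relative property $(F_{\mathcal{E}_{uc}})$ for all four long root subgroups. Specializing the relation $[x_\alpha(p_1), x_\beta(p_2)] = x_{\alpha+\beta}(p_1 p_2)\,x_{\alpha+2\beta}(p_1 p_2^2)$ of Proposition \ref{C2,  G2 relations prop} at $p_2 = 1$ gives $x_{\alpha+\beta}(p) = [x_\alpha(p), x_\beta(1)]\,x_{\alpha+2\beta}(-p)$, which exhibits the short root subgroup $K_{\alpha+\beta}$ as boundedly generated by $K_\alpha$, $K_{\alpha+2\beta}$ and the single cyclic subgroup $\langle x_\beta(1)\rangle$. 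The first two have relative property $(F_{\mathcal{E}_{uc}})$ by the standing hypothesis of this stage; for $\langle x_\beta(1)\rangle \cong \mathbb{Z}$ I would establish relative property $(F_{\mathcal{E}_{uc}})$ separately, by showing that for any affine isometric action $\rho$ of $\St_{C_2}(\mathbb{Z}[t_1,\dots,t_m])$ on a uniformly convex space, the $\langle x_\beta(1)\rangle$-orbit of a vector produced from the relative properties already in hand (via Lemma \ref{bounded generation lemma}) is bounded, so that a circumcenter argument yields an $x_\beta(1)$-fixed vector. Lemma \ref{bounded generation lemma} then gives relative property $(F_{\mathcal{E}_{uc}})$ for $K_{\alpha+\beta}$, and re-running the Weyl-conjugation stage upgrades this to all short root subgroups, completing the proof; the symmetric situation ($\gamma_0$ short, the long root subgroups to be produced) is handled by the same relations of Proposition \ref{C2,  G2 relations prop}.

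The crux — and the reason $C_2$ is more delicate than the higher rank $B_n, C_n$ cases, which are handled by a plain bounded generation argument — is precisely this cross-length passage. In $C_2$ the commutator of two short root elements, $[x_{\alpha+\beta}(p_1), x_\beta(p_2)] = x_{\alpha+2\beta}(2 p_1 p_2)$, carries the unavoidable coefficient $2$; since $\mathbb{Z}[t_1,\dots,t_m]/2\mathbb{Z}[t_1,\dots,t_m]$ is infinite, the subgroup generated by the short root subgroups meets each long root subgroup only in the infinite-index subgroup $\{x_{\alpha+2\beta}(2p) : p\}$, so one cannot reach a whole long root subgroup by bounded generation from the short ones alone. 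The plan above sidesteps this by working with the factor-$2$-free relation and reducing the entire difficulty to the relative property of a single cyclic subgroup; the point that genuinely requires care is justifying that this last reduction goes through — that is, that the relative property of $\langle x_\beta(1)\rangle$ really does follow from the relative properties already established.
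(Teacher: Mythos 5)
Your first stage (Weyl conjugation between roots of the same length) is fine and agrees with the paper's opening step, but the cross-length stage has a genuine gap at exactly the point you flag. You reduce the bounded generation of $K_{\alpha+\beta}$ to relative property $(F_{\mathcal{E}_{uc}})$ for the cyclic subgroup $\langle x_\beta(1)\rangle$, and propose to obtain that from the relative properties of the long root subgroups via a bounded orbit and a circumcenter. This cannot work: in $C_2$ no positive combination of two distinct long roots is a root, so the long root subgroups $K_{\pm\alpha}, K_{\pm(\alpha+2\beta)}$ pairwise commute and generate a subgroup that does not see $x_\beta(1)$ at all; a vector fixed by all of them gives no control on its $\langle x_\beta(1)\rangle$-orbit. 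To bound that orbit you would have to write $x_\beta(n)$ as a bounded-length word in subgroups already known to have the relative property, which is precisely the statement the theorem is trying to establish --- the reduction is circular. The paper sidesteps the cyclic subgroup entirely: expanding $[x_\alpha(p), x_\beta(1)] = x_\alpha(-p)\,\bigl(x_\beta(-1)\,x_\alpha(p)\,x_\beta(1)\bigr)$, the middle factor lies in the \emph{conjugate} $K_\alpha^{x_\beta(-1)}$, which has relative property $(F_{\mathcal{E}_{uc}})$ by Observation \ref{automorp obs} since conjugation is an inner automorphism. Hence $K_{\alpha+\beta} \subseteq K_\alpha\, K_\alpha^{x_\beta(-1)}\, K_{\alpha+2\beta}$, and Lemma \ref{bounded generation lemma} applies with no additional input.

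The second gap is your claim that the case ``$\gamma_0$ short, long roots to be produced'' is symmetric. It is not: specializing $[x_\alpha(p_1), x_\beta(p_2)]$ at $p_2=1$ to isolate $x_{\alpha+2\beta}(p_1)$ requires $x_\alpha(p_1)$ for \emph{arbitrary} $p_1$, i.e., control of a whole long root subgroup you do not yet have, while the short-short relation only yields $x_{\alpha+2\beta}(2p_1p_2)$, and $2\mathbb{Z}[t_1,\dots,t_m]$ has infinite index. The paper's Case 2 is where the real work lies: it decomposes $K_{\alpha+2\beta}$ according to the square-free part $\underline{t}'$ of monomials, obtains $K_{\alpha+2\beta}(2\mathbb{Z}[t_1,\dots,t_m])$ from $[x_{\alpha+\beta}(p_1), x_\beta(1)] = x_{\alpha+2\beta}(2p_1)$, obtains the elements $x_{\alpha+2\beta}(\underline{t}'p_2^2)$ from $[x_\alpha(\underline{t}'), x_\beta(p_2)]$ --- here $\underline{t}'$ is one of the finitely many fixed square-free monomials, so the long-root factor is absorbed into the conjugate $K_\beta^{x_\alpha(-\underline{t}')}$ of a short root subgroup --- and then proves the arithmetic identity that every polynomial supported on monomials with square-free part $\underline{t}'$ equals $2p_1 + \underline{t}'p_2^2$, because a sum of monomials is congruent to the square of the sum of their square roots modulo $2$. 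None of this appears in your sketch, so the short-to-long direction is unproved as written.
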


\begin{proof}

We note that for every $\gamma , \gamma ' \in C_2$ such that $\gamma$ and $\gamma '$ are both short/long roots,  there is an automorphism $\phi : \St_{C_2} (\mathbb{Z} [t_1,...,t_m ]) \rightarrow \St_{C_2} (\mathbb{Z} [t_1,...,t_m ])$ such that $\phi (K_{\gamma}) = K_{\gamma '}$.  This implies that if $\gamma , \gamma '$ are both both short/long roots,  then,  by Observation \ref{automorp obs},  the pair $(\St_{C_2} (\mathbb{Z} [t_1,...,t_m ]),  K_{\gamma})$ has relative property $(F_{\mathcal{E}_{uc}})$ if and only if the pair $(\St_{C_2} (\mathbb{Z} [t_1,...,t_m ]) ,  K_{\gamma '})$ has relative property $(F_{\mathcal{E}_{uc}})$.

Let $\gamma_0 \in C_2$ such that the pair $(\St_{C_2} (\mathbb{Z} [t_1,...,t_m ]),  K_{\gamma_0})$ has relative property $(F_{\mathcal{E}_{uc}})$.  \\

\paragraph*{\textbf{Case 1: $\gamma_0$ is a long root}} By our argument above, it follows that for every long root $\gamma$,  the pair $(\St_{C_2} (\mathbb{Z} [t_1,...,t_m ],  K_{\gamma})$ has relative property $(F_{\mathcal{E}_{uc}})$.  We are left to show that there is a short root $\gamma$ such that the pair $(\St_{C_2} (\mathbb{Z} [t_1,...,t_m ]) ,  K_{\gamma})$ has relative property $(F_{\mathcal{E}_{uc}})$.

Explicitly,  let $\alpha$ denote a long root of $C_2$ and $\beta$ denote a short root of  $C_2$ such that the angle between $\alpha$ and $\beta$ is $\frac{3}{4} \pi$.  We already showed that $(\St_{C_2} (\mathbb{Z}[t_1,...,t_m]),  K_{\alpha})$ and $(\St_{C_2} (\mathbb{Z}[t_1,...,t_m]),  K_{\alpha + 2 \beta})$ have relative property $(F_{\mathcal{E}_{uc}})$.  We will show that the pair \\ $(\St_{C_2} (\mathbb{Z}[t_1,...,t_m]),  K_{\alpha + \beta})$ has relative property $(F_{\mathcal{E}_{uc}})$.

By Proposition \ref{C2,  G2 relations prop}, it holds for every $p \in \mathbb{Z}[t_1,...,t_m]$ that 
$$[x_{\alpha} (p),  x_\beta (1) ] = x_{\alpha + \beta} (p) x_{\alpha + 2 \beta} (p).$$
Thus, 
\begin{dmath*}
x_{\alpha + \beta} (p) = [x_{\alpha} (p),  x_\beta (1) ] x_{\alpha + 2 \beta} ( -p) = x_{\alpha} ( -p)  (x_\beta (-1) x_{\alpha} ( p) x_\beta (1) ) x_{\alpha + 2 \beta} ( -p)  \subseteq K_{\alpha} K_{\alpha}^{x_\beta (-1)}   K_{\alpha + 2 \beta},
\end{dmath*}
and it follows that 
$$K_{\alpha + \beta} \subseteq K_{\alpha} K_{\alpha}^{x_\beta (-1)} K_{\alpha + 2 \beta}$$
and in particular,  $K_{\alpha + \beta}$ is boundedly generated by $K_{\alpha}$,  $K_{\alpha}^{x_\beta (-1)}$ and $K_{\alpha + 2 \beta}$.  

Thus,  $K_{\alpha + \beta}$ is boundedly generated by $3$ subgroups that all have relative property $(F_{\mathcal{E}_{uc}})$ and it follows from Lemma \ref{bounded generation lemma} that $(\St_{C_2} (\mathbb{Z}[t_1,...,t_m]),  K_{\alpha + \beta})$ has relative property $(F_{\mathcal{E}_{uc}})$. \\

\paragraph*{\textbf{Case 2: $\gamma_0$ is a short root}} The proof of this case is similar to the proof of the previous case (but it is more involved).  By our argument above, it follows that for every short root $\gamma$,  the pair $(\St_{C_2} (\mathbb{Z} [t_1,...,t_m ]),  K_{\gamma})$ has relative property $(F_{\mathcal{E}_{uc}})$.  We are left to show that there is a long root $\gamma$ such that the pair $(\St_{C_2} (\mathbb{Z} [t_1,...,t_m ]) ,  K_{\gamma})$ has relative property $(F_{\mathcal{E}_{uc}})$.

Explicitly,  for $\alpha,  \beta$ as above,  we need to show that the pair $(\St_{C_2} (\mathbb{Z}[t_1,...,t_m]),  K_{\alpha + 2 \beta})$ has relative property $(F_{\mathcal{E}_{uc}})$.

A monomial $\underline{t} = t_1^{k_1} ... t_m^{k_m}$ will be called square-free if $k_i \in \lbrace 0,1\rbrace$ for every $1 \leq i \leq m$.  We note that for every monomial $\underline{t} = t_1^{k_1} ... t_m^{k_m}$,  there is a unique square-free monomial,  which we will denote by $\underline{t}_{\text{sf}}$ and a unique monomial $\underline{t}_{\text{rt}}$ such that $\underline{t} = \underline{t}_{\text{sf}} \underline{t}_{\text{rt}}^2$.  We note that there are exactly $2^m$ different square-free monomials.   For a square-free monomial $\underline{t} '$, we will denote $(\mathbb{Z} [t_1,...,t_m])_{\underline{t}'}$ to be polynomials that are of the form $a_1 \underline{t}_1 +... + a_n \underline{t}_n$ such that for every $1 \leq i \leq n$,   $(\underline{t}_i)_{\text{sf}} = \underline{t} '$.  We define 
$$K_{\alpha + 2 \beta} ((\mathbb{Z} [t_1,...,t_m])_{\underline{t}'} ) = \lbrace x_{\alpha + 2 \beta} (p) : p  \in (\mathbb{Z} [t_1,...,t_m])_{\underline{t}'} \rbrace,$$
and note that $K_{\alpha + 2 \beta}$ is boundedly generated by $K_{\alpha + 2 \beta} ((\mathbb{Z} [t_1,...,t_m])_{\underline{t}'} ), \underline{t} ' \text{ square-free}$.  Thus,  it is enough to show that for every square-free $\underline{t} '$,  it holds that the pair \\
$(\St_{C_2} (\mathbb{Z}[t_1,...,t_m]),  K_{\alpha + 2 \beta} ((\mathbb{Z} [t_1,...,t_m])_{\underline{t}'} ))$ has relative property $(F_{\mathcal{E}_{uc}})$.

Fix  $\underline{t} '$ to be a square-free monomial.  By Proposition \ref{C2,  G2 relations prop},  it holds for every $p_1 \in \mathbb{Z}[t_1,...,t_m]$ that 
$$[x_{\alpha + \beta} (p_1),  x_{\beta} (1) ] = x_{\alpha + 2 \beta} (2 p_1)$$
and thus $K_{\alpha + 2 \beta} (2 \mathbb{Z} [t_1,...,t_m] )$ is boundedly generated by $K_{\alpha + \beta}$ and $K_{\beta}$.  

Also,  by Proposition \ref{C2,  G2 relations prop},  it holds for every $p_2 \in  \mathbb{Z}[t_1,...,t_m]$ that 
$$[x_{\alpha} (\underline{t} ' ),  x_\beta (p_2) ] = x_{\alpha + \beta} (\underline{t} ' p_2) x_{\alpha + 2 \beta} (\underline{t} ' p_2^2).$$
Thus,  the set 
$$\lbrace  x_{\alpha + 2 \beta} (\underline{t} ' p_2^2) : p_2 \in \mathbb{Z}[t_1,...,t_m] \rbrace$$ 
is boundedly generated by $K_{\beta},  K_{\alpha + \beta}$ and $K_\beta^{x_{\alpha} (- \underline{t} ') }$.  

It follows that it is enough to show that for every $p \in (\mathbb{Z} [t_1,...,t_m])_{\underline{t}'}$, there are $p_1,  p_2 \in \mathbb{Z}[t_1,...,t_m]$ such that $p = 2 p_1 + \underline{t} ' p_2^2$.  Indeed,  if such $p_1,  p_2$ exist for every $p \in (\mathbb{Z} [t_1,...,t_m])_{\underline{t}'}$, then it follows that $K_{\alpha + 2 \beta} ((\mathbb{Z} [t_1,...,t_m])_{\underline{t}'} )$ is boundedly generated by $K_{\beta},  K_{\alpha + \beta}$ and $K_\beta^{x_{\alpha} (- \underline{t} ') }$ and by Lemma \ref{bounded generation lemma},  the pair $(\St_{C_2} (\mathbb{Z}[t_1,...,t_m]),  K_{\alpha + 2 \beta} ((\mathbb{Z} [t_1,...,t_m])_{\underline{t}'} ))$ has relative property $(F_{\mathcal{E}_{uc}})$.

Fix $p \in  (\mathbb{Z} [t_1,...,t_m])_{\underline{t}'}$.  We note that there are monomials $\underline{t}_1,..., \underline{t}_n \in (\mathbb{Z} [t_1,...,t_m])_{\underline{t}'}$ such that $p + 2 \mathbb{Z} [t_1,...,t_m] = \sum_{i=1}^{n} \underline{t}_i + 2 \mathbb{Z} [t_1,...,t_m]$.  As noted above,  for every $i$, there is a monomial $(\underline{t}_i)_{\text{rt}}$ such that $\underline{t}_i = \underline{t} ' (\underline{t}_i)_{\text{rt}}^2$.  Thus, 
\begin{dmath*}
p + 2 \mathbb{Z} [t_1,...,t_m] =  \sum_{i=1}^{n} \underline{t}_i + 2 \mathbb{Z} [t_1,...,t_m]=
\underline{t}' \sum_{i=1}^{n} (\underline{t}_i)_{\text{rt}}^2 + 2 \mathbb{Z} [t_1,...,t_m] =
\underline{t}' \left( \left( \sum_{i=1}^{n} (\underline{t}_i)_{\text{rt}} \right)^2 - 2 \sum_{1 \leq i < j \leq n} (\underline{t}_i)_{\text{rt}} (\underline{t}_j)_{\text{rt}} \right)+ 2 \mathbb{Z} [t_1,...,t_m] =
\underline{t}' \left(\sum_{i=1}^{n} (\underline{t}_i)_{\text{rt}} \right)^2 + 2 \mathbb{Z} [t_1,...,t_m].
\end{dmath*}
It follows that there are $p_1, p_2 \in \mathbb{Z} [t_1,...,t_m]$ such that $p = 2 p_1 + \underline{t} ' p_2^2$ as needed. 
\end{proof}

The proof for relative fixed point property in the case where $\Phi = G_2$ appeared  in \cite{EJZK} in the context of property (T). The adaptation of the proof of \cite{EJZK} to our setting is straight-forward and we claim no originality here.

\begin{theorem}
\label{relative f.p. for G2 thm}
For every root $\gamma \in G_2$,  the pair $(\St_{G_2} (\mathbb{Z} [t_1,...,t_m ],  K_\gamma)$ has relative property $(F_{\mathcal{E}_{uc}})$. 
\end{theorem}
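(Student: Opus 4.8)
Write $G_2=\lbrace \pm\alpha,\pm\beta,\pm(\alpha+\beta),\pm(\alpha+2\beta),\pm(\alpha+3\beta),\pm(2\alpha+3\beta)\rbrace$ with $\alpha$ long and $\beta$ short, so that the long roots are $\pm\alpha,\pm(\alpha+3\beta),\pm(2\alpha+3\beta)$ and the short roots are $\pm\beta,\pm(\alpha+\beta),\pm(\alpha+2\beta)$. The Weyl group of $G_2$ acts transitively on the roots of each fixed length and is realised by automorphisms of $\St_{G_2}(\mathbb{Z}[t_1,\dots,t_m])$ permuting the root subgroups, so by Observation \ref{automorp obs} it is enough to prove relative property $(F_{\mathcal{E}_{uc}})$ for one long root subgroup and one short root subgroup.

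For a long root, say $\gamma=\alpha$, the key point is that a positive integer combination $i\gamma_1+j\gamma_2$ ($i,j\geq 1$) of two long roots of $G_2$, whenever it is a root, is again long; hence in the Chevalley commutator formula the root subgroups of long roots commute only into root subgroups of long roots. Consequently the assignment $\alpha\mapsto x_{1,2},\ \alpha+3\beta\mapsto x_{2,3},\ 2\alpha+3\beta\mapsto x_{1,3}$ (and the negatives to $x_{2,1},x_{3,2},x_{3,1}$) extends --- after a routine normalisation of signs, which only replaces $\St_{A_2}(\mathbb{Z}[t_1,\dots,t_m])$ by an isomorphic group --- to a surjective homomorphism $\phi\colon \St_{A_2}(\mathbb{Z}[t_1,\dots,t_m])\twoheadrightarrow L$ onto the subgroup $L<\St_{G_2}(\mathbb{Z}[t_1,\dots,t_m])$ generated by the long root subgroups, carrying the $A_2$ root subgroups onto the long root subgroups. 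Given an affine isometric action $\rho$ of $\St_{G_2}(\mathbb{Z}[t_1,\dots,t_m])$ on a uniformly convex $\B$, the action $(\rho|_L)\circ\phi$ of $\St_{A_2}(\mathbb{Z}[t_1,\dots,t_m])$ has a $K_{1,2}$-fixed point by Theorem \ref{relative prop f.p thm}, and $\phi$ maps $K_{1,2}$ onto $K_\alpha$, so $\B^{\rho(K_\alpha)}\neq\emptyset$.

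For a short root, say $\gamma=\alpha+\beta$, I would run the bounded generation argument of \cite{EJZK}, with Lemma \ref{bounded generation lemma} and the relative property of the long root subgroups (just established) replacing relative property (T) and bounded generation there. First note that $\alpha+\beta$ and $\alpha+2\beta$ admit only $2\alpha+3\beta$ as a root among their positive integer combinations, so $U:=\langle K_{\alpha+\beta},K_{\alpha+2\beta}\rangle=K_{\alpha+\beta}K_{\alpha+2\beta}K_{2\alpha+3\beta}$ is a two-step nilpotent subgroup with $K_{2\alpha+3\beta}$ central, and $K_{\alpha+\beta}<U$; hence it suffices to prove relative property $(F_{\mathcal{E}_{uc}})$ for the pair $(\St_{G_2}(\mathbb{Z}[t_1,\dots,t_m]),U)$. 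Using the $G_2$ relations of Proposition \ref{C2,  G2 relations prop} --- in particular $[x_{\alpha}(p_1),x_{\beta}(p_2)]=x_{\alpha+\beta}(p_1p_2)x_{\alpha+2\beta}(p_1p_2^2)x_{\alpha+3\beta}(p_1p_2^3)x_{2\alpha+3\beta}(p_1^2p_2^3)$ and $[x_{\alpha}(p_1),x_{\alpha+3\beta}(p_2)]=x_{2\alpha+3\beta}(p_1p_2)$, together with $[x_{-\beta}(u),x_{\alpha+3\beta}(p)]=x_{\alpha+2\beta}(c_1up)x_{\alpha+\beta}(c_2u^2p)x_{\alpha}(c_3u^3p)$ --- one rewrites such commutators, and their conjugates by the elements $x_{\pm\beta}(1),x_{\pm\beta}(t_1),\dots,x_{\pm\beta}(t_m)$, so as to express the generators of $U$ as bounded-length words in the long root subgroups $K_{\pm\alpha},K_{\pm(\alpha+3\beta)},K_{\pm(2\alpha+3\beta)}$ and in these finitely many conjugates of them; the conjugates again have relative property $(F_{\mathcal{E}_{uc}})$ by Observation \ref{automorp obs}. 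The remaining coefficient bookkeeping --- producing every polynomial coefficient out of the available products $p_1p_2,p_1p_2^2,\dots$ --- is of exactly the same nature as, and is carried out just as in, the proof of Theorem \ref{weak relative f.p. for C2 thm} (square-free monomials, writing a coefficient in the form $2p_1+\underline{t}'p_2^2$, and so on). Once $U$ is boundedly generated by subgroups with relative property $(F_{\mathcal{E}_{uc}})$, Lemma \ref{bounded generation lemma} gives relative property $(F_{\mathcal{E}_{uc}})$ for $(\St_{G_2}(\mathbb{Z}[t_1,\dots,t_m]),U)$, hence for $(\St_{G_2}(\mathbb{Z}[t_1,\dots,t_m]),K_{\alpha+\beta})$, which completes the proof.

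The short-root case is the only real obstacle. Unlike the long roots, the short roots of $G_2$ do not close up into an $A_2$-configuration inside $\St_{G_2}$ --- every commutator of short root subgroups also spills into long root subgroups --- so there is no direct pullback from $\St_{A_2}$, and one must manipulate the four-term $G_2$ commutator relation carefully enough to reach all coefficients over $\mathbb{Z}[t_1,\dots,t_m]$. This is the ``additional analysis'' referred to in the introduction; it is performed in \cite{EJZK}, and the point here is merely to observe that every ingredient of that argument (bounded generation via Lemma \ref{bounded generation lemma}, automorphism-invariance, and relative property $(F_{\mathcal{E}_{uc}})$ for the $A_2$ Steinberg group) is available verbatim in the uniformly convex Banach setting.
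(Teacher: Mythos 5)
Your treatment of the long roots is correct and matches the paper: the long roots of $G_2$ form an $A_2$-subsystem, the subgroup they generate is a copy (or at worst a quotient of a copy) of $\St_{A_2}(\mathbb{Z}[t_1,\dots,t_m])$, and Theorem \ref{relative prop f.p thm} applies. The short-root case, however, which you yourself identify as ``the only real obstacle,'' is not actually proved: you reduce it to ``coefficient bookkeeping \dots carried out just as in the proof of Theorem \ref{weak relative f.p. for C2 thm}'' and to \cite{EJZK}, but the step that makes bounded generation work is never exhibited. The difficulty is that each $G_2$ commutator $[x_\alpha(p_1),x_\beta(p_2)]$ produces a \emph{product} of four root elements, and one must isolate the short-root factor. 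The paper does this by a cancellation: comparing $[x_\alpha(p_1),x_\beta(p_2)]$ with $[x_\alpha(p_1p_2),x_\beta(1)]$, whose $\alpha+\beta$ components agree, yields
$$x_{\alpha+2\beta}\bigl(p_1(p_2^2-p_2)\bigr)\in[x_\alpha(p_1p_2),x_\beta(1)]^{-1}\,[x_\alpha(p_1),x_\beta(p_2)]\,K_{\alpha+3\beta}K_{2\alpha+3\beta}\subseteq K_\alpha^{x_\beta(1)}K_\alpha K_\alpha^{x_\beta(-p_2)}K_{\alpha+3\beta}K_{2\alpha+3\beta},$$
after which one specializes $p_2\in\{2,t_1,\dots,t_m\}$. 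Without some such identity your ``rewriting of commutators'' has no content.

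Moreover, the bookkeeping is \emph{not} of the same nature as in the $C_2$ case. There the accessible coefficients are $2p_1$ and $\underline{t}'p_2^2$, and one shows every element of $(\mathbb{Z}[t_1,\dots,t_m])_{\underline{t}'}$ equals $2p_1+\underline{t}'p_2^2$, reaching all of $K_{\alpha+2\beta}$. In $G_2$ the accessible coefficients form the additive subgroup $2\mathbb{Z}[t_1,\dots,t_m]+\sum_i(t_i^2-t_i)\mathbb{Z}[t_1,\dots,t_m]$, which is a \emph{proper finite-index} subgroup, so one only boundedly generates a finite-index subgroup $K_{\alpha+2\beta}'$ of $K_{\alpha+2\beta}$ and must then invoke the (easy but unstated in your write-up) fact that relative $(F_{\mathcal{E}_{uc}})$ passes from $K_{\alpha+2\beta}'$ to $K_{\alpha+2\beta}$. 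Two smaller points: your relation for $[x_{-\beta}(u),x_{\alpha+3\beta}(p)]$ is missing an $x_{2\alpha+3\beta}$ factor, since $3(-\beta)+2(\alpha+3\beta)=2\alpha+3\beta$ is a root; and the detour through $U=\langle K_{\alpha+\beta},K_{\alpha+2\beta}\rangle$ is unnecessary --- by the Weyl symmetry you already invoked, it suffices to treat a single short root subgroup such as $K_{\alpha+2\beta}$ directly, which is what the paper does.
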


\begin{proof}

Using the notations of Proposition \ref{C2,  G2 relations prop}, we note that $K_{\lbrace \pm \alpha, \pm (\alpha + 3 \beta ),  \pm ( 2\alpha + 3 \beta ) \rbrace}  \cong \St_{A_2} (\mathbb{Z} [t_1,...,t_m])$.  By Theorem \ref{relative prop f.p thm},  we deduce that for every long root $\gamma \in G_2$,  it holds that pair  $(\St_{G_2} (\mathbb{Z}[t_1,...,t_m]), K_\gamma)$ has relative property $(F_{\mathcal{E}_{uc}})$.  Thus, we are left to prove relative property $(F_{\mathcal{E}_{uc}})$ for subgroups of short roots in $G_2$.  By symmetry,  it is enough to show that the pair $(\St_{G_2} (\mathbb{Z}[t_1,...,t_m]), K_{\alpha + 2 \beta})$ has relative property $(F_{\mathcal{E}_{uc}})$.
 
By Proposition \ref{C2,  G2 relations prop},  for every $p_1,  p_2 \in \mathbb{Z}[t_1,...,t_m]$ it holds that
$$[x_{\alpha} (p_1),  x_{\beta} (p_2) ] = x_{\alpha + \beta} (p_1 p_2) x_{\alpha + 2 \beta} (p_1 p_2^2) x_{\alpha + 3 \beta} (p_1 p_2^3) x_{2 \alpha + 3 \beta} (p_1^2 p_2^3),$$ 
and
$$[x_{\alpha} (p_1 p_2),  x_{\beta} (1) ] = x_{\alpha + \beta} (p_1 p_2) x_{\alpha + 2 \beta} (p_1 p_2) x_{\alpha + 3 \beta} (p_1 p_2) x_{2 \alpha + 3 \beta} (p_1^2 p_2^2).$$ 
Thus,
\begin{dmath*}
x_{\alpha + 2 \beta} (p_1 (p_2^2 - p_2))  \in [x_{\alpha} (p_1 p_2),  x_{\beta} (1) ]^{-1} [x_{\alpha} (p_1),  x_{\beta} (p_2) ] K_{\alpha + 3 \beta} K_{2 \alpha + 3 \beta} \subseteq 
K_\alpha^{x_{\beta} (1)} K_\alpha K_\alpha^{x_{\beta} (- p_2)} K_{\alpha + 3 \beta} K_{2 \alpha + 3 \beta}.  
\end{dmath*}

Taking $p_1 = p \in \mathbb{Z}[t_1,...,t_m]$ to be a general polynomial and $p_2 = 2,  t_1,  t_2,...,t_m$,  we deduce that 
$$K_{\alpha + 2 \beta} (2  \mathbb{Z}[t_1,...,t_m]) \subseteq K_\alpha^{x_{\beta} (1)} K_\alpha K_\alpha^{x_{\beta} (- 2)} K_{\alpha + 3 \beta} K_{2 \alpha + 3 \beta},$$
$$K_{\alpha + 2 \beta} ((t_i^2 - t_i)  \mathbb{Z}[t_1,...,t_m]) \subseteq K_\alpha^{x_{\beta} (1)} K_\alpha K_\alpha^{x_{\beta} (- t_i )} K_{\alpha + 3 \beta} K_{2 \alpha + 3 \beta},  \forall 1 \leq i \leq m.$$

Denote 
$$ K_{\alpha + 2 \beta} ' = K_{\alpha + 2 \beta} \left(2  \mathbb{Z}[t_1,...,t_m] + \sum_{i=1}^m (t_i^2 - t_i)  \mathbb{Z}[t_1,...,t_m] \right).$$ 
By the computations above,  $K_{\alpha + 2 \beta} '$ is boundedly generated by $K_\alpha,   K_{\alpha + 3 \beta}, K_{2 \alpha + 3 \beta},  K_\alpha^{x_{\beta} (1)},  K_\alpha^{x_{\beta} (- 2)}$ and $K_\alpha^{x_{\beta} (- t_i )},  i=1,...,m$.  It follows from Lemma \ref{bounded generation lemma} that $(\St_{G_2} (\mathbb{Z}[t_1,...,t_m]), K_{\alpha + 2 \beta} ')$ has relative property $(F_{\mathcal{E}_{uc}})$. We note that $K_{\alpha + 2 \beta} '$ is a finite index subgroup of $K_{\alpha + 2 \beta}$ (explicitly,  $K_{\alpha + 2 \beta} / K_{\alpha + 2 \beta} ' = \oplus_{i=1}^{2^m} (\mathbb{Z} / 2 \mathbb{Z})$) and thus it follows that $(\St_{G_2} (\mathbb{Z}[t_1,...,t_m]), K_{\alpha + 2 \beta} )$ has relative property $(F_{\mathcal{E}_{uc}})$ as needed.
\end{proof}

Combining these results and Theorem \ref{relative prop f.p thm} give rise to the following result that appeared in the introduction:
\begin{theorem} 
\label{final relative f.p. thm}
Let $\Phi \neq C_2$ be a classical,  reduced,  irreducible root system of rank $\geq 2$ and $R$ a finitely generated commutative (unital) ring.  For every $\alpha \in \Phi$,  the pair $(\St_{\Phi} (R),  K_{\alpha})$ has relative property $(F_{\mathcal{E}_{uc}})$.
\end{theorem}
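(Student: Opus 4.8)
The plan is to reduce to the universal ring $R=\mathbb{Z}[t_1,\dots,t_m]$, and then to dispatch the various root systems using the three relative fixed point results already established — Theorem~\ref{relative prop f.p thm} for type $A_2$ and Theorem~\ref{relative f.p. for G2 thm} for type $G_2$ — together with the bounded generation Lemma~\ref{bounded generation lemma} and the automorphism invariance of Observation~\ref{automorp obs}. For the reduction: choosing ring generators of $R$ gives a surjection $\mathbb{Z}[t_1,\dots,t_m]\twoheadrightarrow R$, hence, by functoriality of Steinberg groups, a surjection $\St_\Phi(\mathbb{Z}[t_1,\dots,t_m])\twoheadrightarrow\St_\Phi(R)$ carrying each root subgroup $K_\gamma$ onto $K_\gamma$. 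Relative property $(F_{\mathcal{E}_{uc}})$ is inherited by quotients: pull an affine isometric action of $\St_\Phi(R)$ back to the cover, obtain a $K_\gamma$-fixed vector there, and note it is still $K_\gamma$-fixed downstairs; so from now on $R=\mathbb{Z}[t_1,\dots,t_m]$.

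The main mechanism is this: for any subsystem $\Psi\subseteq\Phi$ of type $A_2$, all roots of $\Psi$ have equal length and the only root of the form $i\gamma+j\delta$ with $i,j\ge 1$ and $\gamma,\delta\in\Psi$ is $\gamma+\delta$; consequently the assignment $x_\gamma(p)\mapsto x_\gamma(p)$ for $\gamma\in\Psi$ extends to a homomorphism $\St_{A_2}(R)\to\St_\Phi(R)$ whose image is a quotient of $\St_{A_2}(R)$ and contains $K_\gamma$ for every $\gamma\in\Psi$. By Theorem~\ref{relative prop f.p thm} and the quotient remark above, it follows that $(\St_\Phi(R),K_\gamma)$ has relative property $(F_{\mathcal{E}_{uc}})$ whenever $\gamma$ lies in an $A_2$-subsystem of $\Phi$. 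When $\Phi$ is simply laced (types $A_n$, $D_n$, $E_6$, $E_7$, $E_8$) every root lies in such a subsystem, and the same holds for $\Phi=F_4$, where one invokes that the Weyl group acts transitively on roots of each fixed length and $F_4$ contains a subsystem of type $A_2\times A_2$ with one long and one short factor. Together with Theorem~\ref{relative f.p. for G2 thm} for $G_2$, this settles all cases except $\Phi=B_n$ and $\Phi=C_n$ with $n\ge 3$.

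For $\Phi=B_n$ or $\Phi=C_n$ with $n\ge 3$, the $A_2$-mechanism still covers half of the roots: every long root of $B_n$ and every short root of $C_n$ — in standard coordinates, the roots $\pm e_i\pm e_j$ — lies in an $A_2$-subsystem (the roots $e_i-e_j$ in the triangles $\{e_i-e_j,\,e_j-e_k,\,e_i-e_k\}$, and the roots $e_i+e_j$ in subsystems $\{\pm(e_i+e_j),\,\pm(e_j-e_k),\,\pm(e_i+e_k)\}$, which requires $n\ge 3$). For the remaining roots — the short roots $\pm e_i$ of $B_n$ and the long roots $\pm 2e_i$ of $C_n$ — I would carry out, now inside $\St_\Phi(R)$, the bounded generation arguments already used in the proof of Theorem~\ref{weak relative f.p. for C2 thm}. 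In $B_n$, working in the $B_2$-subsystem on $e_1,e_2$ and using the corresponding Chevalley commutator relation yields $x_{e_1}(p)\in K_{e_1-e_2}\cdot K_{e_1-e_2}^{\,x_{e_2}(1)}\cdot K_{e_1+e_2}$, so $K_{e_1}(R)$ is boundedly generated by three subgroups with relative property $(F_{\mathcal{E}_{uc}})$ (the conjugate via Observation~\ref{automorp obs}), and Lemma~\ref{bounded generation lemma} applies. In $C_n$, working in the $C_2$-subsystem on $e_1,e_2$, the relations of Proposition~\ref{C2,  G2 relations prop} give $[x_{e_1+e_2}(p_1),x_{e_1-e_2}(p_2)]=x_{2e_1}(2p_1p_2)$, hence $K_{2e_1}(2R)$ from short-root subgroups, and $[x_{2e_2}(\underline t'),x_{e_1-e_2}(p_2)]=x_{e_1+e_2}(\underline t'p_2)\,x_{2e_1}(\underline t'p_2^2)$, placing each $x_{2e_1}(\underline t'p_2^2)$ in a bounded product of $K_{e_1+e_2}$, $K_{e_1-e_2}$ and the conjugate $K_{e_1-e_2}^{\,x_{2e_2}(\underline t')}$; decomposing $R$ into the $2^m$ submodules of polynomials whose monomials all share a fixed square-free part $\underline t'$ and using the identity $p=2p_1+\underline t'p_2^2$ then exhibits $K_{2e_1}(R)$ as boundedly generated by finitely many subgroups with relative property $(F_{\mathcal{E}_{uc}})$. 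A Weyl-group automorphism (Observation~\ref{automorp obs}) propagates the conclusion to all short (resp.\ long) root subgroups.

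The step I expect to be the main obstacle is the $C_n$ long-root case: a long root of $C_n$ is orthogonal to every other long root and hence lies in no $A_2$-subsystem, so the square-free-monomial bounded generation argument (with the identity $p=2p_1+\underline t'p_2^2$) genuinely cannot be bypassed, and one must check that the Chevalley structure constants appearing inside the ambient $C_n$ — in particular the factor $2$ in $[x_{e_1+e_2}(p_1),x_{e_1-e_2}(p_2)]=x_{2e_1}(2p_1p_2)$ — are exactly those of Proposition~\ref{C2,  G2 relations prop}. This is precisely why $C_2$ is excluded from the statement: in $C_2=B_2$ neither the long nor the short roots span an $A_2$-subsystem, so there is no class of root subgroups available to bootstrap from.
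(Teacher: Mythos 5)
Your proposal is correct and follows essentially the same route as the paper: reduce to $R=\mathbb{Z}[t_1,\dots,t_m]$ via the induced epimorphism of Steinberg groups, handle every root lying in an $A_2$-subsystem via Theorem~\ref{relative prop f.p thm} (and $G_2$ via Theorem~\ref{relative f.p. for G2 thm}), and treat the short roots of $B_n$ and long roots of $C_n$ ($n\ge 3$) by the $C_2$ bounded-generation bootstrap. The only cosmetic difference is that you re-execute that bootstrap inside the ambient $\St_\Phi(R)$ rather than citing Theorem~\ref{weak relative f.p. for C2 thm} as the paper does; this is in fact the careful reading of that citation, since the relative property must be taken with respect to $\St_\Phi(R)$ and the argument there is purely a bounded-generation computation with the $C_2$ commutator relations, which transfers verbatim.
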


\begin{proof}
Fix $\Phi$ and $R$ as above.

We note that there is  an integer $m \in \mathbb{N}$ such that there is a ring epimorphism $\phi : \mathbb{Z} [t_1,...,t_m] \rightarrow R$.  This ring epimorphism induces a group epimorphism $\widetilde{\phi} : \St_{\Phi} ( \mathbb{Z} [t_1,...,t_m]) \rightarrow \St_{\Phi} (R)$ such that every $\alpha \in \Phi$,  $\widetilde{\phi} (K_{\alpha} (\mathbb{Z} [t_1,...,t_m] )) = K_\alpha (R)$.   Thus,  it is enough to prove that for every $\alpha \in \Phi$,  the pair $( \St_{\Phi} ( \mathbb{Z} [t_1,...,t_m]),  K_{\alpha})$ has relative property $(F_{\mathcal{E}_{uc}})$. \\

\paragraph*{\textbf{Case 1: $\Phi$ is simply laced,  $\Phi = F_4$ or $\Phi = G_2$}} In this case,  every root $\alpha$ of $\Phi$ is either in an $A_2$ subsystem or in a $G_2$ system (if $\Phi = G_2$) and thus by Theorems \ref{relative prop f.p thm},  \ref{relative f.p. for G2 thm},  for every $\alpha \in \Phi$,  the pair  $(\St_{\Phi} (\mathbb{Z}[t_1,...,t_m]), K_\alpha)$ has relative property $(F_{\mathcal{E}_{uc}})$.  \\

\paragraph*{\textbf{Case 2: $\Phi = B_n,  n \geq 3$ or $\Phi = C_n,  n \geq 3$}}   In this case,  the only roots that are not in an $A_2$ subsystem are the short roots in the $C_2$ subsystem (if $\Phi = B_n$) or  the long roots in the $C_2$ subsystem (if $\Phi = C_n$).  By Theorem \ref{weak relative f.p. for C2 thm}, it follows that for every $\alpha$ the pair  $(\St_{\Phi} (\mathbb{Z}[t_1,...,t_m]), K_\alpha)$ has relative property $(F_{\mathcal{E}_{uc}})$. 
\end{proof}

\section{General synthesis machinery}
\label{General synthesis section}

The aim of this section is to develop a general machinery that allows one to deduce property $(F_{\mathcal{E}_{uc}})$ for a group $\Gamma$ from relative property $(F_{\mathcal{E}_{uc}})$ given a certain subgroup structure of $\Gamma$.  This machinery was inspired by the work of Mimura \cite{Mimura4} (which in turn was inspired by the work of Shalom \cite{Shalom2}), but our machinery is much more general than that of \cite{Mimura4}.

\subsection{Ultraproduct of Banach spaces and group actions}

We recall that an \textit{ultrafilter} $\mathcal{U}$ on $\mathbb{N}$ is a non-empty collections of subsets of $\mathbb{N}$ such that the following holds:
\begin{itemize}
\item $\emptyset \notin \mathcal{U}$. 
\item If $B \in \mathcal{U}$ and $B \subseteq A$, then $A \in \mathcal{U}$.
\item If $A, B \in \mathcal{U}$, then $A \cap B \in \mathcal{U}$.
\item If $A \notin \mathcal{U}$, then $\mathbb{N} \setminus A \in \mathcal{U}$.
\end{itemize}

An ultrafilter $\mathcal{U}$ on $\mathbb{N}$ is called \textit{non-principal} if for every $n \in \mathbb{N}$,  $\lbrace n \rbrace \notin \mathcal{U}$.  The existence of non-principal ultrafilters is proven via the axiom of choice.  In the discussion below, we fix $\mathcal{U}$ to be a non-principal ultrafilter.  

Given a sequence of real numbers $\lbrace a_n \rbrace_{n \in \mathbb{N}}$,  the limit of the sequence along the ultrafilter $\mathcal{U}$ is denoted by $\lim_{n \rightarrow \mathcal{U}} a_n$ and defined to be a number $a \in \mathbb{R}$ such that for every $\varepsilon >0$,  
$$\lbrace n : \vert a - a_n \vert < \varepsilon \rbrace \in \mathcal{U}.$$
One can show that if $\lbrace a_n \rbrace_{n \in \mathbb{N}}$ is a bounded sequence, then this limit always exists and it is unique.  

Given a sequence of pointed Banach spaces $\lbrace \B_n \rbrace_{n \in \mathbb{N}}$,  the $\ell^{\infty}$ product $\ell^{\infty} (\mathbb{N}, \lbrace \B_n \rbrace_{n \in \mathbb{N}})$ is the space of bounded sequence, i.e.,
$$\ell^{\infty} (\mathbb{N}, \lbrace \B_n \rbrace_{n \in \mathbb{N}}) = \lbrace \lbrace \xi_n \in \B_n \rbrace_{n \in \mathbb{N}} : \sup_{n} \Vert \xi_n \Vert < \infty \rbrace.$$
The \textit{ultraproduct} of the sequence  $\lbrace \B_n \rbrace_{n \in \mathbb{N}}$ is defined as follows: we define
$$N_{\mathcal{U}} = \lbrace \lbrace \xi_n \rbrace_{n \in \mathbb{N}} \in \ell^{\infty} (\mathbb{N}, \lbrace \B_n \rbrace_{n \in \mathbb{N}}) : \lim_{n \rightarrow \mathcal{U}} \Vert \xi_n \Vert = 0 \rbrace,$$
and the ultraproduct is defined as 
$$\lbrace \B_n \rbrace_{\mathcal{U}} = \ell^{\infty} (\mathbb{N}, \lbrace \B_n \rbrace_{n \in \mathbb{N}}) / N_{\mathcal{U}},$$
with the norm
$$\Vert \lbrace \xi_n \rbrace_{n \in \mathbb{N}} \Vert = \lim_{n \rightarrow \mathcal{U}} \Vert \xi_n \Vert .$$

We note that $\lbrace \B_n \rbrace_{\mathcal{U}}$ is a Banach space and for a class of Banach spaces $\mathcal{E}$,  we say that $\mathcal{E}$ is \textit{closed under passing to ultraproducts} if for every sequence $\lbrace \B_n \rbrace_{n \in \mathbb{N}} \subseteq \mathcal{E}$ it holds that $\lbrace \B_n \rbrace_{\mathcal{U}} \in \mathcal{E}$.  

Let $\Gamma$ be a finitely generated group and fix a finite generating set $S$.  For an affine isometric action $\rho$ of $\Gamma$ on a Banach space $\B$,  we define the displacement function $\disp_{\rho,  S} : \B \rightarrow [0, \infty)$ as 
$$\disp_{\rho,  S} (\xi) = \max_{s \in S} \Vert \xi - \rho (s) \xi \Vert. $$
We note that the action $\rho$ has a fixed point if and only if there is $\xi_0 \in \B$ such that $\disp_{\rho,  S} (\xi_0) =0$.  

Let $\lbrace \B_n \rbrace_{n \in \mathbb{N}}$ be a sequence of Banach spaces and a sequence of affine isometric actions $\rho_n$ of $\Gamma$ on $\B_n$ such that
$$\sup_{n} \disp_{\rho_n,  S} (0_n) < \infty,$$
where $0_n \in \B_n$ denotes the origin of $\B_n$.  In this setting,  there is an affine isometric action $\rho_{\mathcal{U}}$ of $\Gamma$ on the ultraproduct $\lbrace \B_n \rbrace_{\mathcal{U}}$ induced by the action of $\Gamma$ on $ \ell^{\infty} (\mathbb{N}, \lbrace \B_n \rbrace_{n \in \mathbb{N}})$.  

An affine isometric action $\rho$ will be called \textit{uniform with respect to $S$} (or \textit{$S$-uniform}) if
$$\inf_{\xi \in \B} \disp_{\rho,  S} (\xi) \geq 1 .$$

Given a class of Banach space $\mathcal{E}$,  we define the class of uniform $S$-actions as 
$$\mathcal{C}_{\mathcal{E}}^{S-\text{uniform}} = \lbrace (\B, \rho) : \B \in \mathcal{E},  \rho \text{ is uniform with respect to } S \rbrace.$$

The class $\mathcal{C}_{\mathcal{E}}^{S-\text{uniform}}$ can be thought of as a class of actions that are ``uniformly far'' from having a fixed point.  What will be important to us in the discussion below is that if $\mathcal{E}$ is closed under passing to ultraproducts,  then so is $\mathcal{C}_{\mathcal{E}}^{S-\text{uniform}}$,  i.e.,  if $\mathcal{E}$ is closed under passing to ultraproducts,  then given a sequence $\lbrace (\B_n,  \rho_n) \rbrace_{n \in \mathbb{N}} \subseteq \mathcal{C}_{\mathcal{E}}^{S-\text{uniform}}$ with 
$$\sup_{n} \disp_{\rho_n,  S} (0_n) < \infty,$$
then $(\lbrace \B_n \rbrace_{\mathcal{U}},  \rho_{\mathcal{U}}) \in \mathcal{C}_{\mathcal{E}}^{S-\text{uniform}}$. 

The following result is proved (in much greater generality) in \cite{Stalder} (where it is attributed to Gromov):
\begin{theorem} \cite[Theorem 1.5]{Stalder}
\label{S-uniform thm}
Let $\Gamma$ be a finitely generated group with a finite generating set $S$ and $\mathcal{E}$ be a class of Banach spaces that is closed under passing to ultraproducts.  If are $\B \in \mathcal{E}$ and an affine isometric action $\rho$ of $\Gamma$ on $\B$ such that $\B^{\rho (\Gamma)} = \emptyset$,  then 
$\mathcal{C}_{\mathcal{E}}^{S-\text{uniform}} \neq \emptyset$.  

In other words,  if $\mathcal{E}$ is closed under passing to ultraproducts and there is an action of $\Gamma$ on some $\B \in \mathcal{E}$ that does not admit a fixed point,  then there is an $S$-uniform action of $\Gamma$ on a space in $\mathcal{E}$.
\end{theorem}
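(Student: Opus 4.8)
The statement to prove is Theorem~\ref{S-uniform thm}, attributed to Gromov and proved in \cite{Stalder}. The plan is to argue by contrapositive: assuming $\mathcal{C}_{\mathcal{E}}^{S-\text{uniform}} = \emptyset$, we must produce a fixed point for every affine isometric action of $\Gamma$ on any $\B \in \mathcal{E}$. So fix such an action $(\B, \rho)$. Since $\mathcal{C}_{\mathcal{E}}^{S-\text{uniform}} = \emptyset$, the action $\rho$ is \emph{not} $S$-uniform, meaning $\inf_{\xi \in \B} \disp_{\rho, S}(\xi) < 1$. The key observation is that this infimum can actually be forced to be $0$ by a rescaling trick: for any $\lambda > 0$, the space $\B$ with its norm scaled by $\lambda$ still lies in $\mathcal{E}$ (scaling the metric doesn't change the isomorphism class of a Banach space, and $\mathcal{E}$ as used here — being either $\mathcal{E}_{uc}$ or closed under ultraproducts, hence in particular containing isometric rescalings — is closed under this operation), and the same action $\rho$ on the rescaled space has $\disp$ scaled by $\lambda$. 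Hence if $\inf_\xi \disp_{\rho,S}(\xi) = c$ for some $c \in (0,1)$, rescaling by $\lambda < 1/c$ would put us back in $\mathcal{C}_{\mathcal{E}}^{S-\text{uniform}}$, a contradiction. Therefore $\inf_{\xi \in \B} \disp_{\rho, S}(\xi) = 0$.

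Now I would extract an \emph{almost fixed point} sequence: pick $\xi_n \in \B$ with $\disp_{\rho, S}(\xi_n) \to 0$, i.e. $\|\xi_n - \rho(s)\xi_n\| \to 0$ for every $s \in S$ (uniformly over the finite set $S$). The goal is to upgrade such a sequence to a genuine fixed point, and this is where the ultraproduct enters. Form the sequence of actions $(\B_n, \rho_n) := (\B, \rho)$ for all $n$ — or, to keep the displacement at the origin bounded, first translate so that $\xi_n = 0_n$, i.e. replace $\rho_n$ by $g \mapsto \rho(g)(\cdot + \xi_n) - \xi_n$. Then $\disp_{\rho_n, S}(0_n) = \disp_{\rho, S}(\xi_n) \to 0$, so certainly $\sup_n \disp_{\rho_n, S}(0_n) < \infty$, and the ultraproduct action $\rho_{\mathcal{U}}$ on $\{\B_n\}_{\mathcal{U}} \in \mathcal{E}$ is defined. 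The vector $\widetilde{0} = [(0_n)_n] \in \{\B_n\}_{\mathcal{U}}$ then satisfies $\|\widetilde{0} - \rho_{\mathcal{U}}(s)\widetilde{0}\| = \lim_{n \to \mathcal{U}} \|0_n - \rho_n(s) 0_n\| = \lim_{n \to \mathcal{U}} \disp_{\rho_n, \{s\}}(0_n) = 0$ for every $s \in S$, since the whole sequence tends to $0$. As $S$ generates $\Gamma$, this forces $\widetilde{0}$ to be a fixed point of $\rho_{\mathcal{U}}$.

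Finally I would descend from a fixed point of $\rho_{\mathcal{U}}$ on the ultraproduct to a fixed point of $\rho$ on $\B$. Having a fixed point $\widetilde{\xi}$ of $\rho_{\mathcal{U}}$ means the $\rho_{\mathcal{U}}$-orbit of any point is bounded; pulling back, the displacement function of $\rho_{\mathcal{U}}$ attains the value $0$, so $\inf \disp_{\rho_{\mathcal{U}}, S} = 0$; but $\disp_{\rho_{\mathcal{U}}, S}(\widetilde{\eta}) = \lim_{n\to\mathcal{U}} \disp_{\rho, S}(\eta_n)$ for any representative $(\eta_n)$ of $\widetilde{\eta}$, so $\inf_{\eta \in \B} \disp_{\rho, S}(\eta) = 0$ — which we already knew. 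The actual content is that a fixed point of $\rho_{\mathcal{U}}$ gives a \emph{bounded orbit} for $\rho$ on $\B$: indeed for any $\xi \in \B$, the $\rho_{\mathcal{U}}$-orbit of $[(\xi)_n]$ is bounded (it equals the $\rho_{\mathcal{U}}(\Gamma)$-orbit, contained in a ball around $\widetilde{\xi}$ of radius $\|[(\xi)_n] - \widetilde{\xi}\|$), and $\sup_{g \in \Gamma} \|\rho(g)\xi - \xi\| = \sup_{g}\|\rho_{\mathcal{U}}(g)[(\xi)_n] - [(\xi)_n]\| < \infty$. Then by the Ryll-Nardzewski fixed point theorem or a circumcenter/barycenter argument in the uniformly convex space $\B$ — exactly as in Lemma~\ref{bounded generation lemma} — the bounded $\Gamma$-orbit yields a $\Gamma$-fixed point in $\B$. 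This contradicts $\B^{\rho(\Gamma)} = \emptyset$, completing the contrapositive. The main obstacle to watch is the rescaling/closure step: one must check that the class $\mathcal{E}$ in question is genuinely closed under scaling the norm by a positive constant (trivial, since that is an isometric isomorphism and both $\mathcal{E}_{uc}$ and any ultraproduct-closed class of the relevant type contain isometric copies), and that the infimum of $\disp$ being strictly less than $1$ really is the negation of $S$-uniformity as defined — it is, by definition.
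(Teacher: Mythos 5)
The paper does not actually prove this statement; it is imported from Stalder (attributed to Gromov), so there is no internal proof to compare against, and your argument must stand on its own. It does not: the final ``descent'' step contains a genuine gap, and the implication it relies on is false. You argue by contrapositive and reduce to the claim that $\inf_{\xi\in\B}\disp_{\rho,S}(\xi)=0$ forces $\rho$ to have a fixed point. To get this you translate the origins to almost-fixed points $\xi_n$, form the ultraproduct, obtain the fixed point $\widetilde 0=[(0_n)]$ for $\rho_{\mathcal U}$, and then assert $\sup_{g}\Vert\rho(g)\xi-\xi\Vert=\sup_g\Vert\rho_{\mathcal U}(g)[(\xi)_n]-[(\xi)_n]\Vert<\infty$. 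But $\rho_{\mathcal U}$ is built from the \emph{translated} actions $\rho_n(g)\eta=\rho(g)(\eta+\xi_n)-\xi_n$, so $\Vert\rho_{\mathcal U}(g)[(\xi)_n]-[(\xi)_n]\Vert=\lim_{n\to\mathcal U}\Vert\rho(g)(\xi+\xi_n)-(\xi+\xi_n)\Vert$: this measures displacement at the moving points $\xi+\xi_n$, not at $\xi$. What you actually obtain is ``for each $g$, for $\mathcal U$-most $n$ the displacement at $\xi+\xi_n$ is bounded'', with the good set of $n$ depending on $g$; since $\Gamma$ is infinite this yields no single point with bounded orbit. (Omitting the translation does not help, since the $\xi_n$ may be unbounded in norm and then fail to define a point of the ultraproduct.) The implication ``translated ultraproduct has a fixed point $\Rightarrow$ $\rho$ has a bounded orbit'' is false outright: for $\Gamma=\mathbb Z$ acting on $\ell^2(\mathbb Z)$ with linear part the shift and cocycle $c(1)=\delta_0$ one has $\inf\disp=0$ (take truncated Heaviside functions), the translated ultraproduct fixes $\widetilde 0$, yet $\Vert c(n)\Vert=\sqrt n$ so every orbit is unbounded and there is no fixed point. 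As you half-observe yourself, your ultraproduct step only recovers $\inf\disp_{\rho,S}=0$, which is strictly weaker than having a fixed point.

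The missing idea is the selection of a \emph{critical scale}, which is where the hypothesis ``no fixed point'' must actually be used. One shows: if $\rho$ has no fixed point, then for each $n$ there is $\xi_n$ with $\epsilon_n:=\disp_{\rho,S}(\xi_n)>0$ such that $\disp_{\rho,S}(\eta)\ge\epsilon_n/2$ for every $\eta$ with $\Vert\eta-\xi_n\Vert\le n\epsilon_n$. (If this failed for some $n$, one could iteratively halve the displacement while moving distances summing to at most $2n\epsilon_0$, producing a Cauchy sequence whose limit is a fixed point, since $\disp_{\rho,S}$ is continuous.) Rescaling the norm by $1/\epsilon_n$ and translating $\xi_n$ to the origin gives actions with displacement $1$ at $0_n$ and $\ge 1/2$ on the ball of radius $n$; since every point of the ultraproduct is represented by a bounded sequence, the ultraproduct action is $S$-uniform after one further rescaling, and lies over a space in $\mathcal E$ by the closure hypothesis. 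This direct argument is essentially forced, because $\inf\disp=0$ alone never implies a fixed point. Two smaller slips: your rescaling factor must satisfy $\lambda\ge 1/c$, not $\lambda<1/c$; and the concluding appeal to Ryll-Nardzewski or a circumcenter requires reflexivity or uniform convexity of $\B$, which the statement does not assume of the class $\mathcal E$.
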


\subsection{Our synthesis machinery}

We start with two results which were proven by Shalom \cite{Shalom2} in the Hilbert setting and by Mimura \cite{Mimura1} in the Banach setting (we include the proofs for completeness).

\begin{theorem}
\label{N fixed xi^i thm}
Let $\Gamma$ be a discrete group with finite Abelianization and $N, H_{+},  H_{-} < \Gamma$ be subgroups such that $N$ normalizes both $H_{+}$ and $H_{-}$ and $H_{+},  H_{-}$ generate $\Gamma$.  Also,  let $\B$ be a uniformly convex Banach space such that the pairs $(\Gamma,  H_{+}),  (\Gamma, H_{-})$ have relative property $(F_{\B})$.  Given an affine isometric action $\rho$ of $\Gamma$ denote 
$$D_\rho =  \inf_{\xi^{+} \in \B^{\rho (H_{+})}, \xi^{-}  \in \B^{\rho (H_{+})}} \Vert \xi^{+}  - \xi^{-}  \Vert.$$

Assume that there are $\xi^{+} \in \B^{\rho (H_{+})},  \xi^{-} \in \B^{\rho (H_{-})}$ such that $\Vert \xi^{+} - \xi^{-} \Vert = D_\rho$.  Then $\xi^{+} \in \B^{\rho (\langle H_{+}, N \rangle)}$ and $\xi^{-} \in \B^{\rho (\langle H_{-}, N \rangle)}$.
\end{theorem}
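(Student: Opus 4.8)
The plan is to exploit the fact that $N$ normalizes $H_+$, so $\rho(N)$ maps the fixed-point set $\B^{\rho(H_+)}$ to itself, and similarly $\rho(N)$ preserves $\B^{\rho(H_-)}$. Fix an element $n \in N$. First I would observe that $\rho(n)$ restricted to $\B^{\rho(H_+)}$ is an affine isometry of that (nonempty, closed, convex) set, and likewise on $\B^{\rho(H_-)}$. The key point is that $\rho(n)\xi^+ \in \B^{\rho(H_+)}$ and $\rho(n)\xi^- \in \B^{\rho(H_-)}$, and since $\rho(n)$ is an isometry of $\B$,
$$\Vert \rho(n)\xi^+ - \rho(n)\xi^- \Vert = \Vert \xi^+ - \xi^- \Vert = D_\rho.$$
So $(\rho(n)\xi^+, \rho(n)\xi^-)$ is another pair of points, one in each fixed set, realizing the infimal distance $D_\rho$.

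Next I would invoke uniform convexity to upgrade this to uniqueness of the "closest pair" modulo translation, or more precisely to pin down the pair. The standard argument (this is the Shalom/Mimura lemma) is: in a uniformly convex (in fact strictly convex suffices for part of it, but uniform convexity is used to get existence-type compactness and to handle the midpoint) Banach space, if $\xi^+, \eta^+ \in \B^{\rho(H_+)}$ and $\xi^-, \eta^- \in \B^{\rho(H_-)}$ both realize the distance $D_\rho$, then by convexity of the two fixed sets the midpoints $\frac{\xi^+ + \eta^+}{2} \in \B^{\rho(H_+)}$ and $\frac{\xi^- + \eta^-}{2} \in \B^{\rho(H_-)}$ satisfy
$$\left\Vert \frac{\xi^+ + \eta^+}{2} - \frac{\xi^- + \eta^-}{2} \right\Vert \leq \frac{1}{2}\Vert \xi^+ - \xi^- \Vert + \frac{1}{2}\Vert \eta^+ - \eta^- \Vert = D_\rho,$$
hence equals $D_\rho$; then strict/uniform convexity forces $\xi^+ - \xi^- = \eta^+ - \eta^-$ (the two unit vectors $\frac{\xi^+-\xi^-}{D_\rho}$ and $\frac{\eta^+-\eta^-}{D_\rho}$ have midpoint of norm $1$, so they coincide). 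Applying this with $\eta^\pm = \rho(n)\xi^\pm$ gives $\xi^+ - \xi^- = \rho(n)\xi^+ - \rho(n)\xi^-$.

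Then I would use the finite Abelianization hypothesis to conclude $\rho(n)\xi^+ = \xi^+$ (and $\rho(n)\xi^- = \xi^-$). Write $\rho(n)\xi^+ = \xi^+ + v$ where $v = \rho(n)\xi^+ - \xi^+$; the relation just derived says $\rho(n)\xi^- = \xi^- + v$ with the \emph{same} $v$. Iterating, $\rho(n^k)\xi^+ = \xi^+ + (\text{something})$; more carefully, since $\rho(n)$ is an affine isometry fixing the displacement of $\xi^+$ in a controlled way, one shows the orbit $\{\rho(n^k)\xi^+ : k \in \mathbb{Z}\}$ is unbounded unless $v$ is "unitary-invariant", and then using that $\Gamma^{\mathrm{ab}}$ is finite one bounds this orbit: indeed $\langle H_+, H_-\rangle = \Gamma$ and $N$ normalizes both, and a suitable power of $n$ or a product witnessing triviality in $\Gamma^{\mathrm{ab}}$ forces the linear parts to act so that the orbit of $\xi^+$ under $N$ is bounded. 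The cleanest route: the map $N \to \B$, $n \mapsto \rho(n)\xi^+ - \xi^+$, when composed appropriately is a homomorphism-like object valued in a quotient where the finiteness of $\Gamma^{\mathrm{ab}}$ forces it to vanish; hence $\rho(n)\xi^+ = \xi^+$ for all $n \in N$, so $\xi^+ \in \B^{\rho(N)}$, and combined with $\xi^+ \in \B^{\rho(H_+)}$ we get $\xi^+ \in \B^{\rho(\langle H_+, N\rangle)}$, symmetrically for $\xi^-$.

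The main obstacle I expect is the last step — turning "$\rho(n)$ shifts $\xi^+$ by a vector $v$ independent of which minimizing pair we started from" into "$v = 0$" — which is exactly where the finite Abelianization of $\Gamma$ must enter, and getting the bookkeeping right (the orbit-boundedness or the homomorphism-to-abelian-quotient argument) is the delicate part; the uniform convexity input, by contrast, is the routine midpoint/strict-convexity argument. I would also need to be slightly careful that $D_\rho$ could a priori be $0$, in which case $\xi^+ = \xi^-$ lies in both fixed sets and the conclusion is immediate, so that degenerate case should be dispatched first.
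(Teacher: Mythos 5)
Your first two steps coincide with the paper's: $\rho(n)\xi^{\pm}$ is again a minimizing pair because $N$ normalizes $H_{\pm}$ and $\rho(n)$ is an isometry, and the midpoint/strict-convexity argument forces $\rho(n)\xi^{+}-\xi^{+}=\rho(n)\xi^{-}-\xi^{-}=:v$. The gap is in the final step, which you leave as a gesture, and the two routes you sketch do not close it. A telling symptom is that your argument never uses the hypothesis $\langle H_{+},H_{-}\rangle=\Gamma$, which is essential. The paper's mechanism is: since $\rho(n)\xi^{+}$ and $\xi^{+}$ are both $\rho(H_{+})$-fixed, their difference $v$ lies in $\B^{\pi(H_{+})}$ (where $\pi$ is the linear part), and likewise $v\in\B^{\pi(H_{-})}$, hence $v\in\B^{\pi(H_{+})}\cap\B^{\pi(H_{-})}=\B^{\pi(\Gamma)}$. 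Separately, one decomposes $\B=\B^{\pi(\Gamma)}\oplus\B'$ into $\pi$-invariant complements (this is \cite[Proposition 2.6]{BFGM}, and is where uniform convexity enters beyond strict convexity); the $\B^{\pi(\Gamma)}$-component $c_{0}$ of the cocycle is then a genuine homomorphism $\Gamma\to\B^{\pi(\Gamma)}$, which vanishes because $\Gamma$ has finite Abelianization and $\B^{\pi(\Gamma)}$ is torsion-free. Consequently $\rho(g)\eta-\eta\in\B'$ for every $g\in\Gamma$ and $\eta\in\B$, so $v\in\B^{\pi(\Gamma)}\cap\B'=\{0\}$.

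Concretely, why your proposed routes fail: the map $n\mapsto\rho(n)\xi^{+}-\xi^{+}$ satisfies $v(n_{1}n_{2})=v(n_{1})+\pi(n_{1})v(n_{2})$, so it is only a homomorphism once you already know its values lie in $\B^{\pi(\Gamma)}$ (which you have not established without the $\langle H_{+},H_{-}\rangle=\Gamma$ step above); and even granting that, it is a homomorphism out of $N$, not out of $\Gamma$, so finiteness of $\Gamma^{\mathrm{ab}}$ does not kill it --- $N$ could well surject onto $\mathbb{Z}$. Your alternative, showing the $N$-orbit of $\xi^{+}$ is bounded, has no mechanism behind it: nothing in the hypotheses bounds that orbit a priori. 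The missing idea is thus twofold: locate $v$ in $\B^{\pi(\Gamma)}$ via the generation hypothesis, and use the invariant-complement decomposition of $\B$ together with the vanishing of the $\B^{\pi(\Gamma)}$-part of the cocycle on all of $\Gamma$.
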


\begin{proof}


Let $\pi$ denote the linear part of $\rho$ and $c$ denote its cocycle.  By \cite[Proposition 2.6]{BFGM}, there are $\pi$-invariant subspaces $\B^{\pi (\Gamma)}, \B'$ such that $\B = \B^{\pi (\Gamma)} \oplus \B'$.   We decompose the cocycle $c = c_0 + c'$ where $c_0 : \Gamma \rightarrow  \B^{\pi (\Gamma)}$ and $c ' : \Gamma \rightarrow \B '$ (note that $c_0, c'$ are cocycles).  It follows that $c_0 : \Gamma \rightarrow \B^{\pi (\Gamma)}$ is a group homomorphism between $\Gamma$ and $\B^{\pi (\Gamma)}$.  Since $\Gamma$ has a finite Abelianization,  it follows that $c_0 \equiv 0$.  We will first show that for every $g \in \Gamma$ and every $\xi \in \B$, it holds that
$$\rho (g) \xi - \xi \in \B'.$$

Indeed,  fix some $\xi \in \B$ and decompose $\xi = \xi_0 + \xi '$,  where $\xi_0 \in \B^{\rho (\Gamma)}$ and $\xi ' \in \B '$.  Then 
\begin{dmath*}
\rho (g) \xi - \xi = \pi (g) \xi + c(g) - \xi = 
\pi (g) (\xi_0 + \xi ') +  c '(g) - (\xi_0 + \xi ')= 
\xi_0 + \pi (g) \xi ' + c ' (g) - (\xi_0 + \xi ') = {\pi (g) \xi ' + c' (g) - \xi ' \in \B' } .
\end{dmath*}

We need to prove that for every $g \in N$ it holds that $\rho (g) \xi^\pm = \xi^\pm$.  By the fact that $\rho (g) \xi^{\pm} - \xi^{\pm} \in \B'$,  this is equivalent to showing that for every $g \in N$,  it holds that $\rho (g) \xi^\pm - \xi^\pm \in \B^{\pi (\Gamma)}$.

Fix $g \in N$.  We assumed that $N$ normalizes $H_{+}, H_{-}$ and thus $\rho (g) \xi^\pm \in \B^{\rho (H_\pm)}$.  By the fact that $\rho$ is an isometry, it follows that 
$$\Vert \rho (g) \xi^{+} - \rho (g) \xi^{-} \Vert = \Vert \xi^{+} - \xi^{-} \Vert = D_\rho.$$

We note that if $\Vert (\rho (g) \xi^{+} - \rho (g) \xi^{-}) - (\xi^{+} - \xi^{-}) \Vert >0$, then by strict convexity it follows that 
$$\Vert \frac{1}{2} (\rho (g) \xi^{+} - \rho (g) \xi^{-}) + \frac{1}{2} (\xi^{+} - \xi^{-}) \Vert < D_\rho$$
and thus for $\eta^\pm = \frac{\xi^\pm + \rho (g) \xi^\pm}{2} \in \B^{\rho (H_\pm)}$ it holds that 
$$\Vert \eta^+ - \eta^- \Vert <  D_\rho,$$
which is a contradiction to the definition of $D_\rho$.  Thus 
$$(\rho (g) \xi^{+} - \rho (g) \xi^{-}) - (\xi^{+} - \xi^{-}) = 0$$
and it follows that 
$$\rho (g) \xi^{+} - \xi^{+} = \rho (g) \xi^{-} - \xi^{-}.$$
We note that $\rho (g) \xi^{+},  \xi^{+} \in \B^{\rho (H_{+})}$ and thus $\rho (g) \xi^{+} - \xi^{+} \in \B^{\pi (H_{+})}$.  Similarly,  $\rho (g) \xi^{-} - \xi^{-} \in \B^{\pi (H_{-})}$.  Thus from the equality $\rho (g) \xi^{+} - \xi^{+} = \rho (g) \xi^{-} - \xi^{-}$,  it follows that
$$\rho (g) \xi^\pm - \xi^\pm \in \B^{\pi (H_{+})} \cap \B^{\pi (H_{-})} = \B^{ \pi (\langle H_{+}, H_{-} \rangle)} = \B^{\pi (\Gamma)}$$
as needed. 
\end{proof}

\begin{theorem}
\label{D is realized thm}
Let $\Gamma$ be a finitely generated group with a finite generating set $S$ and $H_{+},  H_{-} < \Gamma$ subgroups that generate $\Gamma$. Also,  let $\mathcal{E}$ be a class of Banach spaces that is closed under passing to ultraproducts such that $\mathcal{C}_{\mathcal{E}}^{S-\text{uniform}} \neq \emptyset$.  Assume that the pairs $(\Gamma,  H_+)$,  $(\Gamma,  H_-)$ have relative property $(F_{\mathcal{E}})$.  
For every $(\B,  \rho) \in \mathcal{C}_{\mathcal{E}}^{S-\text{uniform}}$ we denote
$$D_\rho =  \inf_{\xi^{+}  \in \B^{\rho (H_{+})}, \xi^{-}  \in \B^{\rho (H_{-})}} \Vert \xi^{+}  - \xi^{-}  \Vert$$
and further denote
$$D = \inf_{(\B,  \rho) \in \mathcal{C}_{\mathcal{E}}^{S-\text{uniform}}} D_\rho.$$

Then there is $(\B,  \rho) \in \mathcal{C}_{\mathcal{E}}^{S-\text{uniform}}$ and $\xi^{\pm}  \in \B^{\rho (H_{\pm})}$ such that 
$\Vert \xi^{+} - \xi^{-} \Vert = D$.
\end{theorem}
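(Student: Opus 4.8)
The plan is to construct $(\B,\rho)$ and the realizing pair $\xi^{\pm}$ by an ultraproduct (compactness) argument. First I would take a minimizing sequence: by definition of $D$ there is a sequence $(\B_n,\rho_n)\in\mathcal{C}_{\mathcal{E}}^{S-\text{uniform}}$ with $D_{\rho_n}\to D$, and for each $n$ a choice of $\xi_n^{+}\in\B_n^{\rho_n(H_{+})}$ and $\xi_n^{-}\in\B_n^{\rho_n(H_{-})}$ with $\Vert\xi_n^{+}-\xi_n^{-}\Vert\le D_{\rho_n}+\tfrac1n$. Since the origin of each $\B_n$ need not be well-placed, I would re-center each action: replace $\rho_n$ by the conjugate action $\xi\mapsto\rho_n(\cdot)(\xi+\xi_n^{+})-\xi_n^{+}$ — equivalently translate so that $\xi_n^{+}$ becomes the origin $0_n$. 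This is still in $\mathcal{C}_{\mathcal{E}}^{S-\text{uniform}}$ (uniformity and membership in $\mathcal{E}$ are translation-invariant), and now $0_n\in\B_n^{\rho_n(H_{+})}$ and $\Vert 0_n-\xi_n^{-}\Vert\le D_{\rho_n}+\tfrac1n$ is bounded. In particular $\sup_n\disp_{\rho_n,S}(0_n)\le\sup_n\sup_{s\in S}\Vert\rho_n(s)0_n\Vert$; using Proposition~\ref{distance of rho (g) prop} together with the fact that the relative property $(F_{\mathcal{E}})$ for $H_{\pm}$ bounds $\Vert 0_n-\rho_n(s)0_n\Vert$ for $s\in H_{+}\cup H_{-}$, and that $S$-uniformity plus finiteness of $S$ keeps $\disp_{\rho_n,S}(0_n)$ bounded along the minimizing sequence (one may pass to such a subsequence, since if $\disp_{\rho_n,S}(0_n)\to\infty$ one could rescale/re-center to reduce $D_{\rho_n}$, contradicting minimality), I get $\sup_n\disp_{\rho_n,S}(0_n)<\infty$.

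Next I would form the ultraproduct $\B=\lbrace\B_n\rbrace_{\mathcal{U}}$ for a non-principal ultrafilter $\mathcal{U}$, with the induced affine isometric action $\rho=\rho_{\mathcal{U}}$. Since $\mathcal{E}$ is closed under passing to ultraproducts and each $(\B_n,\rho_n)$ is $S$-uniform with uniformly bounded displacement at the origin, $(\B,\rho)\in\mathcal{C}_{\mathcal{E}}^{S-\text{uniform}}$. Let $\xi^{+}=[\,(0_n)_n\,]$ and $\xi^{-}=[\,(\xi_n^{-})_n\,]$ (the latter is a bounded sequence, hence a legitimate element of $\B$). Because $\rho_n(h)0_n=0_n$ for every $h\in H_{+}$ and all $n$, we get $\rho(h)\xi^{+}=\xi^{+}$, so $\xi^{+}\in\B^{\rho(H_{+})}$; similarly $\rho_n(h)\xi_n^{-}=\xi_n^{-}$ for $h\in H_{-}$ gives $\xi^{-}\in\B^{\rho(H_{-})}$. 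Moreover $\Vert\xi^{+}-\xi^{-}\Vert=\lim_{n\to\mathcal{U}}\Vert 0_n-\xi_n^{-}\Vert\le\lim_{n\to\mathcal{U}}(D_{\rho_n}+\tfrac1n)=D$.

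It remains to see that the inequality is an equality, i.e.\ $\Vert\xi^{+}-\xi^{-}\Vert\ge D$; but this is immediate since $(\B,\rho)\in\mathcal{C}_{\mathcal{E}}^{S-\text{uniform}}$, so $\Vert\xi^{+}-\xi^{-}\Vert\ge D_\rho\ge D$ by definition of $D_\rho$ and of $D$. Hence $\Vert\xi^{+}-\xi^{-}\Vert=D$, as desired.

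The main obstacle I anticipate is the uniform control $\sup_n\disp_{\rho_n,S}(0_n)<\infty$, which is exactly what makes the ultraproduct action well-defined: one must argue that along a minimizing sequence the re-centered actions cannot have displacement at the origin tending to infinity. The clean way is to fix, once and for all, a single base action $(\B_0,\rho_0)\in\mathcal{C}_{\mathcal{E}}^{S-\text{uniform}}$ (which exists by hypothesis), observe that the relative properties $(F_{\mathcal{E}})$ for $H_{\pm}$ guarantee nonempty fixed-point sets $\B_0^{\rho_0(H_\pm)}$ so that $D_{\rho_0}<\infty$ and hence $D<\infty$; then choosing the minimizing sequence with $D_{\rho_n}\le D+1$ and re-centering at a near-optimal $\xi_n^{+}$, the bound $\Vert\xi_n^{+}-\xi_n^{-}\Vert\le D+2$ together with $\xi_n^{\pm}$ being $H_{\pm}$-fixed and $H_{+},H_{-}$ generating $\Gamma$ lets one bound $\disp_{\rho_n,S}(0_n)$ via Proposition~\ref{distance of rho (g) prop} applied inside each $H_{\pm}$-orbit. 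Making this last estimate precise — showing that a bounded distance between the two fixed vectors forces bounded displacement of the whole generating set at the origin — is the one step that needs care, though it is essentially the standard "no escape to infinity" argument for minimizing sequences of affine actions.
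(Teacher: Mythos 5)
Your proposal is correct and follows essentially the same route as the paper: take a minimizing sequence, translate so that $\xi_n^{+}$ is the origin, bound $\Vert \rho_n(h)0_n\Vert$ by $2(D+1)$ for $h\in H_{+}\cup H_{-}$ and then use Proposition \ref{distance of rho (g) prop} with the generating set $H_{+}\cup H_{-}$ (finiteness of $S$ gives a uniform word-length bound) to get $\sup_n\disp_{\rho_n,S}(0_n)<\infty$, pass to the ultraproduct, and close the gap $\Vert\xi^{+}-\xi^{-}\Vert\ge D$ via $D_\rho\ge D$. The only blemish is the aside suggesting one could "rescale/re-center to reduce $D_{\rho_n}$" if the displacement blew up — that argument does not work (rescaling destroys the $S$-uniform normalization and re-centering does not change $D_{\rho_n}$) — but it is superseded by the correct estimate you give in your final paragraph, which is exactly the paper's.
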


\begin{proof}
Let $\lbrace (\B_n, \rho_n) \rbrace_{n \in \mathbb{N}} \subseteq \mathcal{C}_{\mathcal{E}}^{S-\text{uniform}}$ such that for every $n$,  $D_{\rho_n} \leq D+\frac{1}{2n}$.  Thus,  for every $n$,  there are $\xi^{+}_n  \in \B^{\rho_n (H_{+})}, \xi^{-}_n  \in \B^{\rho_n (H_{-})}$ such that 
$$\Vert \xi^{+}_n - \xi^{-}_n \Vert \leq D + \frac{1}{n} \leq D+1.$$
By translating the origin of $\B_n$, we can assume that for every $n$,  $\xi^{+}_n = 0_n$.  

Denote $\B_{\mathcal{U}} = \lbrace \B_n \rbrace_{\mathcal{U}}$ to be the ultraproduct.  We note that the sequences $\lbrace \xi^{+}_n \rbrace_{n \in \mathbb{N}}$ and $\lbrace \xi^{-}_n \rbrace_{n \in \mathbb{N}}$ are in $\ell^{\infty} (\mathbb{N},  \lbrace \B_n \rbrace_{n \in \mathbb{N}})$,  i.e., that these are bounded sequences.  Indeed,  we translated the origin such that for every $n$,  $\Vert \xi^{+}_n  \Vert = 0$ and 
$$\Vert \xi^{-}_n \Vert =  \Vert \xi^{-}_n - \xi^{+}_n \Vert \leq D +1.$$
Denote $\xi^{+}_\infty$ and $\xi^{-}_\infty$ to be the $\mathcal{U}$-limits of these sequences, then $\Vert \xi^{+}_\infty - \xi^{-}_\infty \Vert = D$.  We are left to check that the action $\rho_{\mathcal{U}}$ on $\B_{\mathcal{U}}$ is well-defined (and it will readily follow that $\xi^{+}_\infty  \in \B_{\mathcal{U}}^{\rho_{\mathcal{U}} (H_{+})}$ and  $\xi^{-}_\infty  \in \B_{\mathcal{U}}^{\rho_{\mathcal{U}} (H_{-})}$). Thus, it is left to verify that for every $g \in S$,  
$$\sup_n \Vert  \rho_n (g) 0_n \Vert < \infty.$$

Fix $g \in S$.   We will show that for every $h \in H_{+} \cup H_{-}$ it holds for every $n$ that 
$$\Vert \rho_n (h) 0_n  \Vert \leq 2 (D+1).$$
If $h \in H_{+}$,  then $\rho_n (h) 0_n = 0_n$ and thus $\Vert \rho_n (h) 0_n  \Vert =0$.  If $h \in H_{-}$,  then
\begin{dmath*}
\Vert \rho_n (h) 0_n  \Vert \leq \Vert \rho_n (h) 0_n - \rho_n (h) \xi^{-}_n  \Vert + \Vert \rho_n (h) \xi^{-}_n \Vert = 
2 \Vert \xi^{-}_n \Vert \leq 2 (D+1).
\end{dmath*}

Thus, by Proposition \ref{distance of rho (g) prop},  it follows that 
$$\sup_{n \in \mathbb{N}} \Vert \rho_n (g) 0_n \Vert \leq 2 (D+1) \vert g \vert_{H_{+} \cup H_{-}}$$
as needed.
\end{proof}

After this preparation,  we prove a general criterion for property $(F_{\mathcal{E}})$:
\begin{theorem}
\label{general reduction thm - directed graph}
Let $\Gamma$ be a finitely generated group with a finite Abelianization and $\mathcal{E}$ a class of uniformly convex Banach spaces that is closed under passing to ultraproducts.  Also,  let $\vec{\mathcal{G}}$ be a directed graph with a vertex set $V$ such that for every $u,v \in V,  u \neq v$,  there are directed paths from $u$ to $v$ and from $v$ to $u$.  Assume that for every $v \in V$ there are subgroups $N^{v},  H_{+}^{v},  H_{-}^{v} < \Gamma$ such that the following holds:
\begin{enumerate}
\item For every $v \in V$,  $N^{v}$ normalizes $H_{+}^{v}$ and $H_{-}^{v}$.
\item For every $v \in V$,  $\langle H_{+}^{v},  H_{-}^{v} \rangle = \Gamma$.
\item For $u,v \in V$,  if $u \rightarrow v$,  then
$H_{\pm}^{v} < \langle H_{\pm}^{u}, N^{u} \rangle .$
\item It holds that
$$\langle  H_{+}^{v}, N^{v}  : v \in V \rangle = \Gamma.$$
\item For every $v \in V$,  the pairs $(\Gamma,  H_{+}^{v}),  (\Gamma,  H_{-}^{v})$ have relative property $(F_{\mathcal{E}})$.
\end{enumerate} 
Then $\Gamma$ has property $(F_{\mathcal{E}})$. 
\end{theorem}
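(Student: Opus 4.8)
The plan is to argue by contradiction, using Gromov's theorem (Theorem \ref{S-uniform thm}) to pass to $S$-uniform actions and then running a Shalom--Mimura minimal-displacement argument of the type behind Theorems \ref{N fixed xi^i thm} and \ref{D is realized thm}, propagating fixed vectors along directed paths in $\vec{\mathcal{G}}$. Fix a finite generating set $S$ of $\Gamma$ and suppose $\Gamma$ does not have property $(F_{\mathcal{E}})$. Then there are $\B \in \mathcal{E}$ and an affine isometric action $\rho_0$ of $\Gamma$ on $\B$ with $\B^{\rho_0(\Gamma)} = \emptyset$, so by Theorem \ref{S-uniform thm} (using that $\mathcal{E}$ is closed under ultraproducts) we have $\mathcal{C}_{\mathcal{E}}^{S\text{-uniform}} \neq \emptyset$. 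For $v \in V$ and $(\B,\rho) \in \mathcal{C}_{\mathcal{E}}^{S\text{-uniform}}$, condition (5) forces $\B^{\rho(H_{+}^{v})} \neq \emptyset$ and $\B^{\rho(H_{-}^{v})} \neq \emptyset$, so
$$D_{\rho}^{v} = \inf_{\xi^{+} \in \B^{\rho(H_{+}^{v})},\ \xi^{-} \in \B^{\rho(H_{-}^{v})}} \Vert \xi^{+} - \xi^{-} \Vert$$
is a well-defined nonnegative real; set $D_{v} = \inf_{(\B,\rho) \in \mathcal{C}_{\mathcal{E}}^{S\text{-uniform}}} D_{\rho}^{v}$, which is finite.

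First I would show that $v \mapsto D_{v}$ is constant on $V$. If $u \rightarrow v$ in $\vec{\mathcal{G}}$, apply Theorem \ref{D is realized thm} to the pair $H_{\pm}^{u}$ (whose hypotheses are supplied by conditions (2), (5) and closure under ultraproducts): there are $(\B,\rho) \in \mathcal{C}_{\mathcal{E}}^{S\text{-uniform}}$ and $\xi^{\pm} \in \B^{\rho(H_{\pm}^{u})}$ with $\Vert \xi^{+} - \xi^{-} \Vert = D_{u}$. By Theorem \ref{N fixed xi^i thm} — whose hypotheses are exactly conditions (1), (2), (5) together with uniform convexity of $\B$ and finiteness of the Abelianization of $\Gamma$ — we get $\xi^{+} \in \B^{\rho(\langle H_{+}^{u}, N^{u}\rangle)}$ and $\xi^{-} \in \B^{\rho(\langle H_{-}^{u}, N^{u}\rangle)}$. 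By condition (3), $H_{+}^{v} < \langle H_{+}^{u}, N^{u}\rangle$ and $H_{-}^{v} < \langle H_{-}^{u}, N^{u}\rangle$, so $\xi^{+} \in \B^{\rho(H_{+}^{v})}$ and $\xi^{-} \in \B^{\rho(H_{-}^{v})}$, whence $D_{v} \leq D_{\rho}^{v} \leq \Vert \xi^{+} - \xi^{-} \Vert = D_{u}$. Iterating along a directed path shows $D_{v} \leq D_{u}$ whenever there is a directed path $u \rightsquigarrow v$; since $\vec{\mathcal{G}}$ is strongly connected this gives $D_{u} = D_{v}$ for all $u,v$. Call the common value $D$.

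Now fix $v_{0} \in V$ and apply Theorem \ref{D is realized thm} once more: choose $(\B,\rho) \in \mathcal{C}_{\mathcal{E}}^{S\text{-uniform}}$ and $\xi^{+} \in \B^{\rho(H_{+}^{v_{0}})}$, $\xi^{-} \in \B^{\rho(H_{-}^{v_{0}})}$ with $\Vert \xi^{+} - \xi^{-} \Vert = D$; from now on $(\B,\rho)$ is fixed and only Theorem \ref{N fixed xi^i thm} is reused. I claim $\xi^{+} \in \B^{\rho(\langle H_{+}^{v}, N^{v}\rangle)}$ and $\xi^{-} \in \B^{\rho(\langle H_{-}^{v}, N^{v}\rangle)}$ for every $v \in V$. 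This I would prove by induction along a directed path $v_{0} \rightarrow v_{1} \rightarrow \dots \rightarrow v_{k} = v$: the base case $v_{0}$ is Theorem \ref{N fixed xi^i thm} as above; and if $\xi^{\pm} \in \B^{\rho(\langle H_{\pm}^{v_{i}}, N^{v_{i}}\rangle)}$, then condition (3) gives $\xi^{\pm} \in \B^{\rho(H_{\pm}^{v_{i+1}})}$, so $D = D_{v_{i+1}} \leq D_{\rho}^{v_{i+1}} \leq \Vert \xi^{+} - \xi^{-} \Vert = D$; thus $\xi^{+}, \xi^{-}$ realize $D_{\rho}^{v_{i+1}}$, and a further application of Theorem \ref{N fixed xi^i thm} to $v_{i+1}$ promotes the fixation to $\langle H_{\pm}^{v_{i+1}}, N^{v_{i+1}}\rangle$. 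Consequently $\xi^{+}$ is fixed by $\langle H_{+}^{v}, N^{v} : v \in V\rangle = \Gamma$ by condition (4), so $\rho$ has a global fixed point; but then $\disp_{\rho,S}(\xi^{+}) = 0$, contradicting $\inf_{\xi \in \B}\disp_{\rho,S}(\xi) \geq 1$, the defining property of an $S$-uniform action. Hence $\mathcal{C}_{\mathcal{E}}^{S\text{-uniform}} = \emptyset$, and by Theorem \ref{S-uniform thm} every affine isometric action of $\Gamma$ on a space in $\mathcal{E}$ has a fixed point; that is, $\Gamma$ has property $(F_{\mathcal{E}})$.

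The main obstacle, and the point where the directed-graph hypothesis genuinely enters, is the constancy of $v \mapsto D_{v}$ and the accompanying bookkeeping in the induction: at each step one must know that the very same pair $(\xi^{+},\xi^{-})$ still realizes the infimum $D_{\rho}^{v_{i+1}}$ relevant at the next vertex, which is exactly what constancy of $D$ secures, and strong connectivity is what lets a single realizing pair be carried to every vertex. Everything else (nonemptiness of fixed-point sets, the convexity computation, and the ultraproduct compactness) is already packaged into Theorems \ref{N fixed xi^i thm} and \ref{D is realized thm}, so no new estimates are needed.
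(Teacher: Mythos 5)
Your proposal is correct and follows essentially the same route as the paper: pass to $S$-uniform actions via Theorem \ref{S-uniform thm}, show the minimal displacement $D^{v}$ is constant on $V$ using Theorems \ref{D is realized thm} and \ref{N fixed xi^i thm} together with condition (3), and then propagate a single realizing pair $\xi^{\pm}$ along directed paths from a fixed $v_{0}$ to obtain a global fixed point, contradicting uniformity. The paper's induction is phrased in terms of $\dist_{\vec{\mathcal{G}}}(v_{0},v)$ rather than along an arbitrary directed path, but this is an immaterial difference.
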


\begin{proof}
To ease the reading, we will denote $u \rightarrow\rightarrow v$ if there is a directed path from $u$ to $v$. 

Fix $S$ to be a finite generating set of $\Gamma$.  Assume towards contradiction that $\Gamma$ does not have property $(F_{\mathcal{E}})$.  By Theorem \ref{S-uniform thm},  it follows that $\mathcal{C}_{\mathcal{E}}^{S-\text{uniform}} \neq \emptyset$.  For every $(\B,  \rho) \in \mathcal{C}_{\mathcal{E}}^{S-\text{uniform}}$ and every $v \in V$,  we denote 
$$D^{v}_{\rho} =  \inf_{\xi^{+}  \in \B^{\rho (H^{v}_+)}, \xi^{-}  \in \B^{\rho (H^{v}_-)}} \Vert \xi^{+}  - \xi^{-}  \Vert$$
and further denote
$$D^{v} = \inf_{(\B,  \rho) \in \mathcal{C}_{\mathcal{E}}^{S-\text{uniform}}} D_{\rho}^{v}.$$

Let $u,v \in V$ such that $u \rightarrow v$.  By Theorem \ref{D is realized thm},  there are $(\B,  \rho) \in \mathcal{C}_{\mathcal{E}}^{S-\text{uniform}}$,  $\xi^{+}  \in \B^{\rho (H^{u}_+)}$ and $\xi^{-}  \in \B^{\rho (H^{u}_-)}$ such that $\Vert \xi^{+} - \xi^{-} \Vert = D^{u}$.  By Theorem \ref{N fixed xi^i thm},  it follows that $\xi^{\pm} \in \B^{\rho (\langle H^{u}_\pm,  N^{u} \rangle)}$.  By our assumptions,  $H_{\pm}^{v} < \langle H_{\pm}^{u}, N^{u} \rangle$ it follows that $\xi^{\pm} \in \B^{\rho (H^{v}_\pm )}$ and thus 
$$D^{u} = \Vert \xi^{+} - \xi^{-} \Vert \geq^{\xi^{\pm} \in \B^{\rho (H_{\pm}^{v})}} D^{v}_\rho \geq D^{v}.$$

By induction,  if $u \rightarrow\rightarrow v$,  then $D^{u} \geq D^{v}$.  By our assumptions,  for every $u,v \in V,  u \neq v$,  it holds that $u \rightarrow\rightarrow v$ and $v \rightarrow\rightarrow u$.  It follows that for every $u,v \in V$,  $D^{u} = D^{v}$.

Fix $v_0 \in V$.  By Theorem \ref{D is realized thm},  there are $(\B,  \rho) \in \mathcal{C}_{\mathcal{E}}^{S-\text{uniform}}$,  $\xi^{+}  \in \B^{\rho (H^{v_0}_+)}$ and $\xi^{-}  \in \B^{\rho (H^{v_0}_-)}$ such that $\Vert \xi^{+} - \xi^{-} \Vert = D^{v_0}$.  By Theorem \ref{N fixed xi^i thm},  it follows that $\xi^{\pm} \in \B^{\rho (\langle H^{v_0}_\pm,  N^{v_0} \rangle)}$.  We will show that for every $v \in V$,  $\xi^{\pm} \in \B^{\rho (\langle H^{v}_\pm,  N^{v} \rangle)}$.  

Let $v \in V$.  We will say that $\dist_{\vec{\mathcal{G}}} (v_0,  v) = n$ where $n$ is the smallest integer such that there are $v_0, v_1,...,v_n =v$ and $v_i  \rightarrow v_{i+1}$ for every $0 \leq i \leq n-1$.  We will show that for every $v \in V$,  $\xi^{\pm} \in \B^{\rho (\langle H^{v}_\pm,  N^{v} \rangle)}$ by induction on $\dist_{\vec{\mathcal{G}}} (v_0,  v)$.  For $n =0$,  we already showed that $\xi^{\pm} \in \B^{\rho (\langle H^{v_0}_\pm,  N^{v_0} \rangle)}$.  Assume that for $n-1$,  if $\dist_{\vec{\mathcal{G}}} (v_0,  u) = n-1$,  then  $\xi^{\pm} \in \B^{\rho (\langle H^{u}_\pm,  N^{u} \rangle)}$.  Let $v \in V$ such that $\dist_{\vec{\mathcal{G}}} (v_0,  v) = n$.  Then there is $u \in V$ such that $\dist_{\vec{\mathcal{G}}} (v_0,  u) = n-1$ and $u \rightarrow v$.  By our assumptions,  $H_{\pm}^{v} < \langle H_{\pm}^{u}, N^{u} \rangle$ it follows that $\xi^{\pm} \in \B^{\rho (H^{v}_\pm )}$ and thus 
$$D^{v} = D^{v_0} = \Vert \xi^{+} - \xi^{-} \Vert \geq^{\xi^{\pm} \in \B^{\rho (H_{\pm}^{v})}} D^{v}_\rho \geq D^{v}.$$
Thus $D^{v}_\rho = D^{v}$ and by Theorem \ref{N fixed xi^i thm},  it follows that $\xi^{\pm} \in \B^{\rho (\langle H^{v}_\pm,  N^{v} \rangle)}$ as needed.   

Therefore we showed that 
$$\xi^{+} \in \bigcap_{v \in V} \B^{\rho (\langle H^{v}_+,  N^{v} \rangle)} = \B^{\rho (\langle H^{v}_\pm,  N^{v} : v \in V \rangle)} = \B^{\rho (\Gamma)}.$$ 
This is a contradiction to the fact that $\rho$ is $S$-uniform and thus $\B^{\rho (\Gamma)} = \emptyset$.  
\end{proof}

As a consequence,  we deduce the following machinery for the class of all uniformly convex Banach spaces:
\begin{theorem}
\label{general uc reduction thm - directed graph}
Let $\Gamma$ be a finitely generated group with a finite Abelianization.  Also,  let $\vec{\mathcal{G}}$ be a directed graph with a vertex set $V$ such that for every $u,v \in V,  u \neq v$,    there are directed paths from $u$ to $v$ and from $v$ to $u$.  Assume for every $v \in V$ there are subgroups $N^{v},  H_{+}^{v},  H_{-}^{v} < \Gamma$ such that the following holds:
\begin{enumerate}
\item For every $v \in V$,  $N^{v}$ normalizes $H_{+}^{v}$ and $H_{-}^{v}$.
\item For every $v \in V$,  $\langle H_{+}^{v},  H_{-}^{v} \rangle = \Gamma$.
\item For $u,v \in V$,  if $u \rightarrow v$,  then
$H_{\pm}^{v} < \langle H_{\pm}^{u}, N^{u} \rangle .$
\item It holds that
$$\langle  H_{+}^{v}, N^{v}  : v \in V \rangle = \Gamma.$$
\item For every $v \in V$,  the pairs $(\Gamma,  H_{+}^{v}),  (\Gamma,  H_{-}^{v})$ have relative property $(F_{\mathcal{E}_{uc}})$.
\end{enumerate} 
Then $\Gamma$ has property $(F_{\mathcal{E}_{uc}})$. 
\end{theorem}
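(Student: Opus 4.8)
The plan is to deduce the statement from Theorem~\ref{general reduction thm - directed graph} by the standard ``fixed modulus of convexity'' reduction. It suffices to prove that $\Gamma$ has property $(F_{\B})$ for an arbitrary uniformly convex Banach space $\B$, so fix such a $\B$ and let $\delta : (0,2] \to (0,1]$ be its modulus of convexity, which we may take to be non-decreasing (e.g.\ the canonical one, $\delta(\varepsilon) = \inf \lbrace 1 - \Vert \tfrac{\xi+\eta}{2}\Vert : \Vert \xi \Vert = \Vert \eta \Vert = 1,\ \Vert \xi - \eta \Vert \geq \varepsilon \rbrace$, which is positive on $(0,2]$ precisely because $\B$ is uniformly convex). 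Set $\delta^{\sharp}(\varepsilon) = \sup_{0 < \varepsilon' < \varepsilon} \delta(\varepsilon')$ and let $\mathcal{E}_{\delta}$ be the class of all uniformly convex Banach spaces whose modulus of convexity is bounded below by $\delta^{\sharp}$. Since $\delta$ is non-decreasing, $\delta^{\sharp} \leq \delta$, so $\B \in \mathcal{E}_{\delta}$, and clearly $\mathcal{E}_{\delta} \subseteq \mathcal{E}_{uc}$.

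The key point---and the only real content beyond invoking the directed-graph theorem---is that $\mathcal{E}_{\delta}$ is closed under passing to ultraproducts. I would verify that if each $\B_n$ has modulus of convexity at least $\delta^{\sharp}$, then $\lbrace \B_n \rbrace_{\mathcal{U}}$ has modulus of convexity at least $(\delta^{\sharp})^{\sharp} = \delta^{\sharp}$, the equality holding because $\sup_{\varepsilon' < \varepsilon} \sup_{\varepsilon'' < \varepsilon'} \delta(\varepsilon'') = \sup_{\varepsilon'' < \varepsilon} \delta(\varepsilon'')$; thus $\lbrace \B_n \rbrace_{\mathcal{U}} \in \mathcal{E}_\delta$. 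The verification is routine: given unit vectors $\lbrace \xi_n \rbrace_{\mathcal{U}}, \lbrace \eta_n \rbrace_{\mathcal{U}}$ in the ultraproduct with $\Vert \lbrace \xi_n \rbrace_{\mathcal{U}} - \lbrace \eta_n \rbrace_{\mathcal{U}} \Vert \geq \varepsilon$, one may normalize the representatives so that $\Vert \xi_n \Vert = \Vert \eta_n \Vert = 1$ for all $n$ (possible since $\lim_{\mathcal{U}} \Vert \xi_n \Vert = 1 \neq 0$, so $\xi_n / \Vert \xi_n \Vert$ agrees with $\xi_n$ $\mathcal{U}$-almost everywhere up to a $\mathcal{U}$-null perturbation); then for each $\varepsilon' < \varepsilon$ the set $\lbrace n : \Vert \xi_n - \eta_n \Vert \geq \varepsilon' \rbrace$ lies in $\mathcal{U}$, so $\Vert \tfrac{\xi_n + \eta_n}{2} \Vert \leq 1 - \delta^{\sharp}(\varepsilon')$ for $\mathcal{U}$-many $n$, and passing to the $\mathcal{U}$-limit and then letting $\varepsilon' \uparrow \varepsilon$ gives $\Vert \tfrac{1}{2}(\lbrace \xi_n \rbrace_{\mathcal{U}} + \lbrace \eta_n \rbrace_{\mathcal{U}}) \Vert \leq 1 - \delta^{\sharp}(\varepsilon)$. (This is the well-known fact that a Banach ultraproduct of spaces with a common modulus of convexity is again uniformly convex.)

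With this in hand I would simply apply Theorem~\ref{general reduction thm - directed graph} with $\mathcal{E} = \mathcal{E}_{\delta}$: hypotheses (1)--(4) are literally identical, and hypothesis (5)---relative property $(F_{\mathcal{E}_{\delta}})$ for the pairs $(\Gamma, H^{v}_{+}), (\Gamma, H^{v}_{-})$---follows at once from the assumed relative property $(F_{\mathcal{E}_{uc}})$, since $\mathcal{E}_{\delta} \subseteq \mathcal{E}_{uc}$. Theorem~\ref{general reduction thm - directed graph} then yields that $\Gamma$ has property $(F_{\mathcal{E}_{\delta}})$, and since $\B \in \mathcal{E}_{\delta}$ this gives property $(F_{\B})$; as $\B$ was arbitrary, $\Gamma$ has property $(F_{\mathcal{E}_{uc}})$. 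The main (and only mildly delicate) obstacle is the ultraproduct-closure of $\mathcal{E}_{\delta}$ carried out in the second paragraph---needed precisely because $\mathcal{E}_{uc}$ itself is \emph{not} closed under ultraproducts---while everything else is a direct invocation of the preceding theorem.
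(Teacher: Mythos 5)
Your proof is correct and follows essentially the same route as the paper's: both reduce to Theorem \ref{general reduction thm - directed graph} by restricting to the subclass of uniformly convex spaces whose modulus of convexity is bounded below by a fixed function, a subclass which (unlike $\mathcal{E}_{uc}$ itself) is closed under passing to ultraproducts. The only difference is that you actually verify this ultraproduct-closure (including the left-regularization $\delta^{\sharp}$ needed to make the limiting argument close up), a point the paper asserts without proof.
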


\begin{proof}
We note that cannot apply Theorem \ref{general reduction thm - directed graph} directly for $\mathcal{E} = \mathcal{E}_{uc}$,  because $\mathcal{E}_{uc}$ is not closed under passing to ultraproducts.  However,  $\mathcal{E}_{uc}$ can be written as a union of classes of Banach spaces that are closed under passing to ultraproducts.  
Explicitly,  for $\delta_0 : (0,2] \rightarrow (0,1]$,  we denote $\mathcal{E}_{uc} (\delta_0)$ to be the class of all uniformly convex Banach spaces with moduli of convexity $\geq \delta_0$,  i.e., $\B \in  \mathcal{E}_{uc} (\delta_0)$ if it is uniformly convex with modulus of convexity $\delta : (0,2] \rightarrow (0,1]$ such that for every $0 < \varepsilon \leq 2$,  it holds that $\delta (\varepsilon) \geq \delta_0 (\varepsilon)$.  We note that
$$\mathcal{E}_{uc} = \bigcup_{\delta_0 : (0,2] \rightarrow (0,1]}  \mathcal{E}_{uc} (\delta_0)$$ 
and for every $\delta_0 : (0,2] \rightarrow (0,1)$,  the class $\mathcal{E}_{uc} (\delta_0)$ is closed under passing to ultraproducts. 

By our assumptions,  for every $\delta : (0,2] \rightarrow (0,1)$, the conditions of Theorem \ref{general reduction thm - directed graph} holds for $\mathcal{E} = \mathcal{E}_{uc} (\delta_0)$ and thus for every $\delta : (0,2] \rightarrow (0,1)$,  $\Gamma$ has property $(F_{\mathcal{E}_{uc} (\delta_0)})$.  It readily follows that $\Gamma$ has property $(F_{\mathcal{E}_{uc}})$. 
\end{proof}

Theorem \ref{general reduction thm - directed graph} and Theorem \ref{general uc reduction thm - directed graph} have the following variation in which we use a non-directed graph that appeared in the introduction (Theorem \ref{general reduction thm intro - non-directed graph} above):
\begin{theorem}
\label{general reduction thm - non-directed graph}
Let $\Gamma$ be a finitely generated group with a finite Abelianization and $\mathcal{E}$ a class of uniformly convex Banach spaces such that either is closed under passing to ultraproducts or $\mathcal{E} = \mathcal{E}_{uc}$.  Also, let $\mathcal{G}$ be a (non-directed) connected graph with a vertex set $V$.  Assume that for every $v \in V$ there are subgroups $N^{v},  H_{+}^{v},  H_{-}^{v}< \Gamma$ such that the following holds:
\begin{enumerate}
\item For every $v \in V$,  $N^{v}$ normalizes $H_{+}^{v}$ and $H_{-}^{v}$.
\item For every $v \in V$,  $\langle H_{+}^{v},  H_{-}^{v} \rangle = \Gamma$.
\item If $u,v \in V$ such that $u \sim v$,  then
$H_{\pm}^{u} < \langle H_{\pm}^{v}, N^{v} \rangle ,$
and
$H_{\pm}^{v} < \langle H_{\pm}^{u}, N^{u} \rangle .$
\item It holds that
$$\langle  H_{+}^{v}, N^{v}  : v \in V \rangle = \Gamma.$$
\item For every $v \in V$,  the pairs $(\Gamma,  H_{+}^{v}),  (\Gamma,  H_{-}^{v})$ have relative property $(F_{\mathcal{E}})$.
\end{enumerate} 
Then $\Gamma$ has property $(F_{\mathcal{E}})$. 
\end{theorem}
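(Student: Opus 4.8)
The plan is to reduce this non-directed statement to the directed-graph versions already established, Theorem~\ref{general reduction thm - directed graph} (when $\mathcal{E}$ is closed under passing to ultraproducts) and Theorem~\ref{general uc reduction thm - directed graph} (when $\mathcal{E} = \mathcal{E}_{uc}$). The observation that makes this work is that hypothesis~(3) of the non-directed statement is symmetric in $u$ and $v$: it asserts both $H_{\pm}^{u} < \langle H_{\pm}^{v}, N^{v} \rangle$ and $H_{\pm}^{v} < \langle H_{\pm}^{u}, N^{u} \rangle$ whenever $u \sim v$, which is exactly the data the directed theorems require along each directed edge.

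First I would introduce the directed graph $\vec{\mathcal{G}}$ with vertex set $V$ obtained from $\mathcal{G}$ by replacing each edge $\{u,v\}$ of $\mathcal{G}$ with the two directed edges $u \rightarrow v$ and $v \rightarrow u$. Since $\mathcal{G}$ is connected, for any two distinct $u,v \in V$ there is an undirected path $u = w_0 \sim w_1 \sim \cdots \sim w_n = v$ in $\mathcal{G}$, and by construction this gives a directed path $u \rightarrow w_1 \rightarrow \cdots \rightarrow w_n = v$ in $\vec{\mathcal{G}}$; reversing the vertex sequence gives a directed path from $v$ to $u$. Hence $\vec{\mathcal{G}}$ satisfies the reachability hypothesis of Theorems~\ref{general reduction thm - directed graph} and~\ref{general uc reduction thm - directed graph} (in the degenerate case $|V| = 1$ there is nothing to check here).

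Next I would verify conditions~(1)--(5) of the directed theorem using the same subgroups $N^{v}, H_{+}^{v}, H_{-}^{v}$. Conditions~(1), (2), (4), (5) are word-for-word the same as in the non-directed statement, so they carry over immediately. For condition~(3): if $u \rightarrow v$ in $\vec{\mathcal{G}}$ then $u \sim v$ in $\mathcal{G}$, so by hypothesis~(3) of the non-directed statement we have in particular $H_{\pm}^{v} < \langle H_{\pm}^{u}, N^{u} \rangle$, which is precisely condition~(3) of the directed theorem for the edge $u \rightarrow v$. The reversed edge $v \rightarrow u$ is handled by the other half $H_{\pm}^{u} < \langle H_{\pm}^{v}, N^{v} \rangle$ of the same hypothesis, so the symmetry of~(3) is exactly what is being exploited.

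Finally I would apply the relevant directed theorem: Theorem~\ref{general reduction thm - directed graph} when $\mathcal{E}$ is closed under passing to ultraproducts, and Theorem~\ref{general uc reduction thm - directed graph} when $\mathcal{E} = \mathcal{E}_{uc}$; in either case the conclusion is that $\Gamma$ has property $(F_{\mathcal{E}})$. This reduction is entirely formal, so I do not expect a genuine obstacle; the only minor points to be attentive to are the trivial case $|V| = 1$ and making sure the bidirected construction matches each directed edge with the correct half of the symmetric hypothesis~(3).
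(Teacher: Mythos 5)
Your proposal is correct and is essentially identical to the paper's own proof: the paper likewise replaces each undirected edge of $\mathcal{G}$ by the two directed edges and then invokes Theorem \ref{general reduction thm - directed graph} or Theorem \ref{general uc reduction thm - directed graph} according to whether $\mathcal{E}$ is closed under ultraproducts or $\mathcal{E} = \mathcal{E}_{uc}$. Your verification of conditions (1)--(5) is just a slightly more explicit write-up of the same formal reduction.
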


\begin{proof}
We define $\vec{\mathcal{G}}$ to be the directed graph with the vertex set $V$ such that $u \rightarrow v$ and $v \rightarrow u$ in $\vec{\mathcal{G}}$ if and only if $u \sim v$ in $\mathcal{G}$.  For this graph,  we can apply Theorem \ref{general reduction thm - directed graph} when $\mathcal{E}$ is closed under passing to ultraproducts or Theorem \ref{general uc reduction thm - directed graph} when $\mathcal{E} = \mathcal{E}_{uc}$ and deduce that $\Gamma$  has property $(F_{\mathcal{E}})$. 
\end{proof}

\section{Synthesis of the fixed point property for groups graded by root systems}
\label{Synthesis of the fixed point property for groups graded by root systems sec}

Groups graded by root systems were defined Ershov,  Jaikin-Zapirain and Kassabov \cite{EJZK} as a generalization of Steinberg groups over (classical) root systems.  The aim of the section is to prove that from groups that are (strongly) graded by a root system,  relative property $(F_{\mathcal{E}_{uc}})$ of the root groups implies property $(F_{\mathcal{E}_{uc}})$ of the entire group.

\begin{definition}
Let $\Gamma$ be a group and $\Phi$ be a root system in $E$. We say that $\Gamma$ is \textit{graded by $\Phi$} if there are subgroups $\lbrace K_{\alpha} \rbrace_{\alpha \in \Phi}$ of $\Gamma$ such that the following holds:
\begin{itemize}
\item $\langle K_\alpha : \alpha \in \Phi \rangle = \Gamma$.
\item For every $\alpha, \beta \in \Phi$,  $\beta \notin \mathbb{R}_{<0} \alpha$,  
$$[K_\alpha,  K_\beta] \subseteq \langle X_\gamma : \gamma = a \alpha + b \beta \in \Phi,  a,b \geq 1 \rangle.$$
\end{itemize}
If the above holds,  we say that $\langle K_\alpha : \alpha \in \Phi \rangle$ is a grading of $\Gamma$.  Moreover,  we say that $\Gamma$ is \textit{strongly graded by $\Phi$} (and that $\langle K_\alpha : \alpha \in \Phi \rangle$ is a strong grading of $\Gamma$),  if it is graded by $\Phi$ and for every $\Phi_1 \in \Borel (\Phi )$ and every $\beta \in \Core (\Phi_1)$ it holds that 
$$K_\beta \subseteq \langle K_\gamma : \gamma \in \Phi_1 \setminus (\mathbb{R}_{>0} \beta ) \rangle.$$
\end{definition}

The main example for groups graded by root systems are the Steinberg groups:
\begin{proposition} \cite[Proposition 7.7]{EJZK}
\label{Steinberg groups are strongly graded prop}
Let $\Phi$ be a reduced irreducible classical root system of rank $\geq 2$ and $R$ a commutative ring.  Also, let $\St_{\Phi} (R)$ be the Steinberg group and $\lbrace K_\alpha (R) : \alpha \in \Phi \rbrace$ be the root subgroups of $\St_{\Phi} (R)$.  Then $\St_{\Phi} (R)$ is strongly graded by $\lbrace K_\alpha (R) : \alpha \in \Phi \rbrace$. 
\end{proposition}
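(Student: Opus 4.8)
The plan is to verify the two clauses in the definition of a strong grading. The \emph{grading} clause is immediate from the presentation of $\St_\Phi(R)$: relation~(1) makes each root subgroup $K_\alpha(R)$ an abelian group (a quotient of $(R,+)$), and relation~(2) is precisely the Chevalley commutator formula, so for $\alpha,\beta\in\Phi$ with $\beta\notin\mathbb{R}_{<0}\alpha$ the commutator $[x_\alpha(p_1),x_\beta(p_2)]$ is a product of terms $x_{i\alpha+j\beta}(C^{\alpha,\beta}_{i,j}p_1^ip_2^j)$ with $i,j\ge 1$ and $i\alpha+j\beta\in\Phi$; hence $[K_\alpha(R),K_\beta(R)]\subseteq\langle K_\gamma(R):\gamma=a\alpha+b\beta\in\Phi,\ a,b\ge1\rangle$. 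The case $\beta=\alpha$ is vacuous because $\Phi$ is reduced, and $\langle K_\alpha(R):\alpha\in\Phi\rangle=\St_\Phi(R)$ by construction. So everything reduces to the \emph{strong grading} clause.

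For the strong grading clause, fix $\Phi_1\in\Borel(\Phi)$ and $\beta\in\Core(\Phi_1)$, and put $L=\langle K_\gamma(R):\gamma\in\Phi_1,\ \gamma\notin\mathbb{R}_{>0}\beta\rangle$; since $\Phi_1$ is a Borel set and $\Phi$ is reduced we have $\Phi_1\cap\mathbb{R}\beta=\{\beta\}$, so $L=\langle K_\gamma(R):\gamma\in\Phi_1\setminus\{\beta\}\rangle$ and the goal is $K_\beta(R)\subseteq L$. The first step is a root-theoretic reduction: because $\beta$ lies in the core rather than the boundary of $\Phi_1$, it does not span an extremal ray of the cone generated by $\Phi_1$, which — this is the converse of Lemma~\ref{facts about root sys lemma}(2), established in \cite{EJZK}; were it to fail one would produce a co-minimal Borel set witnessing $\beta\in\partial\Phi_1$ — means $\beta=a\alpha_1+b\alpha_2$ for some linearly independent $\alpha_1,\alpha_2\in\Phi_1$ and some $a,b\in\mathbb{Z}_{\ge1}$. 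Set $\Psi=\Phi\cap\Span(\alpha_1,\alpha_2)$; this is an irreducible rank~$2$ subsystem (it cannot be $A_1\times A_1$ since $a\alpha_1+b\alpha_2$ is a root), $\Psi\cap\Phi_1=\Psi_{f|_\Psi}$ is a Borel set of $\Psi$ (the restriction of a functional in general position is again in general position), $\beta\in\Core_\Psi(\Psi\cap\Phi_1)$ by Lemma~\ref{facts about root sys lemma}(2) applied inside $\Psi$, and $\langle K_\gamma(R):\gamma\in\Psi\rangle$ is a quotient of $\St_\Psi(R)$ because every $\Psi$-Chevalley relation already holds in $\St_\Phi(R)$. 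Hence it suffices to prove the strong grading clause for $\St_\Psi(R)$ with $\Psi$ of type $A_2$, $B_2=C_2$ or $G_2$.

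In the rank~$2$ cases one argues directly. One first checks that every core root $\beta$ of a Borel set $\Psi_1$ of such a $\Psi$ admits a decomposition $\beta=\alpha_1+b\alpha_2$ with $\alpha_1,\alpha_2\in\Psi_1$ and with structure constant $C^{\alpha_1,\alpha_2}_{1,b}=\pm1$; this is where the explicit relations (those recorded above for $A_2$, $C_2$ and $G_2$) are used, and the constants $\pm2,\pm3$ that occur for some pairs in $B_2$ and $G_2$ are circumvented by choosing a different pair (for instance $2\alpha+3\beta$ in $G_2$ is realized through $[x_\alpha(\cdot),x_{\alpha+3\beta}(\cdot)]$ rather than through $[x_\alpha(\cdot),x_\beta(\cdot)]$). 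Given such a decomposition, every root occurring in the Chevalley expansion of $[x_{\alpha_1}(p),x_{\alpha_2}(q)]$ has the form $i\alpha_1+j\alpha_2\in\Psi$ with $i,j\ge1$, hence lies in $\Core_\Psi(\Psi_1)\subseteq\Psi_1$ by Lemma~\ref{facts about root sys lemma}(2), and $\beta$ is the only such root proportional to $\beta$ (again by reducedness), so all other factors of the product, together with $K_{\alpha_1}(R)$ and $K_{\alpha_2}(R)$, lie in $\langle K_\gamma(R):\gamma\in\Psi_1\setminus\{\beta\}\rangle$; cancelling them leaves $x_\beta(C^{\alpha_1,\alpha_2}_{1,b}\,p\,q^{b})$ in that subgroup for all $p,q\in R$, and specialising $q=1$ gives $x_\beta(\pm p)$ there for all $p$, i.e. $K_\beta(R)\subseteq L$. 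Feeding this back through the reduction of the previous paragraph completes the proof.

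I expect the main obstacle to be precisely this last bookkeeping: one must certify, uniformly over all Borel sets of $A_2$, $B_2$ and $G_2$, that each core root decomposes into two roots of the Borel set for which the relevant Chevalley constant is a unit (so that a single specialisation recovers all of $K_\beta(R)$, not merely a proper subgroup such as the one supported on $2R$), and the rank~$2$ reduction is what turns this into a finite, if somewhat fiddly, case analysis. A secondary point is the root-theoretic converse invoked in the reduction — that the boundary roots of $\Phi_1$ are exactly the roots spanning extremal rays of the cone of $\Phi_1$ — which can be handled by the supporting-hyperplane argument sketched above or simply quoted from \cite{EJZK}.
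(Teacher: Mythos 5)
The paper offers no proof of this proposition: it is imported wholesale from \cite[Proposition 7.7]{EJZK}, so there is no in-paper argument to compare yours against, and what you have written is a reconstruction of the external proof. Your architecture is sound and is essentially the standard one. The grading clause is indeed immediate from the defining relations of $\St_\Phi(R)$; the strong-grading clause correctly reduces, via the decomposition of a core root as a positive combination of two linearly independent roots of the Borel set, to the irreducible rank-$2$ subsystem $\Psi=\Phi\cap\Span(\alpha_1,\alpha_2)$ (of type $A_2$, $B_2$ or $G_2$), where $\beta\in\Core_\Psi(\Psi\cap\Phi_1)$ by Lemma \ref{facts about root sys lemma}(2) and the Chevalley commutator formula with one argument specialised to $1$ exhibits $K_\beta(R)$ inside the subgroup generated by the remaining root subgroups. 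Your handling of the non-unit structure constants — taking the $(i,j)=(1,2)$ term $x_{\alpha+2\beta}(p_1p_2^2)$ in $C_2$ rather than the coefficient-$2$ relation, and realising $2\alpha+3\beta$ in $G_2$ via $[x_\alpha(\cdot),x_{\alpha+3\beta}(\cdot)]$ — is exactly the manoeuvre the paper itself performs, for a different purpose, in Theorems \ref{weak relative f.p. for C2 thm} and \ref{relative f.p. for G2 thm}, which corroborates it. Two points in your write-up remain assertions rather than proofs: (i) the root-theoretic converse you invoke (that every core root is a positive combination of \emph{two} linearly independent roots of $\Phi_1$) is a genuine lemma — a bare extremality/Carath\'eodory argument only yields a positive combination of up to $\rank(\Phi)$ roots, so the two-root version really does use the root-system structure and should be quoted from \cite{EJZK} or proved; and (ii) the uniformity of the rank-$2$ check over all Borel sets needs the remark that Borel sets of a rank-$2$ system are Weyl-conjugate and that conjugation preserves the structure constants up to sign. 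You flag both, so I count them as acknowledged dependencies rather than gaps; given that the paper's own standard here is a citation, your argument is more informative than the source text.
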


Let $\Phi$ be a root system in $E$ and $\Gamma$ be a group graded by $\Phi$.  For a non-empty subset $A \subseteq \Phi$, we denote 
$$K_A = \langle K_\alpha : \alpha \in A \rangle.$$

\begin{lemma}
\label{K alpha normalizes lemma}
Let $\Phi$ be a root system in $E$ and $\Gamma$ a group that is graded by $\Phi$.  Also,  let $\Phi_1 \in \Borel (\Phi)$ and $\alpha \in \partial \Phi_1$.  Then $K_{\Phi \cap \mathbb{R}_{>0} \alpha}$ normalizes $K_{\Phi_1 \setminus \mathbb{R} \alpha}$.
\end{lemma}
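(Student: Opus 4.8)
The plan is to reduce everything to a single application of the grading commutator relation together with a bookkeeping argument about Borel sets via a linear functional.

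First I would make two standard reductions. Since $K_{\Phi \cap \mathbb{R}_{>0}\alpha} = \langle K_\gamma : \gamma \in \Phi \cap \mathbb{R}_{>0}\alpha \rangle$, it is enough to show that each individual root subgroup $K_\gamma$, with $\gamma \in \Phi \cap \mathbb{R}_{>0}\alpha$, normalizes $K_{\Phi_1 \setminus \mathbb{R}\alpha}$; and since $K_{\Phi_1 \setminus \mathbb{R}\alpha} = \langle K_\beta : \beta \in \Phi_1 \setminus \mathbb{R}\alpha \rangle$, it suffices to prove that for every $g \in K_\gamma$ and every $\beta \in \Phi_1 \setminus \mathbb{R}\alpha$ one has $g K_\beta g^{-1} \subseteq K_{\Phi_1 \setminus \mathbb{R}\alpha}$ and $g^{-1} K_\beta g \subseteq K_{\Phi_1 \setminus \mathbb{R}\alpha}$ (the two inclusions together give equality $g K_{\Phi_1 \setminus \mathbb{R}\alpha} g^{-1} = K_{\Phi_1 \setminus \mathbb{R}\alpha}$). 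For $h \in K_\beta$ I would write $g h g^{-1} = [g,h]\, h$ and $g^{-1} h g = [g^{-1},h]\, h$, where $[g,h],[g^{-1},h] \in [K_\gamma, K_\beta]$ since $g,g^{-1} \in K_\gamma$. Thus everything comes down to showing $[K_\gamma, K_\beta] \subseteq K_{\Phi_1 \setminus \mathbb{R}\alpha}$.

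Next I would invoke the grading. Because $\gamma \in \mathbb{R}_{>0}\alpha$ and $\beta \notin \mathbb{R}\alpha$, we have $\beta \notin \mathbb{R}\gamma$, so in particular $\beta \notin \mathbb{R}_{<0}\gamma$, and the grading axiom gives
$$[K_\gamma, K_\beta] \subseteq \langle K_\delta : \delta = a\gamma + b\beta \in \Phi,\ a,b \geq 1 \rangle.$$
It then remains to check that every such root $\delta = a\gamma + b\beta$ lies in $\Phi_1 \setminus \mathbb{R}\alpha$. Fix $f \in \mathfrak{F}(\Phi)$ with $\Phi_1 = \Phi_f$. Since $\alpha \in \partial \Phi_1 \subseteq \Phi_1$, we have $f(\alpha) > 0$, hence $f(\gamma) > 0$ for $\gamma \in \mathbb{R}_{>0}\alpha$, so in fact $\gamma \in \Phi_1$; also $f(\beta) > 0$ as $\beta \in \Phi_1$. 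Therefore $f(\delta) = a f(\gamma) + b f(\beta) > 0$ (using $a,b \geq 1 > 0$), so $\delta \in \Phi_f = \Phi_1$. Moreover, writing $\gamma = c\alpha$ with $c > 0$, we get $\delta = (ac)\alpha + b\beta$ with $b \geq 1$ and $\beta \notin \mathbb{R}\alpha$, so $\delta \notin \mathbb{R}\alpha$. Hence $\delta \in \Phi_1 \setminus \mathbb{R}\alpha$ and $K_\delta \subseteq K_{\Phi_1 \setminus \mathbb{R}\alpha}$, which yields $[K_\gamma, K_\beta] \subseteq K_{\Phi_1 \setminus \mathbb{R}\alpha}$.

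Putting this together: $[g,h] \in K_{\Phi_1 \setminus \mathbb{R}\alpha}$ and $h \in K_\beta \subseteq K_{\Phi_1 \setminus \mathbb{R}\alpha}$, so $ghg^{-1} \in K_{\Phi_1 \setminus \mathbb{R}\alpha}$, and symmetrically $g^{-1}hg \in K_{\Phi_1 \setminus \mathbb{R}\alpha}$; this holds for all $h \in K_\beta$ and all $\beta \in \Phi_1 \setminus \mathbb{R}\alpha$, so $g$ normalizes $K_{\Phi_1 \setminus \mathbb{R}\alpha}$ for every $g \in K_\gamma$, hence $K_\gamma$ does, hence $K_{\Phi \cap \mathbb{R}_{>0}\alpha}$ does, as claimed. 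I do not expect a genuine obstacle here: the only point requiring care is verifying, via the functional $f$, that the roots $a\gamma + b\beta$ produced by the commutator relation remain inside $\Phi_1$ and off the line $\mathbb{R}\alpha$ — this is exactly where the hypothesis $\alpha \in \partial\Phi_1$ is used (only through $\alpha \in \Phi_1$), and it is a short computation rather than a difficulty. One should also double-check at the start that $\Phi_1 \setminus \mathbb{R}\alpha \neq \emptyset$ so that $K_{\Phi_1 \setminus \mathbb{R}\alpha}$ is the subgroup intended, which is automatic in the ranks under consideration.
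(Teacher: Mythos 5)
Your proof is correct and follows essentially the same route as the paper: reduce to the grading commutator relation $[K_\gamma,K_\beta]\subseteq\langle K_\delta:\delta=a\gamma+b\beta\in\Phi,\ a,b\geq 1\rangle$ and check that every such $\delta$ lies in $\Phi_1\setminus\mathbb{R}\alpha$. The only (harmless) difference is that you verify this last membership directly with a functional $f$ defining $\Phi_1$, whereas the paper invokes Lemma \ref{facts about root sys lemma}(2) to place $\delta$ in $\Core(\Phi_1)\subseteq\Phi_1\setminus\mathbb{R}\alpha$.
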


\begin{proof}
Let $\alpha ' \in \Phi \cap \mathbb{R}_{>0} \alpha$ and $\beta \in \Phi_1 \setminus \mathbb{R} \alpha$. Then
$$[K_{\alpha '},  K_\beta] \subseteq \langle X_\gamma : \gamma = a \alpha ' + b \beta \in \Phi,  a,b \geq 1 \rangle \subseteq^{\text{Lemma } \ref{facts about root sys lemma} (2)} K_{\Core (\Phi_1)} \subseteq K_{\Phi_1 \setminus \mathbb{R} \alpha}.$$
\end{proof}

\begin{corollary}
\label{Phi1 cap Phi2 normalizes coro}
Let $\Phi$ be a root system in $E$ and $\Gamma$ a group that is graded by $\Phi$.  Also, let $\Phi_1, \Phi_2 \in \Borel (\Phi)$ that are co-maximal.  Then $K_{(\Phi_1 \cup \Phi_2) \setminus (\Phi_1 \cap \Phi_2)}$ normalizes $K_{\Phi_1 \cap \Phi_2}$ and $K_{- (\Phi_1 \cap \Phi_2)}$.
\end{corollary}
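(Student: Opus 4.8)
The plan is to reduce everything to two applications of Lemma \ref{K alpha normalizes lemma}, so the first task is to translate "$\Phi_1$ and $\Phi_2$ are co-maximal" into a statement about a single root. Writing $\Phi_1 = \Phi_f$ and $\Phi_2 = \Phi_g$ with $f, g \in \mathfrak{F}(\Phi)$, and using the basic identity $\Phi \setminus \Phi_f = -\Phi_f$, one sees that $\Phi_1 \setminus \Phi_2 = \Phi_1 \cap (-\Phi_2)$, which by co-maximality spans a $1$-dimensional line. Any $\alpha \in \Phi_1 \setminus \Phi_2$ satisfies $f(\alpha) > 0$ and $g(\alpha) < 0$, and conversely every root $c\alpha \in \Phi$ with $c>0$ then satisfies $f(c\alpha)>0$, $g(c\alpha)<0$, hence lies in $\Phi_1 \setminus \Phi_2$; so $\Phi_1 \setminus \Phi_2 = \Phi \cap \mathbb{R}_{>0}\alpha$ for a suitable root $\alpha$. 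Negating, $\Phi_2 \setminus \Phi_1 = -(\Phi_1 \setminus \Phi_2) = \Phi \cap \mathbb{R}_{<0}\alpha$, so that
$$(\Phi_1 \cup \Phi_2) \setminus (\Phi_1 \cap \Phi_2) = (\Phi_1 \setminus \Phi_2) \cup (\Phi_2 \setminus \Phi_1) = \Phi \cap \mathbb{R}\alpha.$$
The same sign analysis shows $\Phi_1 \cap \mathbb{R}\alpha = \Phi_1 \setminus \Phi_2$ and $\Phi_2 \cap \mathbb{R}\alpha = \Phi_2 \setminus \Phi_1$, whence $\Phi_1 \setminus \mathbb{R}\alpha = \Phi_1 \cap \Phi_2$ and $\Phi_2 \setminus \mathbb{R}\alpha = \Phi_1 \cap \Phi_2$.

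Next I would verify the boundary memberships needed to invoke Lemma \ref{K alpha normalizes lemma}. Since $\Phi_1$ and $\Phi_2$ are co-maximal, $\Phi_1$ and $-\Phi_2$ are co-minimal; moreover $g(\alpha)<0$ gives $-\alpha \in \Phi_2$, i.e. $\alpha \in -\Phi_2$, so $\alpha \in \Phi_1 \cap (-\Phi_2)$ and therefore $\alpha \in \partial \Phi_1$. Symmetrically, $\Phi_2$ and $-\Phi_1$ are co-minimal (co-minimality is symmetric and stable under negating both sets) and $-\alpha \in \Phi_2 \cap (-\Phi_1)$, so $-\alpha \in \partial \Phi_2$.

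Then I apply Lemma \ref{K alpha normalizes lemma} twice. Applied to $(\Phi_1, \alpha)$ it gives that $K_{\Phi \cap \mathbb{R}_{>0}\alpha}$ normalizes $K_{\Phi_1 \setminus \mathbb{R}\alpha} = K_{\Phi_1 \cap \Phi_2}$; applied to $(\Phi_2, -\alpha)$ it gives that $K_{\Phi \cap \mathbb{R}_{>0}(-\alpha)} = K_{\Phi \cap \mathbb{R}_{<0}\alpha}$ normalizes $K_{\Phi_2 \setminus \mathbb{R}\alpha} = K_{\Phi_1 \cap \Phi_2}$. Since $K_{(\Phi_1 \cup \Phi_2)\setminus(\Phi_1 \cap \Phi_2)} = K_{\Phi \cap \mathbb{R}\alpha} = \langle K_{\Phi \cap \mathbb{R}_{>0}\alpha},\, K_{\Phi \cap \mathbb{R}_{<0}\alpha} \rangle$, it normalizes $K_{\Phi_1 \cap \Phi_2}$. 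For the statement about $K_{-(\Phi_1 \cap \Phi_2)}$ I would observe that $-\Phi_1, -\Phi_2 \in \Borel(\Phi)$ are again co-maximal, that $(-\Phi_1)\cap(-\Phi_2) = -(\Phi_1 \cap \Phi_2)$, and that $((-\Phi_1)\cup(-\Phi_2))\setminus((-\Phi_1)\cap(-\Phi_2)) = -(\Phi \cap \mathbb{R}\alpha) = \Phi \cap \mathbb{R}\alpha = (\Phi_1 \cup \Phi_2)\setminus(\Phi_1 \cap \Phi_2)$; applying the case just proven to the pair $(-\Phi_1, -\Phi_2)$ then yields that $K_{(\Phi_1 \cup \Phi_2)\setminus(\Phi_1 \cap \Phi_2)}$ normalizes $K_{-(\Phi_1 \cap \Phi_2)}$, completing the proof.

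The only mildly delicate point is the bookkeeping with the Borel-set identities, in particular establishing $\Phi_1 \setminus \mathbb{R}\alpha = \Phi_1 \cap \Phi_2$ and that $\Phi_1 \setminus \Phi_2$ is exactly the set of roots of $\Phi$ on a single open ray; both follow from $\Phi \setminus \Phi_f = -\Phi_f$ together with the sign conditions defining $\Phi_f$, so I do not expect a genuine obstacle here.
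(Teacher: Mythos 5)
Your proof is correct and follows essentially the same route as the paper: identify the symmetric difference of the two Borel sets with $\Phi \cap \mathbb{R}\alpha$ for a boundary root $\alpha \in \partial\Phi_1$ with $-\alpha \in \partial\Phi_2$, apply Lemma \ref{K alpha normalizes lemma} twice (to $(\Phi_1,\alpha)$ and to $(\Phi_2,-\alpha)$), and then handle $K_{-(\Phi_1\cap\Phi_2)}$ by replacing $(\Phi_1,\Phi_2)$ with $(-\Phi_1,-\Phi_2)$. The only difference is that you carefully justify the set-theoretic identities $\Phi_1\setminus\Phi_2 = \Phi\cap\mathbb{R}_{>0}\alpha$ and $\Phi_1\setminus\mathbb{R}\alpha = \Phi_1\cap\Phi_2$ and the boundary memberships, which the paper simply asserts.
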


\begin{proof}
We assumed that $\Phi_1, \Phi_2$ are co-maximal and thus there is $\alpha \in \partial \Phi_1$ such that $- \alpha \in \partial \Phi_2$ and 
$$(\Phi_1 \cup \Phi_2) \setminus (\Phi_1 \cap \Phi_2) = \Phi \cap \mathbb{R} \alpha.$$

By Lemma \ref{K alpha normalizes lemma},  $K_{\Phi \cap \mathbb{R}_{>0} \alpha}$ normalizes $K_{\Phi_1 \setminus \mathbb{R}_{>0} \alpha} = K_{\Phi_1 \cap \Phi_2}$.  Also,  by Lemma \ref{K alpha normalizes lemma},  $K_{\Phi \cap \mathbb{R}_{<0} \alpha}$,  $K_{\Phi \cap \mathbb{R}_{>0} (-\alpha)} = K_{\Phi \cap \mathbb{R}_{<0} \alpha} $ normalizes $K_{\Phi_2 \setminus \mathbb{R}_{>0} (-\alpha)} = K_{\Phi_1 \cap \Phi_2}$.  Thus, we conclude that $K_{(\Phi_1 \cup \Phi_2) \setminus (\Phi_1 \cap \Phi_2)}$ normalizes $K_{\Phi_1 \cap \Phi_2}$.

It follows that $K_{(\Phi_1 \cup \Phi_2) \setminus (\Phi_1 \cap \Phi_2)}$ also normalizes $K_{- (\Phi_1 \cap \Phi_2)} = K_{(- \Phi_1) \cap (-\Phi_2)} $, since 
$$(\Phi_1 \cup \Phi_2) \setminus (\Phi_1 \cap \Phi_2) = ((-\Phi_1) \cup (-\Phi_2)) \setminus ((-\Phi_1) \cap (-\Phi_2)).$$
\end{proof}

\begin{lemma}
\label{K Phi1 cap Phi2 generates lemma}
Let $\Phi$ be a regular root system in $E$ and $\Gamma$ a group that is strongly graded by $\Phi$.  Also,  let $\Phi_1, \Phi_2 \in \Borel (\Phi)$ that are co-maximal.  Then $\langle K_{\Phi_1 \cap \Phi_2},  K_{- (\Phi_1 \cap \Phi_2)} \rangle = \Gamma$.
\end{lemma}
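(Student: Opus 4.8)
The plan is to prove that every root subgroup $K_\gamma$ with $\gamma \in \Phi$ lies in $M := \langle K_{\Phi_1 \cap \Phi_2},\, K_{-(\Phi_1 \cap \Phi_2)} \rangle$; since $\Gamma = \langle K_\gamma : \gamma \in \Phi \rangle$ by the grading axiom, this yields $M = \Gamma$. For $\gamma \in (\Phi_1 \cap \Phi_2) \cup \big(-(\Phi_1 \cap \Phi_2)\big)$ the inclusion $K_\gamma \subseteq M$ is immediate, so the whole content is to treat the remaining roots. To identify them, I would reuse the computation from the proof of Corollary \ref{Phi1 cap Phi2 normalizes coro}: co-maximality of $\Phi_1$ and $\Phi_2$ provides a root $\alpha$ with $\alpha \in \partial \Phi_1$, $-\alpha \in \partial \Phi_2$ and $(\Phi_1 \cup \Phi_2) \setminus (\Phi_1 \cap \Phi_2) = \Phi \cap \mathbb{R}\alpha$. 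Since Borel sets and their negatives partition $\Phi$, we also have $\Phi \setminus (\Phi_1 \cup \Phi_2) = (-\Phi_1) \cap (-\Phi_2) = -(\Phi_1 \cap \Phi_2)$, so from the partition $\Phi = (\Phi_1 \cap \Phi_2) \sqcup \big((\Phi_1 \cup \Phi_2)\setminus(\Phi_1 \cap \Phi_2)\big) \sqcup \big(\Phi \setminus (\Phi_1 \cup \Phi_2)\big)$ we obtain
$$\Phi \setminus \Big( (\Phi_1 \cap \Phi_2) \cup \big(-(\Phi_1 \cap \Phi_2)\big) \Big) = \Phi \cap \mathbb{R}\alpha .$$
Thus it suffices to show $K_\gamma \subseteq M$ for every $\gamma \in \Phi \cap \mathbb{R}\alpha$.

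Fix such a $\gamma$. Because $\Phi$ is regular, Lemma \ref{facts about root sys lemma}(3) furnishes a Borel set $\Phi_3 \in \Borel(\Phi)$ with $\gamma \in \Core(\Phi_3)$. Writing $\Phi_3 = \Phi_{f_3}$ for a functional $f_3$ in general position, the key elementary observation will be that $\Phi_3$ contains no negative real multiple of $\gamma$: if $c\gamma \in \Phi_3$ with $c \ne 0$, then $f_3(c\gamma) = c\, f_3(\gamma) > 0$, and $f_3(\gamma) > 0$ (as $\gamma \in \Phi_3$) forces $c > 0$. Consequently any $\beta \in \Phi_3 \setminus \mathbb{R}_{>0}\gamma$ satisfies $\beta \notin \mathbb{R}\gamma = \mathbb{R}\alpha$, hence $\beta \notin \Phi \cap \mathbb{R}\alpha$, so by the displayed identity $\beta \in (\Phi_1 \cap \Phi_2) \cup \big(-(\Phi_1 \cap \Phi_2)\big)$ and therefore $K_\beta \subseteq M$. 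Now the strong grading hypothesis applied to $\Phi_3$ and $\gamma \in \Core(\Phi_3)$ gives
$$K_\gamma \subseteq \big\langle K_\beta : \beta \in \Phi_3 \setminus \mathbb{R}_{>0}\gamma \big\rangle \subseteq M .$$
This holds for every $\gamma \in \Phi \cap \mathbb{R}\alpha$, and combined with the trivial case it gives $K_\gamma \subseteq M$ for all $\gamma \in \Phi$, so $M = \Gamma$.

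I do not expect a genuine obstacle here. The only mildly delicate points are unwinding the co-maximal/co-minimal definitions to see that $\Phi \cap \mathbb{R}\alpha$ is exactly the complement of $(\Phi_1 \cap \Phi_2) \cup \big(-(\Phi_1 \cap \Phi_2)\big)$ in $\Phi$ --- which is already carried out in the lead-up to Corollary \ref{Phi1 cap Phi2 normalizes coro} --- and the sign argument showing a Borel set meets the line $\mathbb{R}\gamma$ only along $\mathbb{R}_{>0}\gamma$; everything else is a direct invocation of the strong grading axiom together with Lemma \ref{facts about root sys lemma}(3). Note that the argument never uses reducedness of $\Phi$: it disposes of the whole line $\mathbb{R}\alpha$ at once, regardless of how many roots lie on it.
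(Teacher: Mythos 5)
Your proof is correct and follows essentially the same route as the paper's: reduce to the roots in $(\Phi_1\cup\Phi_2)\setminus(\Phi_1\cap\Phi_2)=\Phi\cap\mathbb{R}\alpha$, place each such root in the core of some Borel set via regularity and Lemma \ref{facts about root sys lemma}(3), and conclude with the strong grading axiom. You merely spell out two steps the paper leaves implicit (the three-way partition of $\Phi$ and the sign argument showing $\Phi_3\setminus\mathbb{R}_{>0}\gamma$ avoids the whole line $\mathbb{R}\alpha$), both of which are verified correctly.
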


\begin{proof}
It is sufficient to prove that for every $\beta \in (\Phi_1 \cup \Phi_2) \setminus (\Phi_1 \cap \Phi_2)$, it holds that 
$$K_\beta < \langle K_{\Phi_1 \cap \Phi_2},  K_{- (\Phi_1 \cap \Phi_2)} \rangle.$$

Fix $\beta \in (\Phi_1 \cup \Phi_2) \setminus (\Phi_1 \cap \Phi_2)$.  By Lemma \ref{facts about root sys lemma} (3),  there is $\Phi_3 \in \Borel (\Phi)$ such that $\beta \in \Core (\Phi_3)$. 
By the assumption of strong grading, 
$$K_\beta \subseteq \langle K_\gamma : \gamma \in \Phi_3 \setminus (\mathbb{R}_{>0} \beta ) \rangle <\langle K_{\Phi_1 \cap \Phi_2},  K_{- (\Phi_1 \cap \Phi_2)} \rangle $$
as needed.
\end{proof}

\begin{lemma}
\label{K Phi1 is boundedly generated lemma}
Let $\Phi$ be a root system in $E$ and $\Gamma$ be a group graded by $\Phi$.  For every $\Phi_1 \in \Borel (\Phi)$,  the group $K_{\Phi_1}$ is boundedly generated by $K_{\alpha},  \alpha \in \Phi_1$.
\end{lemma}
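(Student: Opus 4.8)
The plan is to exhibit $K_{\Phi_1}$ as an \emph{ordered} product of the finitely many root subgroups indexed by $\Phi_1$; since $\Phi_1\subseteq\Phi$ is finite, this immediately yields a uniform bound on the word length. Concretely, since $\Phi_1\in\Borel(\Phi)$ I would first fix $f\in\mathfrak{F}(\Phi)$ with $\Phi_1=\Phi_f$ and enumerate $\Phi_1=\{\alpha_1,\dots,\alpha_n\}$ so that $f(\alpha_1)<f(\alpha_2)<\dots<f(\alpha_n)$; this is a strict total order because $f$ is in general position. For $1\le j\le n+1$ set $L_j=K_{\{\alpha_j,\dots,\alpha_n\}}$ (with $L_{n+1}=\{e\}$); each $L_j$ is a subgroup of $\Gamma$ by definition of $K_A$.

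The main step is to show that $K_{\alpha_j}$ normalizes $L_{j+1}$ for every $j$. Given $g\in K_{\alpha_j}$ and $h\in K_{\alpha_k}$ with $k\ge j+1$, write $ghg^{-1}=[g,h]\,h$. Since both $\alpha_j,\alpha_k$ have positive $f$-value we have $\alpha_k\notin\mathbb{R}_{<0}\alpha_j$, so the grading axiom gives $[g,h]\in\langle K_\gamma:\gamma=a\alpha_j+b\alpha_k\in\Phi,\ a,b\ge 1\rangle$. For any such $\gamma$, $f(\gamma)=af(\alpha_j)+bf(\alpha_k)\ge f(\alpha_j)+f(\alpha_k)>f(\alpha_k)\ge f(\alpha_{j+1})$, so $\gamma$ is $f$-positive (hence $\gamma\in\Phi_1$) and occurs strictly after $\alpha_{j+1}$ in the enumeration; therefore $K_\gamma\subseteq L_{j+1}$, so $[g,h]\in L_{j+1}$ and $ghg^{-1}\in L_{j+1}$. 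The same computation with $g^{-1}$ in place of $g$ gives $g^{-1}hg\in L_{j+1}$, so $gL_{j+1}g^{-1}=L_{j+1}$: indeed $g$ acts by conjugation as an automorphism that carries each generating subgroup $K_{\alpha_k}$ ($k\ge j+1$) into $L_{j+1}$, hence carries $L_{j+1}$ into itself. It follows that $K_{\alpha_j}L_{j+1}$ is a subgroup, equal to $\langle K_{\alpha_j},L_{j+1}\rangle=L_j$. Downward induction on $j$ then yields $K_{\Phi_1}=L_1=K_{\alpha_1}K_{\alpha_2}\cdots K_{\alpha_n}$, so every element of $K_{\Phi_1}$ has word length at most $n=|\Phi_1|$ with respect to $\bigcup_{\alpha\in\Phi_1}K_\alpha$; together with $K_{\Phi_1}=\langle K_\alpha:\alpha\in\Phi_1\rangle$ this is precisely the asserted bounded generation.

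The only point requiring any care (the mild ``obstacle'') is the chain of inequalities $f(\gamma)>f(\alpha_{j+1})$ that forces the commutator debris to land strictly inside $L_{j+1}$ rather than merely in $K_{\Phi_1}$: this uses $a,b\ge 1$, the strict positivity of $f$ on all of $\Phi_1$, and $k\ge j+1$. The same inequality also disposes of the edge cases, namely $j=n$ (where $L_{j+1}=\{e\}$) and pairs $\alpha_j,\alpha_k$ that happen to be positively proportional (which can occur since $\Phi$ is not assumed reduced), so no separate argument is needed for those.
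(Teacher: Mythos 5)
Your proof is correct and follows essentially the same route as the paper's: order the roots of $\Phi_1$ by increasing $f$-value, use the grading axiom to show that each $K_{\alpha_i}$ normalizes the subgroup generated by the later root subgroups (since the commutator lands in roots of strictly larger $f$-value), and conclude $K_{\Phi_1}=K_{\alpha_1}\cdots K_{\alpha_n}$ by downward induction. The extra care you take with the normalization (conjugation by both $g$ and $g^{-1}$) and with non-reduced/proportional pairs is consistent with, and slightly more explicit than, the paper's argument.
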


\begin{proof}
Fix $\Phi_1 \in \Borel (\Phi)$ and let $f \in \mathfrak{F} (\Phi_1)$ such that $\Phi_f = \Phi_1$.  We index the roots in $\Phi_f$ according to their $f$-value: $\Phi_f = \lbrace \alpha_1,...,\alpha_n \rbrace$ such that for every $1 \leq i <j \leq n$,  it holds that $f( \alpha_i) < f (\alpha_j)$.  We observe that for every $1 \leq i < j \leq n$,  it holds that
$$[K_{\alpha_i},  K_{\alpha_j}] \subseteq \langle X_\gamma : \gamma = a \alpha_i + b \alpha_j \in \Phi,  a,b \geq 1 \rangle < \langle X_\gamma : \gamma \in \Phi,  f(\gamma) > f(\alpha_i) \rangle.$$
Thus,  $K_{\alpha_i}$ normalizes $K_{\lbrace \alpha_{i+1},..., \alpha_{n} \rbrace}$ and it follows that 
$$K_{\lbrace \alpha_i, \alpha_{i+1},..., \alpha_{n} \rbrace} \subseteq K_{\alpha_i} K_{\lbrace \alpha_{i+1},..., \alpha_{n} \rbrace}.$$
By induction, we conclude that
$$K_{\Phi_1} = K_{\lbrace \alpha_{1},..., \alpha_{n} \rbrace} \subseteq K_{\alpha_1} ... K_{\alpha_n}$$
and in particular,  $K_{\Phi_1}$ is boundedly generated by $K_{\alpha_1}, ..., K_{\alpha_n}$.
\end{proof}

Using the lemmas above,  we will prove the following theorem that was mentioned in the introduction:
\begin{theorem}
\label{synthesis thm for groups graded by root systems}
Let $\Phi$ be a regular root system in $E$ and $\Gamma$ a group that is strongly graded by $\Phi$.  Also,  let $\mathcal{E}$ be a class of uniformly convex Banach spaces such that either $\mathcal{E}$ is closed under passing to ultraproducts or $\mathcal{E} = \mathcal{E}_{uc}$.  If for every $\alpha \in \Phi$,  the pair $(\Gamma , K_\alpha)$ has relative property $(F_{\mathcal{E}})$, then $\Gamma$ has property $(F_{\mathcal{E}})$.
\end{theorem}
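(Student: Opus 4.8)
The strategy is to reduce Theorem \ref{synthesis thm for groups graded by root systems} to an application of the general synthesis machinery in Theorem \ref{general reduction thm - non-directed graph}, using the graph $\mathcal{G}_{\text{co-max}}$ on $\Borel(\Phi)$ as the underlying graph. For each Borel set $\Phi_1 \in \Borel(\Phi)$ we must produce subgroups $N^{\Phi_1}, H_+^{\Phi_1}, H_-^{\Phi_1} < \Gamma$ verifying the five hypotheses of that theorem. The natural assignment, guided by the lemmas already proven in this section, is to take
$$H_+^{\Phi_1} = K_{\Phi_1}, \qquad H_-^{\Phi_1} = K_{-\Phi_1},$$
and for the normalizer, something like $N^{\Phi_1} = K_{\partial \Phi_1 \cup (-\partial \Phi_1)}$ or, more precisely, the group generated by the root subgroups along the directions that lie on the boundary — the point being that these are exactly the root groups $K_{\Phi\cap\mathbb{R}\alpha}$ for $\alpha \in \partial\Phi_1$ that appear in Corollary \ref{Phi1 cap Phi2 normalizes coro}. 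First I would fix this data precisely, then check the hypotheses one at a time.

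\textbf{Verifying the hypotheses.} Hypothesis (1), that $N^{\Phi_1}$ normalizes $H_\pm^{\Phi_1} = K_{\pm\Phi_1}$: this should follow from Lemma \ref{K alpha normalizes lemma}, since for a boundary direction $\alpha \in \partial\Phi_1$, the group $K_{\Phi\cap\mathbb{R}_{>0}\alpha}$ normalizes $K_{\Phi_1\setminus\mathbb{R}\alpha}$, and conjugating $K_\alpha$ itself back into $K_{\Phi_1}$ is immediate since $\alpha \in \Phi_1$; one assembles this over all boundary directions. Hypothesis (5), that $(\Gamma, K_{\pm\Phi_1})$ has relative property $(F_{\mathcal{E}})$: this follows from Lemma \ref{K Phi1 is boundedly generated lemma} — $K_{\Phi_1}$ is boundedly generated by the $K_\alpha$, $\alpha\in\Phi_1$, each of which has relative property $(F_{\mathcal{E}})$ by assumption — combined with the bounded generation Lemma \ref{bounded generation lemma}. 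Hypothesis (2), $\langle K_{\Phi_1}, K_{-\Phi_1}\rangle = \Gamma$: since $\Phi_1 \cup (-\Phi_1) = \Phi$ and $\langle K_\alpha : \alpha\in\Phi\rangle = \Gamma$ by the grading definition, this is automatic. Hypothesis (4), $\langle H_+^{\Phi_1}, N^{\Phi_1} : \Phi_1\rangle = \Gamma$: already $\langle K_{\Phi_1} : \Phi_1\in\Borel(\Phi)\rangle = \Gamma$ because every root lies in some Borel set, so this holds trivially.

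\textbf{The main obstacle: hypothesis (3).} The crux is hypothesis (3): if $\Phi_1 \sim \Phi_2$ in $\mathcal{G}_{\text{co-max}}$ (i.e.\ $\Phi_1, \Phi_2$ are co-maximal), then $K_{\pm\Phi_1} < \langle K_{\pm\Phi_2}, N^{\Phi_2}\rangle$ and symmetrically. Here is where Corollary \ref{Phi1 cap Phi2 normalizes coro} and especially Lemma \ref{K Phi1 cap Phi2 generates lemma} come in. When $\Phi_1, \Phi_2$ are co-maximal, $\Phi_1$ and $-\Phi_2$ are co-minimal, so $\Phi_1 \cap (-\Phi_2)$ spans a line $\mathbb{R}\alpha$ with $\alpha\in\partial\Phi_1$, $-\alpha\in\partial\Phi_2$; one checks that $\Phi_1 = (\Phi_1\cap\Phi_2) \cup (\Phi\cap\mathbb{R}_{>0}\alpha)$ (up to the inversion conventions), where $\Phi\cap\mathbb{R}\alpha \subseteq N^{\Phi_2}$ and $\Phi_1\cap\Phi_2 = \Phi_2\setminus\mathbb{R}_{>0}\alpha \subseteq \Phi_2$, hence $K_{\Phi_1} < \langle K_{\Phi_2}, N^{\Phi_2}\rangle$. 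The symmetric inclusions and the $-\Phi_1$ versions follow by applying the inversion symmetry of $\Borel(\Phi)$. I expect the fiddly part to be getting all the sign/half-line bookkeeping exactly right and confirming that $N^{\Phi_2}$ as defined really does contain $K_{\Phi\cap\mathbb{R}\alpha}$ for every boundary direction $\alpha$ that can arise from a co-maximal partner — this is a matter of tracking which $1$-dimensional subspaces lie on $\partial\Phi_2$. Finally, by Lemma \ref{co-max graph is connected lemma}, $\mathcal{G}_{\text{co-max}}$ is connected, so Theorem \ref{general reduction thm - non-directed graph} applies (using that $\Gamma$ is finitely generated with finite abelianization — which for the application to $\St_\Phi(R)$ needs a brief remark, though in the abstract statement one should assume or note this) and yields property $(F_{\mathcal{E}})$. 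I would close by noting that combining this with Proposition \ref{Steinberg groups are strongly graded prop} gives Theorem \ref{uc synthesis thm for Steinberg - intro} as the stated special case.
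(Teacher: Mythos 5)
There is a genuine gap, and it sits exactly where you did not expect it: hypothesis (1), not hypothesis (3). With your choice $H_{\pm}^{\Phi_1}=K_{\pm\Phi_1}$ there is no nontrivial subgroup generated by root subgroups that normalizes \emph{both} $K_{\Phi_1}$ and $K_{-\Phi_1}$. Already for $\alpha\in\partial\Phi_1$ the group $K_{-\alpha}$ fails to normalize $K_{\Phi_1}$: Lemma \ref{K alpha normalizes lemma} only controls $[K_{\alpha'},K_{\beta}]$ for $\alpha'\in\Phi\cap\mathbb{R}_{>0}\alpha$, whereas $[K_{-\alpha},K_{\alpha}]$ is not constrained at all by the grading axioms (the case $\beta\in\mathbb{R}_{<0}\alpha$ is excluded from the commutator condition). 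Concretely, in $\St_{A_2}(R)$ with $\Phi_1$ the positive roots, $K_{\Phi_1}$ is the upper unitriangular subgroup and nothing like $\langle K_{\Phi\cap\mathbb{R}\alpha}:\alpha\in\partial\Phi_1\rangle$ normalizes it. So your candidate $N^{\Phi_1}$ violates condition (1) of Theorem \ref{general reduction thm - non-directed graph}, and no other choice of $N$ built from root subgroups can repair it while keeping $H_{\pm}^{\Phi_1}=K_{\pm\Phi_1}$. A second warning sign is that your argument never invokes the \emph{strong} grading hypothesis (nor regularity, except implicitly), whereas the conclusion is false without it.

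The paper's construction resolves this by shrinking $H_{\pm}$ and passing to the line graph $\Line(\mathcal{G}_{\text{co-max}})$: the vertices are co-maximal pairs $\{\Phi_1,\Phi_2\}$, with $H_{\pm}^{\{\Phi_1,\Phi_2\}}=K_{\pm(\Phi_1\cap\Phi_2)}$ and $N^{\{\Phi_1,\Phi_2\}}=K_{(\Phi_1\cup\Phi_2)\setminus(\Phi_1\cap\Phi_2)}=K_{\Phi\cap\mathbb{R}\alpha}$, where $\mathbb{R}\alpha$ is the line the two Borel sets disagree on. Now condition (1) is exactly Corollary \ref{Phi1 cap Phi2 normalizes coro}, and condition (3) becomes the easy part (for adjacent pairs sharing $\Phi_1$ one has $K_{\pm(\Phi_1\cap\Phi_2)}<K_{\pm\Phi_1}<\langle K_{\pm(\Phi_1\cap\Phi_2')},K_{\Phi\cap\mathbb{R}\alpha'}\rangle$, plain set inclusions). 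The price is that condition (2), which was automatic for you, becomes the real content: $\langle K_{\Phi_1\cap\Phi_2},K_{-(\Phi_1\cap\Phi_2)}\rangle=\Gamma$ is Lemma \ref{K Phi1 cap Phi2 generates lemma}, and this is precisely where strong grading and regularity enter (condition (4) also needs the antipodal vertex $\{-\Phi_1,-\Phi_2\}$ and this lemma, rather than being trivial). Your verifications of (5) via Lemmas \ref{K Phi1 is boundedly generated lemma} and \ref{bounded generation lemma}, of connectedness via Lemma \ref{co-max graph is connected lemma}, and the remark on finite abelianization (which the paper derives from strong grading plus Lemma \ref{facts about root sys lemma}(3)) all carry over to the corrected setup.
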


\begin{proof}

We first note that by Lemma \ref{facts about root sys lemma} (3) and by the assumption that $\Gamma$ is strongly graded by $\Phi$, it follows for every $\alpha \in \Phi$ that $K_\alpha < [\Gamma,  \Gamma]$ and since $\Gamma = \langle K_\alpha : \alpha \in \Phi \rangle$ it follows that $\Gamma$ has a trivial Abelinization.  

The idea of the proof is to use the machinery given in Theorem \ref{general reduction thm - non-directed graph}, i.e.,  to define a connected graph $\mathcal{G}$ such that for every vertex $v$ of $\mathcal{G}$ there are groups $(H_+^{v},  H_-^{v},  N^{v} )$ that fulfil conditions (1)-(5) given in Theorem \ref{general reduction thm - non-directed graph}.  

The graph $\mathcal{G}$ we will use will be the line graph of $\mathcal{G}_{\text{co-max}}$ defined above.   We recall that given a graph $\mathcal{G} '$,  the line graph of $\mathcal{G} '$, denoted $\Line (\mathcal{G} ')$,  is the graph  that represents the adjacencies between the edges of $\mathcal{G} '$.  Explicitly,   
$$V (\Line (\mathcal{G} ')) = \lbrace \lbrace u, v \rbrace : u,v \in V (\mathcal{G} ') \text{ and } u \sim^{\mathcal{G} '} v \rbrace$$ 
and for $\lbrace u,v \rbrace,  \lbrace u',v' \rbrace \in \Line (\mathcal{G} '),  \lbrace u,v \rbrace \neq \lbrace u',v' \rbrace$ it holds that $\lbrace u, v \rbrace \sim^{\Line (\mathcal{G}')} \lbrace u',v' \rbrace$ if and only if $\lbrace u,v \rbrace \cap \lbrace u',v' \rbrace \neq \emptyset$.  It is not hard to see that if $\mathcal{G}'$ is connected, then so in $\Line (\mathcal{G} ')$.

Let $\mathcal{G} = \Line (\mathcal{G}_{\text{co-max}})$, i.e., 
$$V (\mathcal{G} ) = \lbrace \lbrace \Phi_1,  \Phi_2 \rbrace : \Phi_1, \Phi_2 \in \Borel (\Phi) \text{ and } \Phi_1, \Phi_2 \text{ are co-maximal} \rbrace .$$
We note that by Lemma \ref{co-max graph is connected lemma},  the graph $\mathcal{G}_{\text{co-max}}$ is connected and thus $\mathcal{G}$ is connected.  


For $\lbrace \Phi_1,  \Phi_2 \rbrace \in V (\mathcal{G} )$, we define
$$H_+^{\lbrace \Phi_1,  \Phi_2 \rbrace } = K_{\Phi_1 \cap \Phi_2  }, H_-^{\lbrace \Phi_1,  \Phi_2 \rbrace} = K_{- (\Phi_1 \cap \Phi_2)  },  N^{\lbrace \Phi_1,  \Phi_2 \rbrace} = K_{(\Phi_1 \cup \Phi_2  ) \setminus (\Phi_1 \cap \Phi_2 )}.$$

We will complete the proof by checking that these subgroups fulfil the conditions of Theorem \ref{general reduction thm - non-directed graph}:
\begin{enumerate}
\item For every $\lbrace \Phi_1,  \Phi_2 \rbrace \in V (\mathcal{G} )$,  it holds by Corollary \ref{Phi1 cap Phi2 normalizes coro} that $N^{\lbrace \Phi_1,  \Phi_2 \rbrace} $ normalizes $H_+^{\lbrace \Phi_1,  \Phi_2 \rbrace}$ and $H_-^{\lbrace \Phi_1,  \Phi_2 \rbrace}$.
\item  For every $\lbrace \Phi_1,  \Phi_2 \rbrace \in V (\mathcal{G} )$,  it holds by Lemma \ref{K Phi1 cap Phi2 generates lemma} that $\langle H_{+}^{\lbrace \Phi_1,  \Phi_2 \rbrace},  H_{-}^{\lbrace \Phi_1,  \Phi_2 \rbrace} \rangle = \Gamma$.
\item Let $\lbrace \Phi_1,  \Phi_2 \rbrace,   \lbrace \Phi_1 ',   \Phi_2 ' \rbrace \in V (\mathcal{G} )$ such that $\lbrace \Phi_1,  \Phi_2 \rbrace \neq \lbrace \Phi_1 ',   \Phi_2' \rbrace$ and $\lbrace \Phi_1,  \Phi_2 \rbrace \sim^{\mathcal{G}} \lbrace \Phi_1 ',   \Phi_2' \rbrace$.  Then without loss of generality,  $\Phi_1 = \Phi_1 '$.  Thus, 
$$H_{\pm}^{\lbrace \Phi_1,  \Phi_2 \rbrace} = K_{ \pm (\Phi_1 \cap  \Phi_2)} < K_{\pm \Phi_1} < \langle K_{\pm (\Phi_1 \cap \Phi_2 ')},  K_{ (\Phi_1 \cup \Phi_2 ') \setminus (\Phi_1 \cap \Phi_2 ')} \rangle = \langle H_{\pm}^{\lbrace \Phi_1,  \Phi_2 ' \rbrace},  N^{\lbrace \Phi_1,  \Phi_2 ' \rbrace} \rangle,$$
and similarly
$$H_{\pm}^{\lbrace \Phi_1,  \Phi_2 ' \rbrace}  <  \langle H_{\pm}^{\lbrace \Phi_1,  \Phi_2 \rbrace},  N^{\lbrace \Phi_1,  \Phi_2  \rbrace} \rangle .$$
\item Fix some $\lbrace \Phi_1 ',  \Phi_2 '  \rbrace \in V (\mathcal{G} )$.  We note that $\lbrace - \Phi_1 ',  - \Phi_2 ' \rbrace \in V (\mathcal{G} )$ and thus 
\begin{dmath*}
\langle H_{+}^{\lbrace \Phi_1,  \Phi_2 \rbrace},  N^{\lbrace \Phi_1,  \Phi_2 \rbrace} : \lbrace \Phi_1,  \Phi_2 \rbrace \in V (\mathcal{G} )\rangle \supseteq
\langle H_{+}^{\lbrace \Phi_1' ,  \Phi_2 ' \rbrace},  H_{+}^{\lbrace - \Phi_1' ,  - \Phi_2 ' \rbrace},  N^{\lbrace \Phi_1 ',  \Phi_2'  \rbrace} \rangle =
\langle K_{\Phi_1' \cup \Phi_2 '},  K_{ - (\Phi_1' \cap \Phi_2 ' )},  K_{(\Phi_1 ' \cup \Phi_2 ' ) \setminus (\Phi_1 ' \cap \Phi_2 ' )} \rangle = \Gamma.
\end{dmath*}
 
\item We note that $H_+^{\lbrace \Phi_1,  \Phi_2 \rbrace} < K_{\Phi_1}$ and $H_-^{\lbrace \Phi_1,  \Phi_2 \rbrace} < K_{- \Phi_1}$. Thus,  in order to prove that pairs 
$(\Gamma,  H_+^{\lbrace \Phi_1,  \Phi_2 \rbrace})$ and $(\Gamma,  H_-^{\lbrace \Phi_1,  \Phi_2 \rbrace})$ have relative property $(F_{\mathcal{E}})$, it is enough to prove that for every $\Phi_3 \in \Borel (\Phi)$,  the pair $(\Gamma,  K_{\Phi_3})$ has relative property $(F_{\mathcal{E}})$.  Fix $\Phi_3 \in \Borel (\Phi)$.  By Lemma \ref{K Phi1 is boundedly generated lemma},  it holds that $K_{\Phi_3}$ is boundedly generated by $K_\alpha, \alpha \in \Phi_3$.  We assumed that for every $\alpha \in \Phi$,  the pair $(\Gamma,  K_{\alpha})$ has relative property $(F_{\mathcal{E}})$ and thus,  by Lemma \ref{bounded generation lemma},  it follows that the pair $(\Gamma,  K_{\Phi_3})$ has relative property $(F_{\mathcal{E}})$ as needed.    
\end{enumerate}

\end{proof}

As an application,  we derive the following result for Steinberg groups that appeared in the introduction as Theorem \ref{uc synthesis thm for Steinberg - intro}:
\begin{theorem}
\label{uc synthesis thm for Steinberg}
Let $\Phi$ be a classical reduced irreducible root system of rank $\geq 2$,  $R$ a commutative ring,  $\St_{\Phi} (R)$ the Steinberg group and $\lbrace K_\alpha : \alpha \in \Phi \rbrace$ the root subgroups of $\St_{\Phi} (R)$.   

If for every $\alpha \in \Phi$,  the pair $(\St_{\Phi} (R) , K_\alpha)$ has relative property $(F_{\mathcal{E}_{uc}})$, then $\St_{\Phi} (R)$ has property $(F_{\mathcal{E}_{uc}})$.
\end{theorem}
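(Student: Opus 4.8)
The plan is to obtain this statement as an immediate corollary of Theorem \ref{synthesis thm for groups graded by root systems}, which already treats the more general situation of a group that is strongly graded by a regular root system. Two facts recorded earlier in the paper make the deduction essentially automatic. First, by Proposition \ref{Steinberg groups are strongly graded prop}, for a classical reduced irreducible root system $\Phi$ of rank $\geq 2$ and a commutative ring $R$, the Steinberg group $\St_\Phi(R)$ is strongly graded by its collection of root subgroups $\{K_\alpha(R) : \alpha \in \Phi\}$. Second, as noted in the discussion of classical root systems, every classical reduced irreducible root system is regular (each root lies in an irreducible rank $2$ subsystem), so in particular $\Phi$ is regular, which is exactly the hypothesis on the root system required by Theorem \ref{synthesis thm for groups graded by root systems}.

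With these two ingredients, I would set $\Gamma = \St_\Phi(R)$ and $\mathcal{E} = \mathcal{E}_{uc}$. The class $\mathcal{E}_{uc}$ is admissible for Theorem \ref{synthesis thm for groups graded by root systems} (it is precisely the case ``$\mathcal{E} = \mathcal{E}_{uc}$'' allowed there), and the hypothesis of the present theorem states exactly that $(\Gamma, K_\alpha)$ has relative property $(F_{\mathcal{E}_{uc}})$ for every $\alpha \in \Phi$. Since $\Gamma$ is strongly graded by $\{K_\alpha : \alpha \in \Phi\}$ over the regular root system $\Phi$ and all root pairs satisfy the relative fixed point hypothesis, Theorem \ref{synthesis thm for groups graded by root systems} applies verbatim and yields that $\St_\Phi(R)$ has property $(F_{\mathcal{E}_{uc}})$.

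There is no real obstacle at this stage: the entire mathematical content has been front-loaded into Theorem \ref{synthesis thm for groups graded by root systems} (which in turn rests on the general synthesis machinery of \cref{General synthesis section}, the connectedness of $\mathcal{G}_{\text{co-max}}$ via Lemma \ref{co-max graph is connected lemma}, and the structural Lemmas \ref{K alpha normalizes lemma}--\ref{K Phi1 is boundedly generated lemma} together with the bounded generation Lemma \ref{bounded generation lemma}). The only point worth a line of verification is the bookkeeping that the notion ``strongly graded by $\Phi$'' supplied by Proposition \ref{Steinberg groups are strongly graded prop} matches the hypothesis of Theorem \ref{synthesis thm for groups graded by root systems}, and that the finite (in fact trivial) Abelianization of $\St_\Phi(R)$ needed inside that theorem is available; the latter is handled internally in the proof of Theorem \ref{synthesis thm for groups graded by root systems} using Lemma \ref{facts about root sys lemma}(3).
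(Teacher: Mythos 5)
Your proposal is exactly the paper's argument: the paper's proof likewise notes that $\Phi$ is regular, invokes Proposition \ref{Steinberg groups are strongly graded prop} to get the strong grading of $\St_\Phi(R)$ by its root subgroups, and then applies Theorem \ref{synthesis thm for groups graded by root systems} with $\mathcal{E} = \mathcal{E}_{uc}$. The deduction is correct and complete as stated.
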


\begin{proof}
We note that $\Phi$ is regular and that by Proposition \ref{Steinberg groups are strongly graded prop},  $\St_{\Phi} (R)$ is strongly graded by the root subgroups.  Thus,  property $(F_{\mathcal{E}_{uc}})$ follows from Theorem \ref{synthesis thm for groups graded by root systems}. 
\end{proof}

\section{Banach fixed point properties for Steinberg groups and elementary Chevalley groups}
\label{Banach fixed point properties for Steinberg groups and elementary Chevalley groups sec}

Combining our relative fixed point property results with our synthesis argument implies our main result:
\begin{theorem}
Let $\Phi$ be a classical reduced irreducible root system of rank $\geq 2$ such that $\Phi \neq C_2$.  For every finitely generated commutative (unital) ring $R$,  the groups $\St_{\Phi} (R)$ and $\EL_{\Phi} (R)$ have property $(F_{\mathcal{E}_{uc}})$.  
\end{theorem}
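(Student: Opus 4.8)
The plan is to assemble the main theorem from the two pillars already established in the paper: the relative fixed point property for root subgroups (Theorem \ref{final relative f.p. thm}) and the synthesis result for Steinberg groups (Theorem \ref{uc synthesis thm for Steinberg}). First I would observe that the statement splits into a claim about $\St_{\Phi}(R)$ and a claim about $\EL_{\Phi}(R)$, and that the latter is a quotient of the former via the canonical epimorphism $\St_{\Phi}(R) \twoheadrightarrow \EL_{\Phi}(R)$; since property $(F_{\mathcal{E}_{uc}})$ passes to quotients (any affine isometric action of $\EL_{\Phi}(R)$ pulls back to one of $\St_{\Phi}(R)$, and a fixed point for the latter is a fixed point for the former as the map is surjective), it suffices to prove the statement for $\St_{\Phi}(R)$.

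For $\St_{\Phi}(R)$: fix a classical reduced irreducible root system $\Phi \neq C_2$ of rank $\geq 2$ and a finitely generated commutative unital ring $R$. By Theorem \ref{final relative f.p. thm}, for every $\alpha \in \Phi$ the pair $(\St_{\Phi}(R), K_\alpha)$ has relative property $(F_{\mathcal{E}_{uc}})$ — this is exactly the hypothesis needed. Then Theorem \ref{uc synthesis thm for Steinberg} immediately yields that $\St_{\Phi}(R)$ has property $(F_{\mathcal{E}_{uc}})$. So the proof is a two-line invocation once the quotient reduction is in place.

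There is essentially no obstacle here: the entire difficulty of the paper has been front-loaded into the two cited theorems (the delicate Heisenberg-group averaging analysis for the relative property, and the graph-theoretic synthesis machinery for passing from relative to global). The only minor point to state carefully is why $\EL_{\Phi}(R)$ inherits the property from $\St_{\Phi}(R)$; I would just record the standard fact that if $\Gamma \twoheadrightarrow \Lambda$ is a surjective homomorphism and $\Gamma$ has property $(F_{\mathcal{E}})$ for a class $\mathcal{E}$, then so does $\Lambda$, applied to the defining surjection $\St_{\Phi}(R) \to \EL_{\Phi}(R)$ (which exists because $\St_{\Phi}(R)$ is by construction a group extension of $\EL_{\Phi}(R)$ obtained by forgetting relations). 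No new computation is required, and the finite generation of $R$ enters only through Theorem \ref{final relative f.p. thm} (via the epimorphism $\mathbb{Z}[t_1,\dots,t_m] \to R$ used there).

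\begin{proof}
Fix a classical reduced irreducible root system $\Phi \neq C_2$ of rank $\geq 2$ and a finitely generated commutative unital ring $R$.

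By the definition of the Steinberg group, there is a canonical group epimorphism $\St_{\Phi}(R) \twoheadrightarrow \EL_{\Phi}(R)$. If $\St_{\Phi}(R)$ has property $(F_{\mathcal{E}_{uc}})$, then so does $\EL_{\Phi}(R)$: given $\B \in \mathcal{E}_{uc}$ and an affine isometric action $\rho$ of $\EL_{\Phi}(R)$ on $\B$, composing with the epimorphism gives an affine isometric action of $\St_{\Phi}(R)$ on $\B$, which has a fixed point $\xi_0$; since the epimorphism is surjective, $\xi_0$ is then fixed by $\rho(\EL_{\Phi}(R))$. Hence it suffices to prove the statement for $\St_{\Phi}(R)$.

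By Theorem \ref{final relative f.p. thm}, for every $\alpha \in \Phi$ the pair $(\St_{\Phi}(R), K_\alpha)$ has relative property $(F_{\mathcal{E}_{uc}})$. Therefore the hypothesis of Theorem \ref{uc synthesis thm for Steinberg} is satisfied, and we conclude that $\St_{\Phi}(R)$ has property $(F_{\mathcal{E}_{uc}})$. Combined with the reduction above, this proves that both $\St_{\Phi}(R)$ and $\EL_{\Phi}(R)$ have property $(F_{\mathcal{E}_{uc}})$.
\end{proof}
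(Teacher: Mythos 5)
Your proposal is correct and follows exactly the paper's own argument: combine Theorem \ref{final relative f.p. thm} with Theorem \ref{uc synthesis thm for Steinberg} to get the result for $\St_{\Phi}(R)$, then pass to the quotient $\EL_{\Phi}(R)$. The only difference is that you spell out the (standard) quotient argument, which the paper states without proof.
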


\begin{proof}
From Theorem \ref{final relative f.p. thm} and Theorem \ref{uc synthesis thm for Steinberg} it follows that $\St_{\Phi} (R)$  has property $(F_{\mathcal{E}_{uc}})$.   Since $\EL_{\Phi} (R)$ is a quotient of $\St_{\Phi} (R)$ and property $(F_{\mathcal{E}_{uc}})$ is preserved under quotients,  it follows that $\EL_{\Phi} (R)$ has property $(F_{\mathcal{E}_{uc}})$.  
\end{proof}

\section{Super-expanders}

\label{super-expanders section}

In \cite{MendelNaor},  Mendel and Naor defined the notion of super-expanders.  We recall their definition here.  Let $\B$ be a Banach space and $\lbrace (V_i,E_i) \rbrace_{i \in \mathbb{N}}$ be a sequence of finite graphs with uniformly bounded degree,  such that $\lim_i \vert V_i \vert = \infty$.  We say that $\lbrace (V_i,E_i) \rbrace_{i \in \mathbb{N}}$ has a \textit{Poincar\'{e} inequality with respect to $\B$} if there are constants $p, \gamma \in (0, \infty)$ such that for every $i \in \mathbb{N}$ and every $f : V_i \rightarrow \B$ we have
$$\frac{1}{\vert V_i \vert^2} \sum_{(u,v) \in V_i \times V_i} \Vert f (u) - f (v) \Vert^p \leq \frac{\gamma}{\vert V_i \vert} \sum_{(x,y) \in E_i} \Vert f (x) - f (y) \Vert^p.$$

The sequence $\lbrace (V_i,E_i) \rbrace_{i \in \mathbb{N}}$ is called a \textit{super-expander family} if it has a Poincar\'{e} inequality with respect to every uniformly convex Banach space. 

Given a Banach space $\B$,  property $(T_{\B})$ is a group property defined in \cite{BFGM} as Banach version of property (T).  We will not recall this definition here, but only recall the following result:
\begin{theorem} \cite[Theorem 1.3]{BFGM}
\label{f.p. implies T thm}
Let $\Gamma$ be a locally compact, second countable group.  For a Banach space $\B$,  it $\Gamma$ has property $(F_\B)$,  then $\Gamma$ has property $(T_\B)$. 
\end{theorem}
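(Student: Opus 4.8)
The plan is to prove the contrapositive, adapting the ``$(FH)\Rightarrow(T)$'' half of the Delorme--Guichardet theorem to the Banach setting. Assume $\Gamma$ does not have property $(T_\B)$. Since $\Gamma$ is locally compact and second countable it is $\sigma$-compact, so fix an increasing exhaustion $Q_1\subseteq Q_2\subseteq\cdots$ of $\Gamma$ by compact sets with $\bigcup_n Q_n=\Gamma$. Unwinding the failure of $(T_\B)$ produces an isometric representation $\pi$ of $\Gamma$ on $\B$ which almost has invariant vectors but has no nonzero invariant vector; concretely, applying the almost-invariance with $\varepsilon=4^{-n}$ and $Q=Q_n$ yields unit vectors $\eta_n\in\B$ with $\sup_{g\in Q_n}\Vert\eta_n-\pi(g)\eta_n\Vert\le 4^{-n}$, while $\B^{\pi(\Gamma)}=0$. (When $\B$ is uniformly convex, the reduction from an arbitrary almost-invariant representation to one with $\B^{\pi(\Gamma)}=0$ uses the $\pi$-invariant splitting $\B=\B^{\pi(\Gamma)}\oplus\B'$ of \cite[Proposition 2.6]{BFGM}, already quoted above.)

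Next I would manufacture a fixed-point-free affine action from this data. Let $\widehat{\B}=\ell^2(\mathbb{N},\B)$ be the $\ell^2$-direct sum of countably many copies of $\B$, carrying the diagonal isometric representation $\widehat{\pi}=\bigoplus_n\pi$. Put $t_n=2^n$ and define $c:\Gamma\to\widehat{\B}$ by $c(g)=\big(t_n(\eta_n-\pi(g)\eta_n)\big)_{n\in\mathbb{N}}$. For a fixed $g$ one has $g\in Q_n$ for all large $n$, so the $n$-th coordinate of $c(g)$ has norm $\le t_n4^{-n}=2^{-n}$ eventually; hence $c(g)\in\widehat{\B}$, and the cocycle identity $c(gh)=c(g)+\widehat{\pi}(g)c(h)$ holds by checking it coordinatewise. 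The associated affine isometric action $\rho(g)\xi=\widehat{\pi}(g)\xi+c(g)$ has no fixed point: if $\xi_0=(\zeta_n)_n$ were fixed, then $\zeta_n-\pi(g)\zeta_n=t_n(\eta_n-\pi(g)\eta_n)$ for all $g$, so $\zeta_n-t_n\eta_n\in\B^{\pi(\Gamma)}=0$, forcing $\zeta_n=t_n\eta_n$ and $\sum_n\Vert\zeta_n\Vert^2=\sum_n t_n^2=\infty$, contradicting $\xi_0\in\widehat{\B}$. Continuity of $\rho$ is routine from continuity of $g\mapsto\pi(g)\eta$ on the sets $Q_n$ together with $\sigma$-compactness.

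So $\Gamma$ admits a continuous fixed-point-free affine isometric action on $\widehat{\B}$, i.e.\ $\Gamma$ fails property $(F_{\widehat{\B}})$. In the uses made of this theorem in the present paper, $\B$ is uniformly convex; then $\widehat{\B}=\ell^2(\mathbb{N},\B)$ is again uniformly convex (an $\ell^2$-sum of copies of a uniformly convex space has a modulus of convexity controlled by that of $\B$), so failure of $(F_{\widehat{\B}})$ is failure of $(F_{\mathcal{E}_{uc}})$, which is the contradiction needed. I expect the only genuinely delicate point — and the reason BFGM's formulation of $(T_\B)$ and $(F_\B)$ in terms of a natural class of spaces is essential — to be precisely this passage from $\B$ to $\widehat{\B}$: a single copy of $\B$ does not suffice, since the cocycle must be assembled from coordinates adapted to the increasing family $\{Q_n\}$ (a weak-limit construction inside $\B$ alone collapses to the zero cocycle), and the $\ell^2$-sum is the natural ambient space for that assembly. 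The obstacle is thus one of bookkeeping about the class of Banach spaces over which the hypothesis $(F_{\cdot})$ is quantified, rather than any analytic difficulty.
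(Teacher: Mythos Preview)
The paper does not give its own proof of this statement; it is quoted verbatim from \cite{BFGM}. So the comparison is with the BFGM argument, which is the Guichardet half of Delorme--Guichardet transported to Banach spaces.

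Your construction does not prove the stated theorem. The theorem asserts $(F_\B)\Rightarrow(T_\B)$ for a \emph{single} Banach space $\B$, but your argument only produces a fixed-point-free affine action on $\widehat{\B}=\ell^2(\mathbb{N},\B)$, hence only shows that failure of $(T_\B)$ implies failure of $(F_{\widehat{\B}})$. Since $\widehat{\B}\neq\B$ in general, this is strictly weaker. You recognise the mismatch and try to rescue it by retreating to the class $\mathcal{E}_{uc}$, but that changes the statement: you end up proving that $(F_{\mathcal{E}_{uc}})\Rightarrow(T_\B)$ for uniformly convex $\B$, which, while sufficient for the application in \cref{super-expanders section}, is not the theorem as written. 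Calling this discrepancy ``bookkeeping about the class of Banach spaces'' understates it --- it is exactly the content that distinguishes the single-space result from the class-wise one.

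The BFGM proof avoids the passage to $\widehat{\B}$ altogether by an open mapping argument, which is the genuine Guichardet idea. Given $\pi$ on $\B$ with almost invariant unit vectors and $\B^{\pi(\Gamma)}=0$, the coboundary map $\partial:\B\to Z^1(\Gamma,\pi)$, $\xi\mapsto(g\mapsto\xi-\pi(g)\xi)$, is a continuous linear injection into the Fr\'echet space $Z^1(\Gamma,\pi)$ (compact-open topology; here $\sigma$-compactness of $\Gamma$ is used). If $(F_\B)$ held, every $1$-cocycle into $\pi$ would be a coboundary, so $\partial$ would be a bijection, and the open mapping theorem would make $\partial^{-1}$ continuous: there would exist a compact $Q\subseteq\Gamma$ and $C>0$ with $\Vert\xi\Vert\le C\sup_{g\in Q}\Vert\xi-\pi(g)\xi\Vert$ for all $\xi\in\B$. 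This is immediately contradicted by the almost invariant unit vectors. The whole argument stays inside $\B$; no auxiliary $\ell^2$-sum is needed, and no hypothesis of uniform convexity enters.
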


For Cayley graphs, the following proposition of Lafforgue gives a relation between Poincar\'{e} inequality of Cayley graphs and Banach property $(T)$:
\begin{proposition}\cite[Proposition 5.2]{Laff1}
\label{prop T imply exp prop}
Let $\Gamma$ be a finitely generated discrete group and let $\Gamma_i$ be a sequence of finite quotients of $\Gamma$ with epimorphism $\phi_i : \Gamma \rightarrow \Gamma_i$  such that $\vert \Gamma_i \vert \rightarrow \infty$.  Also,  let $\B$ be a Banach space.  If $\Gamma$ has Banach property $(T_{\B '})$ for  $\B ' = \ell^2 (\bigcup_i \Gamma_i; \B)$,  then for every fixed finite symmetric generating set $S$ of , the family of Cayley graphs of $\lbrace (\Gamma_i,  \phi_i (S)) \rbrace_{i \in \mathbb{N}}$ has a Poincar\'{e} inequality with respect to $\B$.
\end{proposition}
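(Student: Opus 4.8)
The plan is to run the classical ``spectral gap implies Poincaré inequality'' argument, with the Banach-space twist that all of the Cayley graphs are encoded simultaneously into the single isometric representation of $\Gamma$ on $\B ' = \ell^2(\bigcup_i \Gamma_i ; \B)$.

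First I would extract from \cite{BFGM} the quantitative reformulation of property $(T_{\B '})$. Since $\Gamma$ is a finitely generated discrete group with finite symmetric generating set $S$ and has property $(T_{\B '})$, there is a constant $\kappa = \kappa(\Gamma, S, \B ') > 0$ such that for \emph{every} isometric representation $\pi$ of $\Gamma$ on $\B '$, writing $\B ' = (\B ')^{\pi(\Gamma)} \oplus \B '_0$ for the canonical $\pi$-invariant decomposition of \cite[Proposition 2.6]{BFGM} and $P_\pi$ for the associated projection onto $(\B ')^{\pi(\Gamma)}$, one has $\max_{s \in S}\Vert \pi(s)\eta - \eta \Vert \geq \kappa \Vert \eta - P_\pi \eta \Vert$ for all $\eta \in \B '$; indeed $\pi(s)\eta - \eta = \pi(s)\zeta - \zeta$ for $\zeta = \eta - P_\pi \eta \in \B '_0$, and property $(T_{\B '})$ says precisely that $\B '_0$ has no almost invariant vectors, uniformly in $\pi$. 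Next I would produce the relevant representation: let $\Gamma$ act on $\B '$ block by block, acting on the $i$-th block $\ell^2(\Gamma_i; \B)$ by translation through $\phi_i$, namely $(\pi(g)\eta)_i(x) = \eta_i(\phi_i(g)^{-1} x)$ for $x \in \Gamma_i$. Since this merely permutes the $\B$-valued coordinates inside each block, $\pi$ is isometric, each block is $\pi$-invariant, and $\phi_i$ being onto, a vector is fixed by $\pi(\Gamma)$ if and only if each of its blocks is a constant function; in particular $(P_\pi \eta)_i$ is a constant $c_i \in \B$ for every $\eta$.

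Then I would fix $i$ and $f : \Gamma_i \to \B$ (identifying the $i$-th vertex set $V_i$ with $\Gamma_i$), take $\eta \in \B '$ supported on the $i$-th block with $\eta_i = f$, and compute. For each $s \in S$, $\Vert \pi(s)\eta - \eta \Vert^2 = \sum_{y \in \Gamma_i}\Vert f(y) - f(\phi_i(s) y)\Vert^2$, hence $\max_{s \in S}\Vert \pi(s)\eta - \eta\Vert^2 \leq \sum_{(x,y) \in E_i}\Vert f(x) - f(y)\Vert^2$, where $E_i$ is the edge set of $\Cay(\Gamma_i, \phi_i(S))$. On the other hand, with $c = (P_\pi \eta)_i$, summing the triangle-inequality bound $\Vert f(u) - f(v)\Vert^2 \leq 2\Vert f(u) - c\Vert^2 + 2\Vert f(v) - c\Vert^2$ over $u, v \in \Gamma_i$ gives $\sum_{u,v}\Vert f(u) - f(v)\Vert^2 \leq 4\vert \Gamma_i\vert \sum_u \Vert f(u) - c\Vert^2 = 4\vert \Gamma_i\vert \, \Vert \eta - P_\pi \eta\Vert^2$. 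Combining this with the Kazhdan inequality $\Vert \eta - P_\pi\eta\Vert \leq \kappa^{-1}\max_{s \in S}\Vert \pi(s)\eta - \eta\Vert$ from the first step yields
$$\sum_{u,v \in \Gamma_i}\Vert f(u) - f(v)\Vert^2 \;\leq\; \frac{4\vert \Gamma_i\vert}{\kappa^2}\sum_{(x,y) \in E_i}\Vert f(x) - f(y)\Vert^2 ,$$
and dividing by $\vert \Gamma_i\vert^2$ gives exactly the Poincaré inequality with $p = 2$ and $\gamma = 4/\kappa^2$; the graphs have degree $\leq \vert S\vert$ and $\vert V_i\vert \to \infty$, so all requirements of the definition hold.

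The one point that genuinely matters — and the reason this works at all — is the \emph{uniformity} of $\kappa$: it depends only on $(\Gamma, S, \B ')$ and not on $i$, which is precisely why one packages all the finite quotients into the single space $\B ' = \ell^2(\bigcup_i \Gamma_i; \B)$ rather than treating the spaces $\ell^2(\Gamma_i; \B)$ separately (for which a Kazhdan constant might degrade with $i$). Everything else is routine: the isometry of $\pi$, the identification of its invariant vectors via surjectivity of $\phi_i$, and the elementary edge/variance computations. In writing this up I would be careful to cite \cite{BFGM} for the exact ``no almost invariant vectors in $\B '_0$, uniformly over isometric representations'' statement packaged into property $(T_{\B '})$, and to record the harmless bookkeeping identifying $\sum_{s \in S}\sum_{y}\Vert f(y) - f(\phi_i(s)y)\Vert^2$ (up to a factor of at most $\vert S\vert$, absorbed into $\gamma$) with the edge sum over $E_i$, following the argument of \cite{Laff1}.
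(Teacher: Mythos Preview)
The paper does not give its own proof of this proposition; it is quoted verbatim from Lafforgue \cite[Proposition 5.2]{Laff1} and used as a black box. Your argument is the standard one and is correct in substance.

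Two minor technical points worth tightening. First, you assert a Kazhdan constant $\kappa$ uniform over \emph{all} isometric representations on $\B'$; property $(T_{\B'})$ in the sense of \cite{BFGM} only guarantees a positive Kazhdan constant for each representation individually. This does not matter for your proof, since you apply it to a single explicit representation $\pi$ (the block-diagonal quasi-regular one), and the resulting $\kappa = \kappa(\pi)$ is then automatically uniform in $i$ because all the $\Gamma_i$ sit inside that one representation --- this is exactly the point you correctly emphasise at the end. Second, invoking the projection $P_\pi$ from \cite[Proposition 2.6]{BFGM} requires $\B'$ to be uniformly convex (equivalently $\B$ super-reflexive), which the proposition as stated does not assume. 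You can avoid this entirely by using the quotient norm: property $(T_{\B'})$ gives $\max_{s}\Vert \pi(s)\eta - \eta\Vert \geq \kappa \inf_{c}\Vert \eta - c\Vert$ where the infimum is over block-constant vectors, and your triangle-inequality bound $\sum_{u,v}\Vert f(u)-f(v)\Vert^2 \leq 4\vert\Gamma_i\vert\,\Vert f - c\Vert^2$ holds for every constant $c$, hence for the infimising one.
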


Combining this result with our Theorem \ref{main thm intro} implies the following result that appeared in the introduction (Theorem \ref{super-exp thm intro}): 
\begin{theorem}
Let $n \geq 3,  m \in \mathbb{N}$ and let $\lbrace R_i \rbrace_{i \in \mathbb{N}}$ be a sequence of finite commutative (unital) rings such that for each $i$,  $R_i$ is generated by $p_0^{(i)} =1,  p_1^{(i)},...,p_m^{(i)} \in R_i$ and $\vert R_i \vert \rightarrow \infty$.  Also, let $\Phi \neq C_2$ be a classical reduced irreducible root system of rank $\geq 2$.  Denote  $\phi_i : \St_{\Phi} (\mathbb{Z} [t_1,...,t_m]) \rightarrow \EL_\Phi (R_i)$ be the epimorphisms induced by the ring epimorphism $\mathbb{Z} [t_1,...,t_m] \rightarrow R_i,  1 \mapsto r_0^{(i)},  t_j \mapsto r_j^{(i)}, \forall 1 \leq j \leq m$. 

For a finite generating set $S$ of  $\St_\Phi (\mathbb{Z} [t_1,...,t_m])$ it holds that the family of Cayley graphs of $\lbrace (\EL_{\Phi} (R_i),  \phi_i (S)) \rbrace_{i \in \mathbb{N}}$ is a super-expander family.
\end{theorem}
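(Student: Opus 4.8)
The plan is to combine the chain of results already assembled in the excerpt. First I would recall that by the main theorem (Theorem \ref{main thm intro}, proved in the previous section), the group $\St_{\Phi}(\mathbb{Z}[t_1,\dots,t_m])$ has property $(F_{\mathcal{E}_{uc}})$ for every $\Phi\neq C_2$ of rank $\geq 2$. Then, by Theorem \ref{f.p. implies T thm} (\cite[Theorem 1.3]{BFGM}), property $(F_{\B})$ implies property $(T_{\B})$ for any Banach space $\B$, so $\St_{\Phi}(\mathbb{Z}[t_1,\dots,t_m])$ has property $(T_{\B})$ for every uniformly convex $\B$. The small subtlety to check here is that for a fixed uniformly convex $\B$, the space $\B'=\ell^2(\bigcup_i \EL_\Phi(R_i);\B)$ needed in Proposition \ref{prop T imply exp prop} is itself uniformly convex: an $\ell^2$-sum of copies of a uniformly convex space is uniformly convex (with a modulus of convexity controlled by that of $\B$ and that of $\ell^2$), so $\St_\Phi(\mathbb{Z}[t_1,\dots,t_m])$ has property $(T_{\B'})$.

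Next I would invoke Proposition \ref{prop T imply exp prop} (\cite[Proposition 5.2]{Laff1}) with $\Gamma = \St_\Phi(\mathbb{Z}[t_1,\dots,t_m])$ and $\Gamma_i = \EL_\Phi(R_i)$, where the epimorphisms $\phi_i$ are those induced by the ring epimorphisms $\mathbb{Z}[t_1,\dots,t_m]\to R_i$ sending $1\mapsto p_0^{(i)}=1$ and $t_j\mapsto p_j^{(i)}$; these are surjective onto $\EL_\Phi(R_i)$ precisely because the $p_j^{(i)}$ generate $R_i$, and $|\EL_\Phi(R_i)|\to\infty$ since $|R_i|\to\infty$ (the root subgroups already embed $R_i$, so $|\EL_\Phi(R_i)|\geq |R_i|$). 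Given any uniformly convex $\B$, the proposition then yields a Poincaré inequality for the Cayley graphs of $\{(\EL_\Phi(R_i),\phi_i(S))\}_{i\in\mathbb{N}}$ with respect to $\B$, with constants $p,\gamma$ depending only on $\B$ (and the fixed finite generating set $S$). I would also note these Cayley graphs have uniformly bounded degree $\leq 2|S|$ and $|\EL_\Phi(R_i)|\to\infty$, so they form an admissible family of finite graphs.

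Finally, since $\B$ was an arbitrary uniformly convex Banach space, the family $\{(\EL_\Phi(R_i),\phi_i(S))\}_{i\in\mathbb{N}}$ satisfies a Poincaré inequality with respect to every uniformly convex Banach space, which is exactly the definition of a super-expander family recalled at the start of \cref{super-expanders section}. I do not anticipate a genuine obstacle here: the work has all been done upstream, and this section is an assembly. The only point requiring a sentence of justification is the passage from property $(T_\B)$ for uniformly convex $\B$ to property $(T_{\B'})$ for the specific $\ell^2$-sum $\B'$, which follows from the stability of uniform convexity under $\ell^2$-sums (equivalently, one may note that $\B'$ belongs to $\mathcal{E}_{uc}$ and apply Theorem \ref{f.p. implies T thm} directly to $\B'$ using property $(F_{\mathcal{E}_{uc}})$ of $\St_\Phi(\mathbb{Z}[t_1,\dots,t_m])$, which sidesteps the issue entirely).
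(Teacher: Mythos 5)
Your proposal is correct and follows essentially the same route as the paper: apply Theorem \ref{main thm intro} to get property $(F_{\mathcal{E}_{uc}})$ for $\St_\Phi(\mathbb{Z}[t_1,\dots,t_m])$, observe that $\ell^2(\bigcup_i \EL_\Phi(R_i);\B)$ is again uniformly convex (the paper cites Day's theorem for this; your sketch of the $\ell^2$-sum stability is the same fact), pass to property $(T_{\B'})$ via Theorem \ref{f.p. implies T thm}, and conclude with Lafforgue's Proposition \ref{prop T imply exp prop}. The extra verifications you include (surjectivity of $\phi_i$, $|\EL_\Phi(R_i)|\to\infty$, bounded degree) are left implicit in the paper but are correct and harmless.
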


\begin{proof}
Let $\B$ be some uniformly convex Banach space.  By \cite[Theorem 3]{Day},  $\ell^2 (\EL_{\Phi} (R_i); \B)$ is a uniformly convex Banach space.  It follows from Theorems \ref{main thm intro}, \ref{f.p. implies T thm} and Proposition \ref{prop T imply exp prop},  that the family of Cayley graphs of $\lbrace (\EL_{\Phi} (R_i),  \phi_i (S)) \rbrace_{i \in \mathbb{N}}$ is a $\B$-expander family.
\end{proof}

\bibliographystyle{alpha}
\bibliography{bib1}
\end{document}